\newtheorem{thm}{Theorem}[section]
\newtheorem{cor}[thm]{Corollary}
\newtheorem{lem}[thm]{Lemma}
\newtheorem{prop}[thm]{Proposition}
\theoremstyle{definition}
\newtheorem*{defi}{Definition}
\newtheorem{conj}{Conjecture}[]
\newtheorem{ex}{Example}[]
\newtheorem*{akn}{Acknowledgements}
\theoremstyle{remark}
\newtheorem{remark}[thm]{Remark}
\newcommand{\forkindep}[1][]{%
  \mathrel{
    \mathop{
      \vcenter{
        \hbox{\oalign{\noalign{\kern-.3ex}\hfil$\vert$\hfil\cr
              \noalign{\kern-.7ex}
              $\smile$\cr\noalign{\kern-.3ex}}}
      }
    }\displaylimits_{#1}
  }
}
\numberwithin{equation}{section}
\title{Categoricity of Shimura Varieties}
\author{Sebastian Eterovi\'c}
\date{\today}
\address{Mathematical Institute, University of Oxford, UK}
\email{eterovic$@$maths.ox.ac.uk}
\begin{document}

\maketitle

\begin{abstract}
    We propose a model-theoretic structure for Shimura varieties and give necessary and sufficient conditions to obtain categoricity. We show that these conditions are directly related to important conjectures in number theory coming from Galois representations attached the points of a Shimura variety. We end by showing that the existing literature is enough to prove categoricity of $\mathcal{A}_{2}$ and $\mathcal{A}_{3}$.
\end{abstract}

\tableofcontents

\section{Introduction}

Suppose that $p:X^{+}\rightarrow S(\mathbb{C})$ is a (connected) Shimura variety given by the Shimura datum $(G,X)$, that $E^{S}$ is the minimal abelian extension of the reflex field $E(G,X)$ over which $S$ is defined, and let $\Sigma$ be the set of coordinates of special points in $S(\mathbb{C})$. Briefly speaking then, the categoricity problem for $S$ is: if $q:X^{+}\rightarrow S(\mathbb{C})$ is any other map (of sets) satisfying the same algebraic conditions as $p$ (this will be made precise later: we will ask that $q$ satisfy the Special Subvarieties and Standard Fibers conditions), is there then a commutative diagram:
\begin{equation}
\label{eq:com}
\begin{CD}
X^{+} @>\xi>> X^{+}\\
@VVpV @VVqV\\
S(\mathbb{C}) @>\sigma>> S(\mathbb{C}),
\end{CD}    
\end{equation}
where $\xi:X^{+}\rightarrow X^{+}$ is a $G^{\mathrm{ad}}(\mathbb{Q})^{+}$-equivariant bijection which respects special domains (see \textsection \ref{subsec:special}), and $\sigma$ is a map induced by an automorphism of $\mathbb{C}$ fixing $E^{S}(\Sigma)$?

Another way of thinking about categoricity in this case is: how rigid is the special structure of $S$? Or, to what extent is $p$ determined by the algebraic properties of special subvarieties of $S$? A precise definition of categoricity is given at the beginning of \textsection \ref{sec:mt}.

Categoricity for Shimura curves was established in \cite{daw-harris}, and the authors acknowledged in their paper that the language they used may not be rich enough to treat the higher dimensional cases. The first objective of the present work then is to provide an enriched language which keeps the nice properties of the language used in the one-dimensional case: quantifier elimination, a useful description of types, and superstability. The second objective is to give necessary and sufficient conditions for Shimura varieties of any dimension to be categorical. It is immediate from the work done in \cite{daw-harris} (and from other similar categoricity results which will be addressed later in this Introduction) that some form of arithmetic open image condition is needed. Our main goal is to postulate the correct open image condition (here we split it into two conditions: FIC1 and FIC2) and prove that it is equivalent to categoricity. This open image condition is known to hold in dimension $1$ thanks to \cite[Theorem 1.1]{daw-harris}, but not known in general. In fact, a version of this condition had already been conjectured by Pink in \cite{pink} (see Conjecture \ref{conj:pink}), and he also shows the link between his conjecture and the Mumford-Tate conjecture. We will be able, however, to show that enough results are known in the literature to obtain categoricity of two well-known Shimura varieties: $\mathcal{A}_{2}$ and $\mathcal{A}_{3}$, the moduli spaces of principally polarised abelian varieties of dimension 2 and 3, respectively. As far as we know, this result is new.

The categoricity problem will be approached adapting techniques on $\ell$-atomicity and independent systems developed in \cite{bays-pillay} for studying commutative groups of finite Morley rank (e.g. abelian varieties). We will also show an approach using quasiminimal classes that gives the same result, but we will prefer the first method as it proves to be more robust than the quasiminimal approach. We show both approaches as they emphasize different aspects of the problem, the methods of \cite{bays-pillay} focusing more on the model-theoretic properties of the structures, while the quasiminimal approach focuses on the existence of certain curves within the Shimura variety.

After going over some necessary concepts and results from model-theory in \textsection\ref{sec:mt}, we give an introduction to Shimura varieties in \textsection\ref{sec:svi}. In \textsection\ref{sec:ss}, \textsection\ref{sec:sf} and \textsection\ref{sec:covers} we present the model-theoretic structures for Shimura varieties, we define the special subvarieties condition (here called $\mathrm{SS}$), we prove quantifier elimination of this language and give an axiomatisation for the complete first-order theory of $p:X^{+}\rightarrow S(\mathbb{C})$. We also give a description of the types in models of our theory that satisfy the standard fibres condition (here called SF). Using what we will call Shimura covers, we prove some stability properties of the first-order theory of a Shimura variety.

In \textsection\ref{sec:quasi} and \textsection\ref{sec:classification} we present our main categoricity results, first using quasiminimal classes, and then through independent systems. At the end of \textsection\ref{sec:remarks} we show that the categoricity conditions FIC 1 \& 2 for $\mathcal{A}_{2}$ and $\mathcal{A}_{3}$ can be obtained from the literature. 

This work follows an important series of results regarding categoricity of certain algebraic varieties arising from arithmetic: \cite{daw-harris} for categoricity of Shimura curves, \cite{bays-pillay} for categoricity of abelian and split semi-abelian varieties, \cite{bays-kirby} for categoricity conditions of pseudo-exponential maps, \cite{zilber2} for the general quasiminimal strategy for semi-abelian varieties, \cite{bays-zilber} and \cite{zilber3} for categoricity of multiplicative groups of algebraically closed fields, \cite{misha} for categoricity of abelian varieties under continuum hypothesis, \cite{bays} for categoricity of elliptic curves without complex multiplication; among others.

\section{Model Theory}
\label{sec:mt}
In this section we give some standard definitions and results from model theory needed for our work. We will always assume that the language $\mathcal{L}$ we are using is countable. For us the word \emph{type} means \emph{complete type}. Standard definitions and results from model theory can be found, for example, in \cite{tent-ziegler}.

\begin{defi}
Let $\kappa$ be an infinite cardinal. An $\mathcal{L}$-theory $T$ is \emph{$\kappa$-categorical} if all models of $T$ of cardinality $\kappa$ are isomorphic. We will say that $T$ is \emph{categorical} \index{categorical} if it is $\kappa$-categorical for all infinite $\kappa$.
\end{defi}

\begin{ex}
Consider the theory of algebraically closed fields: ACF\index{ACF}. We use $\mathrm{ACF}_{0}$ \index{a@$\mathrm{ACF}_{0}$} to denote the theory of algebraically closed fields of characteristic 0. Then $\mathrm{ACF}_{0}$ is $\kappa$-categorical for any uncountable $\kappa$, but it is not $\aleph_{0}$-categorical, as $\overline{\mathbb{Q}}$ and $\overline{\mathbb{Q}(X)}$ are non-isomorphic countable models of $\mathrm{ACF}_{0}$.
\end{ex}

\begin{prop}[see {{\cite[\textsection 3.3.4]{tent-ziegler}}}]
Let $K\models\mathrm{ACF}$ and $A\subseteq K$. Let $k$ be the subfield of $K$ generated by $A$. Then the $n$-types over $A$ are determined by prime ideals in the poynomial ring $k[X_{1},\ldots,X_{n}]$.  
\end{prop}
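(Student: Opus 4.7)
The plan is to build a bijection between complete $n$-types $p(\bar{x})$ over $A$ and prime ideals of $k[X_{1},\ldots,X_{n}]$, leveraging the fact that ACF admits quantifier elimination. To a type $p(\bar{x})$ I would associate the set
\[
I_{p}:=\{f\in k[X_{1},\ldots,X_{n}] : (f(\bar{x})=0)\in p\}.
\]
First I would verify that $I_{p}$ is an ideal (immediate, since $p$ is closed under consequences and $k$-linear combinations of zero-statements are zero-statements) and that it is prime: if $fg\in I_{p}$ but neither $f$ nor $g$ belong to $I_{p}$, then by completeness $(f(\bar{x})\neq 0)\in p$ and $(g(\bar{x})\neq 0)\in p$, and consistency of $p$ would be violated since the product of two non-zero elements of a field is non-zero.

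Next I would show injectivity of $p\mapsto I_{p}$ using quantifier elimination. By QE for ACF, the type $p$ is determined by its quantifier-free part, and each atomic or negated atomic $\mathcal{L}_{\mathrm{ring}}$-formula over $A$ in variables $\bar{x}$ has the form $f(\bar{x})=0$ or $f(\bar{x})\neq 0$ with $f\in k[X_{1},\ldots,X_{n}]$ (parameters from $A$ generate $k$). Hence $p$ is recovered from $I_{p}$ via
\[
p=\{f(\bar{x})=0 : f\in I_{p}\}\cup\{g(\bar{x})\neq 0 : g\notin I_{p}\}
\]
together with the closure under logical consequence dictated by QE.

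For surjectivity, given a prime ideal $P\subseteq k[X_{1},\ldots,X_{n}]$ I would form the domain $R:=k[X_{1},\ldots,X_{n}]/P$, embed its field of fractions into some algebraically closed extension $K'$ of $k$ (which, by model-completeness of ACF, can be embedded over $k$ into an elementary extension of $K$), and let $\bar{a}$ denote the image of $(X_{1},\ldots,X_{n})$. The type $p:=\mathrm{tp}(\bar{a}/A)$ then satisfies $I_{p}=P$: one inclusion is clear from $f(\bar{a})=0$ for $f\in P$, and the reverse follows because any $f\notin P$ has non-zero image in the domain $R$, hence $f(\bar{a})\neq 0$.

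The only subtle step is the reduction of arbitrary formulas to polynomial equations and inequations, i.e. the invocation of quantifier elimination; once that is available, everything else is bookkeeping about prime ideals in a polynomial ring. I would therefore cite QE for ACF at the outset and state the bijection $p\leftrightarrow I_{p}$ as the content of the proposition.
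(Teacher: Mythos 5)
Your argument is correct, and it is the standard quantifier-elimination proof of this fact; the paper itself does not prove the proposition but simply cites \cite[\textsection 3.3.4]{tent-ziegler}, where essentially the same correspondence $p\mapsto I_{p}$ is established. The only step worth making explicit is that polynomial conditions with coefficients in $k$ (rather than in the subring generated by $A$) are expressible as formulas over $A$ after clearing denominators, but this is routine bookkeeping and does not affect the proof.
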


This means that if $m_{1},\ldots,m_{n}\in K$, then $\mathrm{tp}(m_{1},\ldots,m_{n}/A)$ is determined by the smallest algebraic subvariety $W\subseteq K^{n}$ defined over $k$ which contains the point $(m_{1},\ldots,m_{n})$. Any other point of $W$ which is generic in $W$ over $k$, will also realise the type $\mathrm{tp}(m_{1},\ldots,m_{n}/A)$.

\begin{defi}
An $\mathcal{L}$-structure $M$ is \emph{strongly minimal} \index{strongly minimal} if every definable subset of $M$ is either finite or cofinite. 
\end{defi}

\begin{ex}
$\mathrm{ACF}_{0}$ is strongly minimal (see \cite[\textsection 5.7]{tent-ziegler}).
\end{ex}

\subsection{Quantifier Elimination}

We recall a few standard definitions and results about theories with quantifier elimination. The exposition is taken from \cite{pillay}.

\begin{defi}
Let $M$ and $N$ be $\mathcal{L}$-structures. A \emph{finite partial isomorphism} \index{finite partial isomorphism} between $M$ and $N$ is a pair $\left(\overline{a},\overline{b}\right)$ with $\overline{a}\in M$ and $\overline{b}\in N$, such that $\mathrm{qftp}_{M}\left(\overline{a}\right) = \mathrm{qftp}_{N}\left(\overline{b}\right)$. 
\end{defi}

In other words, a finite partial isomorphism between $M$ and $N$ is the same thing as an isomorphism of $\mathcal{L}$-structures between a finitely generated substructure of $M$ and a finitely generated substructure of $N$. 

\begin{defi}
Let $M$ and $N$ be $\mathcal{L}$-structures. Let $I$ be a set of partial isomorphisms between $M$ and $N$. $I$ has \emph{back-and-forth} \index{back-and-forth} if for every partial isomorphism $\left(\overline{a},\overline{b}\right)\in I$ and every $c\in M$, there is $d\in N$ such that $\left(\overline{a}c,\overline{b}d\right)\in I$ and dually. 
\end{defi}

\begin{defi}
An $\mathcal{L}$-structure $M$ is \emph{$\aleph_{0}$-homogeneous} \index{h@$\aleph_{0}$-homogeneous} if for every finite subset $A\subseteq M$ and very $m\in M$, any elementary map $A\rightarrow M$ can be extended to an elementary map $A\cup\left\{m\right\}\rightarrow M$. 
\end{defi}

When talking about the class of models of a theory $T$, the previous concept will appear in the following two variants:
\begin{enumerate}
    \item \emph{$\aleph_{0}$-homogeneity over the empty set}.\index{h@$\aleph_{0}$-homogeneous!over the empty set} This will mean that, given any two models of $T$, the set $I$ of all finite partial isomorphism between them has back-and-forth. 
    \item \emph{$\aleph_{0}$-homogeneity over countable models}.\index{h@$\aleph_{0}$-homogeneous!over countable models} For this we need to adapt the definition of finite partial isomorphism. Given $M,M',N\models T$, where $N$ is a countable submodel common to $M$ and $M'$, a \emph{partial isomorphism over $N$} is a pair $\left(\overline{a},\overline{b}\right)$ with $\overline{a}\in M$ and $\overline{b}\in M'$, such that $\mathrm{qftp}_{M}\left(\overline{a}/N\right) = \mathrm{qftp}_{M'}\left(\overline{b}/N\right)$. Now we can define $\aleph_{0}$-homogeneity over countable models in an analogous way to $\aleph_{0}$-homogeneity over the empty set.
\end{enumerate}

The following proposition gives us the strategy we will use later for proving quantifier elimination for Shimura structures. 

\begin{prop}[see {{\cite[Proposition 2.29]{pillay}}}]
\label{prop:qe}
Let $T$ be an $\mathcal{L}$-theory. The following are equivalent:
\begin{enumerate}[(a)]
    \item $T$ has quantifier elimination.
    \item If $M$ and $N$ are $\omega$-saturated models of $T$ and $I$ is the set of all finite partial isomorphism between $M$ and $N$, then $I$ has back-and-forth.
\end{enumerate}
\end{prop}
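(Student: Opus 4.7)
The plan is to prove the two implications of this standard quantifier-elimination criterion separately, using compactness and $\omega$-saturation as the main tools.

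For (a)$\Rightarrow$(b), I would argue as follows. Suppose $T$ eliminates quantifiers and fix $\omega$-saturated models $M,N\models T$, a finite partial isomorphism $(\bar a,\bar b)\in I$, and some $c\in M$. By quantifier elimination, $\mathrm{qftp}_M(\bar a)=\mathrm{qftp}_N(\bar b)$ upgrades to $\mathrm{tp}_M(\bar a)=\mathrm{tp}_N(\bar b)$, so for every formula $\varphi(\bar x,y)$ the partial map sending $\bar a\mapsto\bar b$ is elementary. Let $p(y):=\mathrm{tp}_M(c/\bar a)$ and let $q(y)$ be its image under this elementary map, i.e.\ the set of formulas $\varphi(\bar b,y)$ with $\varphi(\bar a,y)\in p(y)$. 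Consistency of $q$ follows from consistency of $p$ via finite-satisfiability and the elementarity of $\bar a\mapsto\bar b$, so by $\omega$-saturation of $N$ we can realise $q$ by some $d\in N$. Then $\mathrm{qftp}_N(\bar b d)=\mathrm{qftp}_M(\bar a c)$, so $(\bar ac,\bar bd)\in I$. The dual direction is symmetric, giving back-and-forth.

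For (b)$\Rightarrow$(a), I would show the semantic form of QE: whenever $M,N\models T$ and tuples $\bar a\in M$, $\bar b\in N$ satisfy $\mathrm{qftp}_M(\bar a)=\mathrm{qftp}_N(\bar b)$, they satisfy $\mathrm{tp}_M(\bar a)=\mathrm{tp}_N(\bar b)$. This is known to imply quantifier elimination (either via the usual compactness-plus-diagram argument, or by using Proposition~2.27 of \cite{pillay}). To obtain it, pass to $\omega$-saturated elementary extensions $M^*\succeq M$ and $N^*\succeq N$; the qftps of $\bar a$ and $\bar b$ are unchanged, so $(\bar a,\bar b)$ lies in the set $I$ of finite partial isomorphisms between $M^*$ and $N^*$. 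I would then prove by induction on the complexity of $\varphi(\bar x)$ that every $(\bar a',\bar b')\in I$ satisfies $M^*\models\varphi(\bar a')\iff N^*\models\varphi(\bar b')$. The atomic case is immediate from the definition of $I$, Boolean connectives are trivial, and the existential step is precisely where the back-and-forth hypothesis is used: if $M^*\models\exists y\,\psi(\bar a',y)$, witnessed by some $c$, use back-and-forth to produce $d\in N^*$ with $(\bar a'c,\bar b'd)\in I$ and apply the inductive hypothesis.

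The only real content is in these two saturation arguments; the main subtlety to watch is the step in (b)$\Rightarrow$(a) where one must check that passing from $M,N$ to $\omega$-saturated extensions preserves both the hypothesis ``$(\bar a,\bar b)\in I$'' and the conclusion we are after, which is why one starts with the semantic equivalent of QE rather than trying to manipulate formulas directly. I do not expect any genuine obstacle; the proof is the standard back-and-forth template, and the only place care is needed is in correctly formulating the induction invariant in (b)$\Rightarrow$(a) so that the existential step uses back-and-forth exactly as stated.
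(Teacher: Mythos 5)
Your proposal is correct: both directions are the standard argument, with $\omega$-saturation used to realise the transported type in (a)$\Rightarrow$(b) and the back-and-forth system driving the induction on formula complexity (after passing to $\omega$-saturated elementary extensions and reducing to the semantic criterion) in (b)$\Rightarrow$(a). Note that the paper itself gives no proof of this proposition -- it is quoted directly from Pillay's lecture notes -- and your argument is essentially the proof found in that cited source, so there is nothing to reconcile.
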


The same strategy can also be used to prove completeness of the theory. 

\begin{prop}[see {{\cite[Proposition 2.30]{pillay}}}]
\label{prop:completeness}
Suppose that for any two $\omega$-saturated models $M$ and $N$ of $T$, the set of finite partial isomorphisms between $M$ and $N$ has back-and-forth and is non-empty. Then $T$ is complete. 
\end{prop}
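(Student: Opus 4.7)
The plan is to show that completeness of $T$ follows from elementary equivalence of any two $\omega$-saturated models of $T$, and then extract that elementary equivalence from the back-and-forth hypothesis. Recall that $T$ is complete iff $M_{0}\equiv N_{0}$ for every pair $M_{0},N_{0}\models T$. Using the standard fact that every $\mathcal{L}$-structure admits an $\omega$-saturated elementary extension (by a routine chain construction with compactness), pick $M\succeq M_{0}$ and $N\succeq N_{0}$ that are $\omega$-saturated; since elementary extensions preserve the complete theory of a structure, it suffices to prove $M\equiv N$.

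For this, let $I$ denote the set of finite partial isomorphisms between $M$ and $N$, which by hypothesis is non-empty and has back-and-forth. I would prove by induction on the complexity of an $\mathcal{L}$-formula $\varphi(\overline{x})$ that for every $(\overline{a},\overline{b})\in I$ with $|\overline{a}|=|\overline{x}|$, one has $M\models\varphi(\overline{a})$ if and only if $N\models\varphi(\overline{b})$. The atomic case is exactly the content of the definition of a finite partial isomorphism (agreement of quantifier-free types), and the Boolean connectives are immediate from the inductive hypothesis.

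The key step is the existential case. Suppose $\varphi(\overline{x})=\exists y\,\psi(\overline{x},y)$ and $M\models\varphi(\overline{a})$, witnessed by some $c\in M$. Applying the back-and-forth property to $(\overline{a},\overline{b})$ and $c$ yields some $d\in N$ with $(\overline{a}c,\overline{b}d)\in I$; the inductive hypothesis applied to $\psi$ then gives $N\models\psi(\overline{b},d)$, hence $N\models\varphi(\overline{b})$. The converse direction follows symmetrically from the dual clause of back-and-forth. Specialising to sentences (so that the choice of $(\overline{a},\overline{b})\in I$ is immaterial) and invoking non-emptiness of $I$ yields $M\equiv N$, completing the argument.

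I do not expect a serious obstacle here: the only ingredient external to the hypothesis is the existence of $\omega$-saturated elementary extensions, which is standard. The genuine work in applying this proposition (and its companion, Proposition \ref{prop:qe}) lies not in the back-and-forth bookkeeping itself but in verifying the back-and-forth hypothesis for a concrete theory, which is precisely the task that will occupy us once the Shimura structures are set up in \textsection\ref{sec:ss}--\textsection\ref{sec:covers}.
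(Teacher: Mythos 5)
Your proof is correct and follows the standard argument: the paper itself gives no proof of this proposition, citing Pillay's notes, and your route (pass to $\omega$-saturated elementary extensions, then show by induction on formula complexity that every finite partial isomorphism in a non-empty back-and-forth system preserves all formulas, hence the two saturated models are elementarily equivalent) is exactly the argument of the cited source. No gaps; the only external ingredient, existence of $\omega$-saturated elementary extensions, is standard as you say.
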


\subsection{Pregeometries}

\begin{defi}
Let $X$ be a set. A function $\mathrm{cl}:\mathcal{P}(X)\rightarrow\mathcal{P}(X)$ (here $\mathcal{P}(X)$ denotes the power set of $X$) is called a \emph{pregeometry} \index{pregeometry} on $X$ if it satisfies the following properties for every $A,B\in\mathcal{P}(X)$:
\begin{enumerate}[(a)]
\item $A\subseteq\mathrm{cl}(A)$,
\item If $A\subseteq B$, then $\mathrm{cl}(A)\subseteq\mathrm{cl}(B)$,
\item $\mathrm{cl}(\mathrm{cl}(A)) = \mathrm{cl}(A)$,
\item Finite character: if $a\in\mathrm{cl}(A)$, then there is a finite subset $A_{0}\subseteq A$ such that $a\in \mathrm{cl}(A_{0})$,
\item Exchange: if $a,b\in X$ are such that $a\in\mathrm{cl}(A\cup\left\{b\right\})$ and $a\notin\mathrm{cl}(A)$, then $b\in\mathrm{cl}(A\cup\left\{a\right\})$.
\end{enumerate}
If $\mathrm{cl}$ is a pregeometry on $X$, then a subset $A\subseteq X$ is called \emph{$\mathrm{cl}$-closed} \index{pregeometry!closure} if $\mathrm{cl}(A) = A$.
\end{defi}

A crucial aspect of pregeometries is that they allow us to have well-defined notions of independence and dimension. Suppose $\mathrm{cl}$ is a pregeometry on $X$. A set $A\subseteq X$ is \emph{$\mathrm{cl}$-independent} \index{pregeometry!independence} if for every $a\in A$ we have that $a\notin\mathrm{cl}(A\setminus\left\{a\right\})$. The \emph{dimension} \index{pregeometry!dimension} of a set $B\subseteq X$ with respect to $\mathrm{cl}$ is the cardinality of any $\mathrm{cl}$-independent set $A\subseteq B$ such that $\mathrm{cl}(A)=B$. For more basic properties of pregeometries, see \cite[Appendix C]{tent-ziegler}.

\begin{ex}[Some pregeometries]
We give two well-known examples of pregeometries and the dimensions they define.
\begin{enumerate}
    \item If $X$ is a $K$-vector space, we can take $\mathrm{cl}(A) := \mathrm{lin.span}_{K}(A)$. In this case $\dim A = \mathrm{lin.dim.}_{K} (\mathrm{cl}(A))$. 
    \item If $X$ is a field, we can take $\mathrm{cl}(A)$ to be the set of elements of $X$ which are algebraic over $A$. In this case $\dim A = \mathrm{tr.deg.}(A)$. This pregeometry is usually denoted (and this is the name we will use): $\mathrm{acl}$. \index{acl} This example is an instance of a more general result, namely that the model-theoretic notion of algebraic closure defines a pregeometry on strongly minimal sets (see \cite[Theorem 5.7.5]{tent-ziegler}). 
\end{enumerate} 
\end{ex}

\subsection{Quasiminimality}
\label{subsec:quasi}
In this subsection we introduce the terminology related to quasiminimality and quasiminimal structures as presented in \cite{fiveguys} and \cite{kirby2}. The final theorem of this subsection provides the strategy we will follow to obtain one of our main results: Theorem \ref{thm:main1}. As always, we assume $\mathcal{L}$ is countable.

\begin{defi}
An $\mathcal{L}$-structure $M$ is \emph{quasiminimal} \index{quasiminimal} is every definable subset of $M$ is countable or co-countable.
\end{defi}

\begin{defi}
Let $M$ be an $\mathcal{L}$-structure. Let $\mathrm{cl}$ be a pregeometry on $M$. Then the pair $(M,\mathrm{cl})$ is called a \emph{quasiminimal pregeometry structure} \index{quasiminimal!pregeometry structure} if:
\begin{description}
    \item[(QM1)] $\mathrm{cl}$ is determined by the language: given $a,a',b_{1},\ldots,b_{n},b_{1}',\ldots,b_{n},\in M$ such that $\mathrm{tp}\left(a,\overline{b}\right)=\mathrm{tp}\left(a',\overline{b}'\right)$, then $a\in\mathrm{cl}\left(\overline{b}\right)$ if and only if $a'\in\mathrm{cl}\left(\overline{b}'\right)$. 
    \item[(QM2)] $M$ is infinite dimensional with respect to $\mathrm{cl}$. 
    \item[(QM3)] \emph{Countable Closure Property}. If $A\subseteq M$ is finite, then $\mathrm{cl}(A)$ is countable.
    \item[(QM4)] \emph{Uniqueness of the generic type}. Suppose $C,C'\subseteq M$ are countable closed subsets, enumerated such that $\mathrm{tp}(C) = \mathrm{tp}(C')$. If $a\in M\setminus C$ and $a'\in M\setminus C'$, then $\mathrm{tp}(C,a) = \mathrm{tp}\left(C',a'\right)$ (with respect to the same enumerations for $C$ and $C'$).
    \item[(QM5)] \emph{$\aleph_{0}$-homogeneity over closed sets and the empty set}. Let $C,C'\subseteq M$ be countable closed subsets or empty, enumerated such that $\mathrm{tp}(C) = \mathrm{tp}\left(C'\right)$, and let $\overline{b},\overline{b}'$ be finite tuples from $M$ such that $\mathrm{tp}\left(C,\overline{b}\right) = \mathrm{tp}\left(C',\overline{b}'\right)$, and let $a\in\mathrm{cl}\left(H,\overline{b}\right)$. Then there is $a'\in M$ such that $\mathrm{tp}\left(C,\overline{b},a\right) =\mathrm{tp}\left(C,'\overline{b}',a'\right)$. 
\end{description}
$(M,\mathrm{cl})$ is said to be a \emph{weakly quasiminimal pregeometry structure} \index{quasiminimal!weak pregeometry structure} if it satisfies all the axioms except maybe (QM2).
\end{defi}

\begin{defi}
Let $(M,\mathrm{cl})$ be a quasiminimal pregeometry structure. Let $\mathscr{K}^{-}(M)$ be the smallest class of $\mathcal{L}$-structures that contains $M$, all the $\mathrm{cl}$-closed substructures of $M$, and that is closed under isomorphism. Let $\mathscr{K}(M)$ be the smallest class containing $\mathscr{K}^{-}(M)$ and that is closed under unions of chains of closed embeddings. Any class of structures of the form $\mathscr{K}(M)$ \index{k@$\mathscr{K}(M)$} is called a \emph{quasiminimal class}. \index{quasiminimal!class}
\end{defi}

Now we present the main result about categoricity and quasiminimal classes. 

\begin{thm}[see {{\cite[Theorem 2.3]{fiveguys}}}]
\label{thm:quasi}
If $\mathscr{K}$ is a quasiminimal class, then every structure $A\in\mathscr{K}$ is a weakly quasiminimal pregeometry structure and, up to isomorphism, there is exactly one structure in $\mathscr{K}$ of each cardinal dimension. In particular, $\mathscr{K}$ is uncountably categorical. Furthermore, $\mathscr{K}$ is the class of models of an $\mathcal{L}_{\omega_{1},\omega}$-sentence. 
\end{thm}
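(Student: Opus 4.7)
The plan is to proceed in four stages, closely following the structure of the statement. First I would show that every $A\in\mathscr{K}$ carries a pregeometry making it a weakly quasiminimal pregeometry structure. For $A\in\mathscr{K}^{-}(M)$ this is essentially built in: if $A$ is a $\mathrm{cl}$-closed substructure of $M$, then the restriction $\mathrm{cl}_{A}(B):=\mathrm{cl}(B)\cap A$ is a pregeometry on $A$, and axioms (QM1), (QM3), (QM4), (QM5) transfer because types in $A$ and types in $M$ of tuples from $A$ agree once (QM5) is invoked to produce the needed realizations. For $A$ obtained as the union of a chain $A_{0}\subseteq A_{1}\subseteq\cdots$ of closed embeddings, I would define $\mathrm{cl}_{A}(B)=\bigcup_{i}\mathrm{cl}_{A_{i}}(B\cap A_{i})$ for $B$ finite (and extend by finite character), and verify the pregeometry axioms and (QM1), (QM3), (QM4), (QM5) by a chain argument: each axiom is of finite or countable character, so it can be checked inside a single $A_{i}$ where it is known.

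Second, I would prove the central uniqueness statement: if $A,A'\in\mathscr{K}$ have the same cardinal dimension, then $A\cong A'$. Fix $\mathrm{cl}$-bases $B\subseteq A$ and $B'\subseteq A'$ of equal cardinality and enumerate them as $(b_{\alpha})_{\alpha<\lambda}$ and $(b'_{\alpha})_{\alpha<\lambda}$. I would then build by transfinite recursion an increasing chain of partial isomorphisms $f_{\alpha}:C_{\alpha}\to C'_{\alpha}$ between closed subsets, with $C_{0}=\mathrm{cl}(\emptyset)$, $C'_{0}=\mathrm{cl}(\emptyset)$ matched using (QM4) at the empty base. At successor stages I pick a new basis element (either $b_{\alpha}\in A$ or a preimage of $b'_{\alpha}$, doing a back-and-forth to cover both $A$ and $A'$) and extend using (QM4) to guarantee that generic elements over $C_{\alpha}$ (resp.\ $C'_{\alpha}$) realize a unique type; then I close off under $\mathrm{cl}$ using (QM5) to add all elements in $\mathrm{cl}(C_{\alpha}\cup\{b_{\alpha}\})$ one at a time while preserving types. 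At limits I take unions, which stay partial isomorphisms because types are determined by their finite restrictions. The resulting $f=\bigcup_{\alpha}f_{\alpha}$ is an isomorphism $A\to A'$.

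Third, from this structural uniqueness I would deduce uncountable categoricity. By (QM3) (Countable Closure), if $B$ is a $\mathrm{cl}$-basis of an uncountable $A\in\mathscr{K}$, then $|A|=|B|+\aleph_{0}=|B|$, so $\dim A=|A|$ for uncountable $A$. Hence any two structures in $\mathscr{K}$ of the same uncountable cardinality have the same cardinal dimension and are isomorphic by the previous step, giving uncountable categoricity. For the final clause, I would describe $\mathscr{K}$ by an $\mathcal{L}_{\omega_{1},\omega}$-sentence $\varphi$ whose content is: (i) the theory $\mathrm{Th}(M)$ (expressible as a countable conjunction of first-order sentences), (ii) the countably many quantifier-free types realized over $\emptyset$ and their closures, and (iii) axiom schemata encoding the pregeometry $\mathrm{cl}$ via the formulas isolated by (QM1), together with the countable closure statement. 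The weak quasiminimality of the models of $\varphi$ is built into the axioms, and (QM4)--(QM5) are each expressible by countably many first-order schemata relating types over closed sets.

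The main obstacle I expect is the bookkeeping in the back-and-forth of step two: one must simultaneously handle the extension along \emph{independent} new basis elements (governed by (QM4)) and along \emph{dependent} new elements in the closure (governed by (QM5)), and check that the union at each limit stage genuinely is a partial isomorphism between closed sets rather than just a type-preserving bijection. Once one is comfortable that (QM4) uniquely controls the type of a fresh generic and (QM5) controls types of algebraic elements over parameter sets, the transfinite construction goes through uniformly, and the $\mathcal{L}_{\omega_{1},\omega}$-axiomatization follows because all the data used are countable.
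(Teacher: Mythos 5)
This theorem is not proved in the paper at all: it is imported verbatim from \cite{fiveguys}, so there is no internal argument to compare against, and any attempt to reprove it must reproduce the substantial work of that reference. Your sketch, however, has a genuine gap at exactly the point where that work lies. Axioms (QM4) and (QM5) give uniqueness of the generic type and $\aleph_{0}$-homogeneity only over \emph{countable} closed sets (or the empty set). In your transfinite back-and-forth, as soon as the dimension exceeds $\aleph_{0}$ the sets $C_{\alpha}$ you are extending over become uncountable closed sets, and at that moment neither (QM4) nor (QM5) applies as stated; the claim that ``the transfinite construction goes through uniformly'' is precisely what fails naively. The entire content of the cited theorem is the nontrivial step that homogeneity over countable closed sets propagates to homogeneity over arbitrary closed sets: historically this is what Zilber's separate ``excellence'' hypothesis (amalgamation over independent systems of countable closed sets) was introduced to guarantee, and the achievement of \cite{fiveguys} is to derive it from (QM1)--(QM5) alone by a delicate argument. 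Without supplying that lemma (or an excellence-style amalgamation argument in its place), steps two and three of your plan do not close.

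Two smaller points. First, in step one, restricting to closed substructures and taking unions of chains is fine, but verifying (QM4)/(QM5) for an arbitrary member of $\mathscr{K}$ again runs into the same issue once the structure is uncountable, since the closed subsets you must handle need not sit inside a single countable stage. Second, your proposed $\mathcal{L}_{\omega_{1},\omega}$-axiomatization must express the countable closure property (QM3); in the original reference this is done in $\mathcal{L}_{\omega_{1},\omega}(Q)$ using the quantifier ``there exist uncountably many,'' and it is not expressible by the first-order schemata you list, so this clause also needs more care than the sketch suggests.
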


It will be important for us that this theorem can be applied to structures defined by $\mathcal{L}_{\omega_{1},\omega}$-sentences \index{l@$\mathcal{L}_{\omega_{1},\omega}$-sentence} (which are sentences that can be infinitely long, with countably many conjunctions and disjunctions, but with only finitely many variables) because the Standard Fibres condition (see Subection \ref{sec:sf}) will be such a sentence.

\begin{prop}[see {{\cite[Corollary 2.2]{kirby2}}}]
\label{prop:kirby}
Let $\mathscr{K}$ be a quasiminimal class. Then models of $\mathscr{K}$  of dimension less than or equal to $\aleph_{1}$ are determined up to isomorphism by their dimension. There is at most one model of cardinality $\aleph_{1}$.
\end{prop}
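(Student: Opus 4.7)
The plan is to prove both assertions by a (transfinite) back-and-forth argument between $\mathrm{cl}$-closed substructures, driven by the axioms QM3--QM5.

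Let $M, N \in \mathscr{K}$ with $\dim M = \dim N = \kappa \leq \aleph_1$. Call a triple $(A, B, f)$ a \emph{closed partial isomorphism} if $A \subseteq M$ and $B \subseteq N$ are countable $\mathrm{cl}$-closed substructures and $f : A \to B$ is an $\mathcal{L}$-isomorphism (equivalently, enumerated compatibly so that $\mathrm{tp}_M(A) = \mathrm{tp}_N(B)$). The core lemma to establish is the following extension property: given such $(A, B, f)$ and any $m \in M$, and provided $\dim B < \kappa$, there exist $n \in N$ and an extension to a closed partial isomorphism $(A', B', f')$ with $A' = \mathrm{cl}(A \cup \{m\})$, $B' = \mathrm{cl}(B \cup \{n\})$, $f' \supseteq f$, and $f'(m) = n$. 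If $m \in A$ set $n = f(m)$; otherwise $m \notin \mathrm{cl}(A) = A$, and, working inside a common ambient structure where both $M$ and $N$ embed, QM4 applied to $A$ and $B$ with any choice of $n \in N \setminus B$ yields $\mathrm{tp}(A, m) = \mathrm{tp}(B, n)$. A countable chain of applications of QM5 then extends the isomorphism from $A \cup \{m\}$ to the full $\mathrm{cl}$-closure; by the finite character of $\mathrm{cl}$ combined with CCP, both resulting closures remain countable.

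The base case $(A, B) = (\mathrm{cl}_M(\emptyset), \mathrm{cl}_N(\emptyset))$ is established separately by iterating QM5 starting from matched empty tuples (the axiom as stated explicitly permits $C = C' = \emptyset$), which exhausts both countable closures in $\omega$ steps. When $\kappa \leq \aleph_0$, CCP gives $|M|, |N| \leq \aleph_0$, and a standard $\omega$-step back-and-forth from this base case, applying the extension lemma alternately on each side, produces $M \cong N$. When $\kappa = \aleph_1$, CCP together with finite character forces $|M| = |N| = \aleph_1$; we enumerate both models in length $\omega_1$ and iterate the extension lemma for $\omega_1$ stages, alternating sides and taking unions at limit ordinals. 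At each countable stage the partial isomorphism is between countable $\mathrm{cl}$-closed substructures; at a limit of countable cofinality the union remains countable and $\mathrm{cl}$-closed by finite character, and it lies in $\mathscr{K}^{-}(M)$ because $\mathscr{K}$ is closed under chains of closed embeddings. After $\omega_1$ stages both models are exhausted, yielding the desired isomorphism. The second clause of the proposition then follows immediately: $|M| = \aleph_1$ forces $\dim M = \aleph_1$ (else CCP would force $|M| \leq \aleph_0$), so any two such models have equal dimension and are isomorphic by the first clause.

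The main technical obstacle is the careful bookkeeping of enumerations: each invocation of QM4 and QM5 requires that the closed sets on the two sides be enumerated so that their types match, and this compatibility must be maintained as we pass to $\mathrm{cl}$-closures after adding new elements. A related subtlety is that QM4 and QM5 are formally stated within a single structure; applying them across the pair $M, N$ requires either embedding both into a common larger structure in $\mathscr{K}$ or reinterpreting the axioms uniformly across the class, which must be made precise. Once these points are settled, the remainder of the argument follows routinely from QM3--QM5.
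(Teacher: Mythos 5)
The paper offers no proof of this proposition: it is quoted verbatim from Kirby \cite{kirby2}, so there is no internal argument to compare with. What you have written is essentially the standard argument behind that citation: build the isomorphism as a continuous chain of isomorphisms between countable $\mathrm{cl}$-closed substructures, use (QM4) to match a fresh generic element and (QM5), iterated back and forth, to extend over its closure, and observe that when the dimension is at most $\aleph_{1}$ every stage of the construction involves only countable closed sets --- which is exactly why no excellence-type hypothesis is needed here and why the statement stops at $\aleph_{1}$. Your cardinality bookkeeping (CCP giving $|M|\leq\aleph_{1}$ when $\dim M\leq\aleph_{1}$, limits of countable cofinality, the deduction of the second clause from the first) is correct, and the hypothesis $\dim B<\kappa$ correctly encodes, via exchange, that a proper closed substructure has strictly smaller dimension, so fresh elements exist on both sides when needed.

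The point you flag but do not close is, however, the real content of the statement. As defined in this paper, (QM1)--(QM5) are properties of a single structure, while the proposition concerns arbitrary members of $\mathscr{K}=\mathscr{K}(M)$: isomorphic copies of closed substructures of $M$ and unions of chains of closed embeddings. Your extension lemma needs the cross-structure versions: if $A$ and $B$ are countable closed subsets of two \emph{different} members of $\mathscr{K}$, enumerated with equal types, then any $m\notin A$ and $n\notin B$ again have equal types over them, and similarly for the homogeneity step. This is not immediate from the definition of $\mathscr{K}(M)$; it is precisely what Kirby's class-based axiomatisation postulates outright, and what \cite{fiveguys} proves holds uniformly across $\mathscr{K}(M)$ (every member is a weakly quasiminimal pregeometry structure and the axioms transfer to pairs of members). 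Your proposed repair --- embedding both models into a common larger member of $\mathscr{K}$ --- is circular as stated, since joint embedding for $\mathscr{K}$ is itself obtained from these homogeneity statements. A smaller omission: in the internal back-and-forth making the extension surjective onto $\mathrm{cl}(B\cup\{n\})$, you need (QM1) to see that elements produced in the backward direction land inside $\mathrm{cl}(A\cup\{m\})$, so that the domain and image stay closed. Granting the cross-structure axioms (e.g.\ by working in Kirby's formulation, as the citation does), the rest of your argument is correct and is the same route the literature takes.
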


\subsection{\texorpdfstring{$\ell$}{l}-Isolation}

The definitions and results of this subsection can be found in \cite[\textsection 3]{bays-pillay}.

\begin{defi}
Given a type $p\in S(B)$ and a formula $\phi(x,y)$, the \emph{complete $\phi$-type implied by $p$}, \index{p@$\phi$-type} denoted $p_{\phi}$, is the set of all formulas of the form $\phi\left(x,\overline{b}\right)$ that are in $p$, where $\overline{b}$ is a tuple from $B$.

A type $p$ is \emph{$\ell$-isolated} \index{l@$\ell$!isolated} (which is short for \emph{locally isolated}) \index{locally isolated type} if for each $\phi(x,y)$, there exists $\psi(x)\in p$ such that $\psi$ implies the complete $\phi$-type implied by p, i.e. $\psi\models p_{\phi}$.

$A$ is said to be \emph{atomic} \index{atomic} over $B$ if $\mathrm{tp}(a/B)$ is isolated for each tuple $a\in A$. $A$ is \emph{constructible} \index{constructible} over $B$ if $A$ has an enumeration $(a_{i})_{i\leq\lambda}$ such that $\mathrm{tp}\left(a_{i}/Ba_{\leq i}\right)$ is isolated for each $i\leq\lambda$, where $Ba_{\leq i} = B\cup\left\{a_{j} : j\leq i\right\}$.

We say $A$ is \emph{$\ell$-atomic} \index{l@$\ell$!atomic} and \emph{$\ell$-constructible} \index{l@$\ell$!constructible} over $B$ by replacing isolation in the definitions above with $\ell$-isolation. 
\end{defi}

\begin{remark}
It is easy to see that isolated types are $\ell$-isolated, so atomicity implies $\ell$-atomicity and constructibility implies $\ell$-constructibility. Also constructibility implies atomicity and $\ell$-constructibility implies $\ell$-atomicity (\cite[Theorem IV.3.2]{shelah}). The converse statements hold for countable sets (\cite[Lemma IV.3.16]{shelah}). 
\end{remark}

\begin{lem}[see {{\cite[Lemma 3.6-a)]{bays-pillay}}}]
\label{lem:ell}
Let $\mathbb{M}$ be a monster model of a complete stable theory $T$. 
\begin{enumerate}[(a)]
\item For $A\subseteq\mathbb{M}^{\mathrm{eq}}$, there exists $M\prec\mathbb{M}$ such that $A\subseteq M^{\mathrm{eq}}$ and $M^{\mathrm{eq}}$ is $\ell$-constructible over $A$.
\item Let $M\prec\mathbb{M}$. If $\phi$ is a formula over $M$ such that $\phi(M)\subseteq A\subseteq \mathbb{M}^{\mathrm{eq}}$ and $\mathrm{dcl}^{\mathrm{eq}}(A)\cap\mathrm{dcl}^{\mathrm{eq}}(\phi(\mathbb{M}))\subseteq M^{\mathrm{eq}}$, and if $b$ is $\ell$-isolated over $A$ and $\models \phi(b)$, then $b\in\phi(M)$.
\end{enumerate}
\end{lem}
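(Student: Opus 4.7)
For part (a), my plan is to invoke Shelah's theorem on the existence of $a$-prime models over arbitrary sets in a stable theory. Given $A \subseteq \mathbb{M}^{\mathrm{eq}}$, I would carry out an $\ell$-construction inside $\mathbb{M}^{\mathrm{eq}}$: enumerate a superset $N \supseteq A$ as $(e_i)_{i < \lambda}$ such that each $\mathrm{tp}(e_i / A \cup \{e_j : j < i\})$ is $\ell$-isolated, and stop when $N$ becomes a model of $T^{\mathrm{eq}}$. The existence of a suitable $e_i$ at each step is Shelah's $\ell$-extension property for stable theories. Setting $M := N \cap \mathbb{M}$ (the home-sort part of $N$) then gives $M \prec \mathbb{M}$ with $M^{\mathrm{eq}} = N$, and $M^{\mathrm{eq}}$ is $\ell$-constructible over $A$ by the very enumeration.

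For part (b), write $\phi(x) = \phi_0(x, \bar{m})$ with $\phi_0$ parameter-free and $\bar{m} \in M$. The argument divides into two phases. In Phase 1, I show that $\mathrm{tp}(b/\mathbb{M})$ is definable over $M$. For each formula $\chi(x, \bar{y})$, let $e_\chi \in \mathbb{M}^{\mathrm{eq}}$ be the canonical parameter of the $\chi$-definition of $\mathrm{tp}(b/\mathbb{M})$. Because $\mathrm{tp}(b/A)$ is $\ell$-isolated, hence stationary in the stable theory $T$, the $\chi$-definition of its unique nonforking extension to $\mathbb{M}$ is over $A$: $e_\chi \in \mathrm{dcl}^{\mathrm{eq}}(A)$. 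Simultaneously, $e_\chi \in \mathrm{dcl}^{\mathrm{eq}}(b) \subseteq \mathrm{dcl}^{\mathrm{eq}}(\phi(\mathbb{M}))$, because $b$ determines its own $\chi$-type and $b \in \phi(\mathbb{M})$. By the intersection hypothesis, $e_\chi \in M^{\mathrm{eq}}$, so $\mathrm{tp}(b/\mathbb{M})$ is definable over $M$. In Phase 2, I would then use this definability together with $\phi(M) \subseteq A$ and $\ell$-isolation to conclude $b \in \phi(M)$. The idea is that $\ell$-isolation of $\mathrm{tp}(b/A)$, combined with $A$ containing every $\phi$-realization coming from $M$, pins $b$ down to one of the elements of $\phi(M)$; concretely, any $b' \in \phi(M) \subseteq A$ gives the algebraic type $\mathrm{tp}(b'/A) = \{x = b'\}$, and the goal is to force $\mathrm{tp}(b/A)$ to coincide with such an algebraic type.

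The main obstacle I anticipate is Phase 2. A global type $\mathrm{tp}(b/\mathbb{M})$ definable over $M$ need not be realized in $M$ (think of the generic type over a model in a strongly minimal theory), so one cannot simply deduce $b \in M$ from Phase 1. The decisive work is to combine the $\mathrm{dcl}^{\mathrm{eq}}$-intersection hypothesis with the full strength of $\ell$-isolation over $A$ to force $\mathrm{tp}(b/A)$ to be algebraic with unique realization in $\phi(M) \subseteq A$. This interplay between the stability-theoretic content of $\ell$-isolated types and the geometric intersection condition on $A$ is, I expect, the technical heart of the lemma, and is most naturally handled by reducing to part (a) (producing an $\ell$-prime model over $A$ containing $b$) and then using \emph{uniqueness} of $a$-prime models together with the intersection hypothesis to relocate $b$ inside $M$.
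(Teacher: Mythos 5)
First, a point of comparison: the paper does not prove this lemma at all — it is imported verbatim from Bays–Hart–Pillay (their Lemma 3.6) and used as a black box, so the expected treatment here is a citation rather than an argument. Judged on its own merits, your sketch of part (a) is essentially the standard Shelah-style construction (density of $\ell$-isolated types in a stable theory, then build an $\ell$-construction whose home-sort part is a model), and is fine modulo routine care that the set you build really is of the form $M^{\mathrm{eq}}$.

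Part (b), however, has a genuine gap, and it sits exactly where you locate "Phase 1", not only in the admitted "Phase 2". The step "$\mathrm{tp}(b/A)$ is $\ell$-isolated, hence stationary" is false in general: every complete type over $\emptyset$ is vacuously $\ell$-isolated, yet in the theory of an equivalence relation with exactly two infinite classes the unique $1$-type over $\emptyset$ has two distinct nonforking extensions to any model, so it is not stationary. Worse, even granting stationarity, your argument conflates two different global types. The canonical parameter $e_\chi$ you place in $\mathrm{dcl}^{\mathrm{eq}}(b)$ is that of the $\chi$-definition of the \emph{realized} type $\mathrm{tp}(b/\mathbb{M})$ (whose $\chi$-definition is $\chi(b,y)$), whereas the parameter you place in $\mathrm{dcl}^{\mathrm{eq}}(A)$ is that of the $\chi$-definition of the \emph{nonforking} extension of $\mathrm{tp}(b/A)$; these global types coincide only when $x=b$ does not fork over $A$, i.e.\ essentially when $b\in\mathrm{acl}^{\mathrm{eq}}(A)$ — which is (a weak form of) the conclusion you are trying to prove, so the argument is circular. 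Since Phase 2 is then left as a declared intention (reduce to (a) and invoke uniqueness of suitably prime models, itself a nontrivial point for $\ell$-constructions over arbitrary sets), the proposal does not yet contain a proof of (b): what is missing is the actual mechanism by which $\ell$-isolation of $\mathrm{tp}(b/A)$, the hypothesis $\phi(M)\subseteq A$, and the intersection condition $\mathrm{dcl}^{\mathrm{eq}}(A)\cap\mathrm{dcl}^{\mathrm{eq}}(\phi(\mathbb{M}))\subseteq M^{\mathrm{eq}}$ are combined — for this one should consult the proof in Bays–Hart–Pillay rather than the stationarity route you propose.
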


\subsection{Independent Systems}
Here we give some definitions and results from \cite[\textsection 3.5]{bays-pillay} necessary for the classification of Shimura covers in Section \ref{sec:covers}. Let $T$ be a stable theory and $\mathbb{M}$ be a monster model of $T$.

\begin{defi}
Let $I$ be a downward-closed set of sets. An \emph{$I$-system} \index{i@$I$-system} in $\mathbb{M}$ is a collection $\left(M_{s}\right)_{s\in I}$ of elementary submodels of $\mathbb{M}$ such that for $s\subseteq t$, $M_{s}$ is an elementary submodel of $M_{t}$. Given $J\subseteq I$, let $M_{J}:= \bigcup_{s\in J}M_{s}\subseteq\mathbb{M}$.

For $s\in I$, define $<s:=\mathcal{P}(s)\setminus\left\{s\right\}$ and $\ngeq s:= I\setminus\left\{t : s\subseteq t\right\}$.\index{$<s$} \index{$\ngeq s$}

The $I$-system $\left(M_{s}\right)_{s\in I}$ is called \emph{constructible} (resp. \emph{atomic}, \emph{$\ell$-constructible}, \emph{$\ell$-atomic})\index{i@$I$-system!atomic}\index{i@$I$-system!constructible}\index{i@$I$-system!$\ell$-atomic}\index{i@$I$-system!$\ell$-constructible} if $M_{s}$ is constructible (resp. atomic, $\ell$-constructible, $\ell$-atomic) over $M_{<s}$ for all $s\in I$ with $|s|>1$.

The $I$-system is called \emph{independent} \index{i@$I$-system!independent} (or \emph{stable}) if $M_{s}\forkindep[M_{<s}] M_{\ngeq s}$ for all $s\in I$.

The $I$-system is \emph{Noetherian} \index{i@$I$-system!noetherian} if each $s\in I$ is finite.

An \emph{enumeration}\index{enumeration}\index{i@$I$-system!enumeration} of $I$ is a sequence $(s_{i})_{i\leq\lambda}$, where $\lambda$ is an ordinal, such that $I=\left\{s_{i} : i\in \lambda\right\}$ and $s_{i}\subseteq s_{j}\implies i\leq j$. We write $s_{<i}$ for $\left\{s_{j} : j<i\right\}$. \index{$s_{<i}$}
\end{defi}

\begin{lem}[see  {{\cite[Lemma XII.2.3(1)]{shelah}}}]
\label{lem:fact}
Let $(M_{s})_{s}$ be an $I$-system, and let $(s_{i})_{i\in\lambda}$ be an enumeration. Then the system is independent if and only if $M_{s_{i}}\forkindep[M_{<s_{i}}]M_{s_{<i}}$ holds for all $i\in\lambda$.
\end{lem}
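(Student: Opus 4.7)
I would prove the two implications separately. The forward direction ($\Rightarrow$) is a short manipulation using only monotonicity of non-forking. The backward direction ($\Leftarrow$) is the content of the lemma and requires a transfinite induction along the enumeration, appealing repeatedly to symmetry and transitivity of non-forking in the stable theory $T$.

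\textbf{Forward direction.} First I would establish two set-theoretic inclusions that are forced by the enumeration property $s_i\subseteq s_j\Rightarrow i\leq j$: namely, $s_{<i}\subseteq\ngeq s_i$ (because $j<i$ rules out $s_i\subseteq s_j$) and $<s_i\subseteq s_{<i}$ (because $t\subsetneq s_i$ forces the enumeration index of $t$ to be strictly less than $i$). Taking unions gives $M_{<s_i}\subseteq M_{s_{<i}}\subseteq M_{\ngeq s_i}$, and monotonicity of non-forking applied to the hypothesis $M_{s_i}\forkindep[M_{<s_i}]M_{\ngeq s_i}$ then yields the enumeration condition $M_{s_i}\forkindep[M_{<s_i}]M_{s_{<i}}$ at once.

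\textbf{Backward direction.} For the converse I would prove by transfinite induction on $i\in\lambda$ the stronger statement that the subsystem indexed by $I_{\leq i}:=\{s_j:j\leq i\}$ (which is downward-closed in $I$, again by the enumeration property) is independent. The successor step is the heart of the argument: when $M_{s_{i+1}}$ is added, for each already-present $s_j$ with $j\leq i$ one must upgrade its independence from $M_{\ngeq s_j\cap I_{\leq i}}$ (known by induction) to independence from $M_{\ngeq s_j\cap I_{\leq i+1}}$. When $s_j\subseteq s_{i+1}$ nothing changes; otherwise $s_{i+1}\in\ngeq s_j$, and one uses the enumeration hypothesis $M_{s_{i+1}}\forkindep[M_{<s_{i+1}}]M_{s_{<i+1}}$ together with symmetry and transitivity of non-forking to pull $M_{s_{i+1}}$ in. Limit ordinal stages are immediate from finite character of forking.

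\textbf{Main obstacle.} The delicate point is a change-of-base issue at the successor step: the enumeration hypothesis at index $i+1$ gives independence over $M_{<s_{i+1}}$, whereas the conclusion being chased at index $j\leq i$ demands independence over the generally incomparable base $M_{<s_j}$. Negotiating this is not a single application of a forking axiom but a careful bookkeeping of several non-forking statements at once, chaining symmetry and transitivity to migrate the base through $M_{s_{<i+1}}$. This is precisely the content of \cite[XII.2.3(1)]{shelah}, and is the reason the backward direction cannot be reduced to the one-line monotonicity argument that disposes of the forward direction.
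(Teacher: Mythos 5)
You should first note that the paper offers no proof of this lemma at all---it is quoted verbatim from Shelah---so your argument has to stand entirely on its own. Your forward direction does: downward-closure of $I$ together with the enumeration property gives $<s_i\subseteq s_{<i}\subseteq\ngeq s_i$, hence $M_{<s_i}\subseteq M_{s_{<i}}\subseteq M_{\ngeq s_i}$, and monotonicity finishes. The skeleton you propose for the converse (transfinite induction showing independence of initial-segment subsystems, with finite character to pass to the whole system) is also the right one.

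The genuine gap is that the successor step---which is the entire mathematical content of the converse---is never carried out: you describe it as ``careful bookkeeping'' and then declare it to be ``precisely the content of [Shelah XII.2.3(1)]'', which is circular, since that is the statement being proved. Moreover, the change-of-base obstacle you flag dissolves under one containment you did not observe. Fix $j\le i$ with $s_j\not\subseteq s_{i+1}$ (the only case in which anything changes) and write $A=M_{s_j}$, $C=M_{<s_j}$, $B=M_{(\ngeq s_j)\cap I_{\le i}}$, $D=M_{s_{i+1}}$, $E=M_{<s_{i+1}}$. Every proper subset $t\subsetneq s_{i+1}$ satisfies $s_j\not\subseteq t$ (otherwise $s_j\subseteq s_{i+1}$) and has enumeration index at most $i$, so $E\subseteq B$; also $A\cup B\cup C\subseteq M_{s_{<i+1}}$. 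Hence from the hypothesis $D\forkindep[E]M_{s_{<i+1}}$, base monotonicity (moving the base from $E$ up to $B\cup C$) and monotonicity give $D\forkindep[B\cup C]A$, symmetry gives $A\forkindep[B\cup C]D$, and transitivity combined with the inductive hypothesis $A\forkindep[C]B$ yields $A\forkindep[C]B\cup D$, which is exactly the required condition at $s_j$ in the subsystem indexed by $I_{\le i+1}$; the condition at $s_{i+1}$ itself is mere monotonicity of the hypothesis, since $(\ngeq s_{i+1})\cap I_{\le i+1}\subseteq s_{<i+1}$. Finally, a small indexing slip: with your induction statement phrased for $I_{\le i}$, a limit ordinal $i$ still adds the new element $s_i$, so limit stages are not ``immediate from finite character'' alone; run the induction instead on the claim that the subsystem indexed by $I_{<i}=\left\{s_j : j<i\right\}$ is independent, so that limit stages are genuine unions handled by finite character and each successor stage adds exactly one element as above.
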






\begin{lem}[see {{\cite[Lemma 3.21]{bays-pillay}}}]
\label{lem:3.21}
Let $(M_{s})_{s}$ be a constructible Noetherian independent $I$-system. Suppose that for each $p\in\bigcup I$, $B_{p}$ is a subset of $M_{\left\{p\right\}}$ for which $M_{\left\{p\right\}}$ is constructible over $M_{\emptyset}B_{p}$. Then $M_{I}$ is constructible over $A:=M_{\emptyset}\cup\bigcup_{p\in\bigcup I}B_{p}$.
\end{lem}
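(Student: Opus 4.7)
The plan is to prove by transfinite induction along a fixed enumeration $(s_i)_{i<\lambda}$ of $I$ that $M_{s_{<i}}$ is constructible over $A$; taking $i=\lambda$ gives the lemma. Two order-theoretic facts about the enumeration drive the whole argument: for any $i$, one has $M_{<s_i}\subseteq M_{s_{<i}}$ (because every proper subset of $s_i$ must precede $s_i$ in the enumeration) and $M_{s_{<i}}\subseteq M_{\ngeq s_i}$ (because $s_j\supseteq s_i$ with $j<i$ would violate $s_i\subseteq s_j \Rightarrow i\leq j$).

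The inductive step splits into the two cases $|s_i|=1$ and $|s_i|>1$, and in both cases the engine is a standard transfer principle from stable theory: if $X$ is constructible over $B$ and $X\forkindep[B] C$, then $X$ is constructible over $B\cup C$ (enumerate $X$ as a construction over $B$ and observe that isolation of each $\mathrm{tp}(x_j/Bx_{<j})$ is preserved by its non-forking extension to $BCx_{<j}$, using stationarity over models). If $s_i=\{p\}$, then $M_{<s_i}=M_\emptyset$ and by hypothesis $M_{\{p\}}$ is constructible over $M_\emptyset\cup B_p$; by Lemma~\ref{lem:fact} applied to the system, $M_{\{p\}}\forkindep[M_\emptyset] M_{s_{<i}}$, hence $M_{\{p\}}\forkindep[M_\emptyset\cup B_p] M_{s_{<i}}$, so the transfer principle yields that $M_{\{p\}}$ is constructible over $M_\emptyset\cup B_p\cup M_{s_{<i}}$. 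If instead $|s_i|>1$, then by definition $M_{s_i}$ is constructible over $M_{<s_i}$, and by independence combined with $M_{s_{<i}}\subseteq M_{\ngeq s_i}$ we get $M_{s_i}\forkindep[M_{<s_i}] M_{s_{<i}}$, so $M_{s_i}$ is constructible over $M_{<s_i}\cup M_{s_{<i}}=M_{s_{<i}}$.

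In either case I can now concatenate constructions: by the induction hypothesis $M_{s_{<i}}$ is constructible over $A$, and the work above shows $M_{s_i}$ is constructible over $A\cup M_{s_{<i}}$ (since $M_\emptyset\cup B_p\subseteq A$). Gluing the two enumerations witnesses that $M_{s_{\leq i}}=M_{s_{<i}}\cup M_{s_i}$ is constructible over $A$. The limit step is free: if $i$ is a limit ordinal, the union of the constructions produced at the stages $j<i$ is itself a construction of $M_{s_{<i}}=\bigcup_{j<i}M_{s_{<j+1}}$ over $A$.

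The main obstacle is the transfer principle invoked in the inductive step—one must know that in a stable theory, isolation (and not merely $\ell$-isolation) of a type over a model is preserved under non-forking extension of the base, so that constructibility passes from $B$ to $B\cup C$ whenever the constructed set is independent from $C$ over $B$. This is the classical Shelah-style fact (cf.\ \cite[IV]{shelah}) and is precisely why the hypothesis ``independent'' is needed in conjunction with ``constructible''; without it one could not bridge between the hypothesis that $M_{s_i}$ is constructible over $M_{<s_i}$ and the requirement of a construction over the larger base $M_{s_{<i}}$ produced by the induction.
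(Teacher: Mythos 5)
The paper does not actually prove this lemma (it is quoted from \cite{bays-pillay} without proof), so your attempt has to stand on its own, and it does not: the transfer principle on which both cases of your inductive step rest is false. In a stable, even $\omega$-stable, theory, isolation is \emph{not} preserved under non-forking extensions of the base, even when the base contains a model, even when the type is stationary, and even when the new parameters are atomic over the old base. Concretely, let $T$ be the theory of a single equivalence relation $E$ with infinitely many classes, all infinite, let $M\models T$, let $b$ be an element whose class does not meet $M$, and let $a,c_{1},c_{2},\ldots$ be further distinct elements of that class. Then $\mathrm{tp}(a/Mb)$ is isolated by $E(x,b)\wedge x\neq b$ and is stationary, the set $C=\left\{c_{j}\right\}_{j}$ is atomic over $Mb$, and $a\forkindep[Mb] C$ (the type over $MbC$ is still the generic type of the class, of the same $U$-rank, and no formula in it divides over $Mb$); yet $\mathrm{tp}(a/MbC)$ is not isolated --- not even $\ell$-isolated --- since it contains $x\neq c_{j}$ for every $j$, while any formula in it has finitely many parameters and is realised by every $c_{j}$ outside those parameters. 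So ``$X$ constructible over $B$ and $X\forkindep[B] C$ implies $X$ constructible over $B\cup C$'' is simply not a theorem of stability theory, and your appeal to a ``classical Shelah-style fact'' misfires: what \cite[Ch.~IV]{shelah} and \cite{bays-pillay} provide are preservation statements for weaker notions such as $\ell$-isolation, and even those require more than plain independence of the extension --- which is precisely why the $\ell$-machinery and the systems lemmas exist at all.

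Because of this, the heart of your argument --- build all of $M_{s_{<i}}$ over $A$ first, then append a construction of $M_{s_{i}}$ and justify isolation over the enlarged base by independence alone --- has a genuine hole. The surrounding bookkeeping is fine: the order-theoretic facts $M_{<s_{i}}\subseteq M_{s_{<i}}\subseteq M_{\ngeq s_{i}}$, the base-monotonicity step replacing $M_{\emptyset}$ by $M_{\emptyset}B_{p}$ (valid since $B_{p}\subseteq M_{\left\{p\right\}}$), the concatenation of constructions, and the limit stages all work, and the only further omission is minor: the base $A$ also contains $B_{p'}$ for indices $p'$ not yet enumerated, which you would have to absorb via $B_{p'}\subseteq M_{\left\{p'\right\}}\subseteq M_{\ngeq s_{i}}$. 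But the one step that actually needs proof --- that the isolating formulas coming from the construction of $M_{s_{i}}$ over $M_{<s_{i}}$ (resp.\ over $M_{\emptyset}B_{p}$) continue to isolate over $A\cup M_{s_{<i}}$ --- is exactly the step you outsource to a non-existent general principle. A correct proof must exploit the full hypotheses (the whole system being constructible \emph{and} independent, together with the $B_{p}$ condition), as is done in \cite{bays-pillay} with the $\ell$-isolation and independent-systems apparatus recalled in \textsection 2 of this paper; it cannot be reduced to two applications of ``constructible plus independent implies constructible over the union''.
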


\begin{lem}[see {{\cite[Lemma 3.22]{bays-pillay}}}]
\label{lem:3.22}
Suppose $\bigcup I$ is finite. For an $\ell$-atomic $I$-system $(M_{s})_{s}$ to be independent, it suffices to show that for each $p\in\bigcup I$
\begin{equation*}
M_{\left\{p\right\}}\forkindep[M_{\emptyset}]M\ngeq\left\{p\right\}.
\end{equation*}
\end{lem}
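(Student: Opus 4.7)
The plan is to induct on an enumeration of $I$ ordered by size, using Lemma~\ref{lem:fact} to reduce the independence of the whole system to a sequence of stepwise non-forking conditions, and then to separate the singleton steps (handled directly by the hypothesis) from the higher-size steps (handled via $\ell$-atomicity).

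Since $\bigcup I$ is finite, so is $I$. Fix an enumeration $(s_i)_{0\leq i\leq N}$ of $I$ compatible with inclusion and with non-decreasing cardinalities, so that $s_0=\emptyset$, the singletons follow immediately after it, then sets of size two, and so on. By Lemma~\ref{lem:fact}, the system is independent iff $M_{s_i}\forkindep[M_{<s_i}] M_{s_{<i}}$ for every $i$, and I prove this by induction on $i$. The case $i=0$ is vacuous. If $s_i=\{p\}$ is a singleton, then $M_{<s_i}=M_\emptyset$, and each $s_j$ with $j<i$ is either $\emptyset$ or a singleton different from $\{p\}$, so $s_{<i}\subseteq \ngeq\{p\}$ and hence $M_{s_{<i}}\subseteq M_{\ngeq\{p\}}$; monotonicity of non-forking, applied to the hypothesis $M_{\{p\}}\forkindep[M_\emptyset] M_{\ngeq\{p\}}$, gives the required conclusion.

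For $s:=s_i$ with $|s|\geq 2$, every proper subset of $s$ has cardinality strictly less than $|s|$ and is therefore among the $s_j$ with $j<i$, so $M_{<s}\subseteq M_{s_{<i}}$; by the inductive hypothesis and Lemma~\ref{lem:fact}, the subsystem $(M_t)_{t\in s_{<i}}$ is already an independent system. Since $M_s$ is $\ell$-atomic over $M_{<s}$, it then suffices to show that for every finite tuple $\overline{a}\in M_s$ the type $\mathrm{tp}(\overline{a}/M_{s_{<i}})$ does not fork over $M_{<s}$. I would deduce this from the stability-theoretic principle that the $\ell$-isolated type $\mathrm{tp}(\overline{a}/M_{<s})$ admits a (suitably) unique $\ell$-isolated extension to any $D\supseteq M_{<s}$ and that such extensions are non-forking; the already-established independence of the subsystem indexed by $s_{<i}$ then allows one to exhibit an $\ell$-isolated extension to $M_{s_{<i}}$ that is realised by $\overline{a}$ itself.

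The step I expect to be the main obstacle is this last one: converting $\ell$-atomicity over $M_{<s}$ into non-forking over the enlarged base $M_{s_{<i}}$. It requires a careful invocation of the fact that $\ell$-isolated types over a model are stationary and that their $\ell$-isolated extensions to larger parameter sets remain non-forking, combined with the inductive independence of the smaller subsystem to produce the desired extension realised by $\overline{a}$. I would expect this to follow by combining Lemma~\ref{lem:ell} with standard facts from Shelah and from \cite[\textsection 3]{bays-pillay} about $\ell$-atomic independent systems.
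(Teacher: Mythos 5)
Your reduction is set up correctly: enumerating $I$ by non-decreasing cardinality, invoking Lemma \ref{lem:fact}, and disposing of the singleton steps by monotonicity of non-forking from the hypothesis $M_{\{p\}}\forkindep[M_{\emptyset}]M_{\ngeq\{p\}}$ all works (and each initial segment $s_{<i}$ is downward closed, so Lemma \ref{lem:fact} does apply to the partial subsystems). But this part is routine bookkeeping; the entire content of the lemma sits in the step you yourself flag as the main obstacle, and there the proposal has a genuine gap. The ``principle'' you invoke --- that the $\ell$-isolated type $\mathrm{tp}\left(\overline{a}/M_{<s}\right)$ admits a suitably unique $\ell$-isolated extension to any larger parameter set and that such extensions are non-forking --- is false as stated: if $a\notin\mathrm{acl}(B)$ and $\mathrm{tp}(a/B)$ is isolated (hence $\ell$-isolated), then $\mathrm{tp}(a/Ba)$ is again isolated, hence $\ell$-isolated, yet it forks over $B$; so $\ell$-isolated extensions are neither unique nor automatically non-forking. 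In addition, $M_{<s}$ is only a union of models, not a model, so stationarity of types over it is not available for free, and nothing in your sketch explains how the already-established independence of the subsystem indexed by $s_{<i}$ actually interacts with $\ell$-isolation to force $\overline{a}\forkindep[M_{<s}]M_{s_{<i}}$.

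What is needed at exactly this point is a substantive result relating $\ell$-isolation to forking over unions of independent systems --- a domination-type statement for $\ell$-atomic extensions of the kind developed in \cite[\textsection 3]{bays-pillay} --- and that is precisely where the work in the cited proof lies; the present paper does not reproduce it but simply imports the lemma. As written, your argument replaces this core step by an unproved, and in the stated generality incorrect, claim, so the proposal does not yet constitute a proof, even though its overall architecture (hypothesis for the singleton level, $\ell$-atomicity plus induction for levels of size at least two) is the right shape.
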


\section{Introduction to Shimura Varieties}
\label{sec:svi}
A standard introduction to this subject is \cite{milne}, which is a very complete treatment of Shimura varieties. On the other hand, as we will only be dealing with connected varieties, \cite{pink} is also a very good starting point. We will use both sources in this work. An important observation though is that in \cite{pink} the author treats ``connected mixed Shimura varieties'', whereas \cite{milne} mainly refers to ``pure Shimura varieties''. In our work we will work with pure Shimura varieties, only because there are a few results which hold for the pure case for which we could not find a reference in the mixed case. We expect that the methods of this paper can be generalised to the mixed case. Whenever possible, we will point out if a certain result can be easily extended from the pure case to the mixed case.

\subsection{Congruence Subgroups}
Let $G\subseteq\mathrm{GL}_{n}(\mathbb{C})$ be a linear algebraic group over $\mathbb{Q}$ and let $G(\mathbb{Z})$ denote $G(\mathbb{Q})\cap\mathrm{GL}_{n}(\mathbb{Z})$. 

\begin{defi}
 A subgroup $\Gamma$ of $G(\mathbb{Q})$ is \emph{arithmetic} \index{arithmetic group} if $\Gamma$ has finite index in both $\Gamma$ and $G(\mathbb{Z})$, in other words, $\Gamma$ and $G(\mathbb{Z})$ are commensurable. 
\end{defi}

Next we define a distinguished family of arithmetic subgroups of $G(\mathbb{Z})$. 

\begin{defi}
Given an embedding of $G$ into $\mathrm{GL}_{n}$, the \emph{principal congruence subgroup of level $N$} \index{congruence subgroup!principal} in $G$ is:
\begin{equation*}
    \Gamma(N):=\left\{g \in G(\mathbb{Z}) : g\equiv \mathrm{id}\; \textrm{mod}\: N\right\}.
\end{equation*}
This definition depends on the embedding. To address this issue, we define a \emph{congruence subgroup} \index{congruence subgroup} of $G$ to be a subgroup $\Gamma$ of $G$ such that for some $N$, $\Gamma(N)$ is a finite-index subgroup of $\Gamma$. This definition does not depend on the embedding. 
\end{defi}

\begin{defi}
An element $g\in G(\mathbb{Q})$ is called \emph{neat}\index{neat!element} if for some (for all) faithful representation $\rho:G\hookrightarrow\mathrm{GL}(V)$, the eigenvalues of $\rho(g)$ in $\mathbb{C}$ generate a torsion-free subgroup of $\mathbb{C}^{\times}$. A subgroup of $G(\mathbb{Q})$ is \emph{neat}\index{neat!subgroup}, if all its elements are neat.
\end{defi}

\begin{prop}[see {{\cite[Proposition 3.5]{milne}}}]
\label{prop:borel}
Let $G$ be an algebraic group over $\mathbb{Q}$, and let $\Gamma$ be an arithmetic subgroup of $G(\mathbb{Q})$. Then $\Gamma$ contains a neat subgroup $\Gamma'$ of finite index. Moreover, $\Gamma'$ can be defined by congruence relations (i.e. for some embedding $G\hookrightarrow\mathrm{GL}_{n}$ and integer $N$, $\Gamma' = \left\{g\in\Gamma : g\equiv\mathrm{id}\, \mathrm{mod }\, N\right\}$).
\end{prop}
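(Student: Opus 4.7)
\medskip

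\noindent\textbf{Proof plan.} My plan is to reduce to the case $\Gamma=G(\mathbb{Z})$ and then extract a neat subgroup as a principal congruence subgroup $\Gamma(N)$ for a carefully chosen modulus $N$. Fix once and for all a faithful representation $\rho:G\hookrightarrow\mathrm{GL}_{n}$ defined over $\mathbb{Q}$, and use it to identify $G(\mathbb{Q})$ with its image in $\mathrm{GL}_{n}(\mathbb{Q})$. Since $\Gamma$ is arithmetic, $\Gamma_{0}:=\Gamma\cap G(\mathbb{Z})$ has finite index in $\Gamma$, and for every integer $N\geq 1$ the principal congruence subgroup $\Gamma(N)\cap\Gamma_{0}$ has finite index in $\Gamma_{0}$, hence in $\Gamma$. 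The whole game is therefore to show that for a sufficiently divisible $N$, every $g\in\Gamma(N)$ is neat in the sense of the paper.

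So fix such an $N$ (to be chosen below) and take any $g\in\Gamma(N)$. Its characteristic polynomial $\chi_{g}(x)\in\mathbb{Z}[x]$ satisfies $\chi_{g}(x)\equiv(x-1)^{n}\pmod{N}$, so in the splitting field $K$ of $\chi_{g}$ every eigenvalue $\lambda_{i}$ of $\rho(g)$ satisfies $\lambda_{i}\equiv 1\pmod{\mathfrak{p}}$ for every prime $\mathfrak{p}$ of $\mathcal{O}_{K}$ lying above a rational prime dividing $N$. Note also that $[K:\mathbb{Q}]\leq n!$, a bound that depends only on $\rho$, not on $g$. Suppose, toward a contradiction, that some nontrivial product $\zeta=\prod_{i}\lambda_{i}^{e_{i}}$ (with $e_{i}\in\mathbb{Z}$) is a root of unity, say of primitive order $m>1$; then $\zeta-1\in\mathfrak{p}$ for every prime $\mathfrak{p}$ of $\mathcal{O}_{K}$ above a prime dividing $N$, because each $\lambda_{i}\equiv 1$ at such primes.

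Here enters the key cyclotomic input: for a primitive $m$-th root of unity $\zeta$ and a rational prime $p$ with $p\nmid m$, the element $\zeta-1$ is a unit at every prime of $\mathbb{Q}(\zeta)$ above $p$. Hence every rational prime dividing $N$ must divide $m$. Combined with $\zeta\in K$ and $\phi(m)=[\mathbb{Q}(\zeta):\mathbb{Q}]\leq[K:\mathbb{Q}]\leq n!$, we obtain a uniform upper bound $m\leq M(n)$ on the admissible orders. Now simply choose $N$ to be the product of two distinct primes $p_{1},p_{2}$ with $(p_{1}-1)(p_{2}-1)>n!$; then any such $m$ would be divisible by both $p_{1}$ and $p_{2}$, forcing $\phi(m)\geq(p_{1}-1)(p_{2}-1)>n!$, a contradiction. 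Thus no nontrivial $\zeta$ exists, the group generated by the eigenvalues of $\rho(g)$ is torsion-free, and $g$ is neat. Setting $\Gamma':=\Gamma(N)\cap\Gamma_{0}$ finishes the proof, and by construction $\Gamma'$ is cut out by congruence relations with respect to the chosen embedding.

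The only genuinely delicate step is the cyclotomic book-keeping in the previous paragraph: one must verify both the unit statement about $\zeta-1$ for $p\nmid m$ and the fact that a root of unity coming from a multiplicative combination of eigenvalues still lies in the splitting field $K$, so that the degree bound $\phi(m)\leq n!$ really applies. Once these two classical facts are in hand, choosing $N$ to violate the inequality $\phi(m)\leq n!$ is routine, and commensurability takes care of passing from $G(\mathbb{Z})$ back to the original $\Gamma$.
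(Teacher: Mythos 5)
Your argument is correct, and it is essentially the classical proof of this result (Borel's; the paper itself does not prove the proposition but imports it from Milne, who cites Borel), so there is no conflict with anything in the text. The two delicate points you flag do hold: since $g\in\mathrm{GL}_{n}(\mathbb{Z})$ its eigenvalues are units in $\mathcal{O}_{K}$, so a product $\prod_{i}\lambda_{i}^{e_{i}}$ with $e_{i}\in\mathbb{Z}$ lies in $\mathcal{O}_{K}$ and in $K$, giving $\phi(m)\leq [K:\mathbb{Q}]\leq n!$; and the unit statement follows from the identity $\prod_{j=1}^{m-1}(1-\zeta^{j})=m$ in $\mathbb{Z}[\zeta]$, so $\zeta\equiv 1\pmod{\mathfrak{p}}$ with $\mathfrak{p}\mid p$ forces $p\mid m$. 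One small comparison with the standard treatment: the degree bound and the choice of large primes are not actually needed. The same cyclotomic identity shows that a root of unity congruent to $1$ at a prime above $p$ has $p$-power order (factor $m=p^{a}m'$ and apply the identity to $\zeta^{p^{a}}$), so congruence to $1$ at primes above two distinct rational primes already forces $\zeta=1$; hence any $N$ with two distinct prime divisors works, uniformly in $n$. Finally, your $\Gamma'=\Gamma(N)\cap\Gamma_{0}$ does coincide with the set $\left\{g\in\Gamma : g\equiv\mathrm{id}\ \mathrm{mod}\ N\right\}$ in the statement, since any $g\in\Gamma$ that is integral has a power in $G(\mathbb{Z})$ by commensurability, whence $\det g=\pm 1$ and $g\in G(\mathbb{Z})$; it is worth saying this in one line, but it is not a gap.
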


\subsection{Connected Shimura Varieties}
Let $\mathbb{S}$\index{s@$\mathbb{S}$} be the \emph{Deligne torus}, \index{Deligne torus} that is, the linear algebraic group over $\mathbb{R}$ such that $\mathbb{S}(\mathbb{R})=\mathbb{C}^{\times}$. 

A \emph{Shimura datum} \index{Shimura datum} is a pair $(G,X)$ \index{$(G,X)$} consisting of a linear algebraic group $G$ over $\mathbb{Q}$ and the $G(\mathbb{R})$-conjugacy class $X$ of a morphism $h:\mathbb{S}\rightarrow G_{\mathbb{R}}$ satisfying some conditions on $G$, the complex structure of $X$, and the action of $G$ on $X$: see axioms SV1-SV3 in \cite[p. 302]{milne} for the ``pure'' axioms, or \cite[Definition 2.1]{pink} for the ``mixed'' ones. More important than the precise statement of these axioms is that they guarantee that quotients of connected components of $X$ by certain arithmetic subgroups will give an algebraic variety. For this work, we will use the definitions and notation of \cite{milne} for pure Shimura varieties. 

\begin{remark}
\label{rem:behaviour}
The fact that we do not need the precise statement of axioms of Shimura varieties could be interpreted as saying that the work we will do here may be replicated on other algebraic varieties obtained as the quotient of some complex domain by the action of a discrete group. However, as it will become apparent, what really makes the model-theory work is the nice behaviour of the geometry of the ``special'' subvarieties (see \textsection \ref{subsec:special}). Therefore, the present work, as well as \cite{bays-pillay} and \cite{daw-harris}, should be seen as giving insight to the essential properties of the ``special structure'' of these algebraic varieties. Finding an abstract definition of ``special structure'' is an important question in Diophantine geometry and some attempts at this definition have been made, see for instance \cite{ullmo}.
\end{remark}

Let $G^{\mathrm{ad}}$ \index{g@$G^{\mathrm{ad}}$} denote $G$ modulo its centre, let $G^{\mathrm{ad}}(\mathbb{R})^{+}$ be the connected component of the identity in $G^{\mathrm{ad}}(\mathbb{R})$, and let $G^{\mathrm{ad}}(\mathbb{Q})^{+}:=G^{\mathrm{ad}}(\mathbb{Q})\cap G^{\mathrm{ad}}(\mathbb{R})^{+}$.\index{g@$G^{\mathrm{ad}}(\mathbb{Q})^{+}$} Let $X^{+}$ \index{x@$X^{+}$} denote a connected component of $X$; its stabiliser in $G^{\mathrm{ad}}(\mathbb{R})$ is $G^{\mathrm{ad}}(\mathbb{R})^{+}$. We call $(G,X^{+})$ a \emph{connected Shimura datum}. \index{Shimura datum!connected} 

Let $K$ be a compact open subgroup of $G\left(\mathbb{A}_{f}\right)$ (where $\mathbb{A}_{f}$ denotes the finite rational ad\`eles). Define the double coset space:
\begin{equation}\index{s@$\mathrm{Sh}_{K}(G,X)$}
\label{eq:shimuradef}
    \mathrm{Sh}_{K}(G,X) := G(\mathbb{Q})\backslash\left(X\times\left(G\left(\mathbb{A}_{f}\right)/K\right)\right),
\end{equation}
where $G(\mathbb{Q})$ acts diagonally. We will denote the elements of this space by $[x,gK]$. The following lemma will show that $\mathrm{Sh}_{K}(G,X)$ can be seen as a finite disjoint union of connected components. To that end, let $G(\mathbb{Q})_{+}$ denote the stabiliser of $X^{+}$ in $G(\mathbb{Q})$, and let $\mathscr{C}$ be a set of representatives for the finite set $G(\mathbb{Q})_{+}\backslash G(\mathbb{A}_{f})/K$. Given $g\in\mathscr{C}$, let $\Gamma_{g}:=G(\mathbb{Q})_{+}\cap gKg^{-1}$ (which is a congruence subgroup of $G$), and let $[x]_{\Gamma_{g}}$ denote the class of an element $x\in X^{+}$ in the quotient $\Gamma_{g}\backslash X^{+}$. 

\begin{lem}[{{see \cite[Lemma 5.13]{milne}}}]
If $X^{+}$ is a connected component of $X$ and we endow $G(\mathbb{A}_{f})$ with the ad\`elic topology, then there is a homeomorphism
\begin{equation}
\label{eq:union}
    \coprod_{g\in\mathscr{C}}\Gamma_{g}\backslash X^{+} \cong G(\mathbb{Q})\backslash\left(X\times\left(G\left(\mathbb{A}_{f}\right)/K\right)\right)
\end{equation}
which is given by
\begin{align*}
    \Gamma_{g}\backslash X^{+}&\rightarrow G(\mathbb{Q})\backslash\left(X\times\left(G\left(\mathbb{A}_{f}\right)/K\right)\right)\\
    [x]_{\Gamma_{g}}&\mapsto [x,gK].
\end{align*}
\end{lem}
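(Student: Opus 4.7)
The plan is to define the map $\Phi$ on each piece of the disjoint union by $\Phi\bigl([x]_{\Gamma_g}\bigr) := [x,gK]$ and then verify, in order, that it is well-defined, injective, surjective, and a homeomorphism. Throughout, the following two facts drive the argument: (i) by definition of $\mathscr{C}$, the double cosets $G(\mathbb{Q})_{+}\,g\,K$ for $g\in\mathscr{C}$ partition $G(\mathbb{A}_{f})$; and (ii) $G(\mathbb{Q})_{+}$ is precisely the subgroup of $G(\mathbb{Q})$ that preserves the component $X^{+}$, so any element of $G(\mathbb{Q})$ sending a point of $X^{+}$ into $X^{+}$ already lies in $G(\mathbb{Q})_{+}$.

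For well-definedness, if $y=\gamma x$ with $\gamma\in\Gamma_{g}=G(\mathbb{Q})_{+}\cap gKg^{-1}$, write $\gamma g=gk$ for some $k\in K$; then the diagonal $G(\mathbb{Q})$-action gives $[\gamma x,gK]=[\gamma x,gkK]=[\gamma x,\gamma gK]=[x,gK]$. For surjectivity, start from an arbitrary class $[y,hK]$; use the fact that $G(\mathbb{Q})$ acts transitively on $\pi_{0}(X)$ to find $\gamma_{1}\in G(\mathbb{Q})$ with $\gamma_{1}y\in X^{+}$, and then apply (i) to write $\gamma_{1}h=\gamma_{2}\,g\,k$ with $\gamma_{2}\in G(\mathbb{Q})_{+}$, $g\in\mathscr{C}$, $k\in K$. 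Then $[y,hK]=[\gamma_{2}^{-1}\gamma_{1}y,gK]=\Phi\bigl([\gamma_{2}^{-1}\gamma_{1}y]_{\Gamma_{g}}\bigr)$, and $\gamma_{2}^{-1}\gamma_{1}y\in X^{+}$ because $\gamma_{2}\in G(\mathbb{Q})_{+}$ stabilises $X^{+}$.

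For injectivity, suppose $\Phi\bigl([x]_{\Gamma_{g}}\bigr)=\Phi\bigl([x']_{\Gamma_{g'}}\bigr)$, so there exists $\gamma\in G(\mathbb{Q})$ with $\gamma x=x'$ and $\gamma gK=g'K$. Since $x,x'\in X^{+}$, fact (ii) forces $\gamma\in G(\mathbb{Q})_{+}$, and then $g'\in G(\mathbb{Q})_{+}gK$; by the choice of $\mathscr{C}$ this yields $g=g'$ and therefore $\gamma\in G(\mathbb{Q})_{+}\cap gKg^{-1}=\Gamma_{g}$, showing $[x]_{\Gamma_{g}}=[x']_{\Gamma_{g}}$. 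In particular this shows the images of the different components $\Gamma_{g}\backslash X^{+}$ are disjoint.

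It remains to check the topological statement. On each component the composition $X^{+}\hookrightarrow X\times\{gK\}\subseteq X\times\bigl(G(\mathbb{A}_{f})/K\bigr)\twoheadrightarrow G(\mathbb{Q})\backslash\bigl(X\times G(\mathbb{A}_{f})/K\bigr)$ is continuous and factors through the quotient $\Gamma_{g}\backslash X^{+}$, so $\Phi$ is continuous; since $K$ is open, $G(\mathbb{A}_{f})/K$ is discrete, which makes continuity of the inverse a local question near each $[y,hK]$ that reduces to the continuity of the holomorphic identifications used in the surjectivity step. The main technical point I expect to have to treat carefully is the use of (ii) — specifically, justifying that $G(\mathbb{Q})$ surjects onto $\pi_{0}(X)$ so that surjectivity goes through; this is a standard consequence of the real-approximation property of $G$ in the Shimura-datum setting, but it is the step where the ambient axioms on $(G,X)$ are actually being used.
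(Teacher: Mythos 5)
Your argument is correct and is essentially the proof the paper delegates to Milne (Lemma 5.13 there): the decomposition $G(\mathbb{A}_{f})=\coprod_{g\in\mathscr{C}}G(\mathbb{Q})_{+}\,g\,K$ plus real approximation (which gives transitivity of $G(\mathbb{Q})$ on $\pi_{0}(X)$, your fact used in the surjectivity step) yields well-definedness, injectivity and surjectivity exactly as you lay them out. For the homeomorphism claim, rather than appealing to ``holomorphic identifications'' it is cleaner to observe that the projection $X\times\bigl(G(\mathbb{A}_{f})/K\bigr)\rightarrow G(\mathbb{Q})\backslash\bigl(X\times G(\mathbb{A}_{f})/K\bigr)$ is open (any quotient by a group action is), and that $X^{+}\times\{gK\}$ is open in the source (since $X^{+}$ is open in $X$ and $G(\mathbb{A}_{f})/K$ is discrete because $K$ is open), so each induced map $\Gamma_{g}\backslash X^{+}\rightarrow G(\mathbb{Q})\backslash\bigl(X\times G(\mathbb{A}_{f})/K\bigr)$ is continuous, open and injective, hence a homeomorphism onto an open piece, and these pieces are disjoint and cover by your bijectivity argument.
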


The axioms of Shimura data ensure that, when the $\Gamma_{g}$ are sufficiently small, the members of the disjoint union in (\ref{eq:union}) have canonical realisations as quasi-projective varieties over $\mathbb{C}$. In fact, by \cite[Theorem 3.12]{milne} and \cite[Fact 2.3]{pink}, the $\Gamma_{g}$ are sufficiently small when their images in $G^{\mathrm{ad}}(\mathbb{Q})^{+}$ are neat. \textbf{Unless stated otherwise, we will assume from now on that $\Gamma$ is neat.}

\begin{remark}
Even though we assume $\Gamma$ to be neat, a straight-forward generalisation of \cite[\textsection 5.3]{daw-harris} shows that one can obtain a commutative diagram like (\ref{eq:com}) for the general case from the neat case. This is done in \textsection \ref{subsec:neat}.
\end{remark}

$\mathrm{Sh}_{K}(G,X)$ possesses a canonical model over its \emph{reflex field} \index{reflex field} $E:=E(G,X)$ which is independent of the choice of $K$ (see \cite[\textsection 14]{milne} for an exposition and references). For a precise definition of the reflex field, look at \cite[\textsection 12]{milne}. We point out that $E$ is a number field (see \cite[Remark 12.3-a)]{milne}). 

In the notation of (\ref{eq:shimuradef}), let $\Gamma = \Gamma_{e} = G(\mathbb{Q})_{+}\cap K$ \index{$\Gamma$} and let $S(\mathbb{C}) := \Gamma\backslash X^{+}$, which (by the theory of Galois actions on connected components of the canonical model) is defined over a finite abelian extension of $E$ which we will call $E^{S}$. Let $p:X^{+}\rightarrow S(\mathbb{C})$ denote the surjective holomorphic map given by automorphic forms (i.e. $p$ is the quotient map composed with the embedding into projective space). In \cite{milne} and \cite{pink}, $\mathrm{Sh}_{K}(G,X)$ is called a \emph{Shimura variety}, \index{Shimura variety} while the connected component $S$ is called a \emph{connected Shimura variety}. \index{Shimura variety!connected} As our work is only concerned with the connected case, we will use the term ``Shimura variety'' to refer to connected components, like $S$.

We will indulge in a slight abuse of notation because, as we will almost always be working in $G^{\mathrm{ad}}(\mathbb{Q})^{+}$, we will use the same name for $\Gamma$ and for the image of $\Gamma$ in $G^{\mathrm{ad}}(\mathbb{Q})^{+}$. 

\begin{ex}[The $j$ function]
Take $G = \mathrm{GL}_{2}$. Let $X$ be the $\mathrm{GL}_{2}(\mathbb{R})$-conjugacy class of the morphism $h:\mathbb{S}\rightarrow\mathrm{GL}_{2,\mathbb{R}}$, which on real points is given by
\begin{equation*}
    a+ib\mapsto\left(\begin{matrix} a & -b\\
    b & a\end{matrix}\right).
\end{equation*}
Then we can see $X$ as the union of the upper and lower half-planes of $\mathbb{C}$; $X = \mathbb{H}^{+}\cup\mathbb{H}^{-}$. Let $K=\mathrm{GL}_{2}\left(\widehat{\mathbb{Z}}\right)$, so that $\Gamma =\mathrm{SL}_{2}(\mathbb{Z})$. Then the map $p:X^{+}\rightarrow S(\mathbb{C})$ becomes the $j$-function: $j:\mathbb{H}^{+}\rightarrow\mathbb{C}$. Note that this is a case where $\Gamma$ is not torsion-free, but, because it is so well-known, it is a good example to keep in mind as we go along.
\end{ex}

\begin{ex}[Siegel moduli spaces]
\label{ex:ppav}
Here we generalise the previous example to construct the moduli space of principally polarised abelian varieties of dimension $g$ (following well-established tradition, for this example we reserve the letter $g$ to be the dimension of the abelian varieties, \underline{not} an element of the group $G$). Let:
\begin{equation*}
    J_{g} := \left(\begin{matrix} 0 & I_{g}\\
    -I_{g} & 0
    \end{matrix}\right),
\end{equation*}
where $I_{g}$ is the $g\times g$ identity matrix. Consider:
\begin{equation*}
    \mathrm{GSp}_{2g}(\mathbb{R}) := \left\{M\in\mathrm{GL}_{2g}(\mathbb{R}) : M^{t}J_{g}M = v(M)J_{g}\right\},
\end{equation*}
where $v(M)\in\mathbb{R}^{\times}$. Then $v:\mathrm{GSp}_{2g}\rightarrow\mathbb{G}_{m}$ is a homomorphism of linear algebraic groups. So we can define $\mathrm{Sp}_{2g}$ as the kernel of $v$, that is:
\begin{equation*}
    \mathrm{Sp}_{2g} := \left\{M\in\mathrm{GL}_{2g} : M^{t}J_{g}M = J_{g}\right\}.
\end{equation*}
Just like in the previous example, we define:
\begin{align*}
    h:\,&\mathbb{C}^{\times}\longrightarrow\mathrm{GSp}_{2g}(\mathbb{R})\\
    &a+ib\mapsto aI_{2g}+bJ_{g}
\end{align*}
In this case, $X := \mathrm{GSp}_{2g}(\mathbb{R})\cdot h$ can be identified with the disjoint union $\mathbb{H}_{g}^{+}\cup\mathbb{H}_{g}^{-}$, where:
\begin{equation*}
    \mathbb{H}_{g}^{+} := \left\{Z=A+iB\in M_{g\times g}(\mathbb{C}) : Z=Z^{t}, B>0\right\}
\end{equation*}
is called the \emph{Siegel upper half-space}. $\mathbb{H}_{g}^{-}$ is defined in the same way, but changing $B>0$ for $B<0$. The action of $\mathrm{GSp}_{2g}(\mathbb{R})$ on $\mathbb{H}_{g}^{+}\cup\mathbb{H}^{-}$ is given by:
\begin{equation*}
    \left(\begin{matrix}
    A & B\\
    C & D
    \end{matrix}\right)\cdot Z := (AZ + B)(CZ+D)^{-1}.
\end{equation*}
Now we take $K = \mathrm{GSp}_{2g}\left(\widehat{\mathbb{Z}}\right)$, so $\Gamma = \mathrm{Sp}_{2g}(\mathbb{Z})$. The quotient $\mathcal{A}_{g}(\mathbb{C}) := \Gamma\backslash\mathbb{H}_{g}^{+}$ \index{a@$\mathcal{A}_{g}$} is the (coarse) moduli space of principally polarised abelian varieties of dimension $g$. These spaces are also known as \emph{Siegel moduli spaces}.
\end{ex}

\begin{remark}[Products of Shimura data]
\label{rem:shimprod}
Let $\left(G_{1},X_{1}\right)$ and $\left(G_{2}, X_{2}\right)$ be Shimura data, with associated connected Shimura varieties $S_{1}(\mathbb{C}) = \Gamma_{1}\backslash X_{1}^{+}$ and $S_{2}(\mathbb{C})= \Gamma_{2}\backslash X_{2}^{+}$. Then $\left(G_{1}\times G_{2}, X_{1}\times X_{2}\right)$ is a Shimura datum, $X_{1}^{+}\times X_{2}^{+}$ is a connected component of $X_{1}\times X_{2}$ whose stabiliser in $(G_{1}\times G_{2})^{\mathrm{ad}}(\mathbb{R}) = (G_{1}^{\mathrm{ad}}\times G_{2}^{\mathrm{ad}})(\mathbb{R})$ is $G_{1}^{\mathrm{ad}}(\mathbb{R})^{+}\times G_{2}^{\mathrm{ad}}(\mathbb{R})^{+}$. And so we can see $S_{1}\times S_{2}$ as the Shimura variety with datum $\left(G_{1}\times G_{2}, X_{1}\times X_{2}\right)$ whose complex points are given by $\left(\Gamma_{1}\backslash X_{1}^{+}\right)\times\left(\Gamma_{2}\backslash X_{2}^{+}\right)$.
\end{remark}

\subsection{Classifying Points}
Let $(G,X)$ be a Shimura datum. From the definition, a point $x\in X$ corresponds to a homomorphism $x:\mathbb{S}\rightarrow G_{\mathbb{R}}$. We can then classify the points of $X^{+}$ in terms of the image of the homomorphism they represent in the following way: the \emph{Mumford-Tate group} \index{Mumford-Tate group} of $x$, denoted $\mathrm{MT}(x)$, is the smallest algebraic subgroup $H$ of $G$ defined over $\mathbb{Q}$ such that $x$ factors through $H_{\mathbb{R}}$. 

\begin{defi}
Let $x\in X^{+}$ and $p(x) = z\in S(\mathbb{C})$. The \emph{Hecke-orbit} of $z$ is the set of points $z'\in S(\mathbb{C})$ for which there exists $g\in G^{\mathrm{ad}}(\mathbb{Q})^{+}$ such that $p(gx) = z$.
\end{defi}

Points in the same Hecke-orbit will have the same Mumford-Tate group. 

\begin{defi}
A point $x\in X$ is said to be \emph{special} \index{special point} if $\mathrm{MT}(x)$ is a subtorus of $G$. On the other hand, $x$ is called \emph{Hodge-generic} \index{Hodge-generic!point} if $\mathrm{MT}(x)=G$. 
\end{defi}

If we want to develop model theory of Shimura varieties, we should use a language that is capable of recognising points of $X^{+}$ with different Mumford-Tate group (this is done in Remark \ref{rem:mt2}). We will show some results in the following subsection on special subvarieties that will eventually allow us to do that. In the mean time, we recall two results that show that the action of $G^{\mathrm{ad}}(\mathbb{Q})^{+}$ on $X^{+}$ can already distinguish between special points and Hodge-generic points in the case of pure Shimura varieties.

\begin{thm}[see {{\cite[Theorem 2.3]{daw-harris}}}]
\label{thm:special}
Let $(G,X)$ be a pure Shimura datum and let $X^{+}$ be a connected component of $X$. For any $x\in X^{+}$ the following are equivalent:
\begin{enumerate}
    \item $x$ is a special point.
    \item There exists $g\in G^{\mathrm{ad}}(\mathbb{Q})^{+}$ with the property that $x$ is the unique fixed point of $g$ in $X^{+}$.
\end{enumerate}
\end{thm}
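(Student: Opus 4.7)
The plan is to prove the two implications separately. The implication (2)$\Rightarrow$(1) is essentially algebraic and rests on minimality of the Mumford--Tate group, while (1)$\Rightarrow$(2) requires producing a rational element of a suitably large compact torus and is where the pure-Shimura hypothesis genuinely enters.

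For (2)$\Rightarrow$(1), suppose $g\in G^{\mathrm{ad}}(\mathbb{Q})^{+}$ has $x$ as its unique fixed point on $X^{+}$. Since $G^{\mathrm{ad}}(\mathbb{R})^{+}$ acts on $X$ by conjugation of the homomorphism $x:\mathbb{S}\to G_{\mathbb{R}}$, the condition that $g$ fixes $x$ unpacks to: $g$ centralises $x^{\mathrm{ad}}(\mathbb{S})$. Hence $x^{\mathrm{ad}}(\mathbb{S})\subseteq Z_{G^{\mathrm{ad}}}(g)$, and since $Z_{G^{\mathrm{ad}}}(g)$ is a $\mathbb{Q}$-algebraic subgroup, minimality of $M^{\mathrm{ad}}:=\mathrm{MT}(x)^{\mathrm{ad}}$ among $\mathbb{Q}$-subgroups through which $x^{\mathrm{ad}}$ factors gives $M^{\mathrm{ad}}\subseteq Z_{G^{\mathrm{ad}}}(g)$. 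Conversely, for any $h\in Z_{G^{\mathrm{ad}}}(g)(\mathbb{Q})^{+}$ the point $h\cdot x$ is fixed by $g$ (since $g$ and $h$ commute), so by uniqueness $h\cdot x=x$, i.e. $h$ centralises $x^{\mathrm{ad}}(\mathbb{S})$; by Zariski density of rational points in a connected $\mathbb{Q}$-group, this yields $Z_{G^{\mathrm{ad}}}(g)\subseteq Z_{G^{\mathrm{ad}}}(x^{\mathrm{ad}}(\mathbb{S}))$. Chaining these two inclusions shows that $M^{\mathrm{ad}}$ centralises $x^{\mathrm{ad}}(\mathbb{S})$, so $x^{\mathrm{ad}}(\mathbb{S})\subseteq Z(M^{\mathrm{ad}})$; minimality then forces $M^{\mathrm{ad}}=Z(M^{\mathrm{ad}})^{\circ}$, so $M^{\mathrm{ad}}$ is commutative. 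Being connected and reductive, $M^{\mathrm{ad}}$ is a $\mathbb{Q}$-torus, and via the central isogeny $M\to M^{\mathrm{ad}}$ so is $M$, proving that $x$ is special.

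For (1)$\Rightarrow$(2), let $T:=M^{\mathrm{ad}}$. The pure axioms (SV1--SV2) imply that the Cartan involution on $G^{\mathrm{ad}}_{\mathbb{R}}$ is $\mathrm{int}(x(i))$ and in particular that $Z_{G^{\mathrm{ad}}_{\mathbb{R}}}(x^{\mathrm{ad}}(\mathbb{S}))$ is anisotropic; hence $T(\mathbb{R})^{\circ}$ is a compact real torus contained in the stabiliser $K_{x}$. I then enlarge $T$, if necessary, to a maximal $\mathbb{Q}$-subtorus $T'$ of the $\mathbb{Q}$-reductive group $Z_{G^{\mathrm{ad}}}(M^{\mathrm{ad}})$ (which is $\mathbb{Q}$-defined precisely because $x$ is special), still anisotropic over $\mathbb{R}$, so that $T'(\mathbb{R})^{\circ}$ is a maximal compact torus of $K_{x}$. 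Standard theory of compact torus actions on Hermitian symmetric domains says that the fixed-point set of $T'(\mathbb{R})^{\circ}$ on $X^{+}$ is a finite set (an $N_{G^{\mathrm{ad}}}(T')/Z(T')$-orbit of $x$). By a Kronecker-type density argument using that $T'(\mathbb{Q})$ is dense in $T'(\mathbb{R})^{\circ}$, a generic $g\in T'(\mathbb{Q})^{+}$ topologically generates $T'(\mathbb{R})^{\circ}$ and therefore has the same finite fixed-point set; a further rational perturbation inside $T'(\mathbb{Q})$ that separates the remaining fixed points then cuts $\mathrm{Fix}(g)$ down to $\{x\}$.

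The main obstacle is the last step of (1)$\Rightarrow$(2): producing the rational element whose \emph{global} fixed-point set equals $\{x\}$, and not just the larger Weyl-type orbit of $x$. This forces one to understand the interaction between rationality (density of $T'(\mathbb{Q})$ in a compact torus) and the combinatorics of $N_{G^{\mathrm{ad}}}(T')$-fixed points on the symmetric domain, and is exactly where the pure axioms are essential, since they guarantee both compactness of $K_{x}$ and discreteness of the torus fixed-point set on $X^{+}$. By contrast, (2)$\Rightarrow$(1) is formal once one has minimality of the Mumford--Tate group and Zariski density of $\mathbb{Q}$-points in connected $\mathbb{Q}$-algebraic groups.
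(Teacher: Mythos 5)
Your implication (2)$\Rightarrow$(1) is essentially correct, modulo two small repairs: $M^{\mathrm{ad}}$ should be the image of $\mathrm{MT}(x)$ in $G^{\mathrm{ad}}$ (not $\mathrm{MT}(x)$ modulo its own centre), and $Z_{G^{\mathrm{ad}}}(g)$ need not be connected, so the Zariski-density-of-$\mathbb{Q}$-points argument should be applied to its identity component, which suffices since the connected group $M^{\mathrm{ad}}$ lies in that component. (Note also that the paper does not reprove this theorem; it quotes it from Daw--Harris.) The genuine gap is in (1)$\Rightarrow$(2), at precisely the step you flag as the main obstacle. You assert that the fixed-point set of $T'(\mathbb{R})^{\circ}$ in $X^{+}$ may be a finite set strictly larger than $\{x\}$ and propose to cut $\mathrm{Fix}(g)$ down to $\{x\}$ by a ``further rational perturbation inside $T'(\mathbb{Q})$''. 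That step cannot work: since $T'$ is a maximal torus, a point $h\in X^{+}$ is fixed by $T'(\mathbb{R})^{\circ}$ exactly when $h(\mathbb{S})\subseteq Z_{G^{\mathrm{ad}}_{\mathbb{R}}}(T')=T'_{\mathbb{R}}$ (using that $T'(\mathbb{R})^{\circ}$ is Zariski dense in $T'$), and then \emph{every} element of $T'(\mathbb{R})$ --- in particular every element of $T'(\mathbb{Q})$ --- commutes with $h(\mathbb{S})$ and therefore fixes $h$. Thus $\mathrm{Fix}(g)\supseteq\{h\in X^{+}: h(\mathbb{S})\subseteq T'_{\mathbb{R}}\}$ for all $g\in T'$, and no choice inside $T'$ can separate these points; if that set really contained a second point, your construction would fail for every $g\in T'(\mathbb{Q})$.

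What you must prove instead is that this fixed set is already the single point $\{x\}$, and this is where the symmetric-space geometry does the real work. With its invariant metric, $X^{+}$ is complete, simply connected and nonpositively curved, so the common fixed set of any group of isometries is convex: the unique geodesic joining two fixed points is fixed pointwise. If $y\neq x$ were also fixed by $T'(\mathbb{R})^{\circ}\subseteq K_{x}$, the initial tangent vector at $x$ of the geodesic to $y$ would be a nonzero $T'$-invariant vector of $\mathfrak{p}\cong T_{x}X^{+}$; but the centraliser of $\mathfrak{t}'$ in $\mathrm{Lie}\,G^{\mathrm{ad}}_{\mathbb{R}}$ is $\mathfrak{t}'\subseteq\mathfrak{k}_{x}$ because $T'$ is a maximal torus, so $\mathfrak{p}^{T'}=0$, a contradiction. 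Once this is in place, your Kronecker/topological-generator step can be dropped (it is also unjustified as written: a countable dense subgroup of a compact torus does not obviously contain a topological generator): $T'(\mathbb{Q})\cap G^{\mathrm{ad}}(\mathbb{R})^{+}$ is still Zariski dense in $T'$ (a finite-index subgroup of a dense subgroup of a connected group is dense), hence contains a regular element $g$; then $Z_{G^{\mathrm{ad}}}(g)^{\circ}=T'$, and since $h(\mathbb{S})$ is connected, any $h\in X^{+}$ fixed by $g$ factors through $T'_{\mathbb{R}}$, giving $\mathrm{Fix}(g)\cap X^{+}=\{x\}$ as required.
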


\begin{prop}[see {{\cite[Lemma 2.2]{ullmo-yafaev}}}]
\label{prop:hodgegen}
Let $(G,X)$ be a pure Shimura datum and let $X^{+}$ be a connected component of $X$. Let $x\in X^{+}$ be Hodge-generic and suppose $g\in G^{\mathrm{ad}}(\mathbb{Q})^{+}$ fixes $x$. Then $g$ is the identity. 
\end{prop}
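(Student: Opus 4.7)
The strategy is to translate the hypothesis that $g$ fixes $x$ into the statement that a certain $\mathbb{Q}$-subgroup of $G$ contains the image of $x$, then force this subgroup to be all of $G$ using Hodge-genericity, and finally exploit the triviality of the centre of the adjoint group. Write $\bar{x}:\mathbb{S}\to G^{\mathrm{ad}}_{\mathbb{R}}$ for the composition of $x$ with the natural projection $G\to G^{\mathrm{ad}}$. The $G^{\mathrm{ad}}(\mathbb{R})$-action on $X$ is induced from the conjugation action of $G(\mathbb{R})$ on $\mathrm{Hom}(\mathbb{S}, G_{\mathbb{R}})$ (which factors through $G^{\mathrm{ad}}$ because the centre acts trivially by conjugation). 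Consequently, $g\cdot x = x$ is equivalent to saying that the image of $\bar{x}$ commutes with $g$ in $G^{\mathrm{ad}}(\mathbb{R})$.

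Now consider the centraliser $Z:=Z_{G^{\mathrm{ad}}}(g)\subseteq G^{\mathrm{ad}}$: since $g\in G^{\mathrm{ad}}(\mathbb{Q})$, this is an algebraic subgroup defined over $\mathbb{Q}$. Let $H\subseteq G$ denote its preimage under $G\to G^{\mathrm{ad}}$; then $H$ is a $\mathbb{Q}$-subgroup of $G$ containing the centre $Z(G)$. By the previous paragraph, $\bar{x}$ factors through $Z_{\mathbb{R}}$, so $x:\mathbb{S}\to G_{\mathbb{R}}$ factors through $H_{\mathbb{R}}$. By the defining minimality of the Mumford--Tate group, $\mathrm{MT}(x)\subseteq H$; combined with the Hodge-generic hypothesis $\mathrm{MT}(x) = G$, this gives $H = G$ and therefore $Z = G^{\mathrm{ad}}$, i.e. $g$ lies in the centre of $G^{\mathrm{ad}}$.

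To finish, recall that in a pure Shimura datum $G$ is reductive, so $G^{\mathrm{ad}} = G/Z(G)$ is a semisimple \emph{adjoint} group, which by definition has trivial centre. Hence $g = e$, as required. The only real subtlety is Step~1: one must invoke the standard fact that conjugation of $G(\mathbb{R})$ on homomorphisms $\mathbb{S}\to G_{\mathbb{R}}$ descends to a well-defined action of $G^{\mathrm{ad}}(\mathbb{R})$, together with the characterisation of the stabiliser as the centraliser of $\bar{x}(\mathbb{S})$. The remaining steps are a clean application of the definitions of $\mathrm{MT}(x)$ and of ``adjoint group''.
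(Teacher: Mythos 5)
Your proof is correct: the paper offers no argument of its own here (the proposition is quoted from \cite[Lemma 2.2]{ullmo-yafaev}), and your reasoning --- $g$ fixing $x$ forces $\bar{x}$ to factor through the $\mathbb{Q}$-group $Z_{G^{\mathrm{ad}}}(g)$, whose preimage in $G$ must then contain $\mathrm{MT}(x)=G$, so $g$ is central in the adjoint group and hence trivial --- is essentially the standard proof of that cited lemma. Note that only the forward implication of the \emph{equivalence} you assert in your first step is actually used, and that direction is immediate from the definition of the $G^{\mathrm{ad}}(\mathbb{R})^{+}$-action by inner automorphisms.
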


\begin{ex}[Special points of $\mathcal{A}_{g}$]
Using Theorem \ref{thm:special} we know that the points of $\mathbb{H}^{+}_{g}$ which are special are those which are fixed by some element of $\mathrm{GSp}_{2g}(\mathbb{Q})^{+}$. These are precisely the points that correspond to principally polarised abelian varieties with complex multiplication (CM for short)\index{CM abelian variety}. 
\end{ex}

\subsection{Special Subvarieties} 
\label{subsec:special}
Let $(G,X)$ and $(H,Y)$ be Shimura data. Let $\phi:H\rightarrow G$ be a homomorphism of algebraic groups over $\mathbb{Q}$ that induces a map $Y^{+}\rightarrow X^{+}$ given by $y\mapsto\phi\circ y$. In this case we say that $\phi:(H,Y^{+})\rightarrow(G,X^{+})$ is a \emph{morphism of connected Shimura data}. \index{Shimura datum!morphism} If furthermore $\phi$ is an inclusion, then we say that $(H,Y^{+})$ is a \emph{connected Shimura subdatum}\index{Shimura datum!subdatum} of $(G,X^{+})$.

Let $\Gamma^{G}\leq G$ and $\Gamma^{H}\leq H$ be congruence subgroups and let $S_{1} = \Gamma^{H}\backslash Y^{+}$ and $S_{2} = \Gamma^{G}\backslash X^{+}$ be the corresponding connected Shimura varieties. If in addition to being a morphism of Shimura varieties, $\phi\left(\Gamma^{H}\right)\subseteq\Gamma^{G}$, then $\phi$ induces a map in the quotients $[\phi]:\Gamma^{H}\backslash Y^{+}\rightarrow\Gamma^{G}\backslash X^{+}$ given by $[y]\mapsto[\phi\circ y]$ which is well-defined, holomorphic, and algebraic with respect to the algebraic structures of $S_{1}$ and $S_{2}$ (see \cite[Fact 2.6]{pink} for references). The morphisms $S_{1}\rightarrow S_{2}$ which are obtained in this way are called \emph{morphisms of Shimura varieties}. \index{Shimura variety!morphism} 

\begin{defi}
A \emph{special subvariety} of a Shimura variety $S$ is an irreducible component of the image of a morphism of Shimura varieties $T\rightarrow S$. A \emph{special set} \index{special set} in $S$ is a finite union of special subvarieties. 
\end{defi}

This definition does not conflict with the definition of special point we gave in the previous subsection. The Mumford-Tate group of a point $x\in X^{+}$ is a torus if and only if $p(x)\in S(\mathbb{C})$ is a zero-dimensional special subvariety (see \cite[\textsection 18]{daw}, \cite[Remark 12.6]{milne}, or \cite[\textsection 4]{pink}). 

\begin{ex}[Special subvarieties of $\mathbb{C}^{2}$]
Let us start by looking at $j:\mathbb{H}\rightarrow\mathbb{C}$ as a Shimura variety. Given that is it one-dimensional, $\mathbb{C}$ does not have special subvarieties other than the special points. We can instead consider $j:\mathbb{H}^{2}\rightarrow\mathbb{C}^{2}$, where $j$ is now defined on each component, which gives us a Shimura surface, and so we can look at its one-dimensional special subvarieties. These consist of the subvarieties cut out by the family $\left\{\Phi_{N}(X,Y)\right\}_{N\in\mathbb{N}}$ of modular polynomials of the $j$ function, plus the subvarieties of the form $\left\{\tau\right\}\times\mathbb{C}$ or $\mathbb{C}\times\left\{\tau\right\}$, where $\tau$ is a special point. This result will be generalised in Lemma \ref{lem:2.6}.
\end{ex}

\begin{remark}
\label{rem:sub}
Let $S$ be a Shimura variety and $V\subseteq S$ be a special subvariety. By definition this means that it can be realised as the image of a morphism of Shimura varieties $[\phi]:T\rightarrow S$. If $(G,X^{+})$ is the connected Shimura datum of $S$, let $H$ be the image of $\phi$ in $G$. Therefore $H$ is a $\mathbb{Q}$-subgroup of $G$. Also, let $X^{+}_{V}$ be the image of $\phi$ in $X^{+}$. Thus $\left(H, X^{+}_{V}\right)$ is a connected Shimura datum and, if $\Gamma$ is the congruence subgroup defining $S$, then $\Gamma^{H}:= H(\mathbb{Q})_{+}\cap\Gamma$ is a congruence subgroup of $H$ such that the inclusion $i:H\hookrightarrow G$ defines a morphism between the connected Shimura varieties $\Gamma^{H}\backslash X_{V}^{+}$ and $S$ with image $V$. In other words, we can always find a subgroup of $G$ that will define a Shimura datum for $V$. 

We hope that the following commutative diagram becomes a useful visual aid to help the reader with the notation we have introduced so far:
\begin{center}
\begin{tikzcd}[column sep=small]
& X^{+}_{V}\subseteq X^{+} \arrow[dl,"p^{H}"'] \arrow[dr,"p"] & \\
\Gamma^{H}\backslash X^{+}_{V} \arrow{rr}{[i]} & & V\subseteq S(\mathbb{C})
\end{tikzcd}
\end{center}
\end{remark}

\begin{defi}
Let $V\subseteq S$ be a special subvariety. A point $x\in X^{+}$ is said to be \emph{Hodge-generic in $V$} \index{Hodge-generic!in $V$} if $V$ is the smallest special subvariety of $S$ containing $p(x)$. Given a subset $A\subseteq S(F)$ (where $F$ is some algebraically closed field of characteristic zero), the \emph{special closure} \index{special closure} of $A$, denoted $\mathrm{spcl}(A)$,\index{spcl} is the smallest special set of $S(F)$ that contains $A$. So a point is always Hodge-generic in its special closure.
\end{defi}

\begin{remark}
\label{rem:mt}
The special closure of a point $x\in X^{+}$ is given by its Mumford-Tate group. This is because  $\left(\mathrm{MT}(x),\mathrm{MT}(x)(\mathbb{R})\cdot x\right)$ is a Shimura subdatum of $(G,X)$, and so the image of $\left(\mathrm{MT}(x)(\mathbb{R})^{+}\cdot x\right)\times\left\{g\right\}$ in $\mathrm{Sh}_{K}(G,X)(\mathbb{C})$ defines the smallest special subvariety containing $[x,g]_{K}$.
\end{remark}

In what remains of this subsection, fix a Shimura variety $p:X^{+}\rightarrow S(\mathbb{C})$, let $\Sigma$\index{$\Sigma$} be the set of coordinates of special points of $S(\mathbb{C})$ and let $E^{S}$ be the minimal abelian extension of the reflex field associated to the Shimura datum of $S$ over which $S(\mathbb{C})$ is defined.

\begin{remark}[see {{\cite[Proposition 4.14]{pink}}}]
Every special subvariety contains a dense set of special points, so every special subvariety is defined as a variety over $E^{S}(\Sigma)$.
\end{remark}

Now we define the notion of \emph{special domain}. \index{special domain} Essentially, a special domain for a special subvariety $V\subseteq S$ is a subset of $X^{+}$ that can act as the connected component of some Shimura datum which defines $V$.

\begin{defi}
Let $V\subseteq S^{n}$ be a special subvariety and choose a $\mathbb{Q}$-subgroup $H$ of $G^{n}$ and a point $x\in p^{-1}(V)$. So $(H, H(\mathbb{R})\cdot x)$ is a Shimura subdatum of $(G^{n},X^{n})$ for $V$. This can be done more canonically (but not completely) if we choose $x\in p^{-1}(V)$ to be Hodge-generic in $V$ and then $H$ can be taken to be the Mumford-Tate group of $x$. In this case we call $H(\mathbb{R})\cdot x$ a \emph{special domain} for $V$.
\end{defi}

Note that if we choose a point $y\in p^{-1}(V)$ different from $x$ that is Hodge-generic in $V$, then $H(\mathbb{R})\cdot y$ is another special domain for $V$ which is either equal to $H(\mathbb{R})\cdot x$ or disjoint from it. In fact, for every special domain $X^{+}_{V}\subseteq \left(X^{+}\right)^{m}$ of $V$, there is $\gamma\in\Gamma^{m}$ such that $\gamma X^{+}_{V} = \left(H(\mathbb{R})\cdot x\right)^{+}$. Therefore, we can find at most countably many different special domains for $V$, so we will number them and denote them by $X^{+}_{V,i}$,\index{x@$X^{+}_{V,i}$} where $i$ is a natural number between $1$ and the number of special domains for $V$ (usually $\omega$). This way if $i\neq j$, then $X^{+}_{V,i}\cap X^{+}_{V,j}=\emptyset$. 

\begin{remark}
\label{rem:bihol}
Let $X_{V,i}^{+}$ be a special domain, and suppose $\gamma\in \Gamma$ is such that $\gamma X_{V,i}^{+} = X_{V,i}^{+}$. If $H$ is the subgroup of $G$ corresponding to $X_{V,i}^{+}$, then $\gamma\in H(\mathbb{Q})$, because a consequence of the axioms of Shimura datum is that $H(\mathbb{R})^{+}$ maps surjectively to the group of biholomorphisms of $X^{+}_{V,i}$ (see \cite[Proposition 4.8]{milne}).
\end{remark}

Usually, the order in which we index the special domains will not be important, but we will make one distinction. When we define the special subvarieties $Z^{V,i}_{\overline{g}}$ in \textsection \ref{subsec:galrep}, we will want to index the special domains of $Z_{\overline{g}}^{V,i}$ correspondingly with the special domains of $V$, as it will be explained in Remark \ref{rem:indexzg}. One could define a more intrinsic form of indexing the special domains, but as we will not really use this extra information, we have opted for a simpler form of notation.

\begin{defi}
A \emph{special set} \index{special set} in $\left(X^{+}\right)^{m}$ is a finite union of special domains. The \emph{special closure} \index{special closure} of $B\subseteq \left(X^{+}\right)^{m}$ is the smallest special set containing $B$, which again will be denoted by $\mathrm{spcl}(B)$.\index{spcl}
\end{defi}

\begin{ex}[Trivial special closures]
If $x\in X^{+}$ is Hodge-generic, then $\mathrm{spcl}(x)=X^{+}$ and $\mathrm{spcl}(p(x)) = S(\mathbb{C})$. If $x\in X^{+}$ is special, then $\mathrm{MT}(x)$ is a torus, and as it is commutative, we have that $\mathrm{MT}(x)(\mathbb{R})^{+}\cdot x = \left\{x\right\}$. So, for some $i$, $X_{x,i}^{+} = \left\{x\right\}=\mathrm{spcl}(x)$ and $\mathrm{spcl}(p(x))=\left\{p(x)\right\}$. 
\end{ex}

\begin{remark}[A Word on Reflex Fields]
We conclude this subsection with an observation about reflex fields. Let $V\subseteq S$ be a special subvariety and let $X^{+}_{V,i}$ be a special domain for $V$. Let $\Gamma^{H}:= \Gamma\cap H(\mathbb{Q})_{+}$. Then we can view $V$ as a the image of a Shimura variety as: $S_{H} = \Gamma^{H}\backslash X^{+}_{V,i}$ under a Shimura morphism. Let $E^{S_{H}}$ be the (finite abelian extension of the) reflex field of $S_{H}$. By \cite[Remark 12.3]{milne}, $E^{S}\subseteq E^{S_{H}}$. By the general theory of canonical models (\cite[\textsection 12]{milne}), we have that $E^{S_{H}}\subseteq E^{S}(\Sigma)$. 
\end{remark}

\subsection{Galois Representations I}
\label{subsec:galgen}
Following \cite[\textsection 2]{ullmo-yafaev} and \cite[\textsection 6]{pink}, we describe Galois representations attached to points in Shimura varieties, and we will end by stating Pink's conjecture, which will later influence the way we state conditions FIC 1 \& 2. 

Given a congruence subgroup $\Gamma'$ of $\Gamma$ we have a morphism of connected Shimura varieties $[\mathrm{id}]:\Gamma'\backslash X^{+}\rightarrow\Gamma\backslash X^{+}$ induced by the identity map $\mathrm{id}:G\hookrightarrow G$. So as to distinguish elements in the quotients of $X^{+}$ with respect to the different congruence groups, let us denote by $[x]_{\Gamma}$ the class of $x\in X^{+}$ in the quotient $\Gamma\backslash X^{+}$. Remember we are assuming $\Gamma$ is neat.

\begin{lem}
\label{lem:nofix}
For every $\gamma\in\Gamma$ and every $x\in X^{+}$, $\gamma x\neq x$. 
\end{lem}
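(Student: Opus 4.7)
The plan is to argue by contradiction using the fact that a neat subgroup of $G^{\mathrm{ad}}(\mathbb{Q})$ is torsion-free, combined with the observation that point-stabilisers of the $\Gamma$-action on $X^{+}$ are finite. Together these force the only element of $\Gamma$ with a fixed point to be the identity (the statement is clearly intended for $\gamma\neq 1$).

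First I would establish that $\mathrm{Stab}_{\Gamma}(x)$ is finite for every $x\in X^{+}$. The Shimura-datum axioms, in particular SV2 (the Cartan-involution condition), ensure that $X^{+}$ is a Hermitian symmetric domain on which $G^{\mathrm{ad}}(\mathbb{R})^{+}$ acts transitively with compact isotropy groups, so the stabiliser $K_{x}$ of $x$ in $G^{\mathrm{ad}}(\mathbb{R})^{+}$ is compact (see \cite[\textsection 5]{milne}). Since $G^{\mathrm{ad}}$ is semisimple and $\Gamma$ is arithmetic, $\Gamma$ is a discrete subgroup of $G^{\mathrm{ad}}(\mathbb{R})^{+}$, and hence $\mathrm{Stab}_{\Gamma}(x)=\Gamma\cap K_{x}$ is a discrete subset of a compact set, therefore finite.

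Now suppose towards a contradiction that some $\gamma\in\Gamma$ with $\gamma\neq 1$ satisfies $\gamma x=x$. By the previous paragraph $\gamma$ has finite order $n\geq 2$. Fix a faithful $\mathbb{Q}$-representation $\rho\colon G^{\mathrm{ad}}\hookrightarrow \mathrm{GL}(V)$; then $\rho(\gamma)^{n}=\mathrm{id}$ and $\rho(\gamma)\neq \mathrm{id}$, so $\rho(\gamma)$ is diagonalisable over $\mathbb{C}$ with eigenvalues that are $n$-th roots of unity, at least one of which is not equal to $1$. Hence the subgroup of $\mathbb{C}^{\times}$ generated by the eigenvalues of $\rho(\gamma)$ contains a non-trivial root of unity and so is not torsion-free, contradicting the neatness of $\gamma$.

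The only real input from outside the excerpt is the compactness of $K_{x}$ coming from the Shimura-datum axioms (equivalently, the proper discontinuity of the $\Gamma$-action); I expect this to be the only delicate point, and it is standard. Everything else is bookkeeping with the definitions of \emph{neat} and \emph{arithmetic}.
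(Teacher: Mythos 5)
Your proof is correct, and at bottom it runs on the same mechanism as the paper's, just written out in full rather than delegated to the literature. The paper's own proof observes that $\gamma x = x$ places $\gamma$ in $Z_{G_{\mathbb{R}}}(x(\mathbb{S}))(\mathbb{R})$ and then invokes Lemmas 2.2 and 2.3 of \cite{ullmo-yafaev} together with neatness to conclude $\gamma = 1$; the content of those lemmas is precisely the two facts you prove by hand, namely that the relevant isotropy/centraliser group is compact (modulo the centre, by the Cartan-involution axiom), so the discrete arithmetic group meets it in a finite group, and that a neat group is torsion-free, so that intersection is trivial. Your version is self-contained, at the mild cost of quietly using that the image of $\Gamma$ in $G^{\mathrm{ad}}(\mathbb{Q})^{+}$ is again arithmetic (hence discrete in $G^{\mathrm{ad}}(\mathbb{R})^{+}$) and neat; this is harmless here, since the paper explicitly identifies $\Gamma$ with its image in $G^{\mathrm{ad}}(\mathbb{Q})^{+}$ and the neatness hypothesis is imposed on that image. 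You are also right that the statement is to be read for $\gamma$ acting nontrivially (i.e.\ $\gamma\neq 1$ in $G^{\mathrm{ad}}$), which is consistent with the paper's proof ending in the conclusion ``$\gamma=1$''.
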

\begin{proof}
Suppose that $\gamma x=x$ for some $x\in X^{+}$ and some $\gamma\in\Gamma$. By definition $\Gamma = G(\mathbb{Q})_{+}\cap K$, but if $\gamma x=x$, then this also means that $\gamma$ belongs to $Z_{G_{\mathbb{R}}}(x(\mathbb{S}))(\mathbb{R})$. As $\Gamma$ is neat, we can use \cite[Lemma 2.2]{ullmo-yafaev} and \cite[Lemma 2.3]{ullmo-yafaev} to conclude straightaway that $\gamma=1$.
\end{proof}

Choose a point $z\in S(\mathbb{C}) = \Gamma\backslash X^{+}$ and choose $x\in X^{+}$ so that $[x]_{\Gamma} = z$. Lemma \ref{lem:nofix} shows that $[\mathrm{id}]^{-1}(z)$ carries a transitive $\Gamma$-action. If $\Gamma'$ is normal in $\Gamma$, then this action factors through $\Gamma/\Gamma'$, and can be described by: for $y\in X^{+}$ such that $[y]_{\Gamma} = [x]_{\Gamma}$ and for every $\gamma\in\Gamma$, define $\gamma[y]_{\Gamma'} := [\gamma y]_{\Gamma'}$. The action of $\Gamma/\Gamma'$ on $[\mathrm{id}]^{-1}(z)$ is transitive. Thus we obtain:

\begin{cor}
The action of $\Gamma/\Gamma'$ on $[\mathrm{id}]^{-1}(z)$ is simply transitive. 
\end{cor}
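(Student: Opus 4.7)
The transitivity half is already established in the paragraph preceding the corollary, so the only remaining task is to show that the $\Gamma/\Gamma'$-action on $[\mathrm{id}]^{-1}(z)$ is free. My plan is to reduce freeness directly to Lemma \ref{lem:nofix}, which asserts that no nontrivial element of the neat congruence subgroup $\Gamma$ fixes any point of $X^{+}$.

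Concretely, fix $y \in X^{+}$ with $[y]_{\Gamma} = z$ and suppose $\gamma \in \Gamma$ stabilises the class $[y]_{\Gamma'}$, i.e.\ $[\gamma y]_{\Gamma'} = [y]_{\Gamma'}$. By the definition of the $\Gamma'$-quotient, this means there exists $\gamma' \in \Gamma'$ such that $\gamma y = \gamma' y$, and hence $(\gamma')^{-1}\gamma \in \Gamma$ fixes the point $y \in X^{+}$. Applying Lemma \ref{lem:nofix} forces $(\gamma')^{-1}\gamma = 1$, so $\gamma = \gamma' \in \Gamma'$, which means $\gamma$ acts trivially in $\Gamma/\Gamma'$. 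Combined with the transitivity noted before the corollary statement, this gives simple transitivity.

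There is no real obstacle here: the essential input is the torsion-freeness/neatness of $\Gamma$ packaged into Lemma \ref{lem:nofix}, and the rest is formal manipulation of cosets. The only subtle point worth flagging is that one must invoke the lemma for the element $(\gamma')^{-1}\gamma$, which lies in $\Gamma$ (not merely in $G(\mathbb{Q})_{+}$), so the hypothesis that $\Gamma' \leq \Gamma$ is used implicitly; normality of $\Gamma'$ in $\Gamma$ is not needed for freeness itself, only for the action to be well-defined (which is already granted).
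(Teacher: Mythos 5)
Your proof is correct and follows the same route the paper (implicitly) takes: transitivity is the content of the paragraph preceding the corollary, and freeness is exactly the reduction to Lemma \ref{lem:nofix} via $\gamma y = \gamma' y \Rightarrow (\gamma')^{-1}\gamma = 1$, which is how the neatness of $\Gamma$ enters. Your side remark that normality of $\Gamma'$ is only needed for well-definedness of the $\Gamma/\Gamma'$-action, not for freeness, is also accurate.
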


The varieties of the form $\Gamma'\backslash X^{+}$, where $\Gamma'$ is a normal congruence subgroup of $\Gamma$, form an inverse system with morphisms the maps $[\mathrm{id}]$ described above. Let $\mathbb{X}^{+}$ be the inverse limit of the system. Any $\tilde{x}\in\mathbb{X}^{+}$ can be denoted as $([x_{i}]_{\Gamma_{i}})_{\Gamma_{i}}$, so that if $\Gamma_{i}\subseteq \Gamma_{j}$, then $[x_{i}]_{\Gamma_{j}} = [x_{j}]_{\Gamma_{j}}$. Also, there are natural maps
\begin{align*}
   \pi_{\Gamma_{i}}:\, \mathbb{X}^{+}\quad&\xrightarrow{\quad\quad}\Gamma'\backslash X^{+}\\
     \left(\left[x_{i}\right]_{\Gamma_{i}}\right)_{\Gamma_{i}}&\xmapsto{\phantom{\mathbb{X}^{+}}\qquad} \left[x_{i}\right]_{\Gamma_{i}},
\end{align*}
compatible with the maps of the system. As we mentioned before, the Shimura variety $\mathrm{Sh}_{K}(G,X)$ is canonically defined over its reflex field $E$, which is independent of the choice of $K$. But when we look at a connected component of this variety like $S(\mathbb{C}) = \Gamma\backslash X^{+}$, this variety is defined over a finite abelian extension of $E$ that we have denominated $E^{S}$. The map $[\mathrm{id}]: S'(\mathbb{C}) = \Gamma'\backslash X^{+}\rightarrow S(\mathbb{C})$ is defined over the compositum of the fields $E^{S'}$ and $E^{S}$. And so, the whole inverse system is defined over $E^{\mathrm{ab}}$, the maximal abelian extension of $E$.

Let $L\subseteq\mathbb{C}$ be a finitely generated field extension of $E^{\mathrm{ab}}$ such that $z$ is defined over $L$. Let $\overline{L}$ denote the algebraic closure of $L$ in $\mathbb{C}$. Then the Galois group $\mathrm{Gal}\left(\overline{L}/L\right)$ also acts on the fibre over $z$ of the morphism $[\mathrm{id}]:\Gamma'\backslash X^{+}\rightarrow\Gamma\backslash X^{+}$. We would like to compare the actions of $\mathrm{Gal}\left(\overline{L}/L\right)$ and $\Gamma/\Gamma'$ on $[\mathrm{id}]^{-1}(z)$. For this we need some elementary results. 

\begin{lem}
\label{lem:elem}
Let $H$ and $G$ be groups and $X$ be a set with a left $G$-action and a compatible right $H$-action. If the action of $H$ is transitive and free, then, fixing a point $x_{0}\in X$, there is a unique group homomorphism $\rho_{x_{0}}:G\rightarrow H$ such that for every $g\in G$ $gx_{0} = x_{0}\rho_{x_{0}}(g)$. Moreover, if $x_{1} = x_{0}h$, then the associated homomorphism $\rho_{x_{1}}$ satisfies $\rho_{x_{1}} = h^{-1}\rho_{x_{0}}h$.
\end{lem}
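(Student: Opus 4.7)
The plan is to use the freeness and transitivity of the $H$-action on $X$ to define $\rho_{x_0}$ pointwise, and then verify the homomorphism and conjugation properties from the compatibility of the two actions.

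First, fix $x_0 \in X$ and take any $g \in G$. By transitivity of the $H$-action, there is some $h \in H$ with $g x_0 = x_0 h$, and by freeness this $h$ is unique. Set $\rho_{x_0}(g) := h$; this already gives existence and uniqueness of a set-theoretic map satisfying $g x_0 = x_0 \rho_{x_0}(g)$.

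Next, I would verify that $\rho_{x_0}$ is a group homomorphism. For $g_1, g_2 \in G$, using the defining equation twice and the fact that the left and right actions commute,
\begin{equation*}
(g_1 g_2) x_0 = g_1(g_2 x_0) = g_1(x_0 \rho_{x_0}(g_2)) = (g_1 x_0)\rho_{x_0}(g_2) = x_0 \rho_{x_0}(g_1)\rho_{x_0}(g_2).
\end{equation*}
By the uniqueness established above, $\rho_{x_0}(g_1 g_2) = \rho_{x_0}(g_1)\rho_{x_0}(g_2)$.

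Finally, for the base-point change, let $x_1 = x_0 h$. Then, again by commutativity of the two actions,
\begin{equation*}
g x_1 = g(x_0 h) = (g x_0) h = x_0 \rho_{x_0}(g) h = (x_1 h^{-1})\rho_{x_0}(g) h = x_1\bigl(h^{-1}\rho_{x_0}(g)h\bigr),
\end{equation*}
so uniqueness gives $\rho_{x_1}(g) = h^{-1}\rho_{x_0}(g)h$, as claimed. There is no real obstacle here; the statement is a purely formal consequence of the axioms of compatible actions, with the only subtle point being to keep careful track of the fact that ``compatibility'' of a left and a right action is precisely the associativity $(gx)h = g(xh)$ used above.
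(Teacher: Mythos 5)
Your proof is correct and follows essentially the same route as the paper: define $\rho_{x_0}$ pointwise via transitivity and freeness, verify the homomorphism property by applying the defining equation twice together with the compatibility $(gx)h = g(xh)$, and derive the conjugation formula for a change of base point the same way. No gaps.
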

\begin{proof}
As $H$ acts transitively on $X$, for every $g\in G$ there is $h\in H$ such that $gx_{0} = x_{0}h$. If $h_{1}, h_{2}\in H$ are such that $x_{0}h_{1} = x_{0}h_{2}$, then $x_{0}$ is a fixed point of $h_{1}h_{2}^{-1}$, but as the action of $H$ is free, then $h_{1} = h_{2}$. Therefore we can define a map $\rho_{x_{0}}:G\rightarrow H$ by setting $\rho_{x_{0}}(g) = h$, where $gx_{0} = x_{0}h$. 

If $g_{1}x_{0} = x_{0}h_{1}$ and $g_{2}x_{0} = x_{0}h_{2}$, then $g_{1}g_{2}x_{0} = g_{1}x_{0}h_{2} = x_{0}h_{1}h_{2}$, and so $\rho_{x_{0}}$ is a group homomorphism.

Finally, if $x_{1}=x_{0}h$, then $x_{1}h^{-1}\rho_{x_{0}}(g)h = x_{0}\rho_{x_{0}}(g)h = gx_{0}h = gx_{1}$, and so $\rho_{x_{1}} = h^{-1}\rho_{x_{0}}h$.
\end{proof}

Although $\Gamma$ acts on the left on $X^{+}$, we can define a corresponding right action of $\Gamma$ on $X^{+}$ in the usual way by $x\gamma:=\gamma^{-1}x$, for all $x\in X^{+}$ and $\gamma\in\Gamma$. 

\begin{lem}
\label{lem:actcom}
Assuming that $\Gamma'$ is a normal congruence subgroup of $\Gamma$, the actions of $\mathrm{Gal}\left(\overline{L}/L\right)$ and $\Gamma/\Gamma'$ on $[\mathrm{id}]^{-1}(z)$ commute. 
\end{lem}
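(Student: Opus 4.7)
The plan is to reduce the commutativity to the purely formal fact that any automorphism of a variety defined over a field $L$ commutes, on $\overline{L}$-points, with the action of $\mathrm{Gal}(\overline{L}/L)$. So what I have to do is exhibit the $\Gamma/\Gamma'$-action on $S'(\mathbb{C})$ as coming from algebraic automorphisms of $S'$ defined over a subfield of $L$.

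First I would record that each $\gamma\in\Gamma$ determines an algebraic automorphism $\rho_\gamma:S'\to S'$ whose effect on complex points is $[x]_{\Gamma'}\mapsto[\gamma x]_{\Gamma'}$. Indeed this is exactly the morphism of Shimura varieties induced by conjugation (or equivalently, by the inner automorphism of $G$), which is holomorphic and algebraic in the sense recalled in Subsection \ref{subsec:special}. Moreover $\rho_\gamma$ fixes $[\mathrm{id}]:S'\to S$ set-theoretically on fibres (since the cosets $\Gamma x$ are preserved), so it restricts to a permutation of $[\mathrm{id}]^{-1}(z)$; when $\gamma\in\Gamma'$ the automorphism $\rho_\gamma$ becomes the identity on $S'$, so we indeed get an action of $\Gamma/\Gamma'$.

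Next, I would invoke the theory of canonical models for the connected Shimura variety $S'$ (as developed in \cite[\textsection 12--14]{milne}, and in the discussion preceding Lemma \ref{lem:nofix} above, where it is observed that the whole inverse system $\mathbb{X}^+$ is defined over $E^{\mathrm{ab}}$) to conclude that the transition morphisms $[\mathrm{id}]$ are defined over $E^{\mathrm{ab}}$, and that the action of $G^{\mathrm{ad}}(\mathbb{Q})^{+}$ on the inverse system is realised by $E^{\mathrm{ab}}$-morphisms. In particular each $\rho_\gamma$ is defined over $E^{\mathrm{ab}}$, and because $L\supseteq E^{\mathrm{ab}}$, it is defined over $L$.

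Having arranged this, the argument closes formally. Let $y\in[\mathrm{id}]^{-1}(z)\subseteq S'(\overline{L})$ and let $\sigma\in\mathrm{Gal}(\overline{L}/L)$. Since $\rho_\gamma$ is an $L$-morphism, functoriality of $\mathrm{Spec}$ gives
\[
\sigma(\gamma\cdot y)=\sigma\bigl(\rho_\gamma(y)\bigr)=\rho_\gamma\bigl(\sigma(y)\bigr)=\gamma\cdot\sigma(y),
\]
which is exactly the desired commutativity. The only non-trivial ingredient is the field of definition of $\rho_\gamma$; everything else is formal, so the main obstacle is simply locating the precise statement in the canonical-model literature that the $\Gamma$-action on the system $(\Gamma'\backslash X^+)_{\Gamma'}$ is defined over $E^{\mathrm{ab}}$ (rather than just over some larger extension), and I would cite \cite{milne} and \cite{pink} for this.
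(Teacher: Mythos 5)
Your proposal is correct and follows essentially the same route as the paper: there, too, the action of each $\gamma\in\Gamma$ is realised as the morphism of Shimura varieties $[\gamma]:S'\to S'$ induced by conjugation on $G$ (normality of $\Gamma'$ making this well defined), its field of definition is identified inside $L$, and commutativity with $\mathrm{Gal}\left(\overline{L}/L\right)$ then follows formally exactly as in your last display. The only difference is that the paper pins the field of definition of $[\gamma]$ down to the compositum $E^{S}E^{S'}\subseteq L$ rather than invoking all of $E^{\mathrm{ab}}$, which is what supports the subsequent remark that $L$ need only contain $E^{S}$, $E^{S'}$ and the coordinates of $z$; your appeal to $E^{\mathrm{ab}}$ is nonetheless valid under the stated hypothesis that $L$ is an extension of $E^{\mathrm{ab}}$.
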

\begin{proof}
Every $\gamma\in\Gamma$ defines a homomorphism $G\rightarrow G$ given by $g\mapsto \gamma g\gamma^{-1}$, which itself induces a morphism of Shimura data $(G,X)\rightarrow(G,X)$ which on $X$ is given by $x\mapsto \gamma x$. As $\Gamma'$ is normal in $\Gamma$, this in turn induces a morphism of Shimura varieties $[\gamma]: S'(\mathbb{C}) = \Gamma'\backslash X^{+}\rightarrow S(\mathbb{C}) = \Gamma'\backslash X^{+}$. This morphism is defined over the compositum $E^{S}E^{S'}$. As $L$ contains $E^{S}E^{S'}$, then $[\gamma]$ is fixed under the action of any element of $\mathrm{Gal}\left(\overline{L}/L\right)$, which proves the statement.
\end{proof}

With these results we can now compare the actions of $\mathrm{Gal}\left(\overline{L}/L\right)$ and $\Gamma/\Gamma'$ on $[\mathrm{id}]^{-1}(z)$. Choosing $[y]_{\Gamma'}\in[\mathrm{id}]^{-1}(z)$, by Lemma \ref{lem:elem} (whose conditions are ensured by Lemma \ref{lem:actcom}) we obtain a homomorphism 
\begin{equation*}
    \rho_{[y]_{\Gamma'}}:\mathrm{Gal}\left(\overline{L}/L\right)\rightarrow \Gamma/\Gamma'.
\end{equation*}
We remark that for this we only need $L$ to contain $E^{S}$, $E^{S'}$ and the coordinates of $z$; it is not necessary that $L$ contain all of $E^{\mathrm{ab}}$.

Let $\hat{\Gamma} = \varprojlim \Gamma/\Gamma'$, where the limit is taken over the normal congruence subgroups $\Gamma'$ of $\Gamma$. Then both $\mathrm{Gal}\left(\overline{L}/L\right)$ and $\hat{\Gamma}$ act on the fibre $\pi_{\Gamma}^{-1}(z)$. In fact the action of $\hat{\Gamma}$ can be understood more generally in the following way. For the Shimura datum $(G,X)$ define the space
\begin{equation*}
    \mathrm{Sh}(G,X):=G(\mathbb{Q})\backslash\left(X\times G(\mathbb{A}_{f})\right),
\end{equation*}
whose elements we denote as $[x,g]$, and for $K$ a compact open subgroup of $G(\mathbb{A}_{f})$, let
\begin{align*}
    \pi_{K}: \mathrm{Sh}(G,X)&\rightarrow \mathrm{Sh}_{K}(G,X)\\
    [x,g]&\mapsto [x,gK].
\end{align*}
When $K$ is neat, the fibre over any point of $\mathrm{Sh}_{K}(G,X)$ under $\pi_{K}$ has a simply transitive right action by $K$ (see \cite[Lemma 2.1]{ullmo-yafaev}) given by $[x,g]k:=[x,gk]$. On the other hand, there is an injective map $X^{+}\rightarrow\mathrm{Sh}(G,X)$ given by $x\mapsto[x,1]$. Observe that for all $\gamma\in\Gamma$ we have that $[\gamma x,1] = [x,\gamma^{-1}]$. The action of $\hat{\Gamma}$ on $\pi_{\Gamma}^{-1}(z)$ is then just the restriction of the action of $K$, and so $\hat{\Gamma}$ acts simply transitively on $\pi_{\Gamma}^{-1}(z)$. Choosing a point $\tilde{y}\in\pi_{\Gamma}^{-1}(z)$, we get a continuous homomorphism: 
\begin{equation*}
    \rho_{\tilde{y}} : \mathrm{Gal}\left(\overline{L}/L\right)\rightarrow\hat{\Gamma}.
\end{equation*}

\begin{remark}
In the case that $z$ is a special point, the homomorphism $\rho_{\tilde{y}}$ can be described using the reciprocity map coming from the theory of complex multiplication (see \cite[Remark 2.7]{ullmo-yafaev}).
\end{remark}

\begin{defi}
With the above notation, $z\in S(\mathbb{C})$ is called \emph{Galois generic} \index{Galois generic point} if the image of $\rho_{\tilde{y}}$ is open in $\hat{\Gamma}$ (observe that openness of the image does not depend on the choice of $\tilde{y}$). The point $z$ is called \emph{strictly Galois generic} if the image of $\rho_{\tilde{y}}$ equals $\hat{\Gamma}$.
\end{defi}

The following proposition says that the notion of ``Galois generic'' is well-defined. 

\begin{prop}[see {{\cite[Proposition 6.4]{pink}}}]
The notion of being `Galois generic' is independent of the choice of $L$. 
\end{prop}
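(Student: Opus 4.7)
The plan is to reduce to the case of an inclusion of fields and then analyse how the two representations relate. Let $L_1$ and $L_2$ be two admissible fields, i.e.\ both finitely generated extensions of $E^{\mathrm{ab}}$ over which $z$ is defined. By passing to the compositum $L_1 L_2 \subseteq \mathbb{C}$, which is still finitely generated over $E^{\mathrm{ab}}$, we may assume $L_1 \subseteq L_2$. Fix compatible algebraic closures $\overline{L_1} \subseteq \overline{L_2}$ inside $\mathbb{C}$, so that restriction of automorphisms yields a continuous homomorphism $\mathrm{res}\colon \mathrm{Gal}(\overline{L_2}/L_2) \to \mathrm{Gal}(\overline{L_1}/L_1)$.

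First I would compute the image of $\mathrm{res}$. Since $L_1$ and $L_2$ are both finitely generated over $E^{\mathrm{ab}}$, the extension $L_2/L_1$ is itself finitely generated, and a standard fact about finitely generated field extensions gives that $L_2 \cap \overline{L_1}$ is a \emph{finite} extension of $L_1$. By Galois theory the image of $\mathrm{res}$ is precisely $\mathrm{Gal}(\overline{L_1}/L_2 \cap \overline{L_1})$, which is therefore an open subgroup of finite index in $\mathrm{Gal}(\overline{L_1}/L_1)$.

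Next I would compare the two representations. The action of $\mathrm{Gal}(\overline{L_i}/L_i)$ on $\pi_\Gamma^{-1}(z)$ is given by the action on coordinates, and since $z$ is defined over $L_1$ and the inverse system $\mathbb{X}^+ \to \Gamma_i\backslash X^+$ is defined over $E^{\mathrm{ab}} \subseteq L_1$, all points of $\pi_\Gamma^{-1}(z)$ have coordinates lying in $\overline{L_1}$. Hence for every $\sigma \in \mathrm{Gal}(\overline{L_2}/L_2)$, the actions of $\sigma$ and of $\mathrm{res}(\sigma)$ on $\tilde{y}$ coincide, yielding the key compatibility $\rho_{\tilde{y}}^{L_2} = \rho_{\tilde{y}}^{L_1} \circ \mathrm{res}$. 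Consequently the image $\mathrm{im}(\rho_{\tilde{y}}^{L_2}) = \rho_{\tilde{y}}^{L_1}(\mathrm{im}(\mathrm{res}))$ is a subgroup of $\mathrm{im}(\rho_{\tilde{y}}^{L_1})$ of finite index in it.

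It then remains to deduce the equivalence of openness. If $\mathrm{im}(\rho_{\tilde{y}}^{L_2})$ is open in $\hat{\Gamma}$, then so is the larger group $\mathrm{im}(\rho_{\tilde{y}}^{L_1})$. Conversely, assume $\mathrm{im}(\rho_{\tilde{y}}^{L_1})$ is open; since $\mathrm{Gal}(\overline{L_2}/L_2)$ is profinite and $\rho_{\tilde{y}}^{L_2}$ is continuous, $\mathrm{im}(\rho_{\tilde{y}}^{L_2})$ is compact and hence closed in $\hat{\Gamma}$, and being of finite index inside the open subgroup $\mathrm{im}(\rho_{\tilde{y}}^{L_1})$ it is automatically open as well. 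The main point to verify carefully will be the compatibility $\rho_{\tilde{y}}^{L_2} = \rho_{\tilde{y}}^{L_1} \circ \mathrm{res}$; once this and the finiteness of $L_2 \cap \overline{L_1}$ over $L_1$ are in hand, everything else is topological bookkeeping on profinite groups.
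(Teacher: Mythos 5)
Your argument is correct: the reduction to $L_1\subseteq L_2$ via the compositum, the finiteness of the relative algebraic closure $L_2\cap\overline{L_1}$ over $L_1$ (so $\mathrm{im}(\mathrm{res})$ is open of finite index), the compatibility $\rho_{\tilde{y}}^{L_2}=\rho_{\tilde{y}}^{L_1}\circ\mathrm{res}$ (valid because at every finite level the fibre over $z$ consists of $\overline{L_1}$-points, the system being defined over $E^{\mathrm{ab}}\subseteq L_1$ and $z$ over $L_1$, and by freeness of the right $\hat{\Gamma}$-action), and the final step that a compact, hence closed, finite-index subgroup of an open subgroup of the profinite group $\hat{\Gamma}$ is open, are all sound. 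Note that the paper does not prove this statement itself but quotes it from \cite[Proposition 6.4]{pink}; your argument is essentially the standard one behind that reference.
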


\begin{prop}[see {{\cite[Proposition 6.7]{pink}}}]
Every Galois generic point on $S(\mathbb{C})$ is Hodge generic.
\end{prop}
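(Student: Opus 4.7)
The plan is to prove the contrapositive: if $z$ is not Hodge-generic, then $z$ is not Galois-generic. Fix any $x \in X^+$ with $p(x) = z$ and let $H := \mathrm{MT}(x)$; by hypothesis this is a proper connected $\mathbb{Q}$-subgroup of $G$. By Remark \ref{rem:mt}, the smallest special subvariety of $S$ containing $z$ is $V := p(H(\mathbb{R})^+\cdot x)$, arising from the Shimura subdatum $(H, H(\mathbb{R})^+\cdot x)$. Since special subvarieties are defined over $E^S(\Sigma)\subseteq E^{\mathrm{ab}}\subseteq L$, $V$ is already defined over $L$, and I choose the coherent lift $\tilde{y}\in \mathbb{X}^+$ of $z$ with $y_i = x$ at every level.

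For each normal congruence subgroup $\Gamma'\trianglelefteq\Gamma$, the subdatum induces a morphism of Shimura varieties
\begin{equation*}
  [i_{\Gamma'}]\colon (\Gamma'\cap H(\mathbb{Q})_+)\backslash (H(\mathbb{R})^+\cdot x) \longrightarrow \Gamma'\backslash X^+,
\end{equation*}
defined over a compositum of reflex fields inside $E^{\mathrm{ab}}\subseteq L$, and sending $[x]$ to $[x]_{\Gamma'}$. Running the argument of Lemma \ref{lem:actcom} at every level, each $\sigma \in \mathrm{Gal}(\overline{L}/L)$ commutes with $[i_{\Gamma'}]$, so $\sigma([x]_{\Gamma'}) = [i_{\Gamma'}]\bigl(\sigma([x]_{\Gamma'\cap H(\mathbb{Q})_+})\bigr)$ still lies in the image of $[i_{\Gamma'}]$. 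Chasing this through the identification $\pi_\Gamma^{-1}(z) \cong \hat{\Gamma}$ determined by $\tilde{y}$ shows that, level by level, $\rho_{\tilde{y}}(\sigma) \bmod \Gamma'$ lies in the image of $\Gamma^H := \Gamma \cap H(\mathbb{Q})_+$; passing to the inverse limit, $\mathrm{im}(\rho_{\tilde{y}})\subseteq \overline{\Gamma^H}$, the closure of $\Gamma^H$ in $\hat{\Gamma}$.

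To finish, I argue that $\overline{\Gamma^H}$ is not open in $\hat{\Gamma}$. For each prime $p$, the principal congruence subgroup $\Gamma(p)\trianglelefteq\Gamma$ gives
\begin{equation*}
  [\hat{\Gamma} : \overline{\Gamma^H}] \;\geq\; [\Gamma : \Gamma(p)\Gamma^H] \;\geq\; \frac{[\Gamma : \Gamma(p)]}{[\Gamma^H : \Gamma^H \cap \Gamma(p)]},
\end{equation*}
and the right-hand side is comparable to $|G(\mathbb{F}_p)|/|H(\mathbb{F}_p)|$, which tends to infinity as $p\to\infty$ because $\dim H < \dim G$ for a proper connected algebraic subgroup. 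Hence $\overline{\Gamma^H}$ has infinite index in $\hat{\Gamma}$ and is not open, so neither is $\mathrm{im}(\rho_{\tilde{y}})$, i.e., $z$ is not Galois-generic. The main obstacle is the second paragraph: one must verify carefully that the Galois action really does confine $\rho_{\tilde{y}}$ into $\overline{\Gamma^H}$ — i.e., that Galois cannot move between different components of the preimage of $V$ in the tower — and for this it is crucial that $\tilde{y}$ has been chosen inside the Galois-invariant sub-Shimura subsystem coming from $(H,H(\mathbb{R})^+\cdot x)$; once this reduction is in place, the final index computation is elementary.
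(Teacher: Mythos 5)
You should note first that the paper does not actually prove this proposition: it is imported verbatim from Pink \cite[Proposition 6.7]{pink}, so the only meaningful comparison is with Pink's argument, and your proposal is essentially that argument transposed to the connected/congruence-subgroup picture. The overall architecture is right: pass to the contrapositive, confine $\mathrm{im}(\rho_{\tilde{y}})$ into the closure of a group attached to the proper subgroup $H=\mathrm{MT}(x)$, and conclude non-openness; and your final count is sound, since $[\Gamma:\Gamma(p)\Gamma^{H}]=[\Gamma:\Gamma(p)]/[\Gamma^{H}:\Gamma^{H}\cap\Gamma(p)]$ grows like $|G(\mathbb{F}_p)|/|H(\mathbb{F}_p)|\to\infty$ (as $\dim H<\dim G$ for a proper closed subgroup of a connected group), and a closed subgroup of infinite index in a profinite group is not open.

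The genuine gap is exactly the step you flag as ``the main obstacle'', and your proposed justification of it does not work as written. (i) The inclusions you invoke are false in general: $E^{S}(\Sigma)\not\subseteq E^{\mathrm{ab}}$, and the morphisms $[i_{\Gamma'}]$ -- more precisely, the particular geometric component $V_{\Gamma'}$ through $[x]_{\Gamma'}$ -- are defined over abelian extensions of the reflex field $E(H,H(\mathbb{R})^{+}\cdot x)$, which strictly contains $E$ in general and whose abelian extensions need not lie in $E^{\mathrm{ab}}$, hence not in $L$; worse, these fields grow as $\Gamma'$ shrinks, so you cannot repair this with a single finite extension of $L$ at the level of components. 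The standard repair is Pink's: non-openness of $\mathrm{im}(\rho_{\tilde{y}})$ is unaffected by replacing $L$ with a finite extension, so adjoin the number field $E(H,X_H)$ once and work with the morphism of infinite-level Shimura varieties $\mathrm{Sh}(H,X_H)\rightarrow\mathrm{Sh}(G,X)$, which is defined over that number field for all levels simultaneously; Galois stability of its image then gives the desired confinement adelically (inside a conjugate of the non-open closed subgroup coming from $H(\mathbb{A}_f)\cap K$). (ii) Even granting Galois stability of the component, the assertion that $\sigma([x]_{\Gamma'})$ lying in the image of $[i_{\Gamma'}]$ and in the fibre over $z$ forces $\rho_{\tilde{y}}(\sigma)\bmod\Gamma'$ into the image of $\Gamma^{H}$ is not proved: unwinding, $\gamma x\in\Gamma' H(\mathbb{R})^{+}x$ only yields $\gamma\in\Gamma'\delta$ with $\delta\in\Gamma$, $\delta H\delta^{-1}=\mathrm{MT}(\delta x)\subseteq H$, hence $\delta$ normalizes $H$ and stabilizes $X_H^{+}$, not necessarily $\delta\in H(\mathbb{Q})$. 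Your index computation survives with $N_G(H)$ (or the stabilizer of the pair $(H,X_H^{+})$) in place of $H$ provided this is still a proper subgroup, but that step, and the degenerate case where it is not, must be argued rather than assumed.
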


We are now in a position to state Pink's conjecture, which is the converse of the last proposition. \index{Pink's conjecture}

\begin{conj}[see {{\cite[Conjecture 6.8]{pink}}}]
\label{conj:pink}
Every Hodge generic point on $S(\mathbb{C})$ is Galois generic. 
\end{conj}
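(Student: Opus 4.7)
The plan is to unpack what ``image open in $\hat{\Gamma}$'' means concretely and then reduce the problem to a well-studied question about $\ell$-adic Galois representations. First I would fix a Hodge generic $z \in S(\mathbb{C})$ with lift $\tilde{y} \in \pi_{\Gamma}^{-1}(z)$, and use the fact that (up to the failure of the congruence subgroup problem for $G$) the profinite completion $\hat{\Gamma}$ decomposes as a product $\prod_\ell \Gamma_\ell$ of compact open subgroups of $G(\mathbb{Q}_\ell)$. By Chebotarev density and compactness, showing that $\rho_{\tilde{y}}(\mathrm{Gal}(\overline{L}/L))$ is open in $\hat{\Gamma}$ is equivalent to showing, for each prime $\ell$, that the $\ell$-adic component $\rho_\ell := \pi_\ell \circ \rho_{\tilde{y}}$ has open image in $\Gamma_\ell$, together with a uniform (``almost all $\ell$ surjective'') bound. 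So the proposal reduces Pink's conjecture to a family of pointwise statements, one per $\ell$.

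Next I would introduce, for each $\ell$, the algebraic monodromy group $H_\ell$ defined as the Zariski closure of $\mathrm{Im}(\rho_\ell)$ in $G_{\mathbb{Q}_\ell}$. A theorem of Bogomolov in the abelian case, and its analogues for general motivic Galois representations attached to points of Shimura varieties, implies that $\mathrm{Im}(\rho_\ell)$ is open in $H_\ell(\mathbb{Q}_\ell)$. Consequently, openness of $\mathrm{Im}(\rho_\ell)$ in $\Gamma_\ell$ is equivalent to $H_\ell^\circ = G_{\mathbb{Q}_\ell}^{\mathrm{der}}$ (perhaps after restriction to $G^{\mathrm{der}}$, where Margulis-type rigidity gives the required matching of arithmetic and $\ell$-adic topologies). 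The bridge from Hodge theory to this $\ell$-adic condition is precisely the \emph{Mumford--Tate conjecture} (MTC): for any $x \in p^{-1}(z)$ it predicts $H_\ell^\circ = \mathrm{MT}(x)_{\mathbb{Q}_\ell}$. Since $z$ is Hodge generic we have $\mathrm{MT}(x) = G$, and hence MTC, together with the openness theorem above and an ``almost all $\ell$'' integrality input in the spirit of Larsen--Pink, would deliver Conjecture \ref{conj:pink}.

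The main obstacle is MTC itself, which is open in this generality. A realistic proof attempt must therefore proceed case by case along the lines where MTC is known: for $\mathcal{A}_g$ with $g \leq 3$ via work of Serre, Chi and Pink; for certain PEL types via Vasiu and Cadoret--Kret; and for abelian-type Shimura varieties admitting a Kuga--Satake construction by reducing to the abelian case. A generic template would be: (i) establish that the Frobenius tori of \emph{any} Hodge generic $z$ are maximal in $G$ (Serre's Frobenius tori argument), (ii) show that the Zariski closure $H_\ell$ is reductive and connected after a finite base change, and (iii) combine (i)--(ii) with Larsen--Pink to force $H_\ell^\circ = G_{\mathbb{Q}_\ell}$. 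The hard part, and the reason this remains a conjecture, is step (iii) outside the known families: one lacks a method to rule out ``exceptional'' proper reductive $H_\ell \subsetneq G_{\mathbb{Q}_\ell}$ containing a maximal torus of Frobenius type, which is essentially the same obstruction as in MTC. Absent a new input, e.g.\ an equidistribution or dynamical argument forcing $H_\ell$ to saturate $G$, one cannot currently settle the statement in the generality claimed.
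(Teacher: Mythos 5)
The statement you were asked about is not proved in the paper at all: it is quoted verbatim as Pink's Conjecture (\cite[Conjecture 6.8]{pink}) and remains open. The paper only records the surrounding known facts — that Galois generic implies Hodge generic, that ad\`elic refinements of the Mumford--Tate conjecture imply the conjecture (via \cite[Remark 6.10]{pink} and Ullmo--Yafaev), and that Serre's theorem settles it for $\mathcal{A}_g$ with $g$ odd or $g=2,6$ — and later, for its categoricity application to $\mathcal{A}_2$ and $\mathcal{A}_3$, it invokes exactly the cases where Mumford--Tate is known (Lombardo, Commelin) plus Ribet-style commutator arguments and a solvability lemma to pass from $\mathbb{Q}^{\mathrm{ab}}$ to $\mathbb{Q}^{\mathrm{ab}}(\Sigma)$. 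So there is no ``paper's own proof'' to compare yours against, and no complete proof is currently possible.

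Your proposal is, in substance, the standard reduction and it is consistent with how the paper (and Pink) treat the problem: decompose the image question into $\ell$-adic pieces plus an almost-all-$\ell$ statement, use Bogomolov/Serre-type openness in the algebraic monodromy group, and identify the remaining bridge as the Mumford--Tate conjecture, which you correctly flag as the irreducible obstruction. Two cautions if you develop this further. First, the decomposition of $\hat{\Gamma}$ as a restricted product and the passage from prime-by-prime openness to ad\`elic openness are exactly where the ``ad\`elic refinement'' is needed; openness of each $\rho_\ell$ does not by itself give openness in $\hat{\Gamma}$, and this refinement is an additional known-in-few-cases input (Serre for $\mathrm{GSp}_{2g}$ with $g$ odd or $2,6$), not a formal consequence of MTC. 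Second, in the paper's setting $\hat{\Gamma}$ is the congruence completion $\varprojlim \Gamma/\Gamma'$, so your implicit appeal to the congruence subgroup property should be made explicit, since for general $G$ the profinite and congruence completions differ. As a proof of the conjecture your text has a genuine and acknowledged gap (MTC plus its ad\`elic strengthening); as a survey of the reduction it matches what the paper relies on when it proves categoricity of $\mathcal{A}_2$ and $\mathcal{A}_3$.
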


In the case of Siegel moduli spaces, \cite[Remark 6.10]{pink} and \cite[p. 265]{ullmo-yafaev} explain the relation between this conjecture and the Mumford-Tate conjecture, concluding that ad\`elic refinements of Mumford-Tate imply Conjecture \ref{conj:pink}. A result of Serre provides such a refinement for some cases: 

\begin{thm}[see {{\cite[Theorem $6.13$]{pink}}}]
\label{thm:pink}
Let $(\mathrm{GSp_{2g}, \mathbb{H}_{2g}})$ be the connected Shimura data of Example \ref{ex:ppav}. Assume that $g$ is odd or equal to $2$ or $6$. Then for any suitable congruence subgroup $\Gamma$, the Hodge-generic points of the corresponding Shimura variety $S(\mathbb{C})$ are Galois generic.
\end{thm}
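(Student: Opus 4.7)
The plan is to translate the condition that a Hodge-generic $z \in \mathcal{A}_g(\mathbb{C})$ is Galois generic into a statement of adelic openness for the Galois representation on the Tate module of the corresponding principally polarized abelian variety, and then to invoke Serre's theorem, which is known unconditionally in the listed dimensions.

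First I would set up the moduli dictionary. A point $z \in \mathcal{A}_g(\mathbb{C})$ defined over a number field $L \supseteq E^{S}$ corresponds, via Example \ref{ex:ppav}, to a principally polarized abelian variety $(A_z, \lambda_z)/L$ of dimension $g$. With $K = \mathrm{GSp}_{2g}(\hat{\mathbb{Z}})$ and $\Gamma = \mathrm{Sp}_{2g}(\mathbb{Z})$, the fiber $\pi_{\Gamma}^{-1}(z) \subseteq \mathbb{X}^{+}$ is canonically identified (using the tower of moduli spaces with level structure) with the set of symplectic $\hat{\mathbb{Z}}$-bases of the adelic Tate module $T_{\hat{\mathbb{Z}}}(A_z) = \prod_{\ell} T_{\ell}(A_z)$, the symplectic form being the one induced by $\lambda_z$. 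Under this identification, the simply transitive right action of $\hat{\Gamma}$ on $\pi_{\Gamma}^{-1}(z)$ becomes the standard right action of $\mathrm{GSp}_{2g}(\hat{\mathbb{Z}})$ on symplectic bases, and the left action of $\mathrm{Gal}(\overline{L}/L)$ is the natural action on Tate modules.

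Next, after a choice of base point $\tilde{y} \in \pi_{\Gamma}^{-1}(z)$, the continuous homomorphism produced by Lemma \ref{lem:elem},
\[
\rho_{\tilde{y}}\colon \mathrm{Gal}(\overline{L}/L) \longrightarrow \hat{\Gamma},
\]
becomes (under the above identification) precisely the adelic Galois representation
\[
\rho_{A_z}\colon \mathrm{Gal}(\overline{L}/L) \longrightarrow \mathrm{GSp}_{2g}(\hat{\mathbb{Z}})
\]
attached to $(A_z,\lambda_z)$. Thus $z$ is Galois generic in the sense of \textsection\ref{subsec:galgen} if and only if the image of $\rho_{A_z}$ is open in $\mathrm{GSp}_{2g}(\hat{\mathbb{Z}})$. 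Since $z$ is Hodge-generic, $\mathrm{MT}(A_z) = \mathrm{GSp}_{2g}$, so in particular $\mathrm{End}^{0}(A_z) = \mathbb{Q}$ (by Mumford-Tate, the endomorphism algebra is the commutant of $\mathrm{MT}(A_z)$ in $\mathrm{End}(H_{1}(A_z,\mathbb{Q}))$).

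The final step is to apply Serre's adelic openness theorem: for principally polarized abelian varieties over number fields with $\mathrm{End}^{0} = \mathbb{Q}$, and with dimension $g$ odd, $g = 2$, or $g = 6$, the image of $\rho_{A_z}$ is open in $\mathrm{GSp}_{2g}(\hat{\mathbb{Z}})$. This combined with the identification above yields the claim. The main obstacle — entirely handled by Serre's work rather than by anything in this paper — is the passage from $\ell$-adic openness to adelic openness. The $\ell$-adic statement, i.e.\ openness of the image in each factor $\mathrm{GSp}_{2g}(\mathbb{Z}_{\ell})$, is the Mumford-Tate conjecture for $A_z$, known in considerable generality by Serre, Faltings and Chi. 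The delicate point is ruling out nontrivial global compatibilities between the mod-$\ell$ reductions (arising from small exterior-power representations and exceptional isogenies) which, in higher even dimensions $g \notin \{2,6\}$, could in principle force the adelic image into a proper closed subgroup of finite index in each factor; the listed dimensions are precisely those for which Serre shows that no such obstruction exists.
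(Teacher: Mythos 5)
Theorem \ref{thm:pink} is not proved in the paper at all: it is quoted from Pink \cite[Theorem 6.13]{pink}, whose argument is exactly the reduction you sketch --- identify the representation $\rho_{\tilde{y}}$ of \textsection\ref{subsec:galgen} with the adelic Galois representation on the Tate module of the principally polarised abelian variety attached to the point, observe that Hodge-genericity ($\mathrm{MT}=\mathrm{GSp}_{2g}$) forces $\mathrm{End}^{0}=\mathbb{Q}$, and invoke Serre's open-image theorem in the dimensions $g$ odd, $2$, $6$. So your route is the intended one, and the later discussion of $\mathcal{A}_{g,N}$ in \textsection 9.2 of the paper confirms the moduli dictionary you use (via \cite{ullmo-yafaev}).

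Two points in your write-up need repair, though neither changes the shape of the argument. First, you only treat points defined over a number field, whereas the statement (and the definition of Galois generic in \textsection\ref{subsec:galgen}) concerns every Hodge-generic point of $S(\mathbb{C})$, whose field of definition $L$ is in general only a finitely generated extension of $E^{\mathrm{ab}}$; you must either quote Serre's openness result in the finitely generated setting or supply the standard specialisation argument --- this is the generality in which Pink states and uses it. Second, there is a $\mathrm{Sp}$ versus $\mathrm{GSp}$ bookkeeping slip: with $\Gamma=\mathrm{Sp}_{2g}(\mathbb{Z})$ one has $\widehat{\Gamma}=\varprojlim\Gamma/\Gamma'\cong\mathrm{Sp}_{2g}(\widehat{\mathbb{Z}})$, not $\mathrm{GSp}_{2g}(\widehat{\mathbb{Z}})$, and since the paper's $L$ contains $\mathbb{Q}^{\mathrm{ab}}$ the image of $\rho_{\tilde{y}}$ has trivial multiplier, so Galois genericity means openness in $\mathrm{Sp}_{2g}(\widehat{\mathbb{Z}})$; as stated, your equivalence (openness of the image of $\mathrm{Gal}(\overline{L}/L)$ in $\mathrm{GSp}_{2g}(\widehat{\mathbb{Z}})$) cannot hold. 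The fix is routine: apply Serre over a finitely generated field of definition $L_{0}$ not containing $\mathbb{Q}^{\mathrm{ab}}$, note that the multiplier of the Tate-module representation is the cyclotomic character, and deduce that the image of $\mathrm{Gal}\left(\overline{L_{0}}/L_{0}\mathbb{Q}^{\mathrm{ab}}\right)$ equals the intersection of the full (open) image with $\mathrm{Sp}_{2g}(\widehat{\mathbb{Z}})$, hence is open there.
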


\subsection{Galois Representations II}
\label{subsec:galrep}
Here we will describe the aspects of Galois representations described in \textsection\ref{subsec:galgen} that can be ``seen'' by the model theory. Essentially, the difference with the previous treatment is that we need to restrict the normal subgroups $\Gamma'$ of $\Gamma$ to be of a specific form. The results of \textsection\ref{subsec:normal} show that this difference is merely technical, and does not affect the results nor the general theory of Galois representations attached to points. 

Let $S$ be a Shimura variety with Shimura datum $\left(G,X^{+}\right)$ and $S(\mathbb{C}) = \Gamma\backslash X^{+}$.

\begin{prop}[see {{\cite[\textsection 4.1 Corollary 1]{platonov}}}]
\label{prop:platonov}
Let $\Gamma$ be an arithmetic subgroup of an algebraic group $G$ defined over $\mathbb{Q}$. Then for any $g\in G(\mathbb{Q})$, $g\Gamma g^{-1}$ is also an arithmetic subgroup of $G(\mathbb{Z})$. 
\end{prop}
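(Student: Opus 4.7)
The plan is to show that $g\Gamma g^{-1}$ and $G(\mathbb{Z})$ are commensurable, and since commensurability is an equivalence relation on subgroups of $G(\mathbb{Q})$, it suffices to reduce to the special case $\Gamma=G(\mathbb{Z})$. That is, if I can prove that $gG(\mathbb{Z})g^{-1}$ is commensurable with $G(\mathbb{Z})$ for every $g\in G(\mathbb{Q})$, then arithmeticity of $\Gamma$ (commensurability with $G(\mathbb{Z})$) will transport through conjugation by $g$, yielding that $g\Gamma g^{-1}$ is commensurable with $gG(\mathbb{Z})g^{-1}$, and hence with $G(\mathbb{Z})$.

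To handle the special case, I would fix the embedding $G\hookrightarrow\mathrm{GL}_{n}$ that is implicit in the definition of $G(\mathbb{Z})$, and work with principal congruence subgroups. Since $g\in G(\mathbb{Q})\subseteq\mathrm{GL}_{n}(\mathbb{Q})$, both $g$ and $g^{-1}$ have entries in $\mathbb{Q}$, so I can choose a positive integer $N$ for which both $Ng$ and $Ng^{-1}$ lie in $M_{n}(\mathbb{Z})$. I then claim $g\Gamma(N^{2})g^{-1}\subseteq G(\mathbb{Z})$. Indeed, any $h\in \Gamma(N^{2})$ has the form $h=I+N^{2}h''$ with $h''\in M_{n}(\mathbb{Z})$, so
\begin{equation*}
ghg^{-1}=I+N^{2}\,g h'' g^{-1}=I+(Ng)\,h''\,(Ng^{-1}),
\end{equation*}
which lies in $\mathrm{GL}_{n}(\mathbb{Z})$, and hence in $G(\mathbb{Q})\cap\mathrm{GL}_{n}(\mathbb{Z})=G(\mathbb{Z})$.

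Now the conjugate $g\Gamma(N^{2})g^{-1}$ sits inside both $G(\mathbb{Z})$ and $gG(\mathbb{Z})g^{-1}$. Since $\Gamma(N^{2})$ has finite index in $G(\mathbb{Z})$ (this is a basic property of principal congruence subgroups, arising from the finiteness of $G(\mathbb{Z}/N^{2})$), the same holds for its conjugate $g\Gamma(N^{2})g^{-1}$ inside $gG(\mathbb{Z})g^{-1}$. Therefore $g\Gamma(N^{2})g^{-1}$ has finite index in $G(\mathbb{Z})\cap gG(\mathbb{Z})g^{-1}$ as well, and this intersection has finite index in each of $G(\mathbb{Z})$ and $gG(\mathbb{Z})g^{-1}$, establishing their commensurability.

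The main step — if there is one — is the matrix computation showing that conjugation by a $\mathbb{Q}$-point absorbs into a congruence subgroup of large enough level, and this is where I rely on the fact that a common denominator for $g$ and $g^{-1}$ exists. One minor subtlety worth checking is independence from the chosen embedding $G\hookrightarrow \mathrm{GL}_{n}$: but since the proposition asserts the conjugate is arithmetic rather than congruence, and since arithmeticity itself is independent of the embedding (up to commensurability, any two integral models of $G$ yield commensurable lattices), no further care is needed.
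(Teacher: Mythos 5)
Your argument is correct in substance, and it is worth noting that the paper gives no proof of this proposition at all: it simply cites \cite[\textsection 4.1 Corollary 1]{platonov}. Your congruence-subgroup absorption argument is essentially the standard proof found in that reference, so you have supplied what the paper delegates to the citation. Two small points should be patched. First, the displayed computation only shows that $ghg^{-1}$ has integer entries; to conclude $ghg^{-1}\in\mathrm{GL}_{n}(\mathbb{Z})$ you should also note either that the same computation applied to $h^{-1}\in\Gamma(N^{2})$ makes $gh^{-1}g^{-1}$ integral, or simply that $\det(ghg^{-1})=\det(h)=\pm 1$. Second, your final sentence asserts that $G(\mathbb{Z})\cap gG(\mathbb{Z})g^{-1}$ has finite index in \emph{each} of $G(\mathbb{Z})$ and $gG(\mathbb{Z})g^{-1}$, but what precedes it only yields finite index in $gG(\mathbb{Z})g^{-1}$ (via the chain $g\Gamma(N^{2})g^{-1}\subseteq G(\mathbb{Z})\cap gG(\mathbb{Z})g^{-1}\subseteq gG(\mathbb{Z})g^{-1}$). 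The other half needs the symmetric argument: since you chose $N$ so that $Ng^{-1}$ is also integral, the same computation gives $g^{-1}\Gamma(N^{2})g\subseteq G(\mathbb{Z})$, equivalently $\Gamma(N^{2})\subseteq gG(\mathbb{Z})g^{-1}$, hence $\Gamma(N^{2})\subseteq G(\mathbb{Z})\cap gG(\mathbb{Z})g^{-1}$ and the intersection has finite index in $G(\mathbb{Z})$ as well. With these one-line fixes, the reduction to the case $\Gamma=G(\mathbb{Z})$ via transitivity of commensurability, and your closing remark on independence of the embedding, are fine.
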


In particular $\Gamma\cap g\Gamma g^{-1}$ has finite index in $\Gamma$. Not only that, but choosing $N$ large enough so that the denominators that appear in $g$ and $g^{-1}$ are coprime with $N$, we get that $\Gamma(N)\subseteq\Gamma\cap g\Gamma g^{-1}$, and therefore $\Gamma\cap g\Gamma g^{-1}$ is a congruence subgroup of $G(\mathbb{Z})$. In the following Lemma we define the notation for the special subvariety $Z^{V,i}_{\overline{g}}$.\index{z@$Z^{V,i}_{\overline{g}}$} These subvarieties play a major role in the model theory of Shimura varieties.

\begin{lem}[cf. {{\cite[Lemma 2.6]{daw-harris}}}]
\label{lem:2.6}
Let $\overline{g}=(g_{1},\ldots,g_{n})$ be a tuple of elements belonging to $G^{\mathrm{ad}}(\mathbb{Q})^{+}$ and let $X_{V,i}^{+}$ be a special domain in $\left(X^{+}\right)^{m}$. The image of the map 
\begin{align*}
    f:\quad X_{V,i}^{+}\qquad &\xrightarrow{\qquad} \quad S(\mathbb{C})^{m+n}\\ (x_{1},\ldots,x_{m})&\mapsto\left(p\left(\overline{g}x_{1}\right),\ldots,p\left(\overline{g}x_{m}\right)\right)
\end{align*}
 is a special subvariety which we will denote $Z^{V,i}_{\overline{g}}$.
\end{lem}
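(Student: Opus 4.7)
The plan is to realise $\operatorname{im}(f)$ as an irreducible component of the image of a morphism of connected Shimura varieties, after which the conclusion is immediate from the definition of special subvariety. Start by invoking Remark \ref{rem:sub} (applied factorwise) to obtain a connected Shimura subdatum $(H,X_{V,i}^{+})$ of $(G^{m},(X^{+})^{m})$ realising the special domain $X_{V,i}^{+}$: here $H\subseteq G^{m}$ is a $\mathbb{Q}$-algebraic subgroup and $X_{V,i}^{+}=H(\mathbb{R})^{+}\cdot x_{0}$ for a Hodge-generic $x_{0}\in X_{V,i}^{+}$, with $\Gamma^{H}:=H(\mathbb{Q})_{+}\cap\Gamma^{m}$ a congruence subgroup of $H$.

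Next, I would encode the action of $\overline{g}$ as a morphism of Shimura data. Each $g_{j}\in G^{\mathrm{ad}}(\mathbb{Q})^{+}$ defines, via conjugation, a $\mathbb{Q}$-endomorphism $c_{g_{j}}:G\to G$ (inner automorphisms factor through $G^{\mathrm{ad}}$, so this is well-defined); assembling these gives a $\mathbb{Q}$-homomorphism $\psi:G\to G^{n}$, $h\mapsto(c_{g_{1}}(h),\ldots,c_{g_{n}}(h))$, which on the Shimura side sends $x:\mathbb{S}\to G_{\mathbb{R}}$ to $(g_{1}x,\ldots,g_{n}x)=\overline{g}x$. Applying $\psi$ factorwise through the inclusion $H\hookrightarrow G^{m}$ produces a $\mathbb{Q}$-homomorphism $\tilde{\psi}:H\to G^{mn}$ that sends $(x_{1},\ldots,x_{m})\in X_{V,i}^{+}$ to $(\overline{g}x_{1},\ldots,\overline{g}x_{m})$, and this is a morphism of connected Shimura data into $(G^{mn},(X^{+})^{mn})$.

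To upgrade $\tilde{\psi}$ to a morphism of Shimura varieties, set $H':=\tilde{\psi}(H)$ and $\Gamma^{H'}:=H'(\mathbb{Q})_{+}\cap\Gamma^{mn}$. Proposition \ref{prop:platonov} ensures that each $g_{j}\Gamma g_{j}^{-1}$ is commensurable with $\Gamma$, so there is a finite-index congruence subgroup of $\Gamma^{H}$ whose image under $\tilde{\psi}$ lies in $\Gamma^{H'}$. This yields a morphism of connected Shimura varieties $[\tilde{\psi}]:\Gamma^{H}\backslash X_{V,i}^{+}\to\Gamma^{mn}\backslash(X^{+})^{mn}$, whose set-theoretic image is exactly
\[
\bigl\{\bigl(p(\overline{g}x_{1}),\ldots,p(\overline{g}x_{m})\bigr):(x_{1},\ldots,x_{m})\in X_{V,i}^{+}\bigr\}=\operatorname{im}(f).
\]
Since $X_{V,i}^{+}$ is connected, so is $\operatorname{im}(f)$, so it is an irreducible component of the image of $[\tilde{\psi}]$ and hence a special subvariety of the appropriate Cartesian power of $S$; this is the variety we baptise $Z_{\overline{g}}^{V,i}$.

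The main technical point, and essentially the only place requiring real care, is constructing $\tilde{\psi}$ as an honest $\mathbb{Q}$-morphism of algebraic groups when the $g_{j}$ only live in $G^{\mathrm{ad}}(\mathbb{Q})^{+}$, together with the commensurability bookkeeping via Proposition \ref{prop:platonov} needed to descend from the analytic covers to actual morphisms of connected Shimura varieties. Once those are in place, the identification of $\operatorname{im}(f)$ with the image of $[\tilde{\psi}]$ is a direct unpacking of definitions.
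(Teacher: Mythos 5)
Your proposal is correct and follows essentially the same route as the paper: the paper likewise takes the $\mathbb{Q}$-subgroup $H\leq G^{m}$ attached to $X^{+}_{V,i}$, maps it into a power of $G$ by conjugating each factor by the $g_{j}$ (a $\mathbb{Q}$-morphism since $g_{j}\in G^{\mathrm{ad}}(\mathbb{Q})^{+}$), and passes to a morphism of connected Shimura varieties whose image is exactly $\mathrm{im}(f)$, which is special by definition. The only cosmetic differences are that the paper writes the relevant congruence subgroup explicitly as $\left(g_{1}^{-1}\Gamma g_{1}\cap\cdots\cap g_{n}^{-1}\Gamma g_{n}\right)^{m}\cap H$ rather than invoking Proposition \ref{prop:platonov}, and accordingly the induced Shimura morphism has as source the quotient by that smaller congruence subgroup rather than by $\Gamma^{H}$ itself (a slip in your wording that does not change the image, hence not the conclusion).
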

\begin{proof}
The Shimura datum of $S(\mathbb{C})^{m+n}$ is $\left(G^{m+n},\left(X^{+}\right)^{m+n}\right)$. Let $H$ be the $\mathbb{Q}$-subgroup of $G^{m}$ acting on $X^{+}_{V,i}$ given by the definition of $X^{+}_{V,i}$. Consider the morphism of algebraic groups 
\begin{align*}
    \phi:\quad H\qquad &\xrightarrow{\qquad\qquad\qquad\qquad}\quad G^{m+n}\\
    \left(h_{1},\ldots,h_{m}\right)&\mapsto\left(g_{1}h_{1}g_{1}^{-1},\ldots,g_{n}h_{1}g_{n}^{-1},g_{1}h_{2},g_{1}^{-1},\ldots,g_{n}h_{m}g_{n}^{-1}\right).
\end{align*}
This induces a morphism of connected Shimura data $\phi:(H,X_{V,i}^{+})\rightarrow\left(G^{m+n},\left(X^{+}\right)^{m+n}\right)$. Let $K$ be a compact open subgroup of $G(\mathbb{A}_{f})$ such that if $\Gamma = G(\mathbb{Q})_{+}\cap K$, then $S(\mathbb{C})=\Gamma\backslash X^{+}$. Then $S(\mathbb{C})^{m+n} = \Gamma^{m+n}\backslash \left(X^{+}\right)^{m+n}$. 

Let $\Gamma' = \left(g_{1}^{-1}\Gamma g_{1}\cap\cdots\cap g_{n}^{-1}\Gamma g_{n}\right)^{m}\cap H(\mathbb{A}_{f})$. Notice that $\phi(\Gamma')\subseteq\Gamma^{m+n}$. Therefore, $\phi$ induces a morphism of connected Shimura varieties $\Gamma'\backslash X^{+}_{V,i} \rightarrow \Gamma^{m+n}\backslash\left(X^{+}\right)^{m+n}$ whose image is precisely:
\begin{equation*}
    Z_{\overline{g}}^{V,i}:=\left\{\left(p\left(\overline{g}x_{1}\right),\ldots,p\left(\overline{g}x_{m}\right)\right) : \left(x_{1},\ldots,x_{m}\right)\in X_{V,i}^{+}\right\}.
\end{equation*}
By definition, $Z_{\overline{g}}^{V,i}$ is a special subvariety.
\end{proof}

\begin{remark}[Indexing of the special domains of $Z_{\overline{g}}^{V,i}$]\index{z@$Z^{V,i}_{\overline{g}}$!indexing}
\label{rem:indexzg}
Given the way we have constructed the special domains of $Z_{\overline{g}}^{V,i}$ in Lemma \ref{lem:2.6}, we will fix the indexing and say that $X^{+}_{Z_{\overline{g}}^{V,i},i}$ is the special domain of $Z^{V,i}_{\overline{g}}$ which is parametrised by $X^{+}_{V,i}$. This means that for any $\overline{x}\in X^{+}_{Z_{\overline{g}}^{V,i},i}$ there is $y\in X^{+}_{V,i}$ such that $\overline{x} = \overline{g}y$, so we can write $X^{+}_{Z_{\overline{g}}^{V,i},i} = \overline{g}X^{+}_{V,i}$. Later, when we define Shimura structures in \textsection\ref{sec:ss}, it will be evident that this parametrisation is definable in the language of Shimura structures.
\end{remark}

\begin{remark}
When $X_{V,i}^{+}=X^{+}$ (and so $V=S$),  we will simplify the notation to be $Z_{\overline{g}} := Z_{\overline{g}}^{S}$.\index{z@$Z_{\overline{g}}$}.
\end{remark}

The special subvarieties $Z_{\overline{g}}^{V,i}$ encode a lot of the information on Galois representations attached to points in $S$, and they will become essential in our model theoretic approach. It is rather crucial to understand how they relate to each other, so in what remains of this section we will recall the treatment done in \cite[\textsection 2.5]{daw-harris} and extend it to the subvarieties $Z_{\overline{g}}^{V,i}$.

Let $\overline{g}=\left(e,g_{1},\ldots,g_{n}\right)$ be a tuple of distinct elements of $G^{\mathrm{ad}}(\mathbb{Q})^{+}$. The corresponding variety $Z_{\overline{g}}$ is biholomorphic to $\Gamma_{\overline{g}}\backslash X^{+}$, where:\index{$\Gamma_{\overline{g}}$}
\begin{equation*}
    \Gamma_{\overline{g}}:=\Gamma\cap g_{1}^{-1}\Gamma g_{1}\cap\cdots\cap g_{n}^{-1}\Gamma g_{n}.
\end{equation*}
Suppose $\left\{h_{1},\ldots,h_{m}\right\}$ is a subset of $G$ such that $\left\{g_{1},\ldots,g_{n}\right\}\subseteq\left\{h_{1},\ldots,h_{m}\right\}$. Let $\overline{h}:=(e,h_{1},\ldots,h_{m})$. Then there is a finite morphism \index{$\psi_{\overline{h},\overline{g}}$}
\begin{equation*}
    \psi_{\overline{h},\overline{g}}:Z_{\overline{h}}\rightarrow Z_{\overline{g}}
\end{equation*}
induced by the natural map $\Gamma_{\overline{h}}\backslash X^{+}\rightarrow \Gamma_{\overline{g}}\backslash X^{+}$. If $\Gamma_{\overline{h}}$ is normal in $\Gamma_{\overline{g}}$, then the fibres carry a simply transitive action of $\Gamma_{\overline{g}}/\Gamma_{\overline{h}}$.  \textbf{For the remainder of this section, we will only consider the case when $\Gamma_{\overline{h}}$ is normal in $\Gamma_{\overline{g}}$.}

The action of $\Gamma_{\overline{g}}/\Gamma_{\overline{h}}$ on the fibers of $\psi_{\overline{h},\overline{g}}$ in $Z_{\overline{h}}$ is given by regular maps defined over $E^{S}(\Sigma)$. Choose $z\in Z_{\overline{g}}$ and let $L$ be a finitely generated field extension of $E^{S}(\Sigma)$ such that $z$ is defined on $L$. Then $\mathrm{Aut}(\mathbb{C}/L)$ acts on the fibre of $\psi_{\overline{h},\overline{g}}$ over $z$ and, because the action of $\Gamma_{\overline{g}}/\Gamma_{\overline{h}}$ is given by regular maps defined over $E^{S}(\Sigma)$, then this action commutes with the action of $\Gamma_{\overline{g}}/\Gamma_{\overline{h}}$. The action of $\Gamma_{\overline{g}}/\Gamma_{\overline{h}}$ on the fibres of $\psi_{\overline{h},\overline{g}}$ is transitive and (as $\Gamma$ is torsion free in $G^{\mathrm{ad}}(\mathbb{Q})^{+}$) free. Therefore, just as in \textsection \ref{subsec:galgen}, for any point in the fibre above $z\in Z_{\overline{g}}$ we obtain a continuous homomorphism 
\begin{equation*}
    \mathrm{Aut}(\mathbb{C}/L)\rightarrow\Gamma_{\overline{g}}/\Gamma_{\overline{h}}.
\end{equation*}

If $\overline{z}=(z_{1},\ldots,z_{m})\in Z_{\overline{g}}^{V,i}\subseteq \left(Z_{\overline{g}}\right)^{m}$ for some special subvariety $V\subseteq\left(S(\mathbb{C})\right)^{m}$, then the map $\psi_{\overline{h},\overline{g}}$ induces a map 
\begin{equation*}\index{$\psi_{\overline{h},\overline{g}}^{V,i}$}
    \psi_{\overline{h},\overline{g}}^{V,i}:Z_{\overline{h}}^{V,i}\rightarrow Z_{\overline{g}}^{V,i}.
\end{equation*}
The stabilizer of the fiber of $\psi_{\overline{h},\overline{g}}^{V,i}$ over $\overline{z}$ is a subgroup of $\left(\Gamma_{\overline{g}}/\Gamma_{\overline{h}}\right)^{m}$ which we will denote $\Gamma_{\overline{g},\overline{h}}^{V,i}$. Notice that in this case, if $L$ also contains the coordinates of special points in $S(\mathbb{C})$, the homomorphism $\mathrm{Aut}(\mathbb{C}/L)\rightarrow\left(\Gamma_{\overline{g}}/\Gamma_{\overline{h}}\right)^{m}$ factors through $\mathrm{Aut}(\mathbb{C}/L)\rightarrow \Gamma_{\overline{g},\overline{h}}^{V,i}$. 

For every special subvariety $V\subseteq S^{m}$, we have obtained an inverse system of varieties $Z_{\overline{g}}^{V,i}$, where the tuples $\overline{g}$ are taken so that $\Gamma_{\overline{g}}\lhd\Gamma$. This induces an inverse system of groups $\Gamma_{\overline{g},\overline{h}}^{V,i}$. We can therefore define the inverse limits
\begin{equation*}
    \overline{\Gamma}:=\varprojlim_{\overline{g}}\Gamma/\Gamma_{\overline{g}}\;\mbox{ and }\; \overline{\Gamma^{V,i}}:=\varprojlim_{\overline{g}}\Gamma_{e,\overline{g}}^{V,i},
\end{equation*} \index{$\overline{\Gamma}$} \index{$\overline{\Gamma^{V,i}}$}
where the limits are taken over all tuples of distinct elements $\overline{g} = (e,g_{1},\ldots,g_{n})$ such that $\Gamma_{\overline{g}}$ is normal in $\Gamma$. If $z$ has coordinates in an extension $L$ of $E^{S}(\Sigma)$, then, by our discussion in \textsection\ref{subsec:galgen}, we get the conjugacy class of a continuous homomorphism $\mathrm{Aut}(\mathbb{C}/L)\rightarrow\overline{\Gamma^{V,i}}$. 
Analogously, if we take a tuple $\overline{z}=(z_{1},\ldots,z_{m})\in S(\mathbb{C})^{n}$ with coordinates in an extension $L$ of $E^{S}(\Sigma)$, then we get the conjugacy class of a continuous homomorphism $\mathrm{Aut}(\mathbb{C}/L)\rightarrow\overline{\Gamma}^{m}$.

\subsection{Normal Subgroups in \texorpdfstring{$\Gamma$}{G}}
\label{subsec:normal}
We now address the question of whether there are normal subgroups of $\Gamma$ of the form $\Gamma_{\overline{g}}$. 

\begin{thm}[see {{\cite[\textsection 4.1 Theorem 4.2]{platonov}}}]
Let $\Gamma$ be an arithmetic subgroup of an algebraic group $G$ defined over $\mathbb{Q}$. Then $\Gamma$ is finitely presented as an abstract group.
\end{thm}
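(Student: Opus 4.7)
The plan is to reduce to the case where $G$ is semisimple and $\Gamma$ is torsion-free, then extract a finite presentation from the combinatorial structure of a fundamental domain for the action of $\Gamma$ on the associated symmetric space.

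First, I would reduce to the reductive case by exploiting the unipotent radical $U$ of $G$. This yields a short exact sequence
\begin{equation*}
1 \longrightarrow \Gamma \cap U(\mathbb{Q}) \longrightarrow \Gamma \longrightarrow \Gamma/(\Gamma \cap U(\mathbb{Q})) \longrightarrow 1,
\end{equation*}
whose kernel is an arithmetic subgroup of the unipotent group $U$, hence finitely generated nilpotent, hence finitely presented (every finitely generated nilpotent group is polycyclic, hence finitely presented), and whose quotient embeds as an arithmetic subgroup of the reductive group $G/U$. Since finite presentation is closed under extensions, it suffices to prove the theorem for $G$ reductive. Next, using Selberg's lemma (or Proposition \ref{prop:borel} for the neat refinement), pass to a torsion-free subgroup $\Gamma_{0} \leq \Gamma$ of finite index; because finite presentation passes between groups and finite-index subgroups in both directions, it is enough to show that $\Gamma_{0}$ is finitely presented.

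Now I would consider the action of $\Gamma_{0}$ on the symmetric space $X = G(\mathbb{R})/K$, where $K$ is a maximal compact subgroup. This space is contractible, and $\Gamma_{0}$ acts freely and properly discontinuously on it, so $\Gamma_{0}\backslash X$ is a $K(\Gamma_{0},1)$ manifold. If $G$ is anisotropic over $\mathbb{Q}$, then by Borel--Harish-Chandra the quotient $\Gamma_{0}\backslash X$ is compact, and a compact aspherical manifold is a finite $K(\pi,1)$, immediately yielding finite presentation. Otherwise, I would invoke the Borel--Serre compactification: there is a $\Gamma$-equivariant enlargement $\overline{X}$ of $X$, obtained by gluing boundary faces indexed by rational parabolic subgroups, such that $\Gamma_{0}\backslash\overline{X}$ is a compact manifold with corners that is homotopy equivalent to $\Gamma_{0}\backslash X$. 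Finite presentation of $\Gamma_{0}$ then follows because its classifying space has the homotopy type of a finite CW complex.

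The main obstacle is the reduction theory underlying this argument: constructing Siegel fundamental sets, verifying their covering and finiteness properties, and ultimately assembling the Borel--Serre corners require substantial technical work with parabolic subgroups, their Langlands decompositions, and the asymptotic geometry of $X$. An alternative, somewhat more combinatorial route that avoids the full Borel--Serre machinery is to work directly with a Siegel domain $\mathfrak{S}$ and the finite set $\{\gamma \in \Gamma_{0} : \gamma\mathfrak{S}\cap \mathfrak{S}\neq\emptyset\}$ of ``side-pairing'' elements; one then shows by a Poincar\'e-style argument that these elements generate $\Gamma_{0}$ and that the relations coming from triple intersections suffice for a presentation. Either route, the essential content is the same: the finite-volume quotient $\Gamma_{0}\backslash X$ admits a finite combinatorial model, and arithmeticity is what guarantees this finiteness.
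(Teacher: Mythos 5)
The paper offers no proof of this statement: it is quoted verbatim from \cite[\S 4.1, Theorem 4.2]{platonov}, so there is no internal argument to compare yours against, and your outline should be judged against the cited literature. At that level it is essentially correct. The reduction via the unipotent radical works because the kernel $\Gamma\cap U(\mathbb{Q})$ is finitely generated nilpotent, hence finitely presented, the image of $\Gamma$ in $G/U$ is again arithmetic (Borel's theorem on images under $\mathbb{Q}$-surjections), and an extension of a finitely presented group by a finitely presented group is finitely presented; the passage to a torsion-free (or neat) finite-index subgroup is likewise harmless in both directions. Of your two geometric routes, the second one --- a Siegel fundamental set $\mathfrak{S}$, the finiteness of $\left\{\gamma\in\Gamma_{0} : \gamma\mathfrak{S}\cap\mathfrak{S}\neq\emptyset\right\}$, and a Macbeath/Behr-type presentation by side-pairing generators and triple-intersection relations --- is precisely the argument of the cited source, which deduces finite presentation from a general criterion for groups acting on a connected, simply connected space with a fundamental set enjoying the Siegel property; the Borel--Serre route is the alternative in the literature and buys more, namely that $\Gamma_{0}$ is the fundamental group of a compact aspherical manifold with corners, hence of finiteness type $F$ and not merely finitely presented. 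Two points to mind if you were to write this out in full: for reductive but non-semisimple $G$ the cocompactness criterion is the absence of nontrivial $\mathbb{Q}$-rational characters (a central $\mathbb{Q}$-split torus destroys compactness even when the derived group is $\mathbb{Q}$-anisotropic), so the central torus must be handled separately or absorbed into the Borel--Serre argument; and in the Poincar\'e/Macbeath step the Siegel set itself is not open, so one must replace it by an open connected fattening before invoking the presentation criterion.
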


In particular, $\Gamma$ is finitely generated. Therefore, for any $n\in \mathbb{N}$, there are only finitely many subgroups of $\Gamma$ of index $n$ (see e.g. \cite[Proposition 5.11]{drutu}). Let $g\in G^{\mathrm{ad}}(\mathbb{Q})^{+}$. Then $\Gamma':= \Gamma\cap g\Gamma g^{-1}$ has finite index in $\Gamma$ by Proposition \ref{prop:platonov}. For every $\gamma\in\Gamma$, $\gamma\Gamma'\gamma^{-1}$ has the same index as $\Gamma'$ in $\Gamma$. Given that there are only finitely many subgroups of $\Gamma$ of this index, the intersection
\begin{equation*}
    \Gamma'': = \bigcap_{\gamma\in\Gamma}\gamma\Gamma'\gamma^{-1}
\end{equation*}
is a finite intersection. On the other hand, it is clear that $\Gamma''$ is normal in $\Gamma$. Also $\gamma\Gamma'\gamma^{-1} = \Gamma\cap\gamma g\Gamma g^{-1}\gamma^{-1}$. And so $\Gamma'' = \Gamma_{\overline{g}}$ for some tuple $\overline{g}$ from $G^{\mathrm{ad}}(\mathbb{Q})^{+}$. 

Observe that this argument can be replicated for any subgroup of $\Gamma$ of the form $\Gamma_{\overline{g}}$ to obtain a normal subgroup of $\Gamma$ of the form $\Gamma_{\overline{h}}$ such that $\overline{h}$ contains the tuple $\overline{g}$.

\section{Shimura Structures}
\label{sec:ss}
Now we will present our first model-theoretic treatment of Shimura varieties (the second approach will be given in \textsection \ref{sec:covers}). For every Shimura variety $S$ we will define a corresponding Shimura structure. Our treatment differs from \cite{daw-harris} in that we have enhanced the language to include names for the special domains of special subvarieties.  

\begin{defi}
A \emph{Shimura structure for $S$}\index{Shimura structure} is a two-sorted structure $\mathbf{q}:=\left<\mathbf{D},\mathbf{S},q\right>$ where:\index{q@$\mathbf{q}$} \index{d@$\mathbf{D}$} \index{s@$\mathbf{S}$}
\begin{enumerate}
    \item $\mathbf{D}:=\left<D;\left\{g\right\}_{g\in G^{\mathrm{ad}}(\mathbb{Q})^{+}},\mathscr{R}_{D}\right>$ is a $G^{\mathrm{ad}}(\mathbb{Q})^{+}$-\emph{covering structure}.\index{covering structure} This means that $D$ is a set with an action of $G^{\mathrm{ad}}(\mathbb{Q})^{+}$ (so the elements of $G^{\mathrm{ad}}(\mathbb{Q})^{+}$ denote function symbols), and $\mathscr{R}_{D}$ is a countable set of subsets of powers of $D$. The elements of $\mathscr{R}_{D}$ are named $D_{V,i}$, \index{d@$D_{V,i}$} where the indexing comes from the indexing of special domains of $X^{+}$.
    \item $\mathbf{S}:=\left<S(F);\mathscr{R}_{S}\right>$ is a \emph{variety structure}. \index{variety structure} This means that $S(F)$ is the set of $F$-points of $S$, where $F$ is an algebraically closed field. Variety structures will be interpreted in the field sort $\mathbf{F}:=\left<F;+,\cdot,F_{0}\right>$,\index{f@$\mathbf{F}$} where $F_{0}$\index{f@$F_{0}$} is a countable subfield of $F$, which is algebraic over $\mathbb{Q}$, contains $E^{S}(\Sigma)$, and which is interpreted as a set of constants in $\mathbf{F}$. $\mathscr{R}_{S}$ is the set of all Zariski closed subsets of $S(F)^{n}$ defined over $F_{0}$ for all $n\in\mathbb{N}$. 
    \item $q:D\rightarrow S(F)$ is a function. 
\end{enumerate}
Every Shimura variety $S$ determines (up to choice of a field of constants $F_{0}$) a natural Shimura structure which we will denote $\mathbf{p}$,\index{p@$\mathbf{p}$} where we take $F=\mathbb{C}$, $D=X^{+}$, $q = p$, and $D_{V,i} = X^{+}_{V,i}$. For $F_{0}$ we will normally use $E^{S}(\Sigma)$.
\end{defi}

We have implicitly defined the language we will use for Shimura structures (which depends on $S$). Let $\mathcal{L}_{D}$ \index{l@$\mathcal{L}_{D}$} be the language we have used in the $G^{\mathrm{ad}}(\mathbb{Q})^{+}$-covering sort, $\mathcal{L}_{F}$ \index{l@$\mathcal{L}_{F}$} the language of the field sort, and let $\mathcal{L}$ \index{l@$\mathcal{L}$} be the language of Shimura structures. $F_{0}$ is countable, so $\mathcal{L}$ is countable. Finally, let $\mathrm{Th}(\mathbf{p})$ \index{t@$\mathrm{Th}(\mathbf{p})$} be the complete first-order theory of the two-sorted structure $\mathbf{p}$ in the language $\mathcal{L}$.

\subsection{Theory of the Variety Sort}
Let $T:=\mathrm{Th}(S(\mathbb{C}))$,\index{t@$T$} which is the complete first-order theory of $S(\mathbb{C})$ in the language $\mathcal{L}_{F}$. We make a few observations, which are standard results from model theory.

\begin{enumerate}
\item The models of $T$ are bi-interpretable with $F$. So any model of $T$ is of the form $S(F)$, for $F$ an algebraically closed field. So $T$ has quantifier elimination, and hence it has finite Morley rank.
\item By \cite[Theorem 6.2.7]{tent-ziegler}, $T$ is totally transcendental.
\item By \cite[Theorem 5.3.3]{tent-ziegler}, $T$ has a prime model. In fact, it is $S\left(\overline{\mathbb{Q}}\right)$.
\item By \cite[Theorem 5.2.6]{tent-ziegler}, as $T$ is countable and totally transcendental, it is also $\omega$-stable.
\end{enumerate}

\begin{remark}[Projections of special sets]
\label{rem:proj}
Let $V\subseteq S^{n}$ be a special subvariety. The Shimura datum of $S^{n}$ is $(G^{n}, X^{n})$ where $G^{n}$ acts coordinate-wise on $X^{n}$. A coordinate projection $\mathrm{pr}:S(F)^{n}\rightarrow S(F)^{m}$ (where $m<n$) is a Shimura morphism as it is induced by the corresponding coordinate projections $G(\mathbb{R})^{n}\rightarrow G(\mathbb{R})^{m}$ and $X^{n}\rightarrow X^{m}$. Therefore, $\mathrm{pr}(V)$ is a special set. 

We can obtain an analogous result for the projections of special domains. Suppose that $\mathrm{pr}(V) = W_{1}\cup\cdots\cup W_{r}$, with each $W_{t}$ a special subvariety. If we take a point in the corresponding coordinate projection $\mathrm{pr}\left(X^{+}_{V,i}\right)$, then this point must belong a special domain of some of the $W_{t}$. So we can write $\mathrm{pr}\left(X^{+}_{V,i}\right) \subseteq X^{+}_{W_{1},j_{1}}\cup\cdots\cup X^{+}_{W_{n},j_{r}}$. This union will consist of exactly one special domain for each $W_{t}$ as $X^{+}_{V,i}$ is connected, and so its projection needs to be connected (remember that distinct special domains of the same special subvariety are disjoint). On the other hand, every point in $W_{t}$ can be lifted to a point in $V$, so we get $\mathrm{pr}\left(X^{+}_{V,i}\right) \subseteq X^{+}_{W_{1},j_{1}}\cup\cdots\cup X^{+}_{W_{n},j_{r}}$.
\end{remark}

\begin{remark}
\label{rem:special}
In our model theoretic setting of Shimura structures, our approach has been purely algebraic, effectively forgetting about the analytic structure on both sorts, only retaining the algebraic vareity structure on $S$, and the special structure and the $G^{\mathrm{ad}}(\mathbb{Q})^{+}$-action on $X^{+}$. But one can instead study Shimura varieties again in a two-sorted way from the perspective of o-minimality (so both $X^{+}$ and $S$ are now defined over real closed fields), in order to see part of the analytic structure. This has been done by B. Zilber in \cite{zilber}. 

In this paper, Zilber studies the structure of special subvarieties in a very general class of arithmetic varieties using o-minimality. Most results are framed in terms of ``weakly special subvarieties'' and ``weakly special domains'', but one can obtain results about special subvarieties using the fact that a weakly special subvariety is special if and only if it contains a special point. The paper proves that weakly special subvarietes define a Noetherian Zariski structure (see \cite[Theorem 4.11]{zilber}) and it also gives a characterisation of special subvarieties (see \cite[Example 6.4]{zilber}). The paper also proves that the special structure of a Shimura curve has trivial geometry in the model-theoretic sense. This result can be interpreted as saying that if we can only see the special subvarieties on the powers of a Shimura curve and we are given a subset $A$ of the curve, then the only points that are algebraic (in the model-theoretic sense) over $A$ are those that belong to the Hecke-orbit of some point in $A$.

It seems from the end of \cite[\textsection 3.2]{zilber} that the author is not sure if mixed Shimura varieties are in the class of arithmetic varieties he is considering. But \cite[\textsection 10.1]{gao} shows that mixed Shimura varieties do satisfy the axioms of \cite[\textsection 3.1]{zilber}.
\end{remark}

\subsection{Axiomatisation}
\label{subsec:axiom}
We will define the special subvarieties condition (SS), which is an axiom scheme in $\mathrm{Th}(\mathbf{p})$ that speaks about the behaviour of $q$. In the following subsection we will show that SS, along with the full theories of both the domain and the variety sort, axiomatise $\mathrm{Th}(\mathbf{p})$. 

First, let us set some notation. If $\overline{g}=(g_{1},\ldots,g_{n})$ is a tuple from $G^{\mathrm{ad}}(\mathbb{Q})^{+}$, then $\overline{g}x = (g_{1}x,\ldots,g_{n}x)$. Whenever we are dealing with powers of the Shimura structure, the covering map will also be denoted by $q$, so that $q(\overline{g}x) = (q(g_{1}x),\ldots,q(g_{n}x))$. 

\begin{defi}
Given a special subvariety $V$ of $S^{n}$, define:
\begin{enumerate}
    \item $\mathrm{SS}^{1}_{V,i}:=\forall\overline{x}\in D_{V,i}\left(q(\overline{x})\in V\right)$.
    \item $\mathrm{SS}^{2}_{V,i}:=\forall z\in V\exists\overline{x}\in D_{V,i}\left(q(\overline{x})=z\right)$.
\end{enumerate}
We define the \emph{special subvarieties condition} \index{SS} to be the axiom scheme  $\mathrm{SS}:=\bigcup_{V,i}\mathrm{SS}^{1}_{V,i}\wedge\mathrm{SS}^{2}_{V,i}$.
\end{defi}

\begin{defi}
Given a tuple $\overline{g}=(g_{1},\ldots,g_{n})$ of elements in $G^{\mathrm{ad}}(\mathbb{Q})^{+}$, consider the sentences:
\begin{enumerate}
    \item $\mathrm{MOD}_{\overline{g},V,i}^{1}:= \forall x\in D_{V,i}\left((q(g_{1}x),\ldots,q(g_{n}x))\in Z_{\overline{g}}^{V,i}\right)$.
    \item $\mathrm{MOD}_{\overline{g},V,i}^{2}:= \forall z\in Z_{\overline{g}}^{V,i} \exists x\in D_{V,i}\left((q(g_{1}x),\ldots,q(g_{n}x))=z\right)$.
\end{enumerate}
Define the \emph{modularity condition}\index{MOD} as the axiom scheme $\mathrm{MOD}: = \bigcup_{\overline{g},V,i}\mathrm{MOD}_{\overline{g},V,i}^{1}\wedge\mathrm{MOD}_{\overline{g},V,i}^{2}.$
\end{defi}

\begin{remark}
Observe that MOD follows from SS. To see this recall Remark \ref{rem:indexzg}, and observe that whenever we have a definable set $A\subseteq D$, we have a corresponding definable set $Z_{\overline{g}}^{q(A)}$ (for any tuple $\overline{g}$ of elements of $G^{\mathrm{ad}}(\mathbb{Q})^{+}$) whose elements all have the form $q(\overline{g}a)$ for some $a\in A$, and there is a definable subset $B$ of $D^{n}$ (where $n$ is the length of $\overline{g}$) such that $q(B)=Z_{\overline{g}}^{q(A)}$, with a definable parametrisation from $A$, i.e. the map $A\rightarrow B$ given by $a\mapsto\overline{g}a$ is definable. This is because we can write in a first-order way the parametrisation of $D_{Z_{\overline{g}}^{V,i},i}$ in terms of $D_{V,i}$: $\overline{y}\in D_{Z_{\overline{g}}^{V,i},i}\iff \exists x\in D_{V,i}(\overline{y} = \overline{g}x)$. 
\end{remark}

Set $\mathrm{T}(\mathbf{p}):= \mathrm{Th}\left<X^{+}; \left\{g\right\}_{g\in G^{\mathrm{ad}}(\mathbb{Q})^{+}},\mathscr{R}_{D}\right>\cup T\cup\mathrm{SS}$. We will show in the next subsection that $\mathrm{T}(\mathbf{p})$ axiomatises $\mathrm{Th}(\mathbf{p})$.

\begin{remark}
\label{rem:mt2}
For $x\in X^{+}$, note that $\mathrm{qftp}(x)$ ``knows'' about the Mumford-Tate group of $x$.\index{Mumford-Tate group} Recall that the Mumford-Tate group determines the smallest special subvariety of $S(\mathbb{C})$ which contains $p(x)$ (see Remark \ref{rem:mt}). So, if $x_{1},x_{2}\in X^{+}$ have different Mumford-Tate groups, then $\mathrm{qftp}(x_{1})\neq\mathrm{qftp}(x_{2})$. 
\end{remark}

\begin{remark}
Suppose that $S$ is a pure Shimura variety. As special points of $S$ are zero-dimensional special subvarieties, then SS implies that if $x\in X^{+}$ is a special point and $g_{x}\in G^{\mathrm{ad}}(\mathbb{Q})^{+}$ fixes $x$ and only $x$ (which exists by Theorem \ref{thm:special}), then 
\begin{equation*}
    \mathrm{T}(\mathbf{p})\models\forall y\in D\left(g_{x}y = y\implies q(y)=p(x)\right),
\end{equation*}
where we view $p(x)$ as a constant symbol (because the coordinates of $p(x)$ are in $F_{0}$). In \cite{daw-harris} this is called the \emph{special points condition}. 
\end{remark}

\subsection{Quantifier Elimination}
Let $p:X^{+}\rightarrow S(\mathbb{C})$ be a Shimura variety and $\mathbf{p}$ its corresponding two-sorted structure. In this section we will show that $\mathrm{Th}(\mathbf{p})$ has quantifier elimination. To do that, we will show that $\mathrm{T}(\mathbf{p})$ has quantifier elimination and is complete. The strategy for this is to use Propositions \ref{prop:qe} and \ref{prop:completeness}.

\begin{prop}
\label{prop:qetp}
$\mathrm{T}(\mathbf{p})$ has quantifier elimination and is complete. In particular, $\mathrm{T}(\mathbf{p}) = \mathrm{Th}(\mathbf{p})$.
\end{prop}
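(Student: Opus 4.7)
The plan is to verify the back-and-forth criteria of Propositions \ref{prop:qe} and \ref{prop:completeness} simultaneously. Since $\mathbf{p}\models\mathrm{T}(\mathbf{p})$, we have $\mathrm{T}(\mathbf{p})\subseteq\mathrm{Th}(\mathbf{p})$, so completeness of $\mathrm{T}(\mathbf{p})$ will force the equality $\mathrm{T}(\mathbf{p}) = \mathrm{Th}(\mathbf{p})$. The set of finite partial isomorphisms between any two models of $\mathrm{T}(\mathbf{p})$ is non-empty (the empty map qualifies, since the two sorts share their complete reducts and the field sort has common constants $F_0$), so the content is to establish back-and-forth between any two $\omega$-saturated models $\mathbf{M}, \mathbf{N} \models \mathrm{T}(\mathbf{p})$.

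Given a finite partial iso $f:(\overline{a},\overline{s})\mapsto(\overline{a}',\overline{s}')$, one must extend by any new element. Extending by $t\in S^M$ is handled by quantifier elimination for ACF: the qf-type of $t$ over $(\overline{a},\overline{s})$ consists only of $\mathcal{L}_F$-atomic formulas (polynomial relations among $t$, $\overline{s}$, and $q$-terms $q^M(g a_i)$); because $f$ preserves all such relations and the theory $T$ of the variety sort has QE, the transferred type is finitely satisfiable in $\mathbf{N}$ and, by $\omega$-saturation, realized by some $t'\in S^N$.

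The main case is extending by $c\in D^M$. Let $\Phi(x)$ be the transfer to $\mathbf{N}$ of $\mathrm{qftp}(c/(\overline{a},\overline{s}))$; by $\omega$-saturation of $\mathbf{N}$ it suffices to show $\Phi$ is finitely satisfiable. A finite $\Phi_0\subseteq\Phi$ involves only finitely many group elements $\overline{h}=(h_1,\ldots,h_n)$, finitely many special-domain relations, and finitely many polynomial relations. The strategy is: in $\mathbf{M}$, choose $(V,i)$ minimal among the special domains containing $c$ that are relevant to $\Phi_0$ --- in the standard model $\mathbf{p}$ this is encoded by the Mumford-Tate group of $c$. By $\mathrm{MOD}_{\overline{h},V,i}^{1}$ (which follows from SS), the tuple $\overline{t}:=(q^M(h_1 c),\ldots,q^M(h_n c))$ lies in $Z_{\overline{h}}^{V,i}(\mathbf{M})$, and the $\mathcal{L}_F$-constraints of $\Phi_0$ further confine $\overline{t}$ to a subvariety $W\subseteq Z_{\overline{h}}^{V,i}$ defined over $F_0(\overline{s})$. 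Because $f$ preserves qf-types in the variety sort, the corresponding subvariety $W'$ over $F_0(\overline{s}')$ has the same defining ideal (after identifying $\overline{s}\leftrightarrow\overline{s}'$), hence is non-empty in $\mathbf{N}$ by the Nullstellensatz; $\omega$-saturation then delivers $\overline{t}'\in W'(\mathbf{N})$ realizing the transferred $\mathcal{L}_F$-qf-type, and $\mathrm{MOD}_{\overline{h},V,i}^{2}$ produces $d\in D_{V,i}(\mathbf{N})$ with $q^N(h_j d)=\overline{t}'_j$ for all $j$.

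The main obstacle will be verifying that $d$ satisfies the remaining $\mathcal{L}_D$-constraints of $\Phi_0$: non-memberships $d\notin D_{V',j}$ and equalities or inequalities $h_1 d = h_2 a'_j$. Minimality of $(V,i)$ handles non-memberships whenever $D_{V',j}$ is disjoint from $D_{V,i}$ in the standard model; otherwise one must exploit the abundance of preimages in the $q^N$-fiber over $\overline{t}'$ and invoke $\omega$-saturation once more to pick a $d$ generic with respect to $\overline{a}'$. For equalities: if $c$ already lies in the $G^{\mathrm{ad}}(\mathbb{Q})^+$-orbit of $\overline{a}$ in $\mathbf{M}$ then $d$ is uniquely forced to be the corresponding element of the orbit of $\overline{a}'$, and compatibility with $\overline{t}'$ follows from the preservation properties of $f$; otherwise a generic choice of $d$ in the fiber avoids accidental coincidences. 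Together these steps yield a realization of $\Phi_0$, establishing finite satisfiability of $\Phi$, hence back-and-forth, and thus both QE and completeness of $\mathrm{T}(\mathbf{p})$.
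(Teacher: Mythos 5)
Your skeleton (back-and-forth between $\omega$-saturated models via Propositions \ref{prop:qe} and \ref{prop:completeness}, with the field-sort part of the quantifier-free type controlled by the minimal variety $W\subseteq Z_{\overline{h}}^{V,i}$ over $L$) matches the paper, but there is a genuine gap at the decisive step. You take $(V,i)$ to be, in effect, the special closure of $c$ alone (``encoded by the Mumford-Tate group of $c$''), so $D_{V,i}\subseteq D$, and you produce $d$ by applying $\mathrm{MOD}^{2}_{\overline{h},V,i}$ to a point of the fibre over $\overline{t}'$. But the quantifier-free type of $c$ over $(\overline{a},\overline{s})$ consists largely of \emph{joint} atomic formulas $(\overline{g}a_{1},\ldots,\overline{g}a_{m},\overline{g}c)\in D_{V'',k}$, i.e.\ the special locus of $c$ over $\overline{a}$ (cf.\ Proposition \ref{prop:typedet1}), and nothing in your construction forces the $d$ you obtain to satisfy these relations together with $\overline{a}'$: $\mathrm{MOD}^{2}$ only prescribes the images $q^{N}(h_{j}d)$, and in a model of $\mathrm{T}(\mathbf{p})$ (no SF is assumed here) distinct preimages of the same fibre point can sit in different special domains relative to $\overline{a}'$, so the images do not determine joint membership. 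Your final paragraph only treats equalities $h_{1}d=h_{2}a'_{j}$ and \emph{non}-memberships; choosing $d$ ``generic in the fibre'' can at best avoid unwanted relations, it cannot create the required \emph{positive} joint ones.

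This is exactly where the paper's proof does its real work: it sets $D_{V,i}:=\mathrm{spcl}(x_{1},\ldots,x_{m},y)$, the special closure of the \emph{whole} tuple, and shows that the existential condition $\exists u\left((\overline{v},u)\in D_{V,i}\wedge \left(q(\overline{g}v_{1}),\ldots,q(\overline{g}v_{m}),q(\overline{g}u)\right)\in W\right)$ is equivalent to a quantifier-free condition on $\overline{v}$, namely membership of $(\overline{g}v_{1},\ldots,\overline{g}v_{m})$ in the finite union of projected special domains (Remark \ref{rem:proj}) intersected with $q^{-1}$ of the projection of $W$ (using quantifier elimination in $T$). That quantifier-free condition is preserved by the partial isomorphism, so a witness $y'$ lying in $D'_{V,i}$ \emph{jointly with} $\overline{x}'$ exists; one application of $\omega$-saturation then handles all tuples $\overline{g}$ simultaneously, and a further one, using that a special domain cannot be covered by finitely many proper special subdomains, makes $D'_{V,i}$ the exact special closure of the extended tuple, which disposes of the negative constraints. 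To repair your argument you would have to replace the appeal to $\mathrm{MOD}$ over $\mathrm{spcl}(c)$ by a transfer of this kind that forces the new witness into the correct special domain together with the parameters.
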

\begin{proof}
Let $\mathbf{q}:=\left<\mathbf{D},\mathbf{S},q\right>$ and $\mathbf{q}':=\left<\mathbf{D}',\mathbf{S}',q'\right>$ be $\omega$-saturated models of $\mathrm{T}(\mathbf{p})$, and suppose we have $x_{1},\ldots,x_{m}\in D$ and $x_{1}',\ldots,x_{m}'\in D'$ such that $\mathrm{qftp}(x_{1},\ldots,x_{m}) = \mathrm{qftp}(x_{1}',\ldots,x_{m}')$. By Propositions \ref{prop:qe} and \ref{prop:completeness}, we need to show that given $y\in D$, there is $y'\in D'$ such that $\mathrm{qftp}(x_{1},\ldots,x_{m},y) = \mathrm{qftp}(x_{1}',\ldots,x_{m}',y')$. Let $L$ be the field generated over $E^{S}(\Sigma)$ by the coordinates of the $q(x_{i})$. 

Let $D_{V,i}$ be the special closure of $(x_{1},\ldots,x_{m},y)$. Using quantifier elimination of $T$ and $\omega$-saturation of $\mathbf{q}'$, we will first show that we can find $y'\in D'$ so that $(x_{1}',\ldots,x_{m}',y')\in D_{U,i}'$ and 
\begin{equation}
\label{eq:qe}
    \bigcup_{\overline{g}}\mathrm{qftp}\left(q(\overline{g}x_{1}),\ldots,q(\overline{g}x_{m}),q(\overline{g}y)/L\right) = \bigcup_{\overline{g}}\mathrm{qftp}\left(q'(\overline{g}x_{1}'),\ldots,q'(\overline{g}x_{m}'),q'(\overline{g}y')/L\right),
\end{equation}
where the unions are taken over all finite tuples $\overline{g}$ of elements of $G^{\mathrm{ad}}(\mathbb{Q})^{+}$. To see the equality of (\ref{eq:qe}) note that for a tuple $\overline{g}$, $\mathrm{qftp}\left(q(\overline{g}x_{1}),\ldots,q(\overline{g}x_{m}),q(\overline{g}y)/L\right)$ is determined by the smallest algebraic variety $W$ defined over $L$ which contains the point $\left(q(\overline{g}x_{1}),\ldots,q(\overline{g}x_{m}),q(\overline{g}y)\right)$. Notice that $W\subseteq  Z_{\overline{g}}^{V,i}$. 

Let $\mathrm{pr}$ be the coordinate projection that sends $(\overline{g}x_{1},\ldots,\overline{g}x_{m},\overline{g}y)\mapsto(\overline{g}x_{1},\ldots,\overline{g}x_{m})$. Applying $\mathrm{pr}$ to $\overline{g}D_{V,i}$ (remember Remark \ref{rem:proj}) will give us finitely many special domains, say $D_{U_{1},j_{1}},\ldots,D_{U_{r},j_{r}}$ (which can be proven in $T(\mathbf{p})$). On the other hand, by quantifier elimination of $T$, $\mathrm{pr}(W)$ is formed of finitely many subvarieties defined over $L$, say $W_{1},\ldots,W_{s}$.  Therefore, the formula 
\begin{equation*}
    \exists u\left((v_{1},\ldots,v_{m},u)\in D_{V,i}\wedge\left(q(\overline{g}v_{1}),\ldots,q(\overline{g}v_{m}),q(\overline{g}u)\right)\in W\right)
\end{equation*}
is equivalent to the quantifier-free formula 
\begin{equation*}
    (\overline{g}v_{1},\ldots,\overline{g}v_{m})\in \left(D_{U_{1},j_{1}}\cup\cdots\cup D_{U_{r},j_{r}}\right)\cap q^{-1}\left(W_{1}\cup\cdots\cup W_{s}\right).
\end{equation*}
As this formula is quantifier-free and it is satisfied by $(x_{1},\ldots,x_{m})$, it is also satisfied by $(x_{1}',\ldots,x_{m}')$, and therefore there is $y'\in D'$ such that $(x_{1}',\ldots,x_{m}',y')\in D_{V,i}'$ and
\begin{equation*}
   \mathrm{qftp}\left(q(\overline{g}x_{1}),\ldots,q(\overline{g}x_{m}),q(\overline{g}y)/L\right) = \mathrm{qftp} \left(q'(\overline{g}x_{1}'),\ldots,q'(\overline{g}x_{m}'),q'(\overline{g}y')/L\right).
\end{equation*}
Not only that, but as $\overline{g}D_{V,i}=\mathrm{spcl}(\overline{g}x_{1},\ldots,\overline{g}x_{m},\overline{g}y)$, then we have that $W\subseteq Z_{\overline{g}}^{V,i}$, and so $(\overline{g}x'_{1},\ldots,\overline{g}x'_{m},\overline{g}y')\in D'_{V,i}$. This means that the left-hand-side of (\ref{eq:qe}) plus the set of formulas
\begin{equation*}
    \left\{\left(\overline{g}v_{1},\ldots,\overline{g}v_{m},\overline{g}u\right)\in \overline{g}D_{V,i}\right\}_{\overline{g}}
\end{equation*}
form a finitely satisfiable set in $\mathbf{q}'$, so $\omega$-saturation gives us the $y'$ we wanted. 

Now we show that $\omega$-saturation allows us to choose $y'$ so that, in addition, $D_{V,j}'$ is the special closure of $\left(\overline{g}x_{1}',\ldots,\overline{g}x_{m}',\overline{g}y'\right)$.  A special domain cannot be covered by finitely many proper special subdomains, so we can avoid finitely many proper special subdomains of $D_{V,i}'$ when choosing $y'$ above. This finishes the proof.
\end{proof}

\subsection{Special Locus}

\begin{defi}
For $D_{V,i}\subseteq D^{m}$ and $a_{1},\ldots,a_{n}\in D$, where $n<m$, let $D_{V,i}(\overline{a})$ denote the set of tuples $\overline{y}=(y_{1},\ldots,y_{m-n})$ such that $(\overline{y},\overline{a})\in D_{V,i}$. Let $y_{1},\ldots,y_{m}\in D$ and $A\subseteq D$. We define the \emph{special locus}\index{special locus} of the tuple $\overline{y}$ over $A$, denoted $\mathrm{sploc}(\overline{y}/A)$,\index{sploc} to be the smallest set of the form $D_{V,i}(\overline{a})$ containing $\overline{y}$, where $\overline{a}$ is ranges through all finite tuples of $A$. 
\end{defi}

The following Lemma shows that the special locus is well-defined. 

\begin{lem}
\label{lem:sploc}
The special locus always exists and, if $C\subseteq D^{m}$ is the special locus of $\overline{y}$ over $A$, then $C$ is definable over $A$ and $q(C)$ is a subvariety of $S^{m}$ defined over $E^{S}(\Sigma,q(A))$. 
\end{lem}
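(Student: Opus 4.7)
The plan is to identify the special locus explicitly and verify its properties. For each finite tuple $\overline{a}$ from $A$, let $D_{(\overline{y},\overline{a})}$ denote the special closure of the single point $(\overline{y},\overline{a})\in D^{m+|\overline{a}|}$; since the special closure of a single point is a single irreducible special subvariety lifting to one connected component, $D_{(\overline{y},\overline{a})}$ is itself a single special domain. Set $D(\overline{a}):=D_{(\overline{y},\overline{a})}(\overline{a})\subseteq D^m$. Any set of the form $D_{V,i}(\overline{a})$ containing $\overline{y}$ has $D_{V,i}\supseteq D_{(\overline{y},\overline{a})}$ by minimality of the special closure, so $D_{V,i}(\overline{a})\supseteq D(\overline{a})$; hence the special locus, if it exists, must coincide with the minimum of $\{D(\overline{a}):\overline{a}\text{ a finite tuple from }A\}$.

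I next show this family is directed downward. For $\overline{a}\subseteq\overline{b}$, the product $D_{(\overline{y},\overline{a})}\times D^{|\overline{b}|-|\overline{a}|}$ is a special domain (by Remark \ref{rem:shimprod}) containing $(\overline{y},\overline{b})$; minimality then gives $D_{(\overline{y},\overline{b})}\subseteq D_{(\overline{y},\overline{a})}\times D^{|\overline{b}|-|\overline{a}|}$, and restricting to the fiber over $\overline{b}$ yields $D(\overline{b})\subseteq D(\overline{a})$. Any two tuples are thus bounded below by their concatenation.

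To extract a minimum, I use Remark \ref{rem:mt}: each $D(\overline{a})$ is the orbit of $\overline{y}$ under a real Lie subgroup $L^{(\overline{a})}\subseteq G^m(\mathbb{R})^+$, namely the projection to $G^m$ of the stabilizer of $\overline{a}$ inside $\mathrm{MT}(\overline{y},\overline{a})(\mathbb{R})^+$. A direct check shows $L^{(\overline{b})}\subseteq L^{(\overline{a})}$ whenever $\overline{a}\subseteq\overline{b}$, so the Lie algebras $\mathrm{Lie}(L^{(\overline{a})})$ form a descending chain of subspaces of the finite-dimensional $\mathrm{Lie}(G^m(\mathbb{R}))$, which must stabilize. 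This forces stabilization of the orbits $D(\overline{a})$ themselves; pick a minimizer $\overline{a}^*$ and set $C:=D(\overline{a}^*)$.

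Finally, $C$ is definable over $A$ by its form. By (SS), $q(C)$ is the fiber over $q(\overline{a}^*)$ of the special subvariety $q(D_{(\overline{y},\overline{a}^*)})$, which is cut out by the equations defining this special subvariety (over $E^S(\Sigma)$) evaluated at $q(\overline{a}^*)$; hence $q(C)$ is a subvariety of $S^m$ defined over $E^S(\Sigma,q(\overline{a}^*))\subseteq E^S(\Sigma,q(A))$. The main technical obstacle lies in the stabilization step: ensuring that stabilization of the Lie algebras genuinely forces the corresponding orbits of $\overline{y}$ to stabilize, which requires careful handling of connected components of the (possibly disconnected) stabilizer groups $L^{(\overline{a})}$.
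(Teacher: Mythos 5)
Your reduction of the problem to the downward-directed family $D(\overline{a})=D_{(\overline{y},\overline{a})}(\overline{a})$ is sound, but the proof has two genuine gaps, one of which you flag yourself. The stabilization step is exactly where the content lies and it is not carried out: equality of the Lie algebras $\mathrm{Lie}\bigl(L^{(\overline{a})}\bigr)$ only pins down the identity components, and since the stabilizer of $\overline{a}$ inside $\mathrm{MT}(\overline{y},\overline{a})(\mathbb{R})^{+}$ is a (real points of an algebraic) centralizer, hence in general disconnected, each $D(\overline{a})$ is a finite union of orbits of the common connected group, and you must still rule out an infinite strict descent among such finite unions. This is repairable (the identity component is normal in $L^{(\overline{a})}$, so each $D(\overline{a})$ is a finite disjoint union of its orbits, and the number of orbits at the first stage after Lie-algebra stabilization bounds any further strict descent), but as written the argument stops at its crux.

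The more serious problem is that your entire mechanism lives in the archimedean picture: Mumford--Tate groups, real points, Lie algebras and orbits in $X^{+}$ (you invoke Remark \ref{rem:mt}). The lemma, however, is stated for the domain sort $D$ of an arbitrary Shimura structure and is later applied to arbitrary models of $\mathrm{Th}(\mathbf{p})$ (Propositions \ref{prop:typedet1}, \ref{prop:typedet}, \ref{prop:genrealisation} and Lemma \ref{lem:def}); in such a model a point of $D$ has no Mumford--Tate group and $D(\overline{a})$ is not an orbit of a real Lie group, so your stabilization argument does not even get started there, and the required finiteness is not a single first-order statement that transfers automatically from $\mathbf{p}$. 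The paper's proof is built precisely to avoid this: it pushes everything to the variety sort via $q$ and the SS axioms, where the images of the sets $D_{V,i}(\overline{a})$ sit inside algebraic subvarieties of $S^{m}$ and Noetherianity of the Zariski topology, available in every model because the variety sort is always $S(F)$ with $F\models\mathrm{ACF}_{0}$, gives the descending chain condition; the finite intersection of the relevant special domains is then a finite union of special domains, exhibiting $C$ in the form $D_{V,i}(\overline{a})$. To keep your route you would need either a transfer argument (uniform, definable bounds proved in $\mathbf{p}$) or a reworking of the stabilization on the algebraic side as in the paper. A smaller point: in the last step you identify $q(C)$ with the full fiber of the special subvariety $q\bigl(D_{(\overline{y},\overline{a}^{*})}\bigr)$ over $q(\overline{a}^{*})$, whereas in general $q(C)$ is only contained in that fiber (for the graph of a Hecke correspondence the fiber has several points while the image of the domain-side fiber is a single one), so the field-of-definition assertion needs more care than ``cut out by the equations of the special subvariety evaluated at $q(\overline{a}^{*})$''.
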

\begin{proof}
We will assume $m=1$; the same proof goes through for higher dimensions. We will preserve the notation from the definition of special locus. To see that the special locus exists, consider first a set of the form $D_{V,i}(\overline{a})$. We know that $V = q(D_{V,i})$ is a special subvariety of $S^{n+1}$. Intersection with the hyperplanes $X_{k} = q(a_{k})$, $k=2,\ldots,n+1$, is still a subvariety (although probably not special), and then if we project onto the first coordinate we get that the set $q(D_{V,i}(\overline{a}))$ is a (possibly reducible) subvariety of $S$. So an arbitrary intersection of sets of the form $D_{V,i}(\overline{a})$ is actually a finite intersection. The finite intersection of special domains can be written as a finite union of some other special domains. So, for this reason, $C = \mathrm{sploc}(y/A)$ is of the form $D_{V,i}(\overline{a})$, for some finite tuple $\overline{a}$ of $A$, and as a set, it is definable over $A$. Also, by the above construction, $q(C)$ is definable over $E^{S}(\Sigma,q(A))$. 
\end{proof}

The next proposition follows from the proof of Proposition \ref{prop:qetp}. We will need this independent result later on, and so we will state it for future reference. We remark that this proposition does not require that the models be $\omega$-saturated. First we introduce the following notation.

Let $\mathbf{q}:=\left<\mathbf{D},\mathbf{S},q\right>$ and $\mathbf{q'}:=\left<\mathbf{D'},\mathbf{S}',q'\right>$ be models of $\mathrm{Th}(\mathbf{p})$. Suppose that there is  a partial isomorphism between $\mathbf{q}$ and $\mathbf{q}'$, represented by the partial isomorphism $\xi:D\dashrightarrow D'$ with finitely generated domain $U\subseteq D$, and an embedding $\sigma:L\rightarrow F'$, where $L$ is the field generated over $F_{0}$ by the coordinates of the images of the points in $U$. 

Observe that a set of the form $D_{V,i}(\overline{a})$ is a subset of $D^{n}$ (for some $n$) which is definable with parameters (given by $\overline{a}$). In what follows, when we write $\xi(\mathrm{sploc}(\overline{x}/U))$, what we mean is $D'_{V,i}(\xi(\overline{a}))$, where $D_{V,i}(\overline{a}) = \mathrm{sploc}(\overline{x}/U)$. 

\begin{prop}
\label{prop:genrealisation}
Let $x\in D_{V,i}\subseteq D$. Then for every tuple $\overline{g}=(g_{1},\ldots,g_{n})$ of $G^{\mathrm{ad}}(\mathbb{Q})^{+}$ there exists $x'\in D'_{V,i}\subseteq D'$ such that $(g_{1}x',\ldots,g_{n}x')\in\xi\left(\mathrm{sploc}(g_{1}x,\ldots,g_{n}x/U)\right)$ and $\left(q'(g_{1}x'),\ldots,q'(g_{n}x')\right)$ realises $\sigma\left(\mathrm{qftp}\left((q(g_{1}x),\ldots,q(g_{n}x))/L\right)\right)$. 
\end{prop}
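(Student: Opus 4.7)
The plan is to recycle the per-$\overline{g}$ step inside the proof of Proposition~\ref{prop:qetp}: the transfer from $\mathbf{q}$ to $\mathbf{q}'$ via the partial isomorphism $(\xi,\sigma)$ is accomplished by quantifier elimination applied to a \emph{single} existential statement, without the need to aggregate infinitely many formulas across varying $\overline{g}$, which is where $\omega$-saturation entered in the proof of Proposition~\ref{prop:qetp}.

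First, using Lemma~\ref{lem:sploc}, write the special locus $C:=\mathrm{sploc}(g_{1}x,\ldots,g_{n}x/U)$ as $D_{W,j}(\overline{a})$ for some special domain $D_{W,j}$ and some finite tuple $\overline{a}$ from $U$. Let $Y\subseteq S^{n}$ be the smallest algebraic subvariety defined over $L$ containing the tuple $(q(g_{1}x),\ldots,q(g_{n}x))$; by quantifier elimination for $\mathrm{ACF}$, $Y$ determines $\mathrm{qftp}((q(g_{1}x),\ldots,q(g_{n}x))/L)$. Let $\overline{c}$ be a finite tuple from $q(U)$ whose entries cut out $Y$ over $F_{0}$.

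The element $x$ witnesses the existential formula
\begin{equation*}
\exists v\Big(v\in D_{V,i}\,\wedge\,(g_{1}v,\ldots,g_{n}v,\overline{a})\in D_{W,j}\,\wedge\,(q(g_{1}v),\ldots,q(g_{n}v))\in Y\Big)
\end{equation*}
in $\mathbf{q}$. By Proposition~\ref{prop:qetp}, this existential is equivalent modulo $\mathrm{T}(\mathbf{p})$ to a quantifier-free formula in the parameters $\overline{a},\overline{c}$. Since $(\xi,\sigma)$ is a partial isomorphism preserving quantifier-free formulas over $U\cup q(U)$, this quantifier-free formula holds at $(\xi(\overline{a}),\sigma(\overline{c}))$ in $\mathbf{q}'$, and hence so does the existential. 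Any witness $x'\in D'$ then satisfies $x'\in D'_{V,i}$, $(g_{1}x',\ldots,g_{n}x')\in D'_{W,j}(\xi(\overline{a}))=\xi(C)$, and $(q'(g_{1}x'),\ldots,q'(g_{n}x'))\in\sigma(Y)$.

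To upgrade the last membership to realisation of the full qftp, one further requires that the image avoid any prescribed proper $\sigma(L)$-subvariety of $\sigma(Y)$. The refined existential, obtained by conjoining the formula above with a negation corresponding to a proper $L$-subvariety $Z\subsetneq Y$, is again witnessed by $x$ in $\mathbf{q}$ (since $(q(g_{1}x),\ldots,q(g_{n}x))$ is by construction generic in $Y$ over $L$, and therefore not in $Z$), is equivalent via QE to a quantifier-free formula, and so transfers to $\mathbf{q}'$ as before. The point worth emphasising is that QE with the partial isomorphism transfers only a single formula at a time, and combining infinitely many such avoidance conditions into one $x'$ is exactly the place where $\omega$-saturation was invoked in Proposition~\ref{prop:qetp}; for the present proposition, however, the conclusion is pointwise in $\overline{g}$ and concerns a single $Y$, so this per-$\overline{g}$ QE step inside the proof of Proposition~\ref{prop:qetp} supplies the required $x'$ directly. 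The main thing to watch out for is to phrase the existential so that its quantifier-free equivalent (given by Proposition~\ref{prop:qetp}) uses only parameters from $U$ and $q(U)$, so that $(\xi,\sigma)$ may indeed act on them.
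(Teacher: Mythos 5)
Up to the transfer of the single existential formula your argument is exactly the per-$\overline{g}$ step of the proof of Proposition~\ref{prop:qetp}, which is indeed how the paper obtains this proposition, and that part is fine: writing $\mathrm{sploc}(g_{1}x,\ldots,g_{n}x/U)=D_{W,j}(\overline{a})$ via Lemma~\ref{lem:sploc}, expressing $Y$ with parameters from $q(U)$ over $F_{0}$, and transferring the quantifier-free equivalent along $(\xi,\sigma)$ does produce $x'\in D'_{V,i}$ with $(g_{1}x',\ldots,g_{n}x')\in\xi\left(\mathrm{sploc}(g_{1}x,\ldots,g_{n}x/U)\right)$ and image in $\sigma(Y)$. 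The gap is in your final step. Realising $\sigma\left(\mathrm{qftp}\left((q(g_{1}x),\ldots,q(g_{n}x))/L\right)\right)$ means the image must avoid \emph{every} proper $\sigma(L)$-subvariety of $\sigma(Y)$, and when $\dim Y>0$ this is an infinite family of conditions even though $\overline{g}$ and $Y$ are fixed. Your argument transfers one avoidance condition at a time, so for each proper $Z\subsetneq Y$ you get \emph{some} witness avoiding $\sigma(Z)$, possibly a different one for each $Z$; nothing you wrote produces a single $x'$ avoiding all of them. The justification you give (``the conclusion is pointwise in $\overline{g}$ and concerns a single $Y$'') misidentifies where the infinitude lies: in the proof of Proposition~\ref{prop:qetp} the finitely satisfiable set that $\omega$-saturation realises already contains, for each fixed $\overline{g}$, all of these inequations, so you cannot simply point back to the per-$\overline{g}$ step after discarding saturation.

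To close the gap some further input is needed. One route: by Proposition~\ref{prop:qetp} the set of tuples $\overline{z}$ of the form $(q(g_{1}v),\ldots,q(g_{n}v))$ with $v\in D_{V,i}$ and $(g_{1}v,\ldots,g_{n}v,\overline{a})\in D_{W,j}$ is constructible over $L$; it contains the $L$-generic point of $Y$, hence contains $Y$ minus a proper closed $L$-subvariety, and this statement transfers along $(\xi,\sigma)$, so any point of $\sigma(Y)(F')$ generic over $\sigma(L)$ yields the required $x'$ in one shot. But this makes visible the real issue: such a generic point need not exist in an arbitrary model of $\mathrm{Th}(\mathbf{p})$ (it requires the transcendence degree of $F'$ over $\sigma(L)$ to be at least $\dim Y$), so the conclusion needs either $\omega$-saturation of $\mathbf{q}'$, or a richness hypothesis on $F'$, or the observation that in the paper's actual use of this proposition (inside Proposition~\ref{prop:main}) every coordinate $q(g_{j}x)$ is algebraic over $L$ because $(q(x),q(g_{j}x))$ lies on the finite correspondence $Z_{(e,g_{j})}$, so $Y$ is zero-dimensional, the type is isolated, and your single-formula transfer already suffices. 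As written, your proof asserts the key genericity claim rather than proving it.
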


In the following lemmas, we will preserve the notation used in the definition of special locus.

\begin{lem}
\label{lem:sploc2}
If $\overline{\alpha}$ is a subtuple of $\overline{g}$ and $y\in X^{+}$ is such that $\overline{g}y\in\mathrm{sploc}(\overline{g}x/A)$, for some $x\in X^{+}$ and $A\subseteq X^{+}$, then $\overline{\alpha}y\in\mathrm{sploc}(\overline{\alpha}x/A)$. 
\end{lem}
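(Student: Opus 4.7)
The plan is to write $\mathrm{sploc}(\overline{g}x/A)=D_{V,i}(\overline{a})$ and $\mathrm{sploc}(\overline{\alpha}x/A)=D_{W,j}(\overline{b})$ for finite tuples $\overline{a},\overline{b}$ from $A$, let $\pi_{0}:D^{n}\to D^{k}$ be the coordinate projection keeping the entries indicated by $\overline{\alpha}$ (with $n=|\overline{g}|$, $k=|\overline{\alpha}|$), and exploit the minimality of $\mathrm{sploc}(\overline{g}x/A)$ to force $\mathrm{sploc}(\overline{g}x/A)$ itself to project into $\mathrm{sploc}(\overline{\alpha}x/A)$.

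The first observation is that $\pi_{0}^{-1}(D_{W,j}(\overline{b}))$ is itself a slice of a special domain. After reordering coordinates, the preimage of $D_{W,j}$ under $\pi_{0}\times\mathrm{id}$ is $D_{W,j}\times D^{n-k}$, which is connected; its image under $q$ is $W\times S^{n-k}$, which is a product of special subvarieties and hence special by Remark \ref{rem:shimprod}. So this product is a special domain, call it $\widetilde{D}$, and $\widetilde{D}(\overline{b})=\pi_{0}^{-1}(D_{W,j}(\overline{b}))$.

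Next, I would form the ``fibre product''
\begin{equation*}
F:=\{(\overline{w},\overline{u},\overline{v})\in D^{n+|\overline{a}|+|\overline{b}|}:(\overline{w},\overline{u})\in D_{V,i}\text{ and }(\overline{w},\overline{v})\in\widetilde{D}\},
\end{equation*}
which is the intersection of two special domains (each being, up to reordering, a product of a special domain with a power of $D$). By the argument in the proof of Lemma \ref{lem:sploc}, this equals a finite union $F=\bigcup_{t}F_{t}$ of special domains. The slice $F(\overline{a},\overline{b})$ equals $D_{V,i}(\overline{a})\cap\pi_{0}^{-1}(D_{W,j}(\overline{b}))$ and contains $\overline{g}x$, so some component $F_{t_{0}}(\overline{a},\overline{b})$ is a special domain slice over $A$ containing $\overline{g}x$. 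By the minimality built into the definition of $\mathrm{sploc}(\overline{g}x/A)=D_{V,i}(\overline{a})$, we conclude $D_{V,i}(\overline{a})\subseteq F_{t_{0}}(\overline{a},\overline{b})\subseteq\pi_{0}^{-1}(D_{W,j}(\overline{b}))$; applying $\pi_{0}$ and using $\overline{g}y\in D_{V,i}(\overline{a})$ yields $\overline{\alpha}y=\pi_{0}(\overline{g}y)\in D_{W,j}(\overline{b})=\mathrm{sploc}(\overline{\alpha}x/A)$, as required.

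The main obstacle is the coordinate bookkeeping: verifying carefully that the preimage of a special domain under a coordinate projection is again a special domain (which reduces to the connectedness check together with closure of ``special'' under products), and then ensuring that the finite-union decomposition from Lemma \ref{lem:sploc} is compatible with slicing, so that $F_{t_{0}}(\overline{a},\overline{b})$ is genuinely of the form $D_{V'',i''}(\overline{c})$ with $\overline{c}$ a tuple from $A$ and hence admissible as a competitor to $\mathrm{sploc}(\overline{g}x/A)$ in the minimality argument.
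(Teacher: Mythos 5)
Your proposal is correct and is essentially the paper's own argument: the paper simply observes that $\mathrm{sploc}(\overline{g}x/A)\subseteq\mathrm{sploc}(\overline{\alpha}x/A)\times\mathrm{sploc}(\overline{\beta}x/A)$ (with $\overline{\beta}$ the complementary subtuple of $\overline{\alpha}$ in $\overline{g}$) and then projects, which is exactly your cylinder-plus-minimality step with $\mathrm{sploc}(\overline{\beta}x/A)$ playing the role of your factor $D^{n-k}$, both resting on the same product-of-special-domains fact. The only inessential difference is that your fibre-product decomposition is redundant: once $\widetilde{D}(\overline{b})=\pi_{0}^{-1}(D_{W,j}(\overline{b}))$ is recognised as a set of the form ``special domain sliced at a tuple from $A$'' containing $\overline{g}x$, the minimality built into the definition of the special locus already gives $D_{V,i}(\overline{a})\subseteq\widetilde{D}(\overline{b})$ directly.
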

\begin{proof}
Let $\overline{\beta}$ be the complement subtuple of $\overline{\alpha}$ in $\overline{g}$, so that $\overline{g}=(\overline{\alpha},\overline{\beta})$. The result follows from noting that 
\begin{equation*}
    \mathrm{sploc}(\overline{g}x/A)\subseteq\mathrm{sploc}(\overline{\alpha}x/A)\times\mathrm{sploc}(\overline{\beta}x/A).
\end{equation*}
\end{proof}

\begin{lem}
\label{lem:sploc3}
Let $\overline{g}_{1},\ldots,\overline{g}_{n}$ be tuples of elements of $G^{\mathrm{ad}}(\mathbb{Q})^{+}$. Let $y_{1},\ldots,y_{n}\in D$, $A\subseteq D$ and let $C:=\mathrm{sploc}(y_{1},\ldots,y_{n}/A)$. Let $\epsilon:C\rightarrow D^{r}$ be given by $(x_{1},\ldots,x_{n})\mapsto(\overline{g}_{1}x_{1},\ldots,\overline{g}_{n}x_{n})$, where $r$ is the sum of the lengths of the $\overline{g}_{i}$. Then the image of $\epsilon$ equals $\mathrm{sploc}(\overline{g}_{1}y_{1}\ldots,\overline{g}_{n}y_{n}/A)$.  
\end{lem}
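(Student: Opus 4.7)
The plan is to establish the two inclusions $B \subseteq \epsilon(C)$ and $\epsilon(C) \subseteq B$ separately, where $B := \mathrm{sploc}(\overline{g}_{1} y_{1}, \ldots, \overline{g}_{n} y_{n} / A)$. By Lemma \ref{lem:sploc} I may write $C = D_{V,i}(\overline{a})$ and $B = D_{U,k}(\overline{b})$ for some special domains $D_{V,i}$, $D_{U,k}$ and finite tuples $\overline{a}, \overline{b}$ from $A$. For brevity, set $\mu(x_{1},\ldots,x_{n}) := (\overline{g}_{1} x_{1}, \ldots, \overline{g}_{n} x_{n})$, so that $\epsilon = \mu|_{C}$.

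For $B \subseteq \epsilon(C)$, the plan is to show that $\epsilon(C)$ is itself of the form $D_{W,j}(\overline{a})$, hence a special locus over $A$ containing $\mu(y_{1},\ldots,y_{n})$; the minimality of $B$ then delivers the inclusion. To this end I would extend $\epsilon$ to a map $\tilde\epsilon$ defined on all of $D_{V,i}$ that applies $\overline{g}_{i}$ to the $i$-th of the first $n$ coordinates and the identity to the coordinates accommodating $\overline{a}$. An argument parallel to the proof of Lemma \ref{lem:2.6}, with the single acting tuple $\overline{g}$ replaced by the list $\overline{g}_{1},\ldots,\overline{g}_{n}$ acting on the respective coordinates, yields a morphism of $\mathbb{Q}$-groups (out of the $\mathbb{Q}$-group $H$ giving rise to $D_{V,i}$) inducing a morphism of connected Shimura data whose image on the $X$-sort is exactly $\tilde\epsilon(D_{V,i})$; this is therefore a special domain $D_{W,j}$. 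Fixing the last coordinates to $\overline{a}$ recovers $\epsilon(C) = D_{W,j}(\overline{a})$, as required.

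For $\epsilon(C) \subseteq B$, I would equivalently prove $C \subseteq \epsilon^{-1}(B)$. The graph $\Gamma_{\mu} := \{(\overline{x},\mu(\overline{x})) : \overline{x} \in (X^{+})^{n}\}$ is a special domain inside $(X^{+})^{n+r}$ by the same Lemma \ref{lem:2.6}-style construction (which in the simplest case yields the Hecke graph $\{(x, gx) : x \in X^{+}\}$), so
\[
\epsilon^{-1}(B) = \mathrm{pr}_{1,\ldots,n}\!\left(\Gamma_{\mu} \cap \bigl((X^{+})^{n} \times B\bigr)\right).
\]
Combining Remark \ref{rem:proj} with the closure argument from the proof of Lemma \ref{lem:sploc} (intersections and coordinate projections of special loci over $A$ are finite unions of special loci over $A$), this exhibits $\epsilon^{-1}(B)$ as a finite union of special loci over $A$. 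Since $(y_{1},\ldots,y_{n})$ belongs to one such component $C'$, the minimality of $C$ forces $C \subseteq C' \subseteq \epsilon^{-1}(B)$, and applying $\epsilon$ yields $\epsilon(C) \subseteq B$.

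The main obstacle I anticipate is the promotion of the Lemma \ref{lem:2.6} construction from a single acting tuple to a list of tuples acting coordinatewise, and in particular the verification that $\tilde\epsilon(D_{V,i})$ and $\Gamma_{\mu}$ really are special domains. Once this adaptation is in hand, the rest of the argument is routine bookkeeping with the closure properties of special loci already established in Remark \ref{rem:proj} and Lemma \ref{lem:sploc}.
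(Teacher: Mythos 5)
Your proof is correct, and your first inclusion ($B\subseteq\epsilon(C)$) is exactly the paper's argument: extend $\epsilon$ to a map on all of $D_{V,i}$ fixing the parameter coordinates, observe via the Lemma \ref{lem:2.6}-style adaptation that its image is a special domain $D_{W,j}$, so $\epsilon(C)=D_{W,j}(\overline{a})$, and invoke minimality of $B$. For the reverse inclusion, however, you genuinely diverge from the paper. The paper first treats the special case where each $\overline{g}_{i}$ is a single group element, where the map is coordinatewise invertible, so one can run the same construction backwards with the $g_{i}^{-1}$ and use minimality of $C$; the general case is then finished by projecting $\mathrm{sploc}(\overline{g}_{1}y_{1},\ldots,\overline{g}_{n}y_{n}/A)$ onto the coordinates of the first entries of the $\overline{g}_{i}$ and appealing to the bijection from the special case. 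Your route instead realizes the graph of $\mu$ as a special domain (again by the same Lemma \ref{lem:2.6} adaptation: up to a permutation of coordinates it is the image of $(X^{+})^{n}$ under the coordinatewise action of the tuples $(e,\overline{g}_{i})$, the graphs of Hecke actions being special domains), writes the preimage of $B$ as a coordinate projection of an intersection, and uses closure of sets of the form $D_{V,i}(\overline{a})$ under intersections and coordinate projections (Remark \ref{rem:proj} and the proof of Lemma \ref{lem:sploc}) together with minimality of $C$. This is a legitimate and arguably cleaner replacement: it avoids both the single-element special case and the somewhat delicate final projection/bijection step of the paper's proof, at the cost of leaning a bit more heavily on the closure properties of special loci over $A$. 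Two small points to tidy: the displayed set is really $\mu^{-1}(B)$ rather than $\epsilon^{-1}(B)=\mu^{-1}(B)\cap C$ (harmless, since $C\subseteq\mu^{-1}(B)$ still yields $\epsilon(C)\subseteq B$), and the graph should be read, as the paper does with its opening ``observation about $X^{+}$'' and in Lemma \ref{lem:sploc4}, as the special domain of $D^{n+r}$ whose standard interpretation in $X^{+}$ is the graph, the transfer to an arbitrary model $D$ being through the first-order theory.
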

\begin{proof}
First we make a quick observation about $X^{+}$. If $X^{+}_{V,i}\subseteq \left(X^{+}\right)^{n}$ is a special domain and $g_{1},\ldots,g_{n}\in G^{\mathrm{ad}}(\mathbb{Q})^{+}$, then the image of the map:
\begin{align*}
X^{+}_{V,i}\quad &\xrightarrow{\qquad} \quad \left(X^{+}\right)^{r}\\
(x_{1},\ldots,x_{n}) &\mapsto (\overline{g}_{1}x_{1},\ldots,\overline{g}_{n}x_{n})
\end{align*}
is again a special domain. This is because we can use a straightforward adaptation of the proof of Lemma \ref{lem:2.6}. 

Now we return to the setting of the Lemma. We know that the special locus has the form $C = D_{V,i}(\overline{a})$ for some tuple of elements of $A$. Let $m$ be the length of $\overline{a}$. Before tackling the full version of the Lemma, let us focus on the following special case. Let $g_{1},\ldots,g_{n}$ be elements of $G^{\mathrm{ad}}(\mathbb{Q})^{+}$. We will prove that the map $\epsilon_{0}:C\rightarrow D^{n}$ given by $(x_{1},\ldots,x_{n})\mapsto(g_{1}x_{1},\ldots,g_{n}x_{n})$ is a bijection between $C$ and $C':=\mathrm{sploc}((g_{1}x_{1},\ldots,g_{n}x_{n})/A)$. Clearly $\epsilon_{0}$ is injective. Let 
\begin{align*}
\epsilon_{0}':\qquad D_{V,i}\qquad &\xrightarrow{\qquad\qquad} \quad D^{n+m}\\ 
(x_{1},\ldots,x_{n+m})&\mapsto (g_{1}x_{1},\ldots,g_{n}x_{n},x_{n+1},\ldots,x_{n+m}).
\end{align*}
By the above observation on $X^{+}$, we know that the image of $\epsilon_{0}'$ is a special domain, which shows that the image of $\epsilon_{0}$ contains $C'$. But now we can repeat the argument backwards to show that $C$ is contained in $\epsilon_{0}^{-1}(C')$. And so $\epsilon(C) = C'$.

Now we prove the general case. Consider the map
\begin{align*}
\epsilon':\qquad D_{V,i}\qquad &\xrightarrow{\qquad\qquad}\quad D^{r+m}\\ (x_{1},\ldots,x_{n+m})&\mapsto(\overline{g}_{1}x_{1},\ldots,\overline{g}_{n}x_{n},x_{n+1},\ldots,x_{n+m}).
\end{align*}
The image of $\epsilon'$ is a special domain of $D^{r+m}$ (because of our above observation on $X^{+}$). Let $D_{W,j}$ be the image of $\epsilon'$. This shows that the image of $\epsilon$ is of the form $D_{W,j}(\overline{a})$, and therefore $\mathrm{sploc}(\overline{g}_{1}y_{1}\ldots,\overline{g}_{n}y_{n}/A)$ is contained in the image of $\epsilon$.

Say $D_{W',j}(\overline{a}'):=\mathrm{sploc}(\overline{g}_{1}y_{1}\ldots,\overline{g}_{n}y_{n}/A)$. Observe that the previous paragraph shows that every point of $D_{W',j}(\overline{a}')$ is of the form $(\overline{g}_{1}x_{1}\ldots,\overline{g}_{n}x_{n})$, for some $(x_{1},\ldots,x_{n})\in C$. Let $\mathrm{pr}$ be the coordinate projection such that $(\overline{g}_{1}y_{1}\ldots,\overline{g}_{n}y_{n},\overline{a})\in D_{W',j}$ gets mapped to $(g_{1}y_{1},\ldots,g_{n}y_{n},\overline{a}')$. Let $D_{V',i}$ be the special domain contained in $\mathrm{pr}(D_{W',j})$ such that $(g_{1}y_{1},\ldots,g_{n}y_{n},\overline{a}')\in D_{V',i}$, with $g_{i}$ being the first coordinate of the tuple $\overline{g}_{i}$. Thanks to the special case, we now know that there is an explicit bijection between $C$ and $\mathrm{sploc}((g_{1}y_{1},\ldots,g_{n}y_{n})/A)$, which finishes the proof.
\end{proof} 

\begin{cor}
\label{cor:sploc}
For any tuple $\overline{g}$ of elements of $G^{\mathrm{ad}}(\mathbb{Q})^{+}$ and any $x\in D$, $\mathrm{sploc}(\overline{g}x/x) = \left\{\overline{g}x\right\}$
\end{cor}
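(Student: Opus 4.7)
The plan is to exhibit $\{\overline{g}x\}$ directly as a set of the form $D_{V,i}(x)$ for some special domain. Since $\mathrm{sploc}(\overline{g}x/x)$ is the smallest set of this shape containing $\overline{g}x$, producing a singleton will force equality.

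First I would augment $\overline{g}$ to include the identity in the last coordinate. Set $\overline{g}' := (g_{1},\ldots,g_{n},e)$, which is still a tuple from $G^{\mathrm{ad}}(\mathbb{Q})^{+}$. Applying Lemma \ref{lem:2.6} with $V = S$ (so that $X_{V,i}^{+} = X^{+}$) gives the special subvariety $Z_{\overline{g}'}\subseteq S^{n+1}$. By the indexing convention of Remark \ref{rem:indexzg}, the associated special domain equals $\overline{g}'X^{+} = \{(g_{1}y,\ldots,g_{n}y,y) : y\in X^{+}\}$, i.e.\ the graph of $y\mapsto\overline{g}y$. Transporting this description back to an arbitrary model of $\mathrm{Th}(\mathbf{p})$ (using that $\mathrm{Th}(\mathbf{p})$ contains SS and hence MOD, together with quantifier elimination from Proposition \ref{prop:qetp}), the corresponding special domain $D_{Z_{\overline{g}'},1}\subseteq D^{n+1}$ is precisely $\{(g_{1}y,\ldots,g_{n}y,y) : y\in D\}$.

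Fixing the last coordinate to be $x$ immediately yields
\begin{equation*}
D_{Z_{\overline{g}'},1}(x) = \{(g_{1}x,\ldots,g_{n}x)\} = \{\overline{g}x\}.
\end{equation*}
This is a singleton set of the form $D_{V,i}(\overline{a})$ with $\overline{a}$ a tuple from $\{x\}$, so by the definition of special locus we have $\mathrm{sploc}(\overline{g}x/x)\subseteq\{\overline{g}x\}$, and the reverse inclusion is obvious.

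A quicker derivation uses Lemma \ref{lem:sploc3}: take $n=1$, $y_{1}=x$, $A=\{x\}$, and $\overline{g}_{1}=\overline{g}$. The diagonal is a special domain of $D^{2}$ (it corresponds to the diagonal morphism of Shimura data $(G,X^{+})\to(G\times G, X^{+}\times X^{+})$), which forces $\mathrm{sploc}(x/x)=\{x\}$. Lemma \ref{lem:sploc3} then identifies $\mathrm{sploc}(\overline{g}x/x)$ with the image of $\epsilon:\{x\}\to D^{n}$, $y\mapsto\overline{g}y$, which is $\{\overline{g}x\}$. There is no real obstacle here; the only subtlety is making sure the graph (or the diagonal) is recognised as a special domain, and this is either invoked directly from Lemma \ref{lem:2.6} with the indexing of Remark \ref{rem:indexzg}, or noted as a standard consequence of the definition of morphism of Shimura data.
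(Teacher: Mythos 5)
Your proposal is correct and follows essentially the route the paper intends: the corollary is stated as an immediate consequence of Lemma \ref{lem:sploc3}, using that the graph of $y\mapsto\overline{g}y$ (equivalently the diagonal, i.e.\ the special domains attached to $Z_{(e,g)}$ and $Z_{(e,e)}$ with the indexing of Remark \ref{rem:indexzg}) is a special domain definable in every model of the complete theory, which is exactly the content of both of your derivations. Either of your two arguments suffices; no gap.
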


\begin{lem}
\label{lem:sploc4}
Given $x_{1},\ldots,x_{n}\in D$ and $A\subseteq D$\ we have that 
\begin{equation*}
    \mathrm{sploc}\left(\overline{x}/A\right) = \mathrm{sploc}\left(\overline{x}/G^{\mathrm{ad}}(\mathbb{Q})^{+}A\right),
\end{equation*}
where $G^{\mathrm{ad}}(\mathbb{Q})^{+} A$ denotes the union of orbits of elements in $A$.  
\end{lem}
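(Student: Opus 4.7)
The plan is to prove the two inclusions separately. The containment $\mathrm{sploc}(\overline{x}/G^{\mathrm{ad}}(\mathbb{Q})^{+}A)\subseteq\mathrm{sploc}(\overline{x}/A)$ is immediate from the definition: since $A\subseteq G^{\mathrm{ad}}(\mathbb{Q})^{+}A$, every set of the form $D_{V,i}(\overline{a})$ with $\overline{a}$ a tuple from $A$ is also one with $\overline{a}$ a tuple from $G^{\mathrm{ad}}(\mathbb{Q})^{+}A$, so the intersection defining the right-hand side is over a larger family and therefore smaller or equal.

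For the reverse inclusion $\mathrm{sploc}(\overline{x}/A)\subseteq\mathrm{sploc}(\overline{x}/G^{\mathrm{ad}}(\mathbb{Q})^{+}A)$, it suffices to show that for every set $D_{V,i}(\overline{b})$ with $\overline{b}=(g_{1}a_{1},\ldots,g_{k}a_{k})$ where $\overline{a}=(a_{1},\ldots,a_{k})\in A^{k}$ and $g_{1},\ldots,g_{k}\in G^{\mathrm{ad}}(\mathbb{Q})^{+}$, one can find a special-domain fibre $D_{V',i'}(\overline{a})$, defined using the parameters $\overline{a}$ from $A$ itself, such that $D_{V',i'}(\overline{a}) = D_{V,i}(\overline{b})$. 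Granting this, $\mathrm{sploc}(\overline{x}/A)$ will be contained in $D_{V',i'}(\overline{a}) = D_{V,i}(\overline{b})$ for every representative of the family defining $\mathrm{sploc}(\overline{x}/G^{\mathrm{ad}}(\mathbb{Q})^{+}A)$, and the inclusion follows by taking intersections.

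To produce $D_{V',i'}$ I would use the observation on $X^{+}$ contained in the proof of Lemma \ref{lem:sploc3}: if $X^{+}_{V,i}\subseteq (X^{+})^{n+k}$ is a special domain and $h_{1},\ldots,h_{k}\in G^{\mathrm{ad}}(\mathbb{Q})^{+}$, then the image of $X^{+}_{V,i}$ under the coordinate-wise map $(x_{1},\ldots,x_{n},z_{1},\ldots,z_{k})\mapsto(x_{1},\ldots,x_{n},h_{1}z_{1},\ldots,h_{k}z_{k})$ is again a special domain. Applying this with $h_{j}=g_{j}^{-1}$ (which is legitimate since $G^{\mathrm{ad}}(\mathbb{Q})^{+}$ is a group and the construction is a bijection on $D^{n+k}$), the image of $D_{V,i}$ is a special domain $D_{V',i'}$. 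By construction $(\overline{y},\overline{a})\in D_{V',i'}$ if and only if $(\overline{y},g_{1}a_{1},\ldots,g_{k}a_{k})=(\overline{y},\overline{b})\in D_{V,i}$, so the fibres satisfy $D_{V',i'}(\overline{a})=D_{V,i}(\overline{b})$, as required.

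The only conceptual point to verify is the group-action invariance of special domains under componentwise multiplication by $G^{\mathrm{ad}}(\mathbb{Q})^{+}$ on a prescribed block of coordinates; this is really the same observation that drives Lemma \ref{lem:sploc3}, and I do not anticipate it to be an obstacle, as it reduces to the construction of $Z^{V,i}_{\overline{g}}$ from Lemma \ref{lem:2.6} applied to the Shimura subdatum determining $X^{+}_{V,i}$. Everything else is formal manipulation of fibres of definable subsets of $D^{n+k}$.
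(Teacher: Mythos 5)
Your proposal is correct and uses essentially the same mechanism as the paper's proof: the block-wise action of a tuple $(e,\ldots,e,g_{1},\ldots,g_{k})$ of elements of $G^{\mathrm{ad}}(\mathbb{Q})^{+}$ maps special domains bijectively to special domains (the observation underlying Lemma \ref{lem:sploc3}, transferred from $X^{+}$ to $D$ by first-order completeness), so fibres over $\overline{a}$ and over $\overline{b}=(g_{1}a_{1},\ldots,g_{k}a_{k})$ coincide. You merely spell out the two inclusions and apply the bijection in the inverse direction ($g_{j}^{-1}$ pulling parameters back into $A$) rather than pushing forward from $\mathrm{sploc}(\overline{x}/A)$ as the paper does, which is an immaterial difference.
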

\begin{proof}
Let $D_{V,i}\left(\overline{a}\right) = \mathrm{sploc}\left(\overline{x}/A\right)$. If $m$ is the length of the tuple $\overline{a}$, then consider the tuple $(e,\ldots,e,g_{1},\ldots,g_{m})$. The map $\epsilon:D_{V,i}\rightarrow D_{W,j}$ defined by that element is a bijection (because the statement that says that it is a bijection between the corresponding special domains of $X^{+}$ is a first-order sentence in the theory of $X^{+}$).
\end{proof}

We finish this section by describing the types in models of $\mathrm{Th}(\mathbf{p})$. By Proposition \ref{prop:qetp}, it is enough to describe the quantifier-free types. First let us set some notation. Let $\mathbf{q}\models\mathrm{Th}(\mathbf{p})$, $x_{1},\ldots,x_{m}\in D$ and $A\subseteq D$. If $v_{1},\ldots,v_{m}$ are first-order variables in $\mathcal{L}_{D}$, we will write 
\begin{equation*}
    \mathrm{sploc}\left((\overline{g}v_{1},\ldots,\overline{g}v_{m})/A\right) = \mathrm{sploc}\left((\overline{g}x_{1},\ldots,\overline{g}x_{m})/A\right)
\end{equation*}
to mean that $\mathrm{sploc}\left((\overline{g}v_{1},\ldots,\overline{g}v_{m})/A\right)$ is the set of all the formulas in $\mathrm{qftp}\left(\overline{x}/A\right)$ which are either the form $(\overline{g}v_{1},\ldots,\overline{g}v_{m})\in D_{V,i}(\overline{a})$ or of the form $\neg\left((\overline{g}v_{1},\ldots,\overline{g}v_{m})\in D_{V,i}(\overline{a})\right)$. 

\begin{prop}
\label{prop:typedet1}
Let $\mathbf{q}\models\mathrm{Th}(\mathbf{p})$, $x_{1},\ldots,x_{m}\in D$, and let $A\subseteq D$ be closed under the action of $G^{\mathrm{ad}}(\mathbb{Q})^{+}$. Then $\mathrm{qftp}\left(\overline{x}/A\right)$ is determined by:
\begin{equation*}
   Q(\overline{v}):= \bigcup_{\overline{g}}\left[\mathrm{qftp}\left(q(\overline{g}x_{1},\ldots,\overline{g}x_{m})/L\right)\cup \left\{\mathrm{sploc}\left((\overline{g}v_{1},\ldots,\overline{g}v_{m})/A\right) = \mathrm{sploc}\left((\overline{g}x_{1},\ldots,\overline{g}x_{m})/A\right)\right\}\right],
\end{equation*}
where the union is taken over all tuples $\overline{g}=(g_{1},\ldots,g_{n})$ of $G^{\mathrm{ad}}(\mathbb{Q})^{+}$, and $L$ is the field generated over $F_{0}$ by the coordinates of the points in $q(A)$. 
\end{prop}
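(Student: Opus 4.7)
The plan is to combine Proposition \ref{prop:qetp} with a case analysis of atomic formulas. Since $\mathrm{Th}(\mathbf{p})$ has quantifier elimination, $\mathrm{tp}(\overline{x}/A) = \mathrm{qftp}(\overline{x}/A)$, so it suffices to show that $Q(\overline{v})$ decides every atomic $\mathcal{L}$-formula $\phi(\overline{v}, \overline{a})$ with $\overline{a}$ a finite tuple from $A$. Such formulas come in three families: (i) equalities of $D$-sort terms, $gv_i = hv_j$ or $gv_i = a$ with $a \in A$; (ii) special-domain memberships $(\overline{h}_1 v_{i_1}, \ldots, \overline{h}_r v_{i_r}, \overline{a}) \in D_{V,j}$ with $\overline{a}$ from $A$; and (iii) polynomial equalities in the $q(gv_i)$ with coefficients from $L = F_{0}(q(A))$.

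Family (iii) is decided directly by the first part of $Q(\overline{v})$: bundling all the relevant group elements into a single tuple $\overline{g}$, the formula is contained in or contradicted by $\mathrm{qftp}(q(\overline{g}\overline{x})/L)$. For family (ii), I would enlarge $\overline{g}$ so that it contains every entry of the $\overline{h}_t$. Then, using that products of special subvarieties with copies of $S$ are again special (via the obvious inclusion and diagonal Shimura morphisms), together with Lemma \ref{lem:sploc3}, the formula $(\overline{h}_1 v_{i_1}, \ldots, \overline{h}_r v_{i_r}, \overline{a}) \in D_{V,j}$ is equivalent to one of the form $(\overline{g}v_1, \ldots, \overline{g}v_m, \overline{a}) \in D_{V', j'}$ for an appropriate larger special domain $D_{V',j'}$, and such formulas are exactly those recorded by the sploc component of $Q(\overline{v})$.

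For family (i), I would invoke the fact that the diagonal embedding $(G, X) \to (G \times G, X \times X)$, $g \mapsto (g, g)$, is a morphism of Shimura data, so the diagonal $X^+_\Delta = \{(x,x) : x \in X^+\}$ is a special domain and therefore has a name $D_\Delta \in \mathscr{R}_D$. Then $gv_i = hv_j \Leftrightarrow (gv_i, hv_j) \in D_\Delta$ and $gv_i = a \Leftrightarrow (gv_i, a) \in D_\Delta$, reducing family (i) to family (ii); it is essential here that $A$ is closed under the $G^{\mathrm{ad}}(\mathbb{Q})^+$-action, so that every parameter $a$ required on the right-hand side is actually available inside $A$.

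The main subtlety lies in family (ii): one must be able to pass back and forth between the sploc of the uniform tuple $(\overline{g}v_1, \ldots, \overline{g}v_m)$ and arbitrary subtuples or rearrangements of $\overline{h}$-translates of the $v_i$. This is precisely the content of Lemmas \ref{lem:sploc2} and \ref{lem:sploc3}, together with the observation (already used in Lemma \ref{lem:2.6}) that the image of a special domain under any coordinate-lift of the form $(x_1, \ldots, x_m) \mapsto (\overline{g}_1 x_1, \ldots, \overline{g}_m x_m)$ is again a special domain, so no further geometric input beyond what has already been established is required.
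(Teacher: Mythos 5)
Your proposal is correct and follows essentially the same route as the paper: reduce $D$-sort equalities to special-domain memberships (your diagonal $D_\Delta$ is exactly the paper's use of the MOD conditions for $Z_{(e,e)}$ and $Z_{(e,g)}$), uniformise general membership formulas into the shape $(\overline{g}v_{1},\ldots,\overline{g}v_{m})\in D_{V,i}(\overline{a})$, decide those by the $\mathrm{sploc}$ component of $Q(\overline{v})$, and decide the variety-sort formulas by $\mathrm{qftp}(q(\overline{g}\overline{x})/L)$. Your explicit appeal to Lemmas \ref{lem:sploc2} and \ref{lem:sploc3} for the uniformisation step merely spells out what the paper leaves implicit, so there is no substantive difference.
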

\begin{proof}
Let $y_{1},\ldots,y_{m}$ be a realisation of $Q(\overline{v})$ in some elementary extension of $\mathbf{q}$ that also satisfies SF. We need to show that $\mathrm{qftp}(\overline{x}/U_{\mathbf{D}}) = \mathrm{qftp}(\overline{y}/U_{\mathbf{D}})$.

The non-trivial atomic formulae in $\mathrm{qftp}(\overline{x}/A)$ that can be written in the domain language $\mathcal{L}_{D}$ are of the form: $x_{i}= a$, $x_{i}= gx_{j}$, $(\overline{g}x_{1},\ldots,\overline{g}x_{m})\in D_{V,i}(\overline{a})$, and their negations, for all parameters $a$ and $\overline{a}$ from $A$, and all $g$, $\overline{g}=(g_{1},\ldots,g_{n})$ from $G^{\mathrm{ad}}(\mathbb{Q})^{+}$. The quantifier-free formulas in the language $\mathcal{L}_{F}$ in $\mathrm{qftp}(\overline{x}/A)$ are already contained in $Q(\overline{v})$. 

Observe that formulae of the forms $x_{i}= a$ or $x_{i}= gx_{j}$ are in fact equivalent to a formula of the form $(\overline{g}x_{1},\ldots,\overline{g}x_{m})\in D_{V,i}(\overline{a})$, using the MOD conditions of the special subvarieties $Z_{(e,e)}$ and $Z_{(e,g)}$. So we only need to focus on this last case.

Note that $(\overline{g}x_{1},\ldots,\overline{g}x_{m})\in D_{V,i}(\overline{a})$, for some $\overline{g}=(g_{1},\ldots,g_{n})$ from $G^{\mathrm{ad}}(\mathbb{Q})^{+}$ and some tuple $\overline{a}$ from $A$, if and only if $\mathrm{sploc}\left((\overline{g}x_{1},\ldots,\overline{g}x_{m})/A\right)\subseteq D_{V,i}(\overline{a})$. So $(\overline{g}x_{1},\ldots,\overline{g}x_{m})\in D_{V,i}(\overline{a})$ implies $(\overline{g}y_{1},\ldots,\overline{g}y_{m})\in D_{V,i}(\overline{a})$. The converse works the same way.
\end{proof}

\section{Models With Standard Fibres}
\label{sec:sf}
From its construction, we know that the fibres of $p:X^{+}\rightarrow S(\mathbb{C})$ consist of exactly one $\Gamma$-orbit. As $\Gamma$ is infinite, it seems unlikely at first sight that this property about the fibres of $p$ can be expressed with a first-order sentence in $\mathcal{L}$. This is verified once we consider the following partial types:
\begin{enumerate}
    \item Let $z\in S(\mathbb{C})$ be a non-Hodge-generic point, that is $\mathrm{spcl}(z) = V\subsetneq S(\mathbb{C})$. For $V$, there are (countably) infintely many corresponding special domains $X^{+}_{V,i}\subset X^{+}$. So the set of formulas (in the variable $v$ with parameter $z$):
    \begin{equation*}
        \left\{(q(v) = z)\wedge (v\notin D_{V,i})\right\}_{i\in\mathbb{N}}
    \end{equation*}
    is finitely satisfiable, and so it is part of a complete type. Therefore it is realised in a saturated model of $\mathrm{Th}(\mathbf{p})$. Observe that if $x$ is a realisation of such a type, then we will see discrepancies between special closures on the domain and special closure on the variety, specifically: $\mathrm{spcl}(q(x))\subsetneq q(\mathrm{spcl}(x))$. In this case, we will say that $x$ is a \emph{non-standard point}.
    \item Even if the domain of a Shimura structure consists only of standard points, we can still have that the fibres of $q$ consists of many $\Gamma$-orbits. To see this, choose $x\in X^{+}$ non-special, let $D_{V,i}=\mathrm{spcl}(x)$, and consider the set of formulas (with variable $v$ and parameter $x$):
    \begin{equation*}
        \left\{(q(v)=q(x))\wedge(v\in D_{V,i})\wedge(\gamma x\neq v)\right\}_{i\in\mathbb{N}, \gamma\in\Gamma}.
    \end{equation*}
    That this set is finitely satisfiable, is a consequence of the fact that, for every $z\in V$, $p^{-1}(z)\cap X^{+}_{V,i}$ is made up of exactly one $\Gamma^{H}$-orbit, where $\Gamma^{H} = \Gamma\cap H(\mathbb{Q})_{+}$, and $H=\mathrm{MT}(x)$. 
\end{enumerate}

We will deal with the presence non-standard points and new standard points when we study Shimura covers. In this section we will instead omit these types as we will restrict to the class of models of $\mathrm{Th}(\mathbf{p})$ that satisfy the \emph{standard fibres condition},\index{SF} which is the $\mathcal{L}_{\omega_{1},\omega}$-sentence:
\begin{equation*}
    \mathrm{SF}:=\forall x, y\in D\left(\left(q(x)=q(y)\right)\rightarrow\bigvee_{\gamma\in\Gamma}(x=\gamma y)\right).
\end{equation*}

\begin{defi}
Given $\mathbf{q}\models\mathrm{Th}(\mathbf{p})$, a point $x\in D$ is called a \emph{standard point} \index{standard point} if $q\left(\mathrm{spcl}(x)\right) = \mathrm{spcl}(q(x))$. A \emph{standard section} \index{standard section} of $q$ is a section $A\subseteq D$ that only contains standard points and such that if $z\in A$, then for every $g\in G^{\mathrm{ad}}(\mathbb{Q})^{+}$ there is $h\in  G^{\mathrm{ad}}(\mathbb{Q})^{+}$ such that $hz\in A$ and $q(hz) = q(gz)$.
\end{defi}

\begin{remark}
\label{rem:stptexist}
We point out that by $\mathrm{MOD}_{\overline{g},V,i}^{2}$, if $q:D\rightarrow S(F)\models \mathrm{Th}(\mathbf{p})$, then every $z\in S(F)$ has a standard point in its $q$-fibre. Also, is $x\in D$ is standard, then $gx$ is standard for every $g\in G^{\mathrm{ad}}(\mathbb{Q})^{+}$.
\end{remark}

Let $\mathrm{Th}_{\mathrm{SF}}(\mathbf{p}) := \mathrm{Th}(\mathbf{p})\cup SF$.\index{t@$\mathrm{Th}_{\mathrm{SF}}(\mathbf{p})$} As the next Proposition shows, we can construct models of $\mathrm{Th}_{\mathrm{SF}}(\mathbf{p})$ from models of $\mathrm{Th}(\mathbf{p})$.

\begin{lem}
\label{lem:sf}
Given any model $\mathbf{q}\models\mathrm{Th}(\mathbf{p})$, there is $\mathbf{q}'\prec\mathbf{q}$ such that $S(F) = S(F')$ and $\mathbf{q}'\models\mathrm{SF}$. 
\end{lem}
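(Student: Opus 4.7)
The plan is to construct $D' \subseteq D$ by a Zorn-style argument that selects, for each $z \in S(F)$, a single $\Gamma$-orbit of standard points in $q^{-1}(z)$ in a manner compatible with the $G^{\mathrm{ad}}(\mathbb{Q})^{+}$-action, and then to verify elementarity via quantifier elimination. Remark \ref{rem:stptexist} guarantees that every fibre of $q$ contains standard points and that standardness is preserved by the $G^{\mathrm{ad}}(\mathbb{Q})^{+}$-action, providing the raw material for the construction.

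First, I would consider the family $\mathcal{F}$ of subsets $D^{*} \subseteq D$ consisting entirely of standard points, closed under both the $\Gamma$- and $G^{\mathrm{ad}}(\mathbb{Q})^{+}$-actions, and such that every non-empty fibre of $q|_{D^{*}}$ is a single $\Gamma$-orbit. Ordered by inclusion, $\mathcal{F}$ contains the empty set and is closed under directed unions (both the closure properties and the fibre condition pass to unions), so Zorn's lemma yields a maximal $D' \in \mathcal{F}$. The crux is to show $q(D') = S(F)$: given $z_{0} \in S(F) \setminus q(D')$, I would produce a standard $x_{0} \in q^{-1}(z_{0})$ such that $D' \cup G^{\mathrm{ad}}(\mathbb{Q})^{+} \cdot x_{0}$ still belongs to $\mathcal{F}$. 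This amounts to two compatibility conditions: within the Hecke orbit of $x_{0}$, distinct elements sharing a $q$-fibre must differ by an element of $\Gamma$, and whenever some $g x_{0}$ shares a fibre with an element of $D'$ the two must lie in the same $\Gamma$-orbit. Both conditions hold in the standard model $\mathbf{p}$ by $\mathrm{SF}$; the idea is to transfer the required $x_{0}$ into $\mathbf{q}$ via quantifier elimination (Proposition \ref{prop:qetp}), the type description in Proposition \ref{prop:typedet1}, and Proposition \ref{prop:genrealisation} applied to the finitely generated piece of $D'$ relevant to the constraints at play.

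With $q(D') = S(F)$, set $\mathbf{q}' := \langle \mathbf{D}', \mathbf{S}, q|_{D'}\rangle$, where $\mathbf{D}'$ inherits the $G^{\mathrm{ad}}(\mathbb{Q})^{+}$-action and the restricted predicates $D_{V,i} \cap (D')^{n}$. The defining conditions of $\mathcal{F}$ immediately yield $\mathbf{q}' \models \mathrm{SF}$. Elementarity $\mathbf{q}' \prec \mathbf{q}$ is then checked by the Tarski--Vaught test: given $\mathbf{q} \models \exists y\, \phi(y, \bar{a})$ with parameters $\bar{a}$ from $D' \cup S(F)$, Proposition \ref{prop:qetp} reduces $\phi$ to a quantifier-free formula and Proposition \ref{prop:genrealisation} (applied with $\mathbf{q}$ in both roles) produces a witness in $D'$ by transferring the relevant quantifier-free type across the inclusion of the finitely generated substructure into $\mathbf{q}'$. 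I expect the main obstacle to be the compatibility step in the maximality argument, since the ``fibres are single $\Gamma$-orbits'' requirement is genuinely $\mathcal{L}_{\omega_{1},\omega}$ and cannot be enforced by a single quantifier-free constraint; the plan is to handle this by approximating the condition through the finite data visible at each stage and by leveraging the fact that the analogous statement is automatic in $\mathbf{p}$, so that each finite obstruction can be avoided by a suitable choice of $x_{0}$.
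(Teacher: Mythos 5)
Your target object — a set $D'$ of standard points, closed under the $G^{\mathrm{ad}}(\mathbb{Q})^{+}$-action and meeting each $q$-fibre in exactly one $\Gamma$-orbit — is the same as the paper's, but the mechanism you propose has a genuine gap at its central step. Zorn's lemma only hands you \emph{some} maximal $D'\in\mathcal{F}$, and maximality yields $q(D')=S(F)$ only if every non-surjective member of $\mathcal{F}$ is extendable, which is precisely what is in doubt. When you try to adjoin the Hecke orbit of a standard $x_{0}\in q^{-1}(z_{0})$ you must satisfy infinitely many constraints simultaneously: whenever $q(gx_{0})=q(g'x_{0})$ the points $gx_{0},g'x_{0}$ must be $\Gamma$-related, and whenever $q(gx_{0})\in q(D')$ the point $gx_{0}$ must land in the one $\Gamma$-orbit $D'$ already has over that fibre. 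These constraints involve the whole Hecke orbit at once, they change completely if you replace $x_{0}$ by a non-$\Gamma$-related standard point of the same fibre, and none of them follows from standardness of $x_{0}$ (which controls $\mathrm{spcl}(x_{0})$ only, not the special loci of pairs $(gx_{0},g'x_{0})$ nor the interaction with $D'$). Your proposed remedy — finite approximation plus Proposition \ref{prop:genrealisation} and the fact that the statement is automatic in $\mathbf{p}$ — cannot supply the witness: $\mathbf{q}$ is an arbitrary, not saturated, model, and the witness has to be an actual element of the fixed fibre $q^{-1}(z_{0})$ satisfying all constraints at once; Proposition \ref{prop:genrealisation} realises quantifier-free types over finitely generated data, it does not locate a point of a prescribed fibre of a prescribed model avoiding infinitely many obstructions. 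In particular a maximal $D'$ built by an uncontrolled Zorn process may be incompatible with every standard point over some $z_{0}$, i.e., maximal but not surjective, and nothing in your argument rules this out.

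The paper organises the construction so that this fibre-by-fibre bookkeeping never arises: it chooses a standard section $A$ of $q$ in one step, working at the level of $G^{\mathrm{ad}}(\mathbb{Q})^{+}$-orbits (first select orbits containing standard points, then representatives modulo $\Gamma$), and sets $D'=\Gamma A$ with predicates $D'_{V,i}=D_{V,i}\cap (D')^{n}$. Since $A$ is a section and $q$ is $\Gamma$-invariant in every model of $\mathrm{Th}(\mathbf{p})$, each fibre of $q|_{D'}$ is a single $\Gamma$-orbit by construction, so SF is immediate; and once $\mathbf{q}'$ is checked to satisfy $\mathrm{T}(\mathbf{p})$ (SS, non-emptiness of the $D'_{V,i}$, the theory of the covering sort), Proposition \ref{prop:qetp} (completeness plus quantifier elimination) makes the substructure $\mathbf{q}'\subseteq\mathbf{q}$ automatically elementary — no Tarski--Vaught argument is needed, and your elementarity step, as written, would in any case again reduce to verifying SS for $D'$ rather than to an appeal to Proposition \ref{prop:genrealisation}. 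If you want to keep a transfinite selection, it must be organised per $G^{\mathrm{ad}}(\mathbb{Q})^{+}$-orbit, as in the paper's notion of standard section, not per fibre.
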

\begin{proof}
First observe that if $x\in D$ is standard, then $gx$ is standard for any $g\in  G^{\mathrm{ad}}(\mathbb{Q})^{+}$. Now choose a standard section of $q$, call it $A$. Standard sections exist because we can look at the space of orbits of $D$ modulo $G^{\mathrm{ad}}(\mathbb{Q})^{+}$ and first choose orbits which contain standard points, and then from each of these orbits, choose representatives modulo $\Gamma$. Let $D' = \Gamma A$ and let $q'$ be the restriction of $q$ to $D'$. Define $\mathbf{q}'$ as the structure given by $q':D'\rightarrow S(F)$, where $D'_{V,i} = D_{V,i}\cap \left(D'\right)^{n}$.

The action of $G^{\mathrm{ad}}(\mathbb{Q})^{+}$ restricts to an action on $D'$ because of the definition of standard section. No $D_{V,i}'$ can be empty because the special points of $D$ are in $D'$. Indeed, this is because the special domains of a special point in $S(F)$ are all singletons. 

It is clear that $\mathbf{q}'$ satisfies SS and, because the $D_{V,i}'$ are just restrictions of the $D_{V,i}$, we have that $\mathbf{q}'$ satisfies $\mathrm{T}(\mathbf{p})$. So $\mathbf{q}'\models\mathrm{Th}_{\mathrm{SF}}(\mathbf{p})$ by Proposition \ref{prop:qetp}.
\end{proof}

When our models have standard fibres, we can strengthen the result from Proposition \ref{prop:typedet1} to give a better description of the types.

\begin{prop}
\label{prop:typedet}
Let $\mathbf{q}\models\mathrm{Th}_{\mathrm{SF}}(\mathbf{p})$, $x_{1},\ldots,x_{m}\in D$, and let $A\subseteq D$ be closed under the action of $G^{\mathrm{ad}}(\mathbb{Q})^{+}$. Then $\mathrm{qftp}(\overline{x}/A)$ is determined (in $\mathrm{Th}_{\mathrm{SF}}(\mathbf{p})$) by:
\begin{equation*}
   Q(\overline{v}):= \bigcup_{\overline{g}}\left[\mathrm{qftp}\left(q(\overline{g}x_{1},\ldots,\overline{g}x_{m})/L\right)\cup \left\{(\overline{g}v_{1},\ldots,\overline{g}v_{m})\in\mathrm{sploc}\left((\overline{g}x_{1},\ldots,\overline{g}x_{m})/A\right)\right\}\right],
\end{equation*}
where the union is taken over all tuples $\overline{g}=(g_{1},\ldots,g_{n})$ of $G^{\mathrm{ad}}(\mathbb{Q})^{+}$, and $L$ is the field generated over $F_{0}$ by the coordinates of the points in $q(A)$. 
\end{prop}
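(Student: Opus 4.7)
The plan is to reduce directly to Proposition \ref{prop:typedet1}: the only difference between the two statements is that the equality of special loci required in \ref{prop:typedet1} has been replaced here by the weaker positive containment $(\overline{g}\overline{v}) \in \mathrm{sploc}((\overline{g}\overline{x})/A)$, so it suffices to show that under SF, for every $\overline{y}$ realising $Q(\overline{v})$ over $A$ and every tuple $\overline{g}$ from $G^{\mathrm{ad}}(\mathbb{Q})^{+}$, we have $\mathrm{sploc}((\overline{g}\overline{y})/A) = \mathrm{sploc}((\overline{g}\overline{x})/A)$. The inclusion $\mathrm{sploc}((\overline{g}\overline{y})/A) \subseteq \mathrm{sploc}((\overline{g}\overline{x})/A)$ is immediate from the positive clause of $Q(\overline{v})$ together with the minimality defining $\mathrm{sploc}$; all the content lies in the reverse inclusion.

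For the reverse inclusion I will argue by contradiction: suppose $\mathrm{sploc}((\overline{g}\overline{y})/A) = D_{W,j}(\overline{b})$ is strictly contained in $D_{V,i}(\overline{a}) = \mathrm{sploc}((\overline{g}\overline{x})/A)$. By Lemma \ref{lem:sploc}, $q(D_{W,j}(\overline{b}))$ is a subvariety of $S^{n}$ defined over $L$ containing $q(\overline{g}\overline{y})$; since $q(\overline{g}\overline{x})$ realises the same quantifier-free type over $L$ by the ACF clause of $Q(\overline{v})$, this forces $q(\overline{g}\overline{x}) \in q(D_{W,j}(\overline{b}))$. Now I invoke $\mathrm{SS}^{2}$ on the ambient special subvariety $W = q(D_{W,j})$ to lift $(q(\overline{g}\overline{x}), q(\overline{b}))$ to some $(\overline{z}, \overline{b}') \in D_{W,j}$, and then SF coordinate-wise replaces $\overline{b}'$ by $\overline{b}$ and $\overline{z}$ by a $\Gamma^{n}$-translate of $\overline{g}\overline{x}$, possibly at the cost of moving to a different special domain $D_{W,j'}$ of $W$ (since $\Gamma^{n+k}$ acts on the set of special domains of the fixed special subvariety $W$). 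Translating back in the first $n$ coordinates yields $\overline{g}\overline{x} \in D_{W,j''}(\overline{b})$ for some index $j''$, and $D_{W,j''}(\overline{b})$ is again an admissible sploc-slice since $A$ is closed under the $G^{\mathrm{ad}}(\mathbb{Q})^{+}$-action.

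By minimality of $\mathrm{sploc}((\overline{g}\overline{x})/A)$ I then obtain $D_{V,i}(\overline{a}) \subseteq D_{W,j''}(\overline{b})$, and combined with the standing assumption $D_{W,j}(\overline{b}) \subsetneq D_{V,i}(\overline{a})$ this gives $D_{W,j}(\overline{b}) \subsetneq D_{W,j''}(\overline{b})$. If $j = j''$ this is already impossible; if $j \neq j''$, the two special domains $D_{W,j}$ and $D_{W,j''}$ are distinct and hence disjoint, forcing $D_{W,j}(\overline{b}) = \emptyset$ and contradicting $(\overline{g}\overline{y}) \in D_{W,j}(\overline{b})$. Either way the assumed strict containment is impossible, so equality of sploc's holds for each $\overline{g}$, and Proposition \ref{prop:typedet1} delivers $\mathrm{qftp}(\overline{x}/A) = \mathrm{qftp}(\overline{y}/A)$.

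The hard part, I expect, will be the careful bookkeeping of how the coordinate-wise $\Gamma$-action permutes the indexing of special domains across the lifting steps: one has to repeatedly use that $\Gamma^{m}$ permutes the set of special domains of a fixed special subvariety, and that translating a sploc-admissible slice $D_{W,j}(\overline{b})$ by an element of $\Gamma$ produces another slice of the same admissible form with parameters still in $A$. Once this bookkeeping is set up cleanly, SF is exactly the extra ingredient that upgrades the positive containment in $Q(\overline{v})$ to the full equality of special loci demanded by Proposition \ref{prop:typedet1}.
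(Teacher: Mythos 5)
Your proposal is correct and follows essentially the same route as the paper's proof: reduce to Proposition \ref{prop:typedet1} by proving equality of special loci, get the easy inclusion from minimality of $\mathrm{sploc}$, and for the converse combine the ACF-part of $Q(\overline{v})$ with SF, the fact that $\Gamma$-translates of a special domain of a fixed special subvariety are again special domains of it, and disjointness of distinct special domains of the same special subvariety. If anything, your lift-and-translate step (use $\mathrm{SS}^{2}$ to lift $\left(q(\overline{g}\overline{x}),q(\overline{b})\right)$ into $D_{W,j}$, then SF to compare with $(\overline{g}\overline{x},\overline{b})$ and translate the domain index) is spelled out more carefully than the paper's compressed assertion that some $\overline{\gamma}\in\Gamma^{\ell m+d}$ carries $(\overline{g}\overline{x},\overline{a})$ to $(\overline{g}\overline{y},\overline{a})$, which read literally would require $q(\overline{g}\overline{x})=q(\overline{g}\overline{y})$; your rendering achieves the same conclusion without that.
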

\begin{proof}
Let $y_{1},\ldots,y_{m}$ be a realisation of $Q(\overline{v})$ in some elementary extension of $\mathbf{q}$ that also satisfies SF. By Proposition \ref{prop:typedet1} is will suffice to show that $\mathrm{sploc}\left((\overline{g}y_{1},\ldots,\overline{g}y_{m})/A\right) = \mathrm{sploc}\left((\overline{g}x_{1},\ldots,\overline{g}x_{m})/A\right)$.

Note that $(\overline{g}x_{1},\ldots,\overline{g}x_{m})\in D_{V,i}(\overline{a})$, for some $\overline{g}=(g_{1},\ldots,g_{n})$ from $G^{\mathrm{ad}}(\mathbb{Q})^{+}$ and some tuple $\overline{a}$ from $A$, if and only if $\mathrm{sploc}\left((\overline{g}x_{1},\ldots,\overline{g}x_{m})/A\right)\subseteq D_{V,i}(\overline{a})$. So $(\overline{g}x_{1},\ldots,\overline{g}x_{m})\in D_{V,i}(\overline{a})$ implies $(\overline{g}y_{1},\ldots,\overline{g}y_{m})\in D_{V,i}(\overline{a})$. Conversely, suppose $(\overline{g}y_{1},\ldots,\overline{g}y_{m})\in D_{V,i}(\overline{a})$. By hypothesis and SF, there is $\overline{\gamma}\in\Gamma^{\ell m+d}$, where $\ell$ is the length of $\overline{g}$ and $d$ is the length of $\overline{a}$, such that $\overline{\gamma}(\overline{g}x_{1},\ldots,\overline{g}x_{m},\overline{a}) = (\overline{g}y_{1},\ldots,\overline{g}y_{m},\overline{a})$. So $(\overline{g}x_{1},\ldots,\overline{g}x_{m},\overline{a})\in D_{V,k}$, for some $k$. But that means that $\mathrm{sploc}\left((\overline{g}x_{1},\ldots,\overline{g}x_{m})/A\right)\subseteq D_{V,k}(\overline{a})$. As special domains of the same special subvariety are either the same or disjoint, we conclude that $D_{V,i} =D_{V,k}$.
\end{proof}

\begin{remark}
\label{rem:orbit}
Let $x_{1},\ldots,x_{m}\in D$ be in different $G^{\mathrm{ad}}(\mathbb{Q})^{+}$-orbits, and let $\overline{g} = (e,g_{1},\ldots,g_{n})$ be a tuple of distinct elements of $G^{\mathrm{ad}}(\mathbb{Q})^{+}$. Let $L$ be the field generated over $F_{0}$ with the coordinates of the $q(x_{i})$, and let $V=\mathrm{spcl}\left(q(x_{1}),\ldots,q(x_{m})\right)$. As we have said before,  $\mathrm{qftp}\left(\left(q(\overline{g}x_{1}),\ldots,q(\overline{g}x_{n})\right)/L)\right)$ is equivalent to the minimal algebraic subset of $Z_{\overline{g}}^{V,i}$ defined over $L$ that contains the tuple. This is a subset of the fibre over $\left(q(x_{1}),\ldots,q(x_{m})\right)$ of the morphism
\begin{equation*}
    \psi_{\overline{g},(e)}^{V,i}:Z_{\overline{g}}^{V,i}\rightarrow S(\mathbb{C})^{m}.
\end{equation*}
In fact, it is the $\mathrm{Aut}(\mathbb{C}/L)$-orbit in this fibre containing this tuple. 
\end{remark}

\section{Shimura Covers}
\label{sec:covers}
From a model theoretic point of view, $\mathrm{Th}(\mathbf{p})$ may not be the most natural first-order theory one can associate with a Shimura variety. For instance, given $S(F)\models T$, it is not immediately clear if one can find an $\mathcal{L}_{D}$-structure $D$ and a map $q:D\rightarrow S(F)$ so that this becomes a model of $\mathrm{Th}(\mathbf{p})$. We address this question by first taking a seemingly different approach to defining Shimura structures.

\subsection{The covering sort}
Set $\Gamma_{\overline{g}}:=g_{1}^{-1}\Gamma g_{1}\cap\cdots\cap g_{n}^{-1}\Gamma g_{n}$, where $\overline{g} = (g_{1},\ldots,g_{n})$ is a  tuple of $G^{\mathrm{ad}}(\mathbb{Q})^{+}$. The quotients of $X^{+}$ by the groups $\Gamma_{\overline{g}}$ form an inverse system of varieties, where the morphisms are those induced by the inclusion of groups. Let $\widehat{S}(\mathbb{C})$ \index{s@$\widehat{S}(\mathbb{C})$} denote the inverse limit of this system. Denote an equivalence class of $\Gamma_{\overline{g}}\backslash X^{+}$ by $[\cdot]_{\overline{g}}$, with the implicit understanding that if $\Gamma_{\overline{g}} = \Gamma_{\overline{h}}$, then $[\cdot]_{\overline{g}} = [\cdot]_{\overline{h}}$. So a point in $\widehat{S}(\mathbb{C})$ can be thought of as a collection of points indexed by tuples $\overline{g}$, such that $\left[x_{\overline{g}}\right]_{\overline{g}}\in \Gamma_{\overline{g}}\backslash X^{+}$ and satisfying that if $\Gamma_{\overline{g}}\subseteq\Gamma_{\overline{h}}$, then $\psi_{\overline{g},\overline{h}}\left(\left[x_{\overline{g}}\right]_{\overline{g}}\right)=\left[x_{\overline{h}}\right]_{\overline{h}}$.

We can define an action of $G^{\mathrm{ad}}(\mathbb{Q})^{+}$ on $\widehat{S}(\mathbb{C})$ as the action of $\alpha\in G^{\mathrm{ad}}(\mathbb{Q})^{+}$ on $X^{+}$ induces a map
\begin{equation*}
    \Gamma_{\overline{g}}\backslash X^{+}\rightarrow\alpha\Gamma_{\overline{g}}\alpha^{-1}\backslash X^{+}
\end{equation*}
which, as a map of algebraic varieties, is defined over $E^{\mathrm{ab}}$. If we let $\overline{g}\alpha^{-1} := (g_{1}\alpha^{-1},\ldots,g_{n}\alpha^{-1})$, then we can describe the action on components by:
\begin{equation*}
    \alpha\left[x_{\overline{g}}\right]_{\overline{g}} := \left[\alpha x_{\overline{g}}\right]_{\overline{g}\alpha^{-1}}.
\end{equation*}

We cite the following Lemma with the observation that its proof can still be used for the case of mixed Shimura varieties.

\begin{lem}[see {{\cite[Lemma 4.3]{daw-harris}}}]
\label{lem:centre}
The group $\Gamma_{\infty}:=\bigcap_{\overline{g}}\Gamma_{\overline{g}}$, where the intersection ranges over all finite tuples of distinct elements of $G^{\mathrm{ad}}(\mathbb{Q})^{+}$, is contained in $Z_{G}(\mathbb{Q})$, where $Z_{G}$ stands for the centre of $G$. 
\end{lem}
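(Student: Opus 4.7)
The plan is to show that the image in $G^{\mathrm{ad}}$ of any $\gamma \in \Gamma_{\infty}$ is the identity, from which $\gamma \in Z_G(\mathbb{Q})$ follows by lifting. First I would rewrite the intersection: the singleton tuples $\overline{g}=(g)$ already suffice to witness membership, so
\begin{equation*}
\Gamma_{\infty} = \bigcap_{g \in G^{\mathrm{ad}}(\mathbb{Q})^{+}} g^{-1}\Gamma g,
\end{equation*}
and $\gamma \in \Gamma_{\infty}$ if and only if the entire $G^{\mathrm{ad}}(\mathbb{Q})^{+}$-conjugacy class of $\gamma$ is contained in the discrete subgroup $\Gamma \subseteq G^{\mathrm{ad}}(\mathbb{R})^{+}$.

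The key step is to analyse the conjugation morphism $c_{\gamma} : G^{\mathrm{ad}} \to G^{\mathrm{ad}}$ defined by $g \mapsto g\gamma g^{-1}$. This is a morphism of $\mathbb{Q}$-algebraic varieties (in particular continuous in the real topology) whose set-theoretic fibre over $\gamma$ is the centraliser $Z_{G^{\mathrm{ad}}}(\gamma)$. Since $\Gamma$ is discrete in $G^{\mathrm{ad}}(\mathbb{R})^{+}$, I can pick a real open neighbourhood $W$ of $\gamma$ with $W \cap \Gamma = \{\gamma\}$ and, by continuity of $c_{\gamma}$, a real open neighbourhood $V$ of the identity with $c_{\gamma}(V) \subseteq W$. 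For every $g \in V \cap G^{\mathrm{ad}}(\mathbb{Q})^{+}$, the element $c_{\gamma}(g)$ lies in $W \cap \Gamma = \{\gamma\}$, so $g$ centralises $\gamma$. Invoking that $G^{\mathrm{ad}}(\mathbb{Q})^{+}$ is dense in $G^{\mathrm{ad}}(\mathbb{R})^{+}$ for the real topology, the set $V \cap G^{\mathrm{ad}}(\mathbb{Q})^{+}$ is real-dense in the non-empty real open set $V$, and therefore Zariski-dense in the connected algebraic group $G^{\mathrm{ad}}$. Since $Z_{G^{\mathrm{ad}}}(\gamma)$ is Zariski closed, we conclude $Z_{G^{\mathrm{ad}}}(\gamma) = G^{\mathrm{ad}}$, i.e. the image of $\gamma$ in $G^{\mathrm{ad}}$ is central; as $G^{\mathrm{ad}}$ has trivial centre by definition, this image is the identity, and any lift of $\gamma$ to $G(\mathbb{Q})$ lies in $Z_{G}(\mathbb{Q})$.

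The main obstacle is the density of $G^{\mathrm{ad}}(\mathbb{Q})^{+}$ in $G^{\mathrm{ad}}(\mathbb{R})^{+}$ for the real topology: this is a standard consequence of strong approximation, applied to the simply connected cover of $G^{\mathrm{ad}}$ together with the surjectivity of the cover onto $G^{\mathrm{ad}}(\mathbb{R})^{+}$, and it is valid under the Shimura datum axioms, which preclude compact simple factors in $G^{\mathrm{ad}}_{\mathbb{R}}$ (since $G^{\mathrm{ad}}(\mathbb{R})^{+}$ must act faithfully on the Hermitian symmetric domain $X^{+}$). A subsidiary point is the fact that a non-empty real open subset of $G^{\mathrm{ad}}(\mathbb{R})^{+}$ is Zariski-dense in $G^{\mathrm{ad}}$, which follows from $G^{\mathrm{ad}}$ being connected, irreducible, and smooth as a $\mathbb{Q}$-variety.
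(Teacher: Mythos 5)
Your core argument is correct and is essentially the argument behind the source this paper cites for the lemma (\cite[Lemma 4.3]{daw-harris}): for $\gamma\in\Gamma_{\infty}$ the whole $G^{\mathrm{ad}}(\mathbb{Q})^{+}$-conjugacy class of $\gamma$ lies in the discrete group $\Gamma$, and density of $G^{\mathrm{ad}}(\mathbb{Q})^{+}$ in $G^{\mathrm{ad}}(\mathbb{R})^{+}$ then forces the centraliser of the image of $\gamma$ in $G^{\mathrm{ad}}$ to contain a Zariski-dense set, so that image is central, hence trivial, and $\gamma\in Z_{G}(\mathbb{Q})$. Where Daw--Harris conclude via connectedness (the continuous image of the connected group $G^{\mathrm{ad}}(\mathbb{R})^{+}$ inside a discrete set is a single point), you use a small real neighbourhood of the identity plus Zariski density of a real-open subset of a connected group; both variants work. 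One small point worth making explicit, given the paper's notational abuse: the image of the congruence subgroup $\Gamma$ in $G^{\mathrm{ad}}(\mathbb{Q})^{+}$ is again an arithmetic subgroup of $G^{\mathrm{ad}}$, hence discrete in $G^{\mathrm{ad}}(\mathbb{R})^{+}$, which is what your neighbourhood argument uses.

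The step you should repair is the justification of the density of $G^{\mathrm{ad}}(\mathbb{Q})^{+}$ in $G^{\mathrm{ad}}(\mathbb{R})^{+}$. Strong approximation concerns density in the finite ad\`eles $G^{\mathrm{sc}}(\mathbb{A}_{f})$ and does not by itself give density at the real place; what is needed is real (weak) approximation: $G(\mathbb{Q})$ is dense in $G(\mathbb{R})$ for \emph{every} connected linear algebraic group over $\mathbb{Q}$ (see the real approximation theorem in \cite{milne}, or weak approximation for the simply connected cover in \cite[Chapter 7]{platonov} combined with the surjection $G^{\mathrm{sc}}(\mathbb{R})\rightarrow G^{\mathrm{ad}}(\mathbb{R})^{+}$), and this requires no hypothesis about compact factors. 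Moreover, your stated reason for excluding compact factors is not correct: the Shimura datum axioms only rule out $\mathbb{Q}$-simple factors of $G^{\mathrm{ad}}$ that are anisotropic over $\mathbb{R}$; compact $\mathbb{R}$-simple factors are perfectly possible (e.g.\ quaternionic Shimura curves attached to quaternion algebras over totally real fields), and such factors act trivially on $X^{+}$, so $G^{\mathrm{ad}}(\mathbb{R})^{+}$ need not act faithfully on $X^{+}$. Fortunately none of this is needed once the density input is quoted as real approximation rather than derived from strong approximation.
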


\begin{lem}
\label{lem:embedding}
There is an embedding of $X^{+}$ into $\widehat{S}(\mathbb{C})$ given by:
\begin{equation*}
\begin{array}{cccc}
    \iota:& X^{+} & \longrightarrow &\widehat{S}(\mathbb{C})\\ 
       & x &\mapsto &\left([x]_{\overline{g}}\right)_{\overline{g}}
\end{array},
\end{equation*}
which is $G^{\mathrm{ad}}(\mathbb{Q})^{+}$-equivariant.
\end{lem}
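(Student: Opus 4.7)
The plan is to verify three things in turn: that $\iota$ lands in $\widehat{S}(\mathbb{C})$ (well-defined), that it is injective, and that it intertwines the $G^{\mathrm{ad}}(\mathbb{Q})^{+}$-actions.

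For well-definedness, observe that whenever $\Gamma_{\overline{g}}\subseteq\Gamma_{\overline{h}}$, the transition map $\psi_{\overline{g},\overline{h}}:\Gamma_{\overline{g}}\backslash X^{+}\to \Gamma_{\overline{h}}\backslash X^{+}$ is induced by the identity on $X^{+}$, so it sends $[x]_{\overline{g}}$ to $[x]_{\overline{h}}$. Thus the family $\left([x]_{\overline{g}}\right)_{\overline{g}}$ is compatible with all transition maps, hence defines a point of the inverse limit $\widehat{S}(\mathbb{C})$.

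For injectivity, suppose $\iota(x)=\iota(y)$. Then for every tuple $\overline{g}$ there exists $\gamma_{\overline{g}}\in\Gamma_{\overline{g}}$ with $x=\gamma_{\overline{g}}y$. The key step is to show all these elements coincide. Given any two tuples $\overline{g},\overline{h}$, consider their concatenation $\overline{k}=(\overline{g},\overline{h})$, for which $\Gamma_{\overline{k}}=\Gamma_{\overline{g}}\cap\Gamma_{\overline{h}}$. Picking $\gamma_{\overline{k}}\in\Gamma_{\overline{k}}$ with $x=\gamma_{\overline{k}}y$, one has $\gamma_{\overline{g}}y=\gamma_{\overline{k}}y$ in $X^{+}$, i.e.\ $\gamma_{\overline{k}}^{-1}\gamma_{\overline{g}}$ fixes $y$. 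Since $\gamma_{\overline{k}}^{-1}\gamma_{\overline{g}}\in\Gamma_{\overline{g}}$ and, after conjugating by the first entry $g_{1}$ of $\overline{g}$, lands inside $\Gamma$, Lemma \ref{lem:nofix} applied to the translate $g_{1}y$ forces $\gamma_{\overline{k}}^{-1}\gamma_{\overline{g}}=e$, so $\gamma_{\overline{g}}=\gamma_{\overline{k}}=\gamma_{\overline{h}}$. Hence there is a common element $\gamma\in\bigcap_{\overline{g}}\Gamma_{\overline{g}}=\Gamma_{\infty}$ with $x=\gamma y$. By Lemma \ref{lem:centre}, $\gamma\in Z_{G}(\mathbb{Q})$, whose image in $G^{\mathrm{ad}}(\mathbb{Q})^{+}$ is trivial. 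Since the action on $X^{+}$ factors through $G^{\mathrm{ad}}$, $\gamma$ acts trivially, and $x=y$.

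For equivariance, given $\alpha\in G^{\mathrm{ad}}(\mathbb{Q})^{+}$, I use the explicit formula $\alpha\left[z_{\overline{g}}\right]_{\overline{g}}=\left[\alpha z_{\overline{g}}\right]_{\overline{g}\alpha^{-1}}$. Applied to $\iota(x)$, which has $x$ in every slot, this shows that the component of $\alpha\cdot\iota(x)$ indexed by $\overline{h}$ equals $[\alpha x]_{\overline{h}}$ (set $\overline{g}=\overline{h}\alpha$), which is exactly the $\overline{h}$-component of $\iota(\alpha x)$. Hence $\iota(\alpha x)=\alpha\iota(x)$.

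The only nontrivial obstacle is the injectivity step: one must genuinely use both the neatness assumption (through Lemma \ref{lem:nofix}, to identify $\gamma_{\overline{g}}$ uniquely from $x$ and $y$) and the centrality result Lemma \ref{lem:centre} (to conclude that the common element $\gamma$ acts trivially on $X^{+}$). Everything else is bookkeeping in the inverse system.
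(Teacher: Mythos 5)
Your proof is correct and takes essentially the same route as the paper: injectivity is reduced to producing an element of $\Gamma_{\infty}=\bigcap_{\overline{g}}\Gamma_{\overline{g}}$ sending $y$ to $x$ and then invoking Lemma \ref{lem:centre}, while equivariance is read off directly from the definition of the $G^{\mathrm{ad}}(\mathbb{Q})^{+}$-action on $\widehat{S}(\mathbb{C})$. The only difference is that you make explicit, via Lemma \ref{lem:nofix} and the concatenation of tuples, why the various $\gamma_{\overline{g}}$ coincide into a single element of $\Gamma_{\infty}$ — a freeness step the paper's proof leaves implicit when it passes from $y\in\Gamma_{\overline{g}}x$ for all $\overline{g}$ to $y\in\Gamma_{\infty}x$.
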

\begin{proof}
Equivariance is immediate from the definition of the action of $G^{\mathrm{ad}}(\mathbb{Q})^{+}$ on $\widehat{S}(\mathbb{C})$. Let $x,y\in X^{+}$ be such that $\iota(x)=\iota(y)$. Let $\overline{g} = (e,g_{1},\ldots,g_{n})$. We know from \textsection \ref{subsec:galrep} that there is an isomorphism:
\begin{align*}
    \Gamma_{\overline{g}}\backslash X^{+}&\xrightarrow{\qquad\sim\qquad} Z_{\overline{g}}\\ [x]_{\overline{g}}&\mapsto (p(x),p(g_{1}x),\ldots,p(g_{n}x)).
\end{align*}
Therefore we have $\left(p(x),p(g_{1}x),\ldots,p(g_{n}x)\right) = \left(p(y),p(g_{1}y),\ldots,p(g_{n}y)\right)$, which means that $y\in \Gamma_{\overline{g}}x$. As this happens for every tuple of the form $\overline{g}$, we get that $y\in\Gamma_{\infty}x$, so that $y=x$ by Lemma \ref{lem:centre}. 
\end{proof}

Let $\widehat{p}:\widehat{S}(\mathbb{C})\rightarrow S(\mathbb{C})$ be given by $\widehat{p}\left(\left(\left[x_{\overline{g}}\right]_{\overline{g}}\right)_{\overline{g}}\right) = p(x_{e})$. Thus we get a commutative diagram:
\begin{center}
\begin{tikzcd}[column sep=small]
X^{+} \arrow[dr,"p"'] \arrow[rr,"\iota"] & & \widehat{S}(\mathbb{C})  \arrow[dl,"\widehat{p}"] \\
 & S(\mathbb{C}) &
\end{tikzcd}
\end{center}
where $\iota$ is the embedding given in Lemma \ref{lem:embedding}. 

Now we will use a new form of notation for the points of $\widehat{S}(\mathbb{C})$. From the construction, it is immediate that we can see $\widehat{S}(\mathbb{C})$ as the inverse limit of the system of special subvarieties $Z_{\overline{g}}$, for all tuples $\overline{g}$ from $G^{\mathrm{ad}}(\mathbb{Q})^{+}$. So a point $x\in\widehat{S}(\mathbb{C})$ can be written as $x=\left(z_{\overline{g}}\right)_{\overline{g}}$, where $z_{\overline{g}}\in Z_{\overline{g}}$, and whenever $\overline{g}\subseteq\overline{h}$, 
\begin{equation}
\label{eq:compat}
     \psi_{\overline{h},\overline{g}}\left(z_{\overline{h}}\right) = z_{\overline{g}}.  
\end{equation}

This structure comes equipped with natural maps $\widehat{p}_{\overline{g}}:\widehat{S}(\mathbb{C})\rightarrow Z_{\overline{g}}$,\index{p@$\widehat{p}_{\overline{g}}$} which are determined by the natural map $\widehat{p}:\widehat{S}(\mathbb{C})\rightarrow S(\mathbb{C})$\index{p@$\widehat{p}$} given by $\widehat{p}_{\overline{g}}\left(\widetilde{x}\right) = z_{\overline{g}}$.

\begin{lem}
For every $\widetilde{x}\in\widehat{S}(\mathbb{C})$ and any tuple $\overline{g}$ of $G^{\mathrm{ad}}(\mathbb{Q})^{+}$, we have that:
\begin{equation*}
    \widehat{p}_{\overline{g}}\left(\widetilde{x}\right) = \widehat{p}\left(\overline{g}\widetilde{x}\right).
\end{equation*}
\end{lem}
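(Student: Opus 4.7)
The strategy is a straightforward unpacking of the two descriptions of $\widehat{S}(\mathbb{C})$: the one as an inverse limit of the quotients $\Gamma_{\overline{h}}\backslash X^{+}$ (using the $[x_{\overline{h}}]_{\overline{h}}$ notation), and the one as an inverse limit of the special subvarieties $Z_{\overline{h}}$ (using the $z_{\overline{h}}$ notation). I will first reduce the statement to the case of a single element of $G^{\mathrm{ad}}(\mathbb{Q})^{+}$, then verify that case by direct computation, and finally recover the tuple case from the compatibility condition (\ref{eq:compat}).

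For the reduction step, interpret $\overline{g}\widetilde{x} = (g_{1}\widetilde{x},\ldots,g_{n}\widetilde{x}) \in \widehat{S}(\mathbb{C})^{n}$ coordinate-wise for $\overline{g}=(g_{1},\ldots,g_{n})$, so that
\[
\widehat{p}(\overline{g}\widetilde{x}) = (\widehat{p}(g_{1}\widetilde{x}),\ldots,\widehat{p}(g_{n}\widetilde{x})).
\]
It therefore suffices to show that $\widehat{p}(\alpha\widetilde{x}) = z_{(\alpha)}$ for every single $\alpha \in G^{\mathrm{ad}}(\mathbb{Q})^{+}$.

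To verify the single-element case, write $\widetilde{x} = ([x_{\overline{h}}]_{\overline{h}})_{\overline{h}}$ and use the formula $\alpha[x_{\overline{h}}]_{\overline{h}} = [\alpha x_{\overline{h}}]_{\overline{h}\alpha^{-1}}$ for the action. This implies that the $\overline{k}$-component of $\alpha\widetilde{x}$ is $[\alpha x_{\overline{k}\alpha}]_{\overline{k}}$ (taking $\overline{h}=\overline{k}\alpha$). Applying $\widehat{p}$, which by definition extracts the $(e)$-component, yields $\widehat{p}(\alpha\widetilde{x}) = p(\alpha x_{(\alpha)})$. Now switch notation: the isomorphism $\Gamma_{(\alpha)}\backslash X^{+} \xrightarrow{\sim} Z_{(\alpha)} = S(\mathbb{C})$ recalled just before Lemma \ref{lem:embedding} sends $[x]_{(\alpha)} \mapsto p(\alpha x)$, so $z_{(\alpha)} = p(\alpha x_{(\alpha)})$ and hence $\widehat{p}(\alpha\widetilde{x}) = z_{(\alpha)}$, as desired.

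Finally, reassemble: for $\overline{g} = (g_{1},\ldots,g_{n})$ we have $\Gamma_{\overline{g}} \subseteq \Gamma_{(g_{i})}$ for each $i$, and the induced morphism $\psi_{\overline{g},(g_{i})}: Z_{\overline{g}} \to Z_{(g_{i})} = S(\mathbb{C})$ is exactly the projection onto the $i$-th coordinate. Compatibility (\ref{eq:compat}) therefore gives $z_{\overline{g}} = (z_{(g_{1})},\ldots,z_{(g_{n})})$, and combining with the previous step,
\[
\widehat{p}(\overline{g}\widetilde{x}) = (z_{(g_{1})},\ldots,z_{(g_{n})}) = z_{\overline{g}} = \widehat{p}_{\overline{g}}(\widetilde{x}).
\]
The whole argument is essentially definition-chasing; the only mildly delicate point is the index re-labelling $\overline{h} \mapsto \overline{h}\alpha^{-1}$ carried by the action, and I do not anticipate any genuine obstacle.
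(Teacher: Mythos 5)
Your proof is correct and follows essentially the same route as the paper: the re-indexing $\alpha[x_{\overline{h}}]_{\overline{h}}=[\alpha x_{\overline{h}}]_{\overline{h}\alpha^{-1}}$ to identify the $e$-component of $\alpha\widetilde{x}$, the identification $z_{\overline{g}}=p(\overline{g}x_{\overline{g}})$, and the compatibility of the inverse system. The only cosmetic difference is that you do the compatibility bookkeeping on the $Z_{\overline{g}}$-side (via $\psi_{\overline{g},(g_{i})}$ being the $i$-th coordinate projection), whereas the paper does it on representatives, relating $x_{\alpha_{i}}$ to $x_{(\alpha_{1},\alpha_{2})}$ modulo $\Gamma_{(\alpha_{i})}$.
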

\begin{proof}
In order to avoid messy notation, we will prove the statement for an arbitrary tuple from $G^{\mathrm{ad}}(\mathbb{Q})^{+}$ of the form $(\alpha_{1},\alpha_{2})$. The general case can be proven using the same argument. Choose $\widetilde{x}\in\widehat{S}(\mathbb{C})$. We denote its components using both systems of notation, i.e. $\widetilde{x} = (z_{\overline{g}})_{\overline{g}}$ (with $z_{\overline{g}}\in Z_{\overline{g}}$), and $\widetilde{x} = \left(\left[x_{\overline{g}}\right]_{\overline{g}}\right)_{\overline{g}}$, where $z_{\overline{g}} = p\left(\overline{g}x_{\overline{g}}\right)$. 

On one hand $\widehat{p}_{(\alpha_{1},\alpha_{2})}\left(\widetilde{x}\right) = z_{(\alpha_{1},\alpha_{2})}$. On the other hand, we know that $\alpha_{1}[x_{\overline{g}}]_{\overline{g}} = \left[\alpha_{1} x_{\overline{g}}\right]_{\overline{g}\alpha_{1}^{-1}}$, so in particular $\alpha_{1}\left[x_{\alpha_{1}}\right]_{\alpha_{1}} = \left[\alpha_{1} x_{\alpha_{1}}\right]_{e}$. Therefore $\widehat{p}\left(\alpha_{1}\widetilde{x}, \alpha_{2}\widetilde{x}\right) = \left(\left[\alpha_{1} x_{\alpha_{1}}\right]_{e}, \left[\alpha_{2} x_{\alpha_{2}}\right]_{e}\right)$. 

By compatibility of components, we have that $\left[x_{(\alpha_{1},\alpha_{1})}\right]_{\alpha_{i}} = \left[x_{\alpha_{i}}\right]_{\alpha_{i}}$ for $i=1,2$. This means that there are $\gamma_{1}\in\alpha_{1}^{-1}\Gamma\alpha_{1}$ and $\gamma_{2}\in\alpha_{2}^{-1}\Gamma\alpha_{2}$ such that $x_{\alpha_{1}} = \gamma_{1}x_{(\alpha_{1},\alpha_{2})}$ and $x_{\alpha_{2}} = \gamma_{2} x_{(\alpha_{1},\alpha_{2})}$. So there are $\gamma_{1}',\gamma_{2}'\in\Gamma$ such that $\alpha_{1}x_{\alpha_{1}} = \gamma_{1}'\alpha_{1}x_{(\alpha_{1},\alpha_{2})}$ and $\alpha_{2}x_{\alpha_{2}} = \gamma_{2}'\alpha_{2}x_{(\alpha_{1},\alpha_{2})}$, so that $\widehat{p}\left(\alpha_{1}\widetilde{x}, \alpha_{2}\widetilde{x}\right)=p\left(\alpha_{1}x_{(\alpha_{1},\alpha_{2})}, \alpha_{2}x_{(\alpha_{1},\alpha_{2})}\right) = z_{(\alpha_{1},\alpha_{2})}$.
\end{proof}
 
If $x_{1},\ldots,x_{m}\in\widehat{S}(\mathbb{C})$, we define the map $\widehat{p}_{\overline{g}}(x_{1},\ldots,x_{m}):=\left(\widehat{p}_{\overline{g}}(x_{1}),\ldots,\widehat{p}_{\overline{g}}(x_{m})\right)$. Given a special subvariety $V$ of $S(\mathbb{C})^{m}$, we will define the set $\widehat{S}(\mathbb{C})_{V,i}$ \index{s@$\widehat{S}(\mathbb{C})_{V,i}$} as the set of points $x\in\widehat{S}^{m}$ such that for every tuple $\overline{g}$ from $G^{\mathrm{ad}}(\mathbb{Q})^{+}$, $\widehat{p}_{\overline{g}}(x)\in Z_{\overline{g}}^{V,i}$. 

And so, we get a two-sorted structure $\widehat{\mathbf{p}}:=\left<\widehat{S}(\mathbb{C}), S(\mathbb{C}), \left\{\widehat{p}_{\overline{g}}\right\}_{\overline{g}}\right>$,\index{p@$\widehat{\mathbf{p}}$} with each $\widehat{p}_{\overline{g}}$ denoting a function from $\widehat{S}(\mathbb{C})$ to a corresponding power of $S(\mathbb{C})$. The language for $S(\mathbb{C})$ is the same language $\mathcal{L}_{F}$ from before, but now the language on $\widehat{S}(\mathbb{C})$ consists only of relation symbols for each $\widehat{S}(\mathbb{C})_{V,i}$, without an explicit action from $G^{\mathrm{ad}}(\mathbb{Q})^{+}$. 

\begin{defi}[Shimura covers]
A \emph{covering structure} for a Shimura variety $p:X^{+}\rightarrow S(\mathbb{C})$ is defined in the following way.
\begin{enumerate}[(a)]
\item Let $\mathcal{L}_{\mathrm{cov}}$ \index{l@$\mathcal{L}_{\mathrm{cov}}$} be the language consisting of relation symbols for every $\widehat{S}(\mathbb{C})_{V,i}$, which can be seen as a reduct of the language $\mathcal{L}_{D}$. Let $\widetilde{\mathcal{L}}$ be the language $\widetilde{\mathcal{L}}:=\mathcal{L}_{\mathrm{cov}}\cup\mathscr{L}_{F}\cup\left\{q_{\overline{g}}\right\}$,\index{l@$\widetilde{\mathcal{L}}$} where every $q_{\overline{g}}$ is a function symbol, indexed by a tuple $\overline{g}$ from $G^{\mathrm{ad}}(\mathbb{Q})^{+}$.
\item Let $T=\mathrm{Th}(S(\mathbb{C}))$ as before, and let $\widetilde{T}(\widehat{\mathbf{p}})$ \index{t@$\widetilde{T}(\widehat{\mathbf{p}})$} be the complete first-order theory of of the two-sorted structure $\widehat{\mathbf{p}}$ in the language $\widetilde{\mathcal{L}}$. Models of $\widetilde{T}$ will be called \emph{Shimura covers}. \index{Shimura covers}
\item Models of $T$ will usually be denoted simply by $M$, and sometimes we will be more explicit by saying that $M=S(F)$ for some algebraically closed field $F$. Models of $\widetilde{T}(\widehat{\mathbf{p}})$ will be denoted $\widetilde{\mathbf{q}}:=\left<\widetilde{M}, M, \left\{q_{\overline{g}}\right\}\right>$ \index{q@$\widetilde{\mathbf{q}}$}. Given a special subvariety $V$ of $M$, let $\widetilde{M}_{V,i}$\index{m@$\widetilde{M}_{V,i}$} denote a subset of the corresponding power of $\widetilde{M}$. These sets will still be called \emph{special domains}.\index{special domain}
\item Given an $\widetilde{\mathcal{L}}$-structure $\left<\widetilde{M}, M, \left\{q_{\overline{g}}\right\}\right>$ and $B\subseteq\widetilde{M}$, let $\widehat{B}$ denote the set of points in $M$ of the form $q_{g}(b)$, for all $b\in B$ and all $g\in G^{\mathrm{ad}}(\mathbb{Q})^{+}$.
\end{enumerate}
\end{defi}

The expectation now is that given $S(F)\models\mathrm{Th}(S(\mathbb{C}))$, we can construct $\widehat{S}(F)$ as the inverse limit of the $Z_{\overline{g}}(F)$, and then the corresponding structure $\left<\widehat{S}(F), S(F), \left\{q_{\overline{g}}\right\}\right>$ will be a model of $\widetilde{T}(\widehat{\mathbf{p}})$. There are, however, a few important details that we need to consider before proceeding.

On one hand, it is true that for every special point $z\in S(\mathbb{C})$, the special domains $\widehat{S}(\mathbb{C})_{z,i}$ consist of exactly one point, which is also true of the special domains $X_{z,i}^{+}$. However, if $p(x) = z$, then Theorem \ref{thm:special} says that there is some $\alpha\in G^{\mathrm{ad}}(\mathbb{Q})^{+}$ such that $x$ is the unique fixed point of $\alpha$. This means then that the intersection of special domains $X_{Z_{(e,\alpha)},1}^{+}\cap X^{+}_{Z_{(e,e)},1}$ consists of exactly one point: $(x,x)$. But the intersection $\widehat{S}(\mathbb{C})_{Z_{(e,\alpha)},1}\cap\widehat{S}(\mathbb{C})_{Z_{(e,e)},1}$ has more than one point, as $\alpha$ now fixes more elements of $\widehat{S}(\mathbb{C})$. In other words, if we view $\widehat{S}(\mathbb{C})$ as an $\mathcal{L}_{D}$-structure, then it is not elementarily equivalent to $X^{+}$. 

It makes sense then to consider the $\mathcal{L}_{\mathrm{cov}}$-substructure of $\widehat{S}(\mathbb{C})$ which consists only of standard points, call it $\widetilde{S}(\mathbb{C})$. Given a tuple $\overline{g}$ of $G^{\mathrm{ad}}(\mathbb{Q})^{+}$, let $\widetilde{p}_{\overline{g}}:\widetilde{S}(\mathbb{C})\rightarrow S(\mathbb{C})^{n}$ be the restriction of $\widehat{p}_{\overline{g}}$. We therefore get an $\widetilde{\mathcal{L}}$-structure $\widetilde{\mathbf{p}}:=\left<\widetilde{S}(\mathbb{C}), S(\mathbb{C}), \left\{\widetilde{p}_{\overline{g}}\right\}\right>$. In fact, $\widetilde{S}(\mathbb{C})$ also inherits a $G^{\mathrm{ad}}(\mathbb{Q})^{+}$-action from $\widehat{S}(\mathbb{C})$. 

\begin{lem}
\label{lem:standardness}
$\widetilde{x}\in\widehat{S}(\mathbb{C})$ is non-standard if and only if there is a tuple $\overline{g}$ of elements of $G^{\mathrm{ad}}(\mathbb{Q})^{+}$ and a standard point $\widetilde{y}\in\mathrm{spcl}(\widetilde{x})$ such that $\widehat{p}(\widetilde{x}) = \widehat{p}(\widetilde{y})$ and $\widetilde{p}_{\overline{g}}(\widetilde{x})\notin\mathrm{spcl}(\widehat{p}_{\overline{g}}(\widetilde{y}))$.
\end{lem}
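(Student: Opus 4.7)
The strategy will be to exploit the fact that standardness of $\widetilde{y}\in\widehat{S}(\mathbb{C})_{V,i}$ forces $\widehat{p}(\widetilde{y})$ to be Hodge-generic in $V$, and then to show that this in turn forces $\mathrm{spcl}(\widehat{p}_{\overline{g}}(\widetilde{y}))=Z_{\overline{g}}^{V,i}$ for every tuple $\overline{g}$. Non-standardness of $\widetilde{x}$ then shows up precisely as the failure of this genericity at some finite level.

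For the reverse direction, I would suppose for contradiction that the hypothesis on $\widetilde{y}$ and $\overline{g}$ holds but $\widetilde{x}$ is standard. Writing $\mathrm{spcl}(\widetilde{x})=\widehat{S}(\mathbb{C})_{V,i}$, standardness gives $V=\mathrm{spcl}(\widehat{p}(\widetilde{x}))$. From $\widehat{p}(\widetilde{y})=\widehat{p}(\widetilde{x})$ one obtains $\mathrm{spcl}(\widehat{p}(\widetilde{y}))=V$, hence $\mathrm{spcl}(\widetilde{y})=\widehat{S}(\mathbb{C})_{V,j}$ for the unique $j$ with $\widetilde{y}\in\widehat{S}(\mathbb{C})_{V,j}$. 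Disjointness of distinct special domains of $V$ combined with $\widetilde{y}\in\mathrm{spcl}(\widetilde{x})=\widehat{S}(\mathbb{C})_{V,i}$ then forces $j=i$. The next step is to invoke the claim that $\mathrm{spcl}(\widehat{p}_{\overline{g}}(\widetilde{y}))=Z_{\overline{g}}^{V,i}$ for every $\overline{g}$, which together with $\widehat{p}_{\overline{g}}(\widetilde{x})\in Z_{\overline{g}}^{V,i}$ (immediate from $\widetilde{x}\in\widehat{S}(\mathbb{C})_{V,i}$) contradicts the hypothesis.

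For the forward direction, assume $\widetilde{x}$ is non-standard and set $\mathrm{spcl}(\widetilde{x})=\widehat{S}(\mathbb{C})_{V,i}$ and $W:=\mathrm{spcl}(\widehat{p}(\widetilde{x}))\subsetneq V$. Using Remark \ref{rem:sub} to realise $W\subseteq V$ by an inclusion of $\mathbb{Q}$-subgroups of $G$, I would pick a special subdomain $X^{+}_{W,j}\subseteq X^{+}_{V,i}$ of $X^{+}$ and lift this to the containment $\widehat{S}(\mathbb{C})_{W,j}\subseteq \widehat{S}(\mathbb{C})_{V,i}$. By $\mathrm{SS}^{2}_{W,j}$ applied to $\widehat{\mathbf{p}}$, pick $\widetilde{y}\in\widehat{S}(\mathbb{C})_{W,j}$ with $\widehat{p}(\widetilde{y})=\widehat{p}(\widetilde{x})$; since $W=\mathrm{spcl}(\widehat{p}(\widetilde{y}))$ and $\widehat{S}(\mathbb{C})_{W,j}=\mathrm{spcl}(\widetilde{y})$, the point $\widetilde{y}$ is standard. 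Finally, if $\widehat{p}_{\overline{g}}(\widetilde{x})\in\mathrm{spcl}(\widehat{p}_{\overline{g}}(\widetilde{y}))=Z_{\overline{g}}^{W,j}$ held for every $\overline{g}$, then by the definition of $\widehat{S}(\mathbb{C})_{W,j}$ we would have $\widetilde{x}\in\widehat{S}(\mathbb{C})_{W,j}\subsetneq \widehat{S}(\mathbb{C})_{V,i}=\mathrm{spcl}(\widetilde{x})$, a contradiction; so some witnessing $\overline{g}$ exists.

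The main obstacle is to verify rigorously the genericity claim used in both directions: that a standard $\widetilde{y}\in\widehat{S}(\mathbb{C})_{V,i}$ with $\widehat{p}(\widetilde{y})$ Hodge-generic in $V$ satisfies $\mathrm{spcl}(\widehat{p}_{\overline{g}}(\widetilde{y}))=Z_{\overline{g}}^{V,i}$ for every tuple $\overline{g}$. I would deduce this from the construction of $Z_{\overline{g}}^{V,i}$ in Lemma \ref{lem:2.6}: any special subvariety $U\subseteq Z_{\overline{g}}^{V,i}$ containing $\widehat{p}_{\overline{g}}(\widetilde{y})$ would pull back along the Shimura-subdatum parametrisation of $Z_{\overline{g}}^{V,i}$ by $X_{V,i}^{+}$ to a special subdomain of $X^{+}_{V,i}$ containing a Hodge-generic point of $V$, forcing it to be all of $X^{+}_{V,i}$ and hence $U=Z_{\overline{g}}^{V,i}$. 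Once this is in hand, the bookkeeping on containments and disjointness of special domains finishes the argument.
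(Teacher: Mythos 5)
Your proposal is essentially the paper's argument. The forward direction---take $W=\mathrm{spcl}(\widehat{p}(\widetilde{x}))\subsetneq$ the image of $\mathrm{spcl}(\widetilde{x})$, choose a special domain for $W$ sitting inside $\mathrm{spcl}(\widetilde{x})$, lift $\widehat{p}(\widetilde{x})$ to a standard point $\widetilde{y}$ there, and extract the witnessing $\overline{g}$ from the very definition of the $\widehat{S}(\mathbb{C})$-domain---is exactly what the paper does, and your ``genericity claim'' ($\mathrm{spcl}(\widehat{p}_{\overline{g}}(\widetilde{y}))=Z^{V,i}_{\overline{g}}$ for standard $\widetilde{y}$ with special closure a $V$-domain) is the same fact the paper extracts from Lemma \ref{lem:sploc3}; your direct verification via the construction of Lemma \ref{lem:2.6} is fine, provided you note that the Hodge-generic preimage in $X^{+}_{V,i}$ of $\widehat{p}_{\overline{g}}(\widetilde{y})$ has to be produced using the compatibility of the inverse system (lift the $(e)\cup\overline{g}$-component and push it to the $\overline{g}$- and $(e)$-levels), since an arbitrary preimage of the $\overline{g}$-component alone need not lie over $\widehat{p}(\widetilde{y})$.

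The one step you should not take for granted is ``disjointness of distinct special domains of $V$'' inside $\widehat{S}(\mathbb{C})$, which you use to force $j=i$ and to identify $\mathrm{spcl}(\widetilde{y})$ with the domain you chose. The paper proves disjointness only for the domains $X^{+}_{V,i}\subseteq X^{+}$, and it explicitly warns that intersection behaviour of special domains does not transfer from $X^{+}$ to $\widehat{S}(\mathbb{C})$ (this is the point of the example preceding the definition of $\widetilde{S}(\mathbb{C})$ and of axiom (A4)), so disjointness of the sets $\widehat{S}(\mathbb{C})_{V,i}$ is not available off the shelf and would itself need an argument. Fortunately you do not need it: all your argument uses is that a special domain of $V$ contained in another special domain of $V$ must coincide with it, and this follows because such a containment forces $Z^{V,j}_{\overline{g}}\subseteq Z^{V,i}_{\overline{g}}$ for every $\overline{g}$, and these are irreducible special subvarieties of the same dimension, hence equal; alternatively, in the forward direction simply take the index so that the domain in play is $\mathrm{spcl}(\widetilde{y})$ itself, which is in effect what the paper does. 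With that repair your proof is complete and coincides in substance with the paper's.
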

\begin{proof}
Suppose first that $\widetilde{x}$ is non-standard. Let $V=\mathrm{spcl}(\widehat{p}(\widetilde{x}))$ and $\widehat{S}(\mathbb{C})_{W,j} = \mathrm{spcl}(\widetilde{x})$. Then $V\subsetneq W$. For some $i\in\mathbb{N}$ we can find $\widetilde{y}\in\widehat{S}(\mathbb{C})_{V,i}\subsetneq\widehat{S}(\mathbb{C})_{W,j}$ such that $\widehat{p}(\widetilde{y}) = \widehat{p}(\widetilde{x})$. As $\widetilde{x}\notin\widehat{S}(\mathbb{C})_{V,i}$, then by definition there must be a tuple $\overline{g}$ such that $\widehat{p}_{\overline{g}}(\widetilde{x})\notin Z_{\overline{g}}^{V,i}$. But by Lemma \ref{lem:sploc3}, $Z_{\overline{g}}^{V,i} = \mathrm{spcl}(\widehat{p}(\widetilde{y}))$. 

Conversely, assume that there exist $\overline{g}$ and $\widetilde{y}$ satisfying the conditions for $\widetilde{x}$. Then $\mathrm{spcl}(\widehat{p}(\widetilde{x}))=\mathrm{spcl}(\widehat{p}(\widetilde{y}))=V$, and as $\widetilde{y}$ is standard, there is some $i\in\mathbb{N}$ such that $\widetilde{y}\in\widehat{S}(\mathbb{C})_{V,i}$. As $\widehat{p}(\widetilde{x})\notin\mathrm{spcl}(\widehat{p}(\widetilde{y})) = Z_{\overline{g}}^{V,i}$, we get that $\widetilde{x}\notin\widehat{S}(\mathbb{C})_{V,i}$. But because $\widetilde{y}\in\mathrm{spcl}(\widetilde{x})$, it must be that $\widehat{S}(\mathbb{C})_{V,i}\subsetneq\mathrm{spcl}(\widetilde{x})$, showing that $\widetilde{x}$ is non-standard.
\end{proof}

In what remains of this section, we will consider both $\widetilde{T}(\widehat{\mathbf{p}})$ and $\widetilde{T}(\widetilde{\mathbf{p}})$, the latter being defined as the complete first-order theory of $\widetilde{\mathbf{p}}$ in the language $\widetilde{\mathcal{L}}$. Many of the results that follow hold in both theories, but in \textsection\ref{sec:classification} we will restrict to $\widetilde{T}(\widetilde{\mathbf{p}})$, because this theory is modeled by $\left<X^{+}, S(\mathbb{C}), \left\{p_{\overline{g}}\right\}\right>$, whereas the same is not true about $\widetilde{T}(\widehat{\mathbf{p}})$.

\subsection{Axiomatisation and quantifier elimination}
\label{subsec:axiomscov}
Fix a Shimura variety $p:X^{+}\rightarrow S(\mathbb{C})$. Define:

\begin{description}
\item[(A1)] $M\models T$.
\item[(A2)] For every special domain $\widetilde{M}_{V,i}$ and all tuples $\overline{g}$ from $G^{\mathrm{ad}}(\mathbb{\mathbb{Q}})^{+}$, $q_{\overline{g}}\left(\widetilde{M}_{V,i}\right) = Z_{\overline{g}}^{V,i}$.
\item[(A3)] For every special domain $\widetilde{M}_{V,i}$, if $\overline{g}\subseteq\overline{h}$, then $\psi_{\overline{h},\overline{g}}^{V,i}\circ q_{\overline{h}} = q_{\overline{g}}$
\item[(A4)] If $V_{1}$ and $V_{2}$ are special subvarieties and if, then for every triple of indices $i,j,k$ and every special subvariety $W$, we have that $\widetilde{M}_{W,k} \subseteq \widetilde{M}_{V_{1},i}\cap\widetilde{M}_{V_{2},j}$ if and only if $\widehat{S}(\mathbb{C})_{W,k} \subseteq \widehat{S}(\mathbb{C})_{V_{1},i}\cap\widehat{S}(\mathbb{C})_{V_{2},j}$. Furthermore, $ \widetilde{M}_{V_{1},i}\cap\widetilde{M}_{V_{2},j}\subseteq\widetilde{M}_{W_{1},k_{1}}\cup\cdots\cup\widetilde{M}_{W_{r},k_{r}}$ if and only if $ \widehat{S}(\mathbb{C})_{V_{1},i}\cap\widehat{S}(\mathbb{C})_{V_{2},j}\subseteq\widehat{S}(\mathbb{C})_{W_{1},k_{1}}\cup\cdots\cup\widehat{S}(\mathbb{C})_{W_{r},k_{r}}$.
\item[(A5)] Let $V$ and $W$ be special subvarieties and fix a coordinate projection. Then for every pair of indices $i,j$ we have that $\widetilde{M}_{W,j}$ is contained in the the corresponding coordinate projection of $\widetilde{M}_{V,i}$ if and only if $\widehat{S}(\mathbb{C})_{W,j}$ is contained in the corresponding coordinate projection of $\widehat{S}(\mathbb{C})_{V,i}$. Furthermore, a coordinate projection of $\widetilde{M}_{V,i}$ is contained in $\widetilde{M}_{W,j}$, if and only if the corresponding coordinate projection of $\widehat{S}(\mathbb{C})_{V,i}$ is contained in $\widehat{S}(\mathbb{C})_{W,j}$.
\item[(A6)] If $z\in S(\mathbb{C})$ is a special point, then for every $i$, $\widetilde{M}_{z,i}$ is a singleton.
\end{description}

Let $\widetilde{T}'(\widehat{\mathbf{p}})$ be the first-order theory axiomatised by (A1)--(A6). Let $\widetilde{T}'(\widetilde{\mathbf{p}})$ be the first-order theory axiomatised by similar axioms, only changing every appearance of $\widehat{S}(\mathbb{C})$ by $\widetilde{S}(\mathbb{C})$.

\begin{lem}
$\widehat{\mathbf{p}}\models\widetilde{T}'(\widehat{\mathbf{p}})$, so $\widetilde{T}'(\widehat{\mathbf{p}})\subseteq \widetilde{T}(\widehat{\mathbf{p}})$. Similarly, $\widetilde{T}'(\widetilde{\mathbf{p}})\subseteq\widetilde{T}(\widetilde{\mathbf{p}})$.
\end{lem}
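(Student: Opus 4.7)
The plan is to verify each of the axiom schemes (A1)--(A6) in turn for $\widehat{\mathbf{p}}$; the analogous verification for $\widetilde{\mathbf{p}}$ will then go through by restriction to the standard locus $\widetilde{S}(\mathbb{C})\subseteq\widehat{S}(\mathbb{C})$. I expect most of the checks to amount to bookkeeping about the inverse limit definition, so the only mildly non-trivial step will be the surjectivity half of (A2).

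Axiom (A1) is immediate from the definitions: the variety sort of $\widehat{\mathbf{p}}$ is $S(\mathbb{C})$ and $T$ was defined to be $\mathrm{Th}(S(\mathbb{C}))$. For (A2), the inclusion $\widehat{p}_{\overline{g}}(\widehat{S}(\mathbb{C})_{V,i})\subseteq Z_{\overline{g}}^{V,i}$ is built into the definition of the relation symbol $\widehat{S}(\mathbb{C})_{V,i}$. For the reverse inclusion, given $z\in Z_{\overline{g}}^{V,i}$, I will invoke Lemma \ref{lem:2.6} to lift $z$ to some $x\in X^{+}_{V,i}$; then the image $\iota(x)\in\widehat{S}(\mathbb{C})$ from Lemma \ref{lem:embedding} lies in $\widehat{S}(\mathbb{C})_{V,i}$ (because it factors through $X^{+}_{V,i}$ at every finite level of the inverse system) and satisfies $\widehat{p}_{\overline{g}}(\iota(x))=z$. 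Axiom (A3) is precisely the compatibility relation (\ref{eq:compat}) defining $\widehat{S}(\mathbb{C})$ as an inverse limit; note that this compatibility automatically restricts to the chosen special domain by functoriality of the maps $\psi_{\overline{h},\overline{g}}^{V,i}$ constructed in the proof of Lemma \ref{lem:2.6}.

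Axioms (A4) and (A5) are tautological for $\widehat{\mathbf{p}}$: in this structure the symbol $\widetilde{M}_{V,i}$ is interpreted as $\widehat{S}(\mathbb{C})_{V,i}$ on the nose, so each biconditional becomes an equality of identical statements. For (A6), if $z\in S(\mathbb{C})$ is special then by Remark \ref{rem:mt} the Mumford--Tate group $\mathrm{MT}(z)$ is a torus, so $\mathrm{MT}(z)(\mathbb{R})^{+}\cdot x=\{x\}$ for any preimage $x$, making each $X^{+}_{z,i}$ a singleton; Lemma \ref{lem:2.6} then forces each $Z_{\overline{g}}^{z,i}$ to be a singleton, so the inverse limit $\widehat{S}(\mathbb{C})_{z,i}$ is a singleton as well.

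For the second assertion, the verification for $\widetilde{\mathbf{p}}$ repeats the above verbatim with $\widetilde{S}(\mathbb{C})$ in place of $\widehat{S}(\mathbb{C})$. The only subtle point is that the preimage produced in the surjectivity step of (A2) must be a standard point, which is automatic because $\iota(x)$ is standard for every $x\in X^{+}$: its special closure is already realised at every finite level by $p$. Thus the main (and genuinely mild) obstacle is the lifting argument in (A2), which is already fully supplied by Lemma \ref{lem:2.6} together with Lemma \ref{lem:embedding}; the rest of the lemma is a consistency check that the axioms (A1)--(A6) were extracted correctly from the intended models.
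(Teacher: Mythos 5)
Your proposal is correct and follows essentially the same route as the paper: a direct verification of axioms (A1)--(A6) for $\widehat{\mathbf{p}}$, with (A4) and (A5) trivial because the relation symbols are interpreted by the sets $\widehat{S}(\mathbb{C})_{V,i}$ themselves, and the surjectivity half of (A2) reduced to exhibiting a compatible system over a given point of $Z_{\overline{g}}^{V,i}$. Your explicit lift through $X^{+}_{V,i}$ via Lemma \ref{lem:2.6} and the embedding $\iota$ of Lemma \ref{lem:embedding}, together with the observation that $\iota(x)$ is standard (which handles the $\widetilde{\mathbf{p}}$ case), merely spells out what the paper's proof leaves implicit.
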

\begin{proof}
Clearly $\widehat{\mathbf{p}}$ satisfies (A1). For (A2) first observe that by definition, any $x\in\widehat{S}(\mathbb{C})_{V,i}$ and any tuple $\overline{g}$ satisfy $\widehat{p}_{\overline{g}}(x) \in Z_{\overline{g}}^{V,i}$. Also, any $z\in Z_{\overline{g}}^{V,i}$ is part of some compatible system $\left(z_{\overline{g}}\right)_{\overline{g}}\in \widehat{S}(\mathbb{C})$. (A3) is satisfied by the definition of $\widehat{S}(\mathbb{C})_{V,i}$ and the compatibility condition (\ref{eq:compat}). (A4) and (A5) are immediate. Finally, as we discussed earlier when we defined $\widetilde{T}(\widetilde{\mathbf{p}})$, (A6) is satisfied. 
\end{proof}

\begin{prop}
\label{prop:qe2}
$\widetilde{T}'(\widehat{\mathbf{p}})$ and $\widetilde{T}'(\widetilde{\mathbf{p}})$ are complete and have quantifier elimination. Therefore $\widetilde{T}(\widehat{\mathbf{p}}) = \widetilde{T}'(\widehat{\mathbf{p}})$ and $\widetilde{T}'(\widetilde{\mathbf{p}}) = \widetilde{T}(\widetilde{\mathbf{p}})$.
\end{prop}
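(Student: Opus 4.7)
The plan is to follow the same strategy as in the proof of Proposition \ref{prop:qetp}, namely apply Propositions \ref{prop:qe} and \ref{prop:completeness} to the two-sorted setup. I will work out the argument for $\widetilde{T}'(\widehat{\mathbf{p}})$; the proof for $\widetilde{T}'(\widetilde{\mathbf{p}})$ is identical after replacing $\widehat{S}(\mathbb{C})_{V,i}$ by $\widetilde{S}(\mathbb{C})_{V,i}$ everywhere, since axioms (A4) and (A5) are the only places the ambient ``limit'' structure intervenes and they are stated in the appropriate form in each theory. So I fix $\omega$-saturated models $\widetilde{\mathbf{q}}=\langle\widetilde{M},M,\{q_{\overline{g}}\}\rangle$ and $\widetilde{\mathbf{q}}'=\langle\widetilde{M}',M',\{q'_{\overline{g}}\}\rangle$ of $\widetilde{T}'(\widehat{\mathbf{p}})$, and I show that the set $I$ of finite partial isomorphisms has back-and-forth (non-emptiness is clear because $F_{0}$ embeds into both field sorts and the empty tuple is always a partial isomorphism on the cover sort).

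Observe first that the atomic $\widetilde{\mathcal{L}}$-formulas over a tuple $(\widetilde{x}_{1},\ldots,\widetilde{x}_{m})\in \widetilde{M}$ are equivalent to conjunctions of: (i) special-domain memberships $\widetilde{M}_{V,i}(\widetilde{x}_{i_{1}},\ldots,\widetilde{x}_{i_{k}})$, and (ii) algebraic conditions of the form $\Phi(q_{\overline{g}}(\widetilde{x}_{1}),\ldots,q_{\overline{g}}(\widetilde{x}_{m}))$ on the field sort (together with conditions purely on the variety sort, which are handled directly by quantifier elimination of $T$ in the other direction of back-and-forth). So suppose we have a partial isomorphism identifying $(\widetilde{x}_{1},\ldots,\widetilde{x}_{m})\in\widetilde{M}$ with $(\widetilde{x}'_{1},\ldots,\widetilde{x}'_{m})\in\widetilde{M}'$ and inducing an embedding $\sigma:L\hookrightarrow M'$ of the field $L$ generated over $F_{0}$ by the coordinates of all $q_{\overline{g}}(\widetilde{x}_{i})$; given $y\in\widetilde{M}$, I must produce $y'\in\widetilde{M}'$ extending the isomorphism.

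Let $\widetilde{M}_{V,i}$ be the smallest special domain containing $(\widetilde{x}_{1},\ldots,\widetilde{x}_{m},y)$ (which exists by the finite-intersection argument of Lemma \ref{lem:sploc}, applied now using the first half of axiom (A4)). For every tuple $\overline{g}$ from $G^{\mathrm{ad}}(\mathbb{Q})^{+}$, let $W_{\overline{g}}\subseteq Z_{\overline{g}}^{V,i}$ be the smallest $L$-subvariety containing $q_{\overline{g}}(\widetilde{x}_{1},\ldots,\widetilde{x}_{m},y)$. Consider the partial $\widetilde{\mathcal{L}}$-type $\Pi(v)$ over $\{\widetilde{x}'_{i}\}\cup\sigma(L)$ consisting of the formulas
\begin{itemize}
\item $(\widetilde{x}'_{1},\ldots,\widetilde{x}'_{m},v)\in\widetilde{M}'_{V,i}$,
\item $(\widetilde{x}'_{1},\ldots,\widetilde{x}'_{m},v)\notin \widetilde{M}'_{V',i'}$ for every special subdomain $\widetilde{M}_{V',i'}\subsetneq\widetilde{M}_{V,i}$ that could contain the tuple,
\item $\sigma_{*}(W_{\overline{g}})(q'_{\overline{g}}(\widetilde{x}'_{1},\ldots,\widetilde{x}'_{m},v))$, for every $\overline{g}$.
\end{itemize}
For finite satisfiability, take finitely many $\overline{g}_{1},\ldots,\overline{g}_{N}$ appearing in a given finite subset and form the union $\overline{g}^{*}$. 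By (A3), the variety conditions at the $\overline{g}_{j}$ are projections of the condition at $\overline{g}^{*}$, so one only needs to realise a single $L$-variety condition inside $Z_{\overline{g}^{*}}^{V,i}$. Such a realisation exists in $M'$ by quantifier elimination of $T$ applied to $\sigma$, and then by axiom (A2) it lifts to a point of $\widetilde{M}'_{V,i}$; combined with the fact that a special domain is not the union of finitely many proper subdomains (which follows from axioms (A4) and (A5) together with the corresponding geometric fact in $\widehat{S}(\mathbb{C})$), this shows $\Pi(v)$ is finitely satisfiable. By $\omega$-saturation $\Pi(v)$ is realised by some $y'\in\widetilde{M}'$, and this $y'$ extends the partial isomorphism: condition (i) above and axioms (A4)–(A5) handle the cover-sort atomic formulas involving the new tuple, while conditions (iii) handle the field-sort formulas about images under $q_{\overline{g}}$. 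The back step on the field sort is immediate from quantifier elimination of $T$ together with axiom (A2) (use (A2) to produce a preimage in $\widetilde{M}'$ of any new field element lying in some $Z_{\overline{g}}^{V,i}$). The only delicate point is verifying finite satisfiability of $\Pi$, which is really the content of axioms (A2)–(A3); everything else proceeds formally as in the proof of Proposition \ref{prop:qetp}.
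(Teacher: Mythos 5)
Your overall architecture is the same as the paper's (back-and-forth between $\omega$-saturated models via Propositions \ref{prop:qe} and \ref{prop:completeness}, taking the special closure $\widetilde{M}_{V,i}$ of the extended tuple, the smallest $L$-varieties $W_{\overline{g}}\subseteq Z_{\overline{g}}^{V,i}$, and using saturation plus the fact that a special domain is not covered by finitely many proper subdomains). But there is a genuine gap at the crux of your finite-satisfiability argument for $\Pi(v)$, namely the step ``then by axiom (A2) it lifts to a point of $\widetilde{M}'_{V,i}$''. Axiom (A2) only says that $q'_{\overline{g}^{*}}$ maps $\widetilde{M}'_{V,i}$ \emph{onto} $Z_{\overline{g}^{*}}^{V,i}$: it produces some tuple of $\widetilde{M}'_{V,i}$ lying over your chosen field point, but nothing forces the first $m$ coordinates of that lift to equal the prescribed $\widetilde{x}'_{1},\ldots,\widetilde{x}'_{m}$. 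The fibres of $q'_{\overline{g}^{*}}$ are large (in the intended model they are $\Gamma_{\overline{g}^{*}}$-orbits), and at this point in the development you have no group action or homogeneity available in an arbitrary model of (A1)--(A6) that would let you move the lift back over the given tuple while remaining inside $\widetilde{M}'_{V,i}$. So the formulas $(\widetilde{x}'_{1},\ldots,\widetilde{x}'_{m},v)\in\widetilde{M}'_{V,i}$ and $\sigma_{*}(W_{\overline{g}^{*}})\bigl(q'_{\overline{g}^{*}}(\widetilde{x}'_{1},\ldots,\widetilde{x}'_{m},v)\bigr)$ have not been shown jointly satisfiable; what you need is a \emph{relative} lifting statement over prescribed cover-sort coordinates, which is not ``really the content of axioms (A2)--(A3)''.

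This is exactly where the paper's proof does extra work: it isolates the first-order statement (\ref{eq:formqe}), asserting that whenever $\overline{v}$ lies in the relevant special domains and $q_{\overline{g}}(\overline{v})$ lies in the projection of $W$, there exists $u$ with $(\overline{v},u)\in\widetilde{M}_{V,i}$ and $q_{\overline{g}}(\overline{v},u)\in W$, and it uses (A5) to know that the projection of $\widetilde{M}_{V,i}$ onto the first $m$ coordinates contains $\mathrm{spcl}(x_{1},\ldots,x_{m})$ (so the given tuple is eligible), together with (A2)/(A3) and $\omega$-saturation, to produce $y'$. The paper also explicitly flags why this case is more delicate than Proposition \ref{prop:qetp}: Remark \ref{rem:proj} fails in $\widehat{\mathbf{p}}$, so projections of special domains need not be finite unions of special domains, and one must work with containments rather than equalities. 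Your closing claim that ``everything else proceeds formally as in the proof of Proposition \ref{prop:qetp}'' passes over precisely this point; to repair the proposal you should state and justify the analogue of (\ref{eq:formqe}) in models of (A1)--(A6) and use it in place of the bare appeal to (A2).
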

\begin{proof}
Let $\left<\widetilde{M}, M, \left\{q_{\overline{g}}\right\}\right>$ and $\left<\widetilde{M}', M', \left\{q'_{\overline{g}}\right\}\right>$ be two $\omega$-saturated models of $\widetilde{T}'$. Let $x_{1},\ldots,x_{m}\in\widetilde{M}$ and $x_{1}',\ldots,x_{m}'\in\widetilde{M}'$ be such that $\mathrm{qftp}(x_{1},\ldots,x_{m}) = \mathrm{qftp}(x_{1}',\ldots,x_{m}')$. Choose $y\in\widetilde{M}$ and let $\mathrm{spcl}(x_{1},\ldots,x_{m},y) = \widetilde{M}_{V,i}$. Let $L$ be the field generated over $F_{0}$ by the coordinates of $q(x_{1}),\ldots,q(x_{m})$. Given a tuple $\overline{g}$, the type $\mathrm{qftp}(q_{\overline{g}}(x_{1},\ldots,x_{m},y)/L)$ is determined by the smallest algebraic subvariety defined over $L$ which contains the point $q_{\overline{g}}(x_{1},\ldots,x_{m},y)$, call it $W$. Clearly $W\subseteq Z_{\overline{g}}^{V,i}$. 

If $M = S(F)$ and $M' = S\left(F'\right)$, then there is an embedding $\sigma:L\hookrightarrow F'$ defined by the finite partial isomorphism. Let $\mathrm{pr}$ denote the coordinate projection sending $(x_{1}\ldots,x_{m},y)\mapsto(x_{1},\ldots,x_{m})$. We will use the same name to denote the corresponding coordinate projections 
\begin{equation*}
    \mathrm{pr}:Z_{(g_{1},\ldots,g_{n})}^{n(m+1)}\rightarrow Z_{(g_{1},\ldots,g_{n})}^{nm}.
\end{equation*}

Observe that the argument we will use now will be slightly more subtle than the one used in Proposition \ref{prop:qetp} because although Remark \ref{rem:proj} still holds in $\widetilde{\mathbf{p}}$, it does not hold in $\widehat{\mathbf{p}}$. But we can still get through with the same strategy. We can write $\mathrm{pr}(W) = W_{1}\cup\cdots\cup W_{t}$, with each member of the union defined over $L$. The projection $\mathrm{pr}\left(\widetilde{M}_{V,i}\right)$ will equal a finite union $\widetilde{M}_{U_{1},j_{1}}\cup\cdots\cup\widetilde{M}_{U_{r},j_{r}}$ of special domains in the case of $\widetilde{T}'(\widetilde{\mathbf{p}})$, but in the case of $\widetilde{T}'(\widehat{\mathbf{p}})$ we can only say that $\mathrm{pr}\left(\widetilde{M}_{V,i}\right)$ contains this finite union. What is still true in both theories is the formula:
\begin{equation}
\label{eq:formqe}
    \forall\overline{v}\left[\left(\overline{v}\in\bigcup_{s=1}^{r}\widetilde{M}_{U_{s},j_{s}}\wedge q_{\overline{g}}(\overline{v})\in\bigcup_{d=1}^{t} W_{d}\right)\rightarrow\exists u\left((\overline{v},u)\in\widetilde{M}_{V,i}\wedge q_{\overline{g}}(\overline{v},u)\in W\right)\right].
\end{equation}

By (A2), (A5) and $\omega$-saturation, we can choose $y'\in \widetilde{M}'$ so that $\left(x_{1}',\ldots,x_{m}',y'\right)\in\widetilde{M'}_{V,i}$,  $q_{\overline{g}}\left(x_{1}',\ldots,x'_{m},y'\right)\in W(F')$ and $q_{\overline{g}}\left(x_{1}',\ldots,x'_{m},y'\right)$ does not belong to a smaller algebraic subset defined over $L$. Indeed, by (A5) we know that the projection of $\widetilde{M}_{V,i}$ onto the first $m$ coordinates contains $\mathrm{spcl}(x_{1},\ldots,x_{m})$. By (A3) we know that $\widetilde{M}_{V,i}'$ is mapped surjectively onto $Z_{\overline{g}}^{V,i}\left(F'\right)$. Thus, using (\ref{eq:formqe}) we can find $y'$ so that $\left(x_{1}',\ldots,x_{m}',y'\right)\in\widetilde{M}_{V,i}'$ and 
\begin{equation*}
\mathrm{qfpt}\left(q_{\overline{g}}\left(x_{1},\ldots,x_{m},y\right)/L\right) = \mathrm{qfpt}\left(q_{\overline{g}}\left(x_{1}',\ldots,x_{m}',y'\right)/L\right)
\end{equation*}

By $\omega$-saturation we can now find $y''$ so that 
\begin{equation*}
\bigcup_{\overline{g}}\mathrm{qfpt}\left(q_{\overline{g}}\left(x_{1},\ldots,x_{m},y\right)/L\right) = \bigcup_{\overline{g}}\mathrm{qftp}\left(q_{\overline{g}}\left(x_{1}',\ldots,x_{m}',y''\right)/L\right)
\end{equation*}
and, as $\widetilde{M}_{V,i}$ is no covered by finitely many proper special subdomains, $\omega$-saturation also gives us that $\mathrm{spcl}(x_{1}',\ldots,x_{m}',y'') = \widetilde{M}_{V,i}'$. 
\end{proof}

Now given a model $S(F)\models T$, we can obtain two covering structures for it: $\widehat{S}(F)$ which is defined as the inverse limit the varieties $Z_{\overline{g}}$, and $\widetilde{S}(F)$, defined to be the subset consisting of standard points of $\widehat{S}(F)$. The maps $\widehat{q}_{\overline{g}}:\widehat{S}(F)\rightarrow Z_{\overline{g}}$ are the natural maps, and $\widetilde{q}_{\overline{g}}$ is the restriction of $\widehat{q}_{\overline{g}}$ to $\widetilde{S}(F)$.

\begin{cor}
\label{cor:promodels}
$\left<\widehat{S}(F),S(F),\left\{\widehat{q}_{\overline{g}}\right\}\right>\models\widetilde{T}(\widehat{\mathbf{p}})$, and $\left<\widetilde{S}(F),S(F),\left\{\widetilde{q}_{\overline{g}}\right\}\right>\models\widetilde{T}(\widetilde{\mathbf{p}})$. 
\end{cor}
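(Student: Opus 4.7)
The plan is to apply Proposition \ref{prop:qe2} and thereby reduce the problem to verifying the axioms (A1)--(A6) for each of the two structures $\left<\widehat{S}(F),S(F),\{\widehat{q}_{\overline{g}}\}\right>$ and $\left<\widetilde{S}(F),S(F),\{\widetilde{q}_{\overline{g}}\}\right>$. Axiom (A1) holds by hypothesis. Axiom (A3), the compatibility $\psi_{\overline{h},\overline{g}}^{V,i}\circ q_{\overline{h}}=q_{\overline{g}}$, is immediate from the construction of $\widehat{S}(F)$ as an inverse limit: the maps $\widehat{q}_{\overline{g}}$ are, by definition, the natural projections of the limit, and they satisfy the compatibility with the $\psi^{V,i}_{\overline{h},\overline{g}}$.

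Next, I would argue that (A4), (A5) and (A6) transfer from the known complex case. Each of these axioms asserts an inclusion, coordinate-projection, or fibre condition purely among the varieties $Z_{\overline{g}}^{V,i}$, which are all defined over $F_0\subseteq F$. Since every such geometric relation between $F_0$-varieties is absolute across algebraically closed extensions of $F_0$, and the corresponding relation is known to hold in $\widehat{\mathbf{p}}$ and $\widetilde{\mathbf{p}}$ (by the preceding lemma stating $\widetilde{T}'\subseteq\widetilde{T}$ in both theories), the same relation transfers to the $F$-points. In particular, (A6) follows because a special point $z\in S(\mathbb{C})$ defines the zero-dimensional variety $Z_{\overline{g}}^{z,i}$, so the inverse limit fibre is a singleton both in $\widehat{S}(\mathbb{C})$ and in $\widehat{S}(F)$.

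The substantive content is (A2), the surjectivity of $\widehat{q}_{\overline{g}}$ (resp.\ $\widetilde{q}_{\overline{g}}$) onto $Z_{\overline{g}}^{V,i}(F)$. For $\widehat{S}(F)$ this is straightforward: the transition morphisms $\psi_{\overline{h},\overline{g}}^{V,i}$ are finite surjective morphisms of algebraic varieties over $F_0$, so they remain surjective on $F$-points, and a directed inverse-limit argument produces, for each $z\in Z_{\overline{g}}^{V,i}(F)$, a compatible system $(z_{\overline{h}})_{\overline{h}}\in \widehat{S}(F)_{V,i}$ with $z_{\overline{g}}=z$.

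The main obstacle is (A2) for the standard-points structure $\widetilde{S}(F)$, where the chosen lift must additionally be standard in the sense of Lemma \ref{lem:standardness}. Here I would mimic the construction of Lemma \ref{lem:sf}: enumerate the orbits of $G^{\mathrm{ad}}(\mathbb{Q})^{+}$ on $\widehat{S}(F)_{V,i}$ lying above $z$ and in each orbit choose, by transfinite recursion through the directed system of tuples $\overline{h}$, a representative $(z_{\overline{h}})_{\overline{h}}$ so that at each stage $z_{\overline{h}}$ is generic in $Z_{\overline{h}}^{V,i}$ over the points already fixed; this is possible because $Z_{\overline{h}}^{V,i}$ is irreducible and cannot be covered by the countably many proper special subdomains (using A4 and A5 as already verified). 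By construction, no proper special subdomain $\widehat{S}(F)_{V',i'}\subsetneq \widehat{S}(F)_{V,i}$ can contain the resulting compatible system, so its special closure in $\widehat{S}(F)$ equals $\widehat{S}(F)_{V,i}$, matching $\mathrm{spcl}(\widehat{p}(\tilde{x}))=V$ and giving standardness. Taking $G^{\mathrm{ad}}(\mathbb{Q})^{+}$-translates of these representatives yields a standard section realising every element of $Z_{\overline{g}}^{V,i}(F)$, which verifies (A2) for $\widetilde{S}(F)$ and completes the proof.
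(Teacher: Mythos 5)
Your overall strategy is the right one, and it is the same as the paper's (the corollary is stated without proof precisely because, once Proposition \ref{prop:qe2} identifies $\widetilde{T}(\widehat{\mathbf{p}})$ and $\widetilde{T}(\widetilde{\mathbf{p}})$ with the theories axiomatised by (A1)--(A6), one only has to check those axioms for the two tautological structures). Your verification of (A1), (A3) and of (A2) for $\widehat{S}(F)$ (finite surjective transition maps, directed inverse limit of nonempty finite fibres) is fine, and the transfer of (A4)--(A6) via the fact that all the $Z_{\overline{g}}^{V,i}$ are defined over $F_{0}$ is the intended reduction.

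However, your treatment of the step you correctly single out as the main one --- (A2) for $\widetilde{S}(F)$ --- does not work. You propose to choose $z_{\overline{h}}$ ``generic in $Z_{\overline{h}}^{V,i}$ over the points already fixed'', but $z_{\overline{h}}$ is forced to lie in the finite fibre of $\psi^{V,i}_{\overline{h},\overline{g}}$ over the \emph{fixed} point $z$, so it cannot be chosen generic unless $z$ itself is generic; moreover the ``not covered by countably many proper special subdomains'' argument fails when $F$ is countable (e.g.\ $F=\overline{\mathbb{Q}}$, exactly the case used later via prime models in Lemma \ref{lem:3.6b}). More seriously, your standardness criterion is wrong: the base point $\widehat{p}(\widetilde{x})$ is determined by $z$, so you cannot ``match'' $\mathrm{spcl}(\widehat{p}(\widetilde{x}))=V$; if $z$ lies over a point whose special closure is $W\subsetneq V$, then arranging $\mathrm{spcl}(\widetilde{x})=\widehat{S}(F)_{V,i}$ would make $\widetilde{x}$ \emph{non}-standard by Lemma \ref{lem:standardness}, the opposite of what is required. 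The correct argument is the other way around: set $W:=\mathrm{spcl}$ of the $S^{m}$-projection of $z$, use the complex picture (Remark \ref{rem:indexzg} and the containments transferred in your (A4)/(A5) step) to find the index $j$ with $X^{+}_{W,j}\subseteq X^{+}_{V,i}$ and $z\in Z^{W,j}_{\overline{g}}(F)$, and then lift $z$ through the tower of the $Z^{W,j}_{\overline{h}}$ exactly as in your $\widehat{S}(F)$ argument; the resulting compatible system lies in $\widehat{S}(F)_{W,j}\subseteq\widehat{S}(F)_{V,i}$, maps to $z$, and is standard (coordinate-wise as well, since the coordinate projections of $W$ are the special closures of the coordinates of the base point). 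No orbit enumeration or transfinite recursion in the style of Lemma \ref{lem:sf} is needed.
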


The following Corollary is proven just like Proposition \ref{prop:typedet1}.

\begin{cor}
\label{cor:typedet2}
Let $\widetilde{\mathbf{q}}\models\widehat{T}(\widetilde{\mathbf{p}})$ or $\widetilde{T}(\widetilde{\mathbf{p}})$. Choose $x_{1},\ldots,x_{m}\in\widetilde{M}$ and $A\subseteq\widetilde{M}$. Then we have that $\mathrm{qftp}\left(x_{1},\ldots,x_{m}/A\right)$ is determined by:
\begin{equation*}
Q(\overline{v}):=\bigcup_{\overline{g}}\mathrm{qfpt}\left(q_{\overline{g}}(x_{1},\ldots,x_{m})/L\right) \cup \left\{\mathrm{sploc}((v_{1},\ldots,v_{m})/A) = \mathrm{sploc}((x_{1},\ldots,x_{m})/A)\right\},
\end{equation*}
where $L$ is the field generated over $F_{0}$ by the elements of $\widehat{A}$. 
\end{cor}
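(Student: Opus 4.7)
The plan is to mimic the proof of Proposition \ref{prop:typedet1}, adapting it to the Shimura cover setting. By the quantifier elimination of $\widetilde{T}(\widehat{\mathbf{p}})$ and $\widetilde{T}(\widetilde{\mathbf{p}})$ established in Proposition \ref{prop:qe2}, it suffices to show that any realisation $\overline{y}$ of $Q(\overline{v})$ in an elementary extension of $\widetilde{\mathbf{q}}$ satisfies $\mathrm{qftp}(\overline{y}/A)=\mathrm{qftp}(\overline{x}/A)$.

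I would classify the non-trivial atomic $\widetilde{\mathcal{L}}$-formulae with parameters from $A$ and $L$ into three groups: (i) memberships $(v_1,\ldots,v_m)\in \widetilde{M}_{V,i}(\overline{a})$ in the covering sort for tuples $\overline{a}$ from $A$, (ii) polynomial conditions over $L$ in the images $q_{\overline{g}}(v_1),\ldots,q_{\overline{g}}(v_m)$ on the field side, and (iii) equalities $v_i=v_j$ or $v_i=a$ for $a\in A$ on the covering sort. The formulae of type (ii) are captured directly by the field-side component $\bigcup_{\overline{g}}\mathrm{qftp}(q_{\overline{g}}(\overline{x})/L)$ of $Q(\overline{v})$, since $L$ already contains $\widehat{A}$ and hence contains $q_{\overline{g}}(a)$ for every $a\in A$ and every tuple $\overline{g}$.

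For type (i), exactly as in the proof of Proposition \ref{prop:typedet1}, the condition $(x_1,\ldots,x_m)\in\widetilde{M}_{V,i}(\overline{a})$ is equivalent to $\mathrm{sploc}((x_1,\ldots,x_m)/A)\subseteq\widetilde{M}_{V,i}(\overline{a})$; this, together with its negations, is transferred to $\overline{y}$ by the special-locus component of $Q(\overline{v})$. For type (iii), I would reduce equalities to memberships of type (i): the diagonal embedding of Shimura data $G\hookrightarrow G\times G$ induces a diagonal Shimura morphism $S\hookrightarrow S^2$, whose image is a special subvariety $\Delta\subseteq S^2$; applying this construction pairwise in $S^m$ and $S^{m+1}$ shows that both $\{\overline{v}: v_i=v_j\}$ and $\{\overline{v}:v_i=a\}$ (for $a\in A$) arise as special-domain loci $\widetilde{M}_{W,j}(\overline{a})$ with parameters from $A$. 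Hence these equalities are themselves controlled by the special-locus portion of $Q(\overline{v})$.

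The main subtlety, and the only real place where one must be careful, is the bookkeeping that ensures all atomic $\widetilde{\mathcal{L}}$-formulae really are exhausted by these three types and that equalities can be faithfully repackaged as special-domain memberships. This follows directly from the definition of $\widetilde{\mathcal{L}}$ (containing only $\mathcal{L}_F$, the predicates $\widetilde{M}_{V,i}$, the functions $q_{\overline{g}}$, and equality) and from the axioms (A2)--(A6) governing the interaction between the two sorts. Once this is in place, the proof proceeds as a routine transcription of the argument for Proposition \ref{prop:typedet1}, with the $G^{\mathrm{ad}}(\mathbb{Q})^{+}$-action on the cover replaced throughout by the family of functions $\{q_{\overline{g}}\}$.
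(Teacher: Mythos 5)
Your proposal is correct and takes essentially the same route as the paper, which proves this corollary simply by repeating the argument of Proposition \ref{prop:typedet1}: field-sort atomic formulae are absorbed into $\bigcup_{\overline{g}}\mathrm{qftp}\left(q_{\overline{g}}(\overline{x})/L\right)$, special-domain memberships are transferred via the special-locus component, and equalities are rewritten as memberships in the diagonal special domains coming from $Z_{(e,e)}$, just as the paper does there via the MOD conditions. Your only cosmetic difference is invoking Proposition \ref{prop:qe2} up front, which is harmless since the reduction to comparing quantifier-free types is already what the statement asks for.
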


\begin{prop}
\label{prop:x+}
$\left<X^{+},S(\mathbb{C}), \left\{p_{\overline{g}}\right\}\right>\models\widetilde{T}(\widetilde{\mathbf{p}})$, where the structure of $X^{+}$ consists only of the names of the special domains $X^{+}_{V,i}$, and we define $p_{\overline{g}}(x) := p(\overline{g}x)$, for all $x\in X^{+}$ and all tuples $\overline{g}$ from $G^{+}(\mathbb{Q})^{+}$.
\end{prop}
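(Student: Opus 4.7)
The plan is to invoke Proposition \ref{prop:qe2}, which shows that $\widetilde{T}(\widetilde{\mathbf{p}})$ is axiomatised by the $\widetilde{\mathbf{p}}$-analogues of (A1)--(A6). Thus it suffices to verify these six axioms for the structure $\langle X^{+}, S(\mathbb{C}), \{p_{\overline{g}}\}\rangle$. Axiom (A1) is immediate since $S(\mathbb{C})\models T$ by construction. Axiom (A2), that $p_{\overline{g}}(X^{+}_{V,i}) = Z_{\overline{g}}^{V,i}$, is literally the defining identity of $Z_{\overline{g}}^{V,i}$ given in Lemma \ref{lem:2.6}. Axiom (A3) is forced by the way $\psi_{\overline{h},\overline{g}}^{V,i}$ was constructed in \textsection\ref{subsec:galrep}: it forgets precisely the coordinates indexed by $\overline{h}\setminus\overline{g}$, which is the same operation as passing from $p_{\overline{h}}$ to $p_{\overline{g}}$. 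Axiom (A6) is the content of the ``Trivial special closures'' example: for a special point $x$, $\mathrm{MT}(x)(\mathbb{R})^{+}\cdot x = \{x\}$, so each $X^{+}_{x,i}$ is a singleton.

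The substantive part is (A4) and (A5), and the key tool is the embedding $\iota:X^{+}\hookrightarrow \widehat{S}(\mathbb{C})$ of Lemma \ref{lem:embedding}, together with the observation that $\iota$ intertwines $p_{\overline{g}}$ with $\widehat{p}_{\overline{g}}$. First I would check that $\iota$ actually factors through $\widetilde{S}(\mathbb{C})$, i.e. that every $\iota(x)$ is standard. For $x\in X^{+}$, letting $X^{+}_{V,i}=\mathrm{spcl}(x)$, axiom (A2) for $\widetilde{\mathbf{p}}$ gives $\iota(x)\in \widehat{S}(\mathbb{C})_{V,i}$; on the other hand, any smaller $\widehat{S}(\mathbb{C})_{W,j}$ containing $\iota(x)$ would project under $\widehat{p}$ into $W$ and force $\mathrm{spcl}(p(x))\subseteq W\subsetneq V$, contradicting $V=\mathrm{spcl}(p(x))$. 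Hence $\mathrm{spcl}(\iota(x)) = \widehat{S}(\mathbb{C})_{V,i}$ and $\widehat{p}(\mathrm{spcl}(\iota(x))) = V = \mathrm{spcl}(\widehat{p}(\iota(x)))$, so $\iota(x)$ is standard.

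With the indexing of $\widetilde{S}(\mathbb{C})_{V,i}$ chosen (compatibly with that of $X^{+}_{V,i}$) so that $\iota(X^{+}_{V,i})\subseteq \widetilde{S}(\mathbb{C})_{V,i}$ for every $V,i$, the ``iff'' statements of (A4) and (A5) in $\widetilde{\mathbf{p}}$ translate under $\iota$ into the corresponding statements in $X^{+}$. Indeed, each $X^{+}_{W,k}$ is a connected special subdomain that is contained in a unique $X^{+}_{V_{1},i}$ whenever $W\subseteq V_{1}$, and the same combinatorial rule determines the containment $\widetilde{S}(\mathbb{C})_{W,k}\subseteq\widetilde{S}(\mathbb{C})_{V_{1},i}$ via the compatibility of $\iota$; Remark \ref{rem:proj} gives the analogous compatibility for projections (and carries over to $\widetilde{\mathbf{p}}$ precisely because we are restricting to standard points). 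I expect (A4) and (A5) to be the main obstacle only in the sense of bookkeeping: one must pin down the indexing convention on the $\widetilde{S}(\mathbb{C})_{V,i}$ so that it is induced by the $X^{+}_{V,i}$ through $\iota$, after which the equivalences become tautologies. Once all six axioms are verified, Proposition \ref{prop:qe2} yields $\langle X^{+},S(\mathbb{C}),\{p_{\overline{g}}\}\rangle\models\widetilde{T}(\widetilde{\mathbf{p}})$.
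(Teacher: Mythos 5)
Your overall route is the paper's route: reduce to the axioms (A1)--(A6) via Proposition \ref{prop:qe2}, dispose of (A1), (A2), (A3), (A6) directly, and attack (A4), (A5) through the embedding $\iota$ of Lemma \ref{lem:embedding} after checking that its image consists of standard points. Up to that point your proposal matches the paper, and your verification that $\iota(x)$ is standard is fine.

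There is, however, a genuine gap in how you dispose of (A4) and (A5). You claim that, once the indexing is matched so that $\iota\left(X^{+}_{V,i}\right)\subseteq\widetilde{S}(\mathbb{C})_{V,i}$, the ``iff'' statements ``translate under $\iota$'' and become tautologies. They do not, because $\iota$ is injective but very far from surjective onto $\widetilde{S}(\mathbb{C})$: for instance, the fibre of $\widehat{p}$ over a Hodge-generic point is a torsor under $\overline{\Gamma}$, whereas the $p$-fibre is a single $\Gamma$-orbit. The axioms quantify over \emph{all} points of $\widetilde{S}(\mathbb{C})$, and a containment or covering relation among the $X^{+}_{V,i}$ only controls the points lying in the image of $\iota$. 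Concretely, the implication from $X^{+}_{W,k}\subseteq X^{+}_{V_{1},i}\cap X^{+}_{V_{2},j}$ to $\widetilde{S}(\mathbb{C})_{W,k}\subseteq\widetilde{S}(\mathbb{C})_{V_{1},i}\cap\widetilde{S}(\mathbb{C})_{V_{2},j}$ must be proved by applying $p_{\overline{g}}$ to get $Z_{\overline{g}}^{W,k}\subseteq Z_{\overline{g}}^{V_{1},i}\cap Z_{\overline{g}}^{V_{2},j}$ for all $\overline{g}$ and then invoking the definition of the domains in $\widehat{S}(\mathbb{C})$ as the loci where every $\widehat{p}_{\overline{g}}$-image lies in $Z_{\overline{g}}^{V,i}$; and, more seriously, the covering clause of (A4) --- that $X^{+}_{V_{1},i}\cap X^{+}_{V_{2},j}\subseteq X^{+}_{W_{1},k_{1}}\cup\cdots\cup X^{+}_{W_{r},k_{r}}$ forces the same inclusion for the $\widetilde{S}(\mathbb{C})$-domains --- requires showing that a point $\widetilde{x}\in\widetilde{S}(\mathbb{C})_{V_{1},i}\cap\widetilde{S}(\mathbb{C})_{V_{2},j}$ escaping the union would have to be non-standard (compare it with a lift $y\in X^{+}_{V_{1},i}\cap X^{+}_{V_{2},j}$ of $\widetilde{p}(\widetilde{x})$ and apply Lemma \ref{lem:standardness}), contradicting the definition of $\widetilde{S}(\mathbb{C})$. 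This use of Lemma \ref{lem:standardness} is the one substantive step of the proof and is precisely what your ``bookkeeping of indices'' dismisses; your parenthetical appeal to ``restricting to standard points'' gestures at it but supplies no argument.
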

\begin{proof}
By Proposition \ref{prop:qe2}, we need to verify that $\left<X^{+},S(\mathbb{C}),\left\{p_{\overline{g}}\right\}\right>$ satisfies (A1)--(A6). (A1), (A2), (A3) and (A6) are straightforward. Observe that the axiom scheme (A2) is essentially a combination of the axiom schemes SS and MOD from \textsection \ref{subsec:axiom}.

To verify (A4) and (A5), first recall the map $\iota$ from Lemma \ref{lem:embedding}, and notice that its image is completely contained in $\widetilde{S}(\mathbb{C})$. From the definition of special domains in $\widetilde{S}(\mathbb{C})$, we get then that $x\in X_{V,i}^{+}$ if and only if $\iota(x)\in\widetilde{S}(\mathbb{C})_{V,i}$. This takes care of some of the implications needed to show (A4) and (A5). For example, if $\widetilde{S}(\mathbb{C})_{W,k}\subseteq\widetilde{S}(\mathbb{C})_{V_{1},i}\cap\widetilde{S}(\mathbb{C})_{V_{2},j}$, then we get that $X^{+}_{W,k}\subseteq X^{+}_{V_{1},i}\cap X^{+}_{V_{2},j}$. Conversely, if $X^{+}_{W,k}\subseteq X^{+}_{V_{1},i}\cap X^{+}_{V_{2},j}$, then $Z_{\overline{g}}^{W,k}\subseteq Z_{\overline{g}}^{V_{1},i}\cap Z_{\overline{g}}^{V_{2},j}$ for all tuples $\overline{g}$. Similarly, $\widetilde{S}(\mathbb{C})_{V,i}\subseteq\mathrm{pr}\left(\widetilde{S}_{W,j}\right)$ if and only if $X^{+}_{V,i}\subseteq\mathrm{pr}\left(X^{+}_{W,j}\right)$.

Suppose now that $\widetilde{S}(\mathbb{C})_{V_{1},i}\cap\widetilde{S}(\mathbb{C})_{V_{2},j}\subseteq\widetilde{S}(\mathbb{C})_{W_{1},k_{1}}\cup\cdots\cup\widetilde{S}(\mathbb{C})_{W_{r},k_{r}}$. As before, we get that $X^{V_{1},i}\cap X^{+}_{V_{2},j}\subseteq X^{+}_{W_{1},k_{1}}\cup\cdots\cup X^{+}_{W_{r},k_{r}}$. Conversely, suppose $X^{V_{1},i}\cap X^{+}_{V_{2},j}\subseteq X^{+}_{W_{1},k_{1}}\cup\cdots\cup X^{+}_{W_{r},k_{r}}$. If the corresponding statement is not true in $\widetilde{S}(\mathbb{C})$, then there must be a point $\widetilde{x}\in\widetilde{S}(\mathbb{C})_{V_{1},i}\cap\widetilde{S}(\mathbb{C})_{V_{2},j}$ such that $\widetilde{x}\notin\widetilde{S}(\mathbb{C})_{W_{1},k_{1}}\cup\cdots\cup\widetilde{S}(\mathbb{C})_{W_{r},k_{r}}$. Choose $y\in X^{+}_{V_{1},i}\cap X^{+}_{V_{2},j}$ such that $p(y) = \widetilde{p}(\widetilde{x})$. By Lemma \ref{lem:standardness}, we get that $\widetilde{x}$ is non-standard, but $\widetilde{S}(\mathbb{C})$ by definition only has standard points. This contradiction shows that (A4) is satisfied. 

To verify what remains of (A5), observe that $\mathrm{pr}\left(X^{+}_{W,j}\right)\subseteq X^{+}_{V,i}$ implies that the corresponding projection of $Z_{\overline{g}}^{W,j}$ is contained in $Z_{\overline{g}}^{V,i}$, for all $\overline{g}$. On the other hand, it is immediate that if $\mathrm{pr}\left(\widetilde{S}(\mathbb{C})_{W,j}\right)\subseteq\widetilde{S}(\mathbb{C})_{V,i}$, then $\mathrm{pr}\left(X^{+}_{W,j}\right)\subseteq X^{+}_{V,i}$.
\end{proof}

\begin{cor}
\label{cor:gact}
If $\widetilde{\mathbf{q}}\models\widetilde{T}(\widetilde{\mathbf{p}})$, then $\widetilde{M}$ carries an action of $G^{\mathrm{ad}}(\mathbb{Q})^{+}$. 
\end{cor}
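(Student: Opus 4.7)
The plan is to reconstruct, for each $g \in G^{\mathrm{ad}}(\mathbb{Q})^{+}$, the multiplication-by-$g$ map on $\widetilde{M}$ as the graph of a relation that is already present in the language, namely the special domain $\widetilde{M}_{Z_{(e,g)}, 1} \subseteq \widetilde{M}^{2}$, and then transfer the relevant first-order properties from the prime model $\widetilde{\mathbf{p}} = \langle X^{+}, S(\mathbb{C}), \{p_{\overline{g}}\}\rangle$ to $\widetilde{\mathbf{q}}$ using completeness of $\widetilde{T}(\widetilde{\mathbf{p}})$ (Proposition \ref{prop:qe2}).

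First, I will use Remark \ref{rem:indexzg} to identify, in the prime model, the special domain $X^{+}_{Z_{(e,g)}, 1}$ (with the indexing fixed to be the one parametrized by $X^{+}_{S,1} = X^{+}$) with the graph $\{(y, gy) : y \in X^{+}\}$ of the $g$-action. In particular this set is the graph of an everywhere-defined function, so the $\widetilde{\mathcal{L}}$-sentence $\forall x \, \exists! y \, \bigl((x, y) \in \widetilde{M}_{Z_{(e,g)}, 1}\bigr)$ holds in $\widetilde{\mathbf{p}}$, and hence in $\widetilde{\mathbf{q}}$ as well. I then define $g \cdot x$ to be the unique witness $y$.

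Next, I verify that this assignment is a group action. The axioms $e \cdot x = x$ and $g \cdot (h \cdot x) = (gh) \cdot x$ correspond to the first-order $\widetilde{\mathcal{L}}$-sentences $\forall x \, \bigl((x, x) \in \widetilde{M}_{Z_{(e,e)}, 1}\bigr)$ and $\forall x, y, z \, \bigl((x, y) \in \widetilde{M}_{Z_{(e,h)}, 1} \wedge (y, z) \in \widetilde{M}_{Z_{(e,g)}, 1} \to (x, z) \in \widetilde{M}_{Z_{(e, gh)}, 1}\bigr)$ (one instance for each pair $g,h$). Both are true in $\widetilde{\mathbf{p}}$ by direct inspection of the graphs in $X^{+}$, and hence transfer to $\widetilde{\mathbf{q}}$ by completeness. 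Bijectivity of each $g$-action then follows by applying existence and uniqueness to $g^{-1}$ and combining with the composition law.

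The only substantive step is the bookkeeping of special-domain indices: one must pin down that the index $1$ on $Z_{(e,g)}$ really selects the graph of multiplication by $g$ and not one of the other countably many special domains above $Z_{(e,g)}$. This is precisely the content of the convention fixed in Remark \ref{rem:indexzg}, so the point of potential confusion is already settled for us in the paper. Once this is fixed, everything else reduces to observing that the relevant properties are first-order and appealing to completeness.
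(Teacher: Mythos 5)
Your proposal is correct and follows essentially the same route as the paper: the paper likewise defines $g\cdot x$ via the special domain $\widetilde{M}_{Z_{(e,g)},1}$ whose counterpart in $\left<X^{+},S(\mathbb{C}),\left\{p_{\overline{g}}\right\}\right>$ is the graph of the $g$-action (Proposition \ref{prop:x+}), and transfers the first-order statement that this set is the graph of a bijection to all models via completeness (Proposition \ref{prop:qe2}). Your explicit verification of the identity and composition laws is a small, correct elaboration that the paper leaves implicit.
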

\begin{proof}
Every element $g\in G^{\mathrm{ad}}(\mathbb{Q})^{+}$ defines a special subvariety $Z_{(e,g)}$, and it has corresponding special domains $\widetilde{M}_{Z_{(e,g)},i}$. Because $\left<X^{+},S(\mathbb{C}),\left\{p_{\overline{g}}\right\}\right>\models\widetilde{T}(\widetilde{\mathbf{p}})$ by Proposition \ref{prop:x+}, then one of the special domains $X^{+}_{Z_{(e,g)},i}$ corresponds to the graph of the action of $g$ on $X^{+}$, say that it is $X^{+}_{Z_{(e,g)},1}$. And as $\widetilde{T}$ is complete (by Proposition \ref{prop:qe2}), then the formula that says that $\widetilde{M}_{Z_{(e,g)},1}$ is the graph of a function, and that this function is a bijection from $\widetilde{M}$ to itself, must hold in all models of $\widetilde{T}(\widetilde{\mathbf{p}})$. And so we can define
\begin{equation*}
    y=gx \iff (x,y)\in\widetilde{M}_{Z_{(e,g)},1}.
\end{equation*}
\end{proof}

From this Corollary we see that, even if $\widetilde{T}(\widetilde{\mathbf{p}})$ is constructed from a Shimura variety in a more natural way than $\mathrm{Th}(\mathbf{p})$, the difference in the end is only cosmetic. 

\begin{prop}[Minimal models]
\label{prop:min}
Suppose $\widetilde{\mathbf{q}}\models\widetilde{T}(\widetilde{\mathbf{p}})$, and let $B\subseteq M$. Let $S(F)\models T$ be prime over $B$. Then there is $\widetilde{\mathbf{q}}'\models\widetilde{T}(\widetilde{\mathbf{p}})$ such that $\widetilde{\mathbf{q}}'\prec\widetilde{\mathbf{q}}$, $M'=S(F)$, and $\widetilde{\mathbf{q}}'$ is minimal over $q^{-1}(B)\cup S(F)$.
\end{prop}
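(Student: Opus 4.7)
The plan is to construct $\widetilde{\mathbf{q}}'$ by taking the given prime model $S(F)$ as its field sort (identified with an elementary submodel of $M$) and then adjoining just enough preimages under $q$ to satisfy axiom (A2) from \textsection\ref{subsec:axiomscov}. Since $S(F)\models T$ is prime over $B$ and $M\models T$ contains $B$, there is an elementary embedding $S(F)\hookrightarrow M$ over $B$; identify $S(F)$ with its image. To build the covering sort, start with $A := q^{-1}(B)\subseteq\widetilde{M}$, which any candidate for $\widetilde{M}'$ must contain. By Corollary \ref{cor:gact}, $\widetilde{M}$ carries a definable $G^{\mathrm{ad}}(\mathbb{Q})^{+}$-action; for every $x\in\widetilde{M}$ and $g\in G^{\mathrm{ad}}(\mathbb{Q})^{+}$ the value $q(gx)$ is Hecke-related to $q(x)$, and since the Hecke correspondence is cut out by the $Z_{(e,g)}$ (defined over $F_{0}$) and $S(F)\preceq M$, the saturation $G^{\mathrm{ad}}(\mathbb{Q})^{+}\cdot A$ projects onto the union of Hecke orbits in $S(F)$ meeting $B$. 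For every remaining Hecke orbit $O\subseteq S(F)$, pick a standard preimage $\tilde z_{O}\in q^{-1}(O)\subseteq\widetilde{M}$ (which exists by Remark \ref{rem:stptexist}), and set
\[
\widetilde{M}' \;:=\; G^{\mathrm{ad}}(\mathbb{Q})^{+}\cdot A \,\cup\, \bigcup_{O} G^{\mathrm{ad}}(\mathbb{Q})^{+}\cdot \tilde z_{O},
\]
equipped with the induced structure $\widetilde{M}'_{V,i} := \widetilde{M}_{V,i}\cap(\widetilde{M}')^{n}$ and the restrictions of the $q_{\overline{g}}$.

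To verify that $\widetilde{\mathbf{q}}' := \langle\widetilde{M}', S(F), \{q_{\overline{g}}|_{\widetilde{M}'}\}\rangle$ is an elementary substructure of $\widetilde{\mathbf{q}}$, I apply Tarski--Vaught together with the type description of Corollary \ref{cor:typedet2}: quantifier-free types over a parameter set are determined by the special locus and the quantifier-free field-sort types of the $q_{\overline{g}}$-images. Given a formula $\exists v\,\phi(v,\overline{a})$ with $\overline{a}\in\widetilde{M}'$ realised in $\widetilde{\mathbf{q}}$, the field-sort restrictions on a witness constrain its $q_{\overline{g}}$-images to a nonempty constructible set defined over a subfield of $S(F)$; since $S(F)\preceq M$, a witness $z\in S(F)$ for the field-sort condition exists. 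Axiom (A2) then supplies a preimage in $\widetilde{M}$ lying in the relevant special domain, and closure under $G^{\mathrm{ad}}(\mathbb{Q})^{+}$ together with the choice of representatives per Hecke orbit ensures this preimage can be taken inside $\widetilde{M}'$.

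Minimality is essentially by construction: any substructure $\widetilde{\mathbf{q}}''$ of $\widetilde{\mathbf{q}}'$ containing $q^{-1}(B)$ with field sort $S(F)$ must, by (A2), contain a preimage of every point of $S(F)$, and by definable closure under $G^{\mathrm{ad}}(\mathbb{Q})^{+}$ must contain each $G^{\mathrm{ad}}(\mathbb{Q})^{+}\cdot\tilde z_{O}$ (or an equivalent choice of orbit representative). The main obstacle is the elementarity step: ensuring that the orbit-by-orbit choice of \emph{one} standard preimage per Hecke orbit is flexible enough to realise all existential formulas over $\widetilde{M}'$. This rests on the atomicity of $S(F)$ over $B$ in $T$ (controlling the algebraic data on the field side) and on quantifier elimination for $\widetilde{T}(\widetilde{\mathbf{p}})$ (Proposition \ref{prop:qe2}), which together reduce realising a type to finding one field-sort witness plus one special-domain preimage, both handled with a single $G^{\mathrm{ad}}(\mathbb{Q})^{+}$-orbit's worth of points.
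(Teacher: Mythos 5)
Your construction is essentially the paper's: the paper chooses a standard section of $q$ over $S(F)$, adjoins $q^{-1}(B)$, and observes via Corollary \ref{cor:gact} that the resulting set is the union of $G^{\mathrm{ad}}(\mathbb{Q})^{+}$-orbits of the section points, which is exactly your ``one standard preimage per Hecke orbit, closed under the $G^{\mathrm{ad}}(\mathbb{Q})^{+}$-action'' set together with $q^{-1}(B)$. Your Tarski--Vaught/quantifier-elimination check of elementarity and the orbit-based minimality argument follow the same route (and are in fact somewhat more detailed than the paper's very terse proof), so the proposal is correct and takes essentially the same approach.
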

\begin{proof}
Choose a section of $q$ which only contains standard points (which exist by (A2)), call it $A$. Let $\widetilde{M}'$ consist of $q^{-1}(B)$ and the subset of $\widetilde{M}$ of points which are constructible over $A$. Let $q'_{\overline{g}}:\widetilde{M}'\rightarrow S(F)^{n}$ be the restriction of $q_{\overline{g}}$ to $\widetilde{M}'$. By Corollary \ref{cor:gact}, we get that the set of constructible points over $A$ is the union of the orbits of points in $A$ under the action of $G^{\mathrm{ad}}(\mathbb{Q})^{+}$.
\end{proof}

\begin{cor}
Let $\widetilde{\mathbf{q}}\models\widetilde{T}(\widetilde{\mathbf{p}})$ and suppose $M'\prec M$, where $M$ is the variety sort of $\widetilde{\mathbf{q}}$. Set $\widetilde{M}' = q^{-1}\left(M'\right)$ and let $q_{\overline{g}}'$ be the restriction of $q_{\overline{g}}$ to $\widetilde{M}'$. Then $\left<\widetilde{M}',M', \left\{q_{\overline{g}}'\right\}\right>\models\widetilde{T}(\widetilde{\mathbf{p}})$.
\end{cor}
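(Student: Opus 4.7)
The plan is to verify axioms (A1)--(A6) of \textsection\ref{subsec:axiomscov} for the proposed substructure $\widetilde{\mathbf{q}}' := \left<\widetilde{M}', M', \left\{q'_{\overline{g}}\right\}\right>$, declaring its special domains by $\widetilde{M}'_{V,i} := \widetilde{M}_{V,i} \cap (\widetilde{M}')^m$ for $V \subseteq S^m$; by Proposition \ref{prop:qe2} this suffices to conclude $\widetilde{\mathbf{q}}' \models \widetilde{T}(\widetilde{\mathbf{p}})$. The technical engine underlying every step is the observation that each transition morphism $\psi^{V,i}_{\overline{g}',\overline{g}}$ is finite and defined over $F_0 \subseteq M'$. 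Combined with the fact that $M' \prec M \models T$ forces $M'$ to be algebraically closed, this means any $M'$-point of $Z^{V,i}_{\overline{g}}$ has its (finitely many) preimages in $Z^{V,i}_{\overline{g}'}$ already lying in $Z^{V,i}_{\overline{g}'}(M')$.

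The first task is to check that $\widetilde{\mathbf{q}}'$ is a well-defined $\widetilde{\mathcal{L}}$-substructure of $\widetilde{\mathbf{q}}$, namely that each $q_{\overline{g}}$ sends $(\widetilde{M}')^m$ into $(M')^{m|\overline{g}|}$. Fixing $x \in \widetilde{M}'$ and setting $\overline{g}' := (e,\overline{g})$, axiom (A3) in $\widetilde{\mathbf{q}}$ gives $\psi_{\overline{g}',(e)}(q_{\overline{g}'}(x)) = q(x) \in M'$; the finiteness argument then places $q_{\overline{g}'}(x)$ in $(M')^{|\overline{g}'|}$, whence $q_{\overline{g}}(x) = \psi_{\overline{g}',\overline{g}}(q_{\overline{g}'}(x)) \in (M')^{|\overline{g}|}$.

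With well-definedness in hand, the remaining axioms split cleanly. Axiom (A1) is the hypothesis $M' \prec M \models T$; axioms (A3), (A4), and (A5) are universal sentences in $\widetilde{\mathcal{L}}$, so they descend automatically from $\widetilde{\mathbf{q}}$ to the substructure; axiom (A6) holds because $\Sigma \subseteq F_0 \subseteq M'$, so the unique element of each singleton $\widetilde{M}_{z,i}$ (with $z \in \Sigma$) has $q$-image $z \in M'$ and therefore lies in $\widetilde{M}'$.

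The only substantive step, and the main obstacle, is the surjectivity axiom (A2). Given $z \in Z^{V,i}_{\overline{g}}(M')$, I would lift $z$ through the finite morphism $\psi^{V,i}_{\overline{g}',\overline{g}}$ (with $\overline{g}' := (e,\overline{g})$) to some $\tilde z \in Z^{V,i}_{\overline{g}'}(M')$, using the same algebraic-closure argument. Applying (A2) inside $\widetilde{\mathbf{q}}$ to $\tilde z$ produces $x = (x_1,\ldots,x_m) \in \widetilde{M}_{V,i}$ with $q_{\overline{g}'}(x) = \tilde z$. Because the first entry of $\overline{g}'$ is $e$, the projection $\psi^{V,i}_{\overline{g}',(e)}(\tilde z) = (q(x_1),\ldots,q(x_m))$ lies in $V(M')$, so every $x_j$ is in $\widetilde{M}'$ and hence $x \in \widetilde{M}'_{V,i}$. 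Finally $q'_{\overline{g}}(x) = \psi^{V,i}_{\overline{g}',\overline{g}}(\tilde z) = z$, which gives (A2) and completes the verification.
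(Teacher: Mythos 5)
Your overall strategy --- declare $\widetilde{M}'_{V,i}:=\widetilde{M}_{V,i}\cap(\widetilde{M}')^{m}$, verify (A1)--(A6), and invoke the completeness statement of Proposition \ref{prop:qe2} --- is exactly the route the paper has in mind (its own treatment, cf.\ the proof of Lemma \ref{lem:3.3}, simply declares the verification immediate), and the two steps where you actually do work are correct and nicely argued: the well-definedness of the restricted maps ($q_{\overline{g}}(\widetilde{M}')\subseteq(M')^{|\overline{g}|}$, via (A3), finiteness of $\psi_{\overline{g}',(e)}$ over $F_{0}\subseteq M'$, and the fact that $M'\prec M$ forces $\mathrm{acl}(M')=M'$), and the surjectivity axiom (A2), where lifting $z$ through $\psi^{V,i}_{\overline{g}',\overline{g}}$ with $\overline{g}'=(e,\overline{g})$ so that the $q$-coordinates of the preimage are literally entries of $\tilde z\in(M')^{|\overline{g}'|m}$ is a clean device.

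There is, however, a genuine gap in the sentence ``(A3), (A4), and (A5) are universal sentences, so they descend automatically.'' This is true of (A3) but false of (A4) and (A5). Each instance of (A4)/(A5) is a biconditional against a fixed fact about $\widetilde{S}(\mathbb{C})$: when the standard containment \emph{fails}, the axiom asserts non-containment in the model, i.e.\ it is an existential sentence, and existential sentences do not pass to substructures; worse, the clauses of (A5) of the form ``$\widetilde{M}_{W,j}$ is contained in the corresponding coordinate projection of $\widetilde{M}_{V,i}$'' are $\forall\exists$ even when asserted positively, so they also do not descend for free. For these instances you must actually exhibit witnesses inside $\widetilde{M}'$. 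The failure instances can be repaired with your own (A2) technique: a failed containment among the $\widetilde{S}(\mathbb{C})_{V,i}$ is witnessed, for some tuple $\overline{g}$ (taking $\overline{g}$ large and using compatibility), by non-containment among the constructible sets $Z_{\overline{g}}^{V,i}$ defined over $F_{0}$; the relevant difference set is nonempty over the algebraically closed field of $M'$, and (A2) for $\widetilde{\mathbf{q}}'$ (which you have proved) turns an $M'$-point of it into the required witness in $\widetilde{M}'$. The positive projection clauses of (A5) need a further lifting argument: given $y\in\widetilde{M}'_{W,j}$, the lift $(y,u)\in\widetilde{M}_{V,i}$ supplied by (A5) in $\widetilde{\mathbf{q}}$ need not have $q(u)\in(M')^{m-m'}$, so one must argue (for instance, by noting that the set of possible values of $q(u)$ over the fixed $y$ is a nonempty set controlled by constructible data defined over $F_{0}\bigl(q_{\overline{g}}(y)\bigr)\subseteq M'$, in the spirit of Lemma \ref{lem:sploc}, and then using algebraic closedness of $M'$ and a first-order transfer from the standard model) that an $M'$-rational choice of $u$ exists. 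Without some such argument the verification of (A4)--(A5) for $\widetilde{\mathbf{q}}'$ is not complete, even though the corollary itself is correct.
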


\begin{remark}
\label{rem:iota}
For $\widetilde{\mathbf{q}}\models\widetilde{T}(\widetilde{\mathbf{p}})$ with $M=S(F)$, we can define $\iota:\widetilde{M}\rightarrow\widehat{S}(F)$ in the following way: $\iota(x) = (z_{\overline{g}})_{\overline{g}}$ if and only if for all tuples $\overline{g}$, we have that $q_{\overline{g}}(x) = z_{\overline{g}}$. 

If $x,y\in\widetilde{M}$ are distinct and such that $x = \gamma y$ for some $\gamma\in\Gamma$, then by Lemma \ref{lem:nofix} this is the only possible element of $\Gamma$ sending $y$ to $x$ (notice that for each $\gamma\in\Gamma$, the statement of Lemma \ref{lem:nofix} can be expressed by a first-order sentence). By Lemma \ref{lem:centre}, there is a tuple $\overline{g}$ such that $\gamma\notin\Gamma_{\overline{g}}$ (as $x\neq y$, then $\gamma$ cannot be central). Now, the following sentence is in $\mathrm{Th}(\mathbf{p})$:
\begin{equation*}
    \forall x,y\in D\left((x=\gamma y)\rightarrow q\left(\overline{g}x\right)\neq q\left(\overline{g} y\right)\right).
\end{equation*}
This shows that $\iota(x)\neq\iota(y)$. In particular, if $\widetilde{\mathbf{q}}$ satisfies SF, then $\iota$ is injective. Furthermore, this shows that if $\mathbf{q}\models\mathrm{Th}_{\mathrm{SF}}(\mathbf{p})$, then $\iota: D\rightarrow\widehat{S}(F)$ is not only injective, but its image is contained in $\widetilde{S}(F)$
\end{remark}

\subsection{Stability Aspects}
We will now discuss some results about the stability properties of $\widetilde{T}(\widehat{\mathbf{p}})$. We remark that the same results and proofs hold for $\widetilde{T}(\widetilde{\mathbf{p}})$. We start with a result which follows immediately from quantifier elimination.

\begin{cor}
\label{cor:dnf}
Let $\widetilde{\mathbf{q}}\models \widetilde{T}(\widehat{\mathbf{p}})$ and let $B\subseteq \widetilde{M}$. Suppose $A\subseteq \widetilde{M}^{n}$ is definable over $B$. Then there is a tuple $\overline{g}$ of $G^{\mathrm{ad}}(\mathbb{Q})^{+}$, special domains $\widetilde{M}_{V_{j},i_{j}}$, tuples $\overline{b}^{j}$ of $B$, and $\emptyset\neq Y_{j}\subseteq V_{j}\left(q_{\overline{g}}\left(b^{j}\right)\right)$, with $j$ ranging on a finite set, and with each $Y_{j}$ $T$-definable over $q_{\overline{g}}(B)$, such that:
\begin{equation*}
\bigcup_{j}\left(\widetilde{M}_{V_{j},i_{j}}\left(\overline{b}^{j}\right)\cap q_{\overline{g}}^{-1}\left(Y_{j}\right)\right)\subseteq A\subseteq\bigcup_{j}\widetilde{M}_{V_{j},i_{j}}\left(\overline{b}^{j}\right).
\end{equation*}
\end{cor}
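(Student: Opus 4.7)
By Proposition \ref{prop:qe2}, $\widetilde{T}(\widehat{\mathbf{p}})$ has quantifier elimination, so $A$ is defined by a quantifier-free $\widetilde{\mathcal{L}}$-formula $\phi(\overline{v},\overline{b})$ with parameters $\overline{b}$ from $B$. Only finitely many function symbols $q_{\overline{h}_{1}},\ldots,q_{\overline{h}_{r}}$ appear in $\phi$; by axiom (A3), each $q_{\overline{h}_{k}}$ is a coordinate projection of $q_{\overline{g}}$ once $\overline{g}$ is chosen to contain every $\overline{h}_{k}$. Put $\phi$ in disjunctive normal form, $\phi=\bigvee_{s}\phi_{s}$, so each disjunct factors as $\phi_{s}=\psi_{s}\wedge\chi_{s}(q_{\overline{g}}(\overline{v},\overline{b}))$, where $\psi_{s}$ is a conjunction of (positive and negative) atomic formulas of the form $\overline{w}\in\widetilde{M}_{U,\ell}(\overline{b}')$ and $\chi_{s}$ is a formula in the field language.

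For the upper bound, let $E_{s}$ be the set cut out by only the \emph{positive} special-domain conjuncts of $\psi_{s}$. By axiom (A4), together with (A5) for handling coordinate subtuples, $E_{s}$ equals a finite union of special domains $\widetilde{M}_{V_{j},i_{j}}(\overline{b}^{j})$ with each $\overline{b}^{j}$ from $B$; here I use the fact that in $\widehat{S}(\mathbb{C})$ intersections of special domains are finite unions of special domains (a geometric fact about the actual Shimura variety), which via (A4) transfers to any model of $\widetilde{T}(\widehat{\mathbf{p}})$. Since each $\phi_{s}$ implies $E_{s}$, this immediately yields $A\subseteq\bigcup_{j}\widetilde{M}_{V_{j},i_{j}}(\overline{b}^{j})$.

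For the lower bound, the key observation is that once we restrict attention to a single $\widetilde{M}_{V_{j},i_{j}}(\overline{b}^{j})$, the \emph{negative} special-domain conjuncts of the relevant $\psi_{s}$ become field-level conditions. Indeed, for a negative conjunct $\neg(\overline{w}\in\widetilde{M}_{U,\ell}(\overline{b}'))$, by (A4) the intersection $\widetilde{M}_{V_{j},i_{j}}(\overline{b}^{j})\cap\widetilde{M}_{U,\ell}(\overline{b}')$ is a union of special domains; this is either all of $\widetilde{M}_{V_{j},i_{j}}(\overline{b}^{j})$ (in which case $\phi_{s}$ is inconsistent on that special domain and the combination is discarded) or a union of proper special sub-domains. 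In the latter case, axiom (A2) ensures that the $q_{\overline{g}}$-image of this intersection is a proper closed subvariety of $q_{\overline{g}}(\widetilde{M}_{V_{j},i_{j}}(\overline{b}^{j}))$, so for $\overline{v}\in\widetilde{M}_{V_{j},i_{j}}(\overline{b}^{j})$ the requirement $\overline{v}\notin\widetilde{M}_{U,\ell}(\overline{b}')$ is equivalent to $q_{\overline{g}}(\overline{v},\overline{b})$ avoiding this subvariety, a condition $T$-definable over $q_{\overline{g}}(B)$. Defining $Y_{j}$ as the intersection of $\chi_{s}$ with the field-level translations of these negated conjuncts gives $\widetilde{M}_{V_{j},i_{j}}(\overline{b}^{j})\cap q_{\overline{g}}^{-1}(Y_{j})\subseteq\phi_{s}(\widetilde{M})\subseteq A$; one then discards those $j$ for which $Y_{j}$ happens to be empty.

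The principal subtlety is consistency of the bookkeeping: to make the lower-bound argument work with the \emph{same} family of special domains that appeared in the upper bound, one must first refine that family so that each $\widetilde{M}_{V_{j},i_{j}}(\overline{b}^{j})$ is contained in $E_{s}$ for a single $s$ (or else duplicate entries to cover every relevant $s$). Since intersecting special domains strictly decreases dimension once the intersection is proper, this refinement (via iterated application of (A4)) terminates after finitely many steps.
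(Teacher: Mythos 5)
This is essentially the paper's own proof, which is only sketched there: quantifier elimination (Proposition \ref{prop:qe2}), disjunctive normal form, the special-domain intersection/containment facts transferred to arbitrary models via (A4) and (A5), and (A2) to convert the remaining negative and field-sort conjuncts into conditions on $q_{\overline{g}}$-images. One caveat: your claimed \emph{equivalence} between $\overline{v}\notin\widetilde{M}_{U,\ell}(\overline{b}')$ and $q_{\overline{g}}(\overline{v},\overline{b})$ avoiding the image of the intersection is really only an implication in the direction you actually use (fibres of $q_{\overline{g}}$ need not be singletons, so the converse can fail), which suffices for the asserted sandwich but means that discarding the indices with empty $Y_{j}$ should be checked not to remove domains still needed for the upper-bound cover.
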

\begin{proof}
By Proposition \ref{prop:qe2}, we know that $\widetilde{T}(\widehat{\mathbf{p}})$ has quantifier elimination. Using the disjunctive normal form of quantifier-free formulas, the fact that for two distinct special domains in the power of $\widetilde{M}$ either one is contained in the other or they are disjoint, and (A2), we get the desired result. 
\end{proof}

\begin{prop}
\label{prop:superstable}
$\widetilde{T}(\widehat{\mathbf{p}})$ is superstable.
\end{prop}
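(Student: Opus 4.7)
The plan is to establish $\lambda$-stability for every $\lambda\geq 2^{\aleph_{0}}$, which, since $\widetilde{\mathcal{L}}$ is countable, amounts to superstability. The starting point is Proposition \ref{prop:qe2} (quantifier elimination) together with Corollary \ref{cor:typedet2}: the complete type of a tuple $\overline{x}$ over a parameter set $B\subseteq\widetilde{M}$ is determined by two pieces of data, namely (i) the compatible family $\{\mathrm{qftp}(q_{\overline{g}}(\overline{x})/L)\}_{\overline{g}}$ of quantifier-free types in the variety sort, where $L$ is the field generated over $F_{0}$ by $\widehat{B}$, and (ii) the special locus $\mathrm{sploc}(\overline{x}/B)$.

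Fix $B$ of cardinality $\lambda\geq 2^{\aleph_{0}}$. Since $G^{\mathrm{ad}}(\mathbb{Q})^{+}$ is countable, $|\widehat{B}|\leq\lambda$ and hence $|L|\leq\lambda$. For ingredient (ii), Lemma \ref{lem:sploc} says that every special locus has the form $\widetilde{M}_{V,i}(\overline{a})$ with $\overline{a}$ a finite tuple from $B$; since the relations $\widetilde{M}_{V,i}$ form a countable family, the number of possibilities is at most $\aleph_{0}\cdot\lambda^{<\omega}=\lambda$.

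For ingredient (i), recall that $T$ is totally transcendental and hence $\lambda$-stable, so there are at most $\lambda$ choices for the base type $p_{e}:=\mathrm{qftp}(q_{e}(\overline{x})/L)$. Fix such a $p_{e}$. Each morphism $\psi_{\overline{g},(e)}^{V,i}:Z_{\overline{g}}^{V,i}\to V$ introduced in \textsection\ref{subsec:galrep} is finite, so $q_{\overline{g}}(\overline{x})$ is algebraic over $q_{e}(\overline{x})$, and $\mathrm{qftp}(q_{\overline{g}}(\overline{x})/L)$ admits only finitely many possibilities refining $p_{e}$. Choosing a cofinal chain $\overline{g}_{1}\subseteq\overline{g}_{2}\subseteq\cdots$ exhausting the countable group $G^{\mathrm{ad}}(\mathbb{Q})^{+}$, the full compatible family is determined by its restriction to this chain, giving at most $\aleph_{0}^{\aleph_{0}}=2^{\aleph_{0}}$ compatible refinements of $p_{e}$. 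Hence ingredient (i) contributes at most $\lambda\cdot 2^{\aleph_{0}}=\lambda$ possibilities.

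Combining (i) and (ii) yields at most $\lambda$ complete $1$-types over $B$, and the same argument works for $n$-types, establishing $\lambda$-stability and hence superstability. The main subtlety lies in (i): a naive count allowing arbitrary types for every tuple $\overline{g}$ would give $\lambda^{\aleph_{0}}$ possibilities, which can strictly exceed $\lambda$ at singular cardinals of countable cofinality. The key point that resolves this is the finiteness of the covering maps $\psi_{\overline{h},\overline{g}}$, which collapses the compatible system to the base type $p_{e}$ plus at most $2^{\aleph_{0}}$ discrete ``covering'' choices, and these get absorbed into $\lambda$ precisely when $\lambda\geq 2^{\aleph_{0}}$.
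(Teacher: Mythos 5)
Your proof is correct and follows essentially the same route as the paper: it uses the type description of Corollary \ref{cor:typedet2}, bounds the special-locus data by the countably many relations $\widetilde{M}_{V,i}$ with finite parameter tuples, and exploits the finiteness of the fibres of the maps $\psi_{\overline{h},\overline{g}}$ to see that the compatible family of field-sort types contributes only a factor of $2^{\aleph_{0}}$, giving $\lambda$-stability for all $\lambda\geq 2^{\aleph_{0}}$ and hence superstability via the stability spectrum theorem. The only difference is presentational: you make the cardinal arithmetic (the cofinal chain of tuples and the absorption of $2^{\aleph_{0}}$ into $\lambda$) explicit, which the paper leaves implicit.
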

\begin{proof}
We use the classification of the stability spectrum of complete countable theories (\cite[Theorem 8.6.5]{tent-ziegler}). By quantifier elimination and Corollary \ref{cor:typedet2}, $\mathrm{tp}(x/A)$ is determined by:
\begin{equation*}
\bigcup_{\overline{g}}\mathrm{qftp}(q(\overline{g}x)/\overline{g}A)\cup\left\{\mathrm{sploc}(v/A)=\mathrm{sploc}(x/A)\right\}.
\end{equation*}
Now $\mathrm{qftp}(q_{\overline{g}(x)}/q_{\overline{g}}(A))$ is determined by the smallest algebraic subvariety defined over $F_{0}\left(q_{\overline{g}}(A)\right)$. This will be a subvariety of $Z_{\overline{g}}$. The fibres of the maps $Z_{\overline{h}}\rightarrow Z_{\overline{g}}$ are finite, so once we have a realisation of $\mathrm{qftp}\left(q_{\overline{g}}(x)/q_{\overline{g}}(A)\right)$, there are only finitely many possibilities for $\mathrm{qftp}\left(q_{\overline{h}}(x)/q_{\overline{g}}(A)\right)$. On the other hand, given a special subvariety $V$, there are at most countably many corresponding special domains $\widetilde{M}_{V,i}$. Therefore, if $A\subseteq \widetilde{M}$ is such that $|A|\geq 2^{\aleph_{0}}$, then the space of $1$-types over $A$ has cardinality at most $|A|$.  
\end{proof}

\begin{prop}
\label{prop:forking}
Given $\widetilde{\mathbf{q}}\models\widetilde{T}(\widehat{\mathbf{p}})$, $x\in\widetilde{M}$ and $B\subseteq\widetilde{M}$. Then $\mathrm{tp}(x/B)$ forks over $A\subseteq B$ if and only if $\mathrm{tp}\left(q(x)/\widehat{B}\right)$ forks over $\widehat{A}$ or $\mathrm{sploc}(x/B)$ is not definable over $A$.
\end{prop}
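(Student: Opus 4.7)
The plan is to exploit the type decomposition given in Corollary \ref{cor:typedet2}, which splits $\mathrm{tp}(x/B)$ into two independent pieces of data: the algebraic component $\bigcup_{\overline{g}}\mathrm{qftp}(q_{\overline{g}}(x)/\widehat{B})$, which encodes $\mathrm{tp}(q(x)/\widehat{B})$ in the stable field theory $T=\mathrm{Th}(S(\mathbb{C}))$; and the combinatorial component $\mathrm{sploc}(x/B)$, the minimal fiber of a special domain over parameters in $B$ containing $x$. Because $\widetilde{T}(\widehat{\mathbf{p}})$ is stable by Proposition \ref{prop:superstable}, forking equals dividing, and we can analyse forking by tracking these two components separately.

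For the $(\Leftarrow)$ direction I would split into two cases. If $\mathrm{tp}(q(x)/\widehat{B})$ forks over $\widehat{A}$, pick a formula $\phi(\overline{y},\overline{c})$ witnessing dividing in $T$ by an $\widehat{A}$-indiscernible sequence $(\overline{c}_i)$. Write $\overline{c}=q_{\overline{g}}(\overline{b})$ with $\overline{b}$ a tuple from $B$, use Ramsey and $\omega$-saturation of a monster to extract an $A$-indiscernible sequence $(\overline{b}_i)$ with $\overline{b}_0=\overline{b}$ such that $(q_{\overline{g}}(\overline{b}_i))$ realises the EM-type of $(\overline{c}_i)$; then $\phi(q_{\overline{g}}(v),q_{\overline{g}}(\overline{b}))\in\mathrm{tp}(x/B)$ divides over $A$. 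If instead $\mathrm{sploc}(x/B)=\widetilde{M}_{V,i}(\overline{b})$ fails to be $A$-definable, its canonical parameter lies outside $\mathrm{dcl}^{\mathrm{eq}}(A)$, so one obtains an $A$-indiscernible sequence $(\overline{b}_j)$ of pairwise distinct conjugates. The crucial geometric input is that any finite intersection $\bigcap_{j\le k}\widetilde{M}_{V,i}(\overline{b}_j)$ is a finite union of \emph{strictly smaller} special fibers (an extension of the proof of Lemma \ref{lem:sploc} using axiom (A4)), which by minimality of $\mathrm{sploc}(x/B)$ cannot contain $x$ for $k$ large enough. This yields $k$-inconsistency of $\{v\in\widetilde{M}_{V,i}(\overline{b}_j)\}_j$, hence dividing.

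For the $(\Rightarrow)$ direction, assume both conditions fail. Given any extension $B\subseteq B'$, pick a non-forking extension of $\mathrm{tp}(q(x)/\widehat{B})$ to $\widehat{B'}$ inside $T$; since $\mathrm{sploc}(x/B)$ is already $A$-definable, it is a fortiori pinned down over $B'$, so the special-locus data extends uniquely. By Corollary \ref{cor:typedet2} these two pieces determine a unique complete extension of $\mathrm{tp}(x/B)$ to $B'$, and bounding the number of such extensions as $B'$ varies shows $\mathrm{tp}(x/B)$ is the non-forking extension of $\mathrm{tp}(x/A)$. The main obstacle I anticipate is Case 2 of the forward direction: converting non-$A$-definability of a particular special fiber into genuine $k$-inconsistency requires careful bookkeeping with the intersection combinatorics of special domains, leaning on axioms (A4)--(A5), on Lemma \ref{lem:sploc3}, and on the minimality clause built into $\mathrm{sploc}$ — everything else reduces cleanly to known forking calculus in the field sort.
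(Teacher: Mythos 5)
Your overall decomposition (field-sort type plus special locus, via Corollary \ref{cor:typedet2}) is the same as the paper's, but both directions contain problems. The genuine gap is in your $(\Leftarrow)$ argument for the case that $\mathrm{sploc}(x/B)=\widetilde{M}_{V,i}(\overline{b})$ is not $A$-definable. You propose to witness dividing of $v\in\widetilde{M}_{V,i}(\overline{b})$ by an $A$-indiscernible sequence of conjugates $(\overline{b}_j)$ and argue that finite intersections $\bigcap_{j\le k}\widetilde{M}_{V,i}(\overline{b}_j)$ are finite unions of \emph{strictly smaller} special fibers which eventually do not contain $x$. That does not establish dividing: $k$-inconsistency requires every $k$-element subfamily to be \emph{jointly inconsistent}, i.e.\ the intersection must be empty, not merely a proper special subset omitting the particular realisation $x$. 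A strictly decreasing chain of nonempty special fibers is perfectly consistent, so as written this step proves nothing about forking. (The paper itself dispatches this implication in one sentence, asserting that $v\in C$ forks over $A$ when $C$ is not $A$-defined; any honest proof has to produce actual forking, e.g.\ by a disjointness property of conjugate fibers or by showing the formula lies in no global type with boundedly many $A$-conjugates, and your ``smaller intersections'' bookkeeping does not do this.)

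Your $(\Rightarrow)$ direction also departs from the paper's route, and in a way that leaves the key interaction unaddressed. You want to exhibit a nonforking extension of $\mathrm{tp}(x/B)$ by pairing a nonforking extension of the field-sort type with the stipulation that the special locus remain the $A$-definable set $C$; Corollary \ref{cor:typedet2} gives \emph{uniqueness} of such a completion, but the real content is \emph{existence}: one must realise the prescribed generic field type inside $C$ while avoiding all proper special subfibers over the larger parameter set, and field-genericity alone does not rule out membership in another special fiber with the same $q$-image (distinct special domains of one special subvariety project to the same variety). The paper handles exactly this interaction in the opposite direction with Corollary \ref{cor:dnf}: assuming $C$ is $A$-definable, any dividing formula in $\mathrm{tp}(x/B)$ can be strengthened to one of the form $v\in C\wedge\psi\left(q_{\overline{g}}(v)\right)$, and since $v\in C$ does not fork over $A$, the formula $\psi$ must divide, transferring the forking to $\mathrm{tp}\left(q(x)/\widehat{B}\right)$. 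Your lifting argument in the field-forking case of $(\Leftarrow)$ is fine (the paper treats it as immediate), but as it stands the proposal does not prove either implication in full.
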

\begin{proof}
Suppose first that $\mathrm{tp}(x/B)$ forks over $A$, say that $\varphi\left(v,\overline{b}\right)\in\mathrm{tp}(x/B)$ divides over $A$. Let $C=\mathrm{sploc}(x/B)$. We can assume that $\varphi\left(v/\overline{b}\right)\models v\in C$ (recall that if $\varphi$ divides over $A$ and $\varphi$ and $\psi$ are consistent, then $\varphi\wedge\psi$ divides over $A$, because $\varphi\wedge\psi\rightarrow\varphi$). Suppose that $C$ is defined over $A$. Then for any $\overline{b}'\equiv_{A}\overline{b}$ we have $\varphi\left(v,\overline{b}'\right)\models v\in C$. By Corollary \ref{cor:dnf}, $\varphi\left(v,\overline{b}\right)$ is implied by a formula in $\mathrm{tp}(x/B)$ of the form
\begin{equation*}
v\in C\wedge\psi\left(q_{\overline{g}}(v)\right),
\end{equation*}
where $\psi$ is a formula in $\mathcal{L}_{\mathrm{cov}}(B)$ implying $q_{\overline{g}}(v)\in q_{\overline{g}}(C)$. As $\varphi$ divides over $A$, $\psi$ must divide over $A$ (we are assuming $C$ is defined over $A$, so $v\in C$ does not fork over $A$), which means that $\mathrm{tp}\left(q_{\overline{g}}(x)/\widehat{B}\right)$ divides over $\widehat{A}$. Since $q(x)$ is algebraic over $q_{\overline{g}}(x)$, $\mathrm{tp}\left(q(x)/\widehat{B}\right)$ forks over $A$. 

Conversely, suppose $C$ is not defined over $A$. Then $v\in C$ forks over $A$.
\end{proof}

\begin{lem}
$\widetilde{T}(\widehat{\mathbf{p}})$ has finite $U$-rank. 
\end{lem}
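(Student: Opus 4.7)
The plan is to bound the $U$-rank by using the forking characterisation of Proposition \ref{prop:forking} to decompose any strictly forking chain of extensions into two kinds of steps, each of which can occur only boundedly many times. I will first prove the bound for single elements, then pass to tuples via a Lascar-type inequality.

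Fix $x \in \widetilde{M}$ and consider a chain $A = A_0 \subsetneq A_1 \subsetneq \cdots$ such that each $\mathrm{tp}(x/A_{i+1})$ forks over $A_i$. By Proposition \ref{prop:forking}, every step falls into at least one of two cases: (i) $\mathrm{tp}(q(x)/\widehat{A_{i+1}})$ forks over $\widehat{A_i}$, or (ii) $\mathrm{sploc}(x/A_{i+1})$ is not definable over $A_i$, which by the minimality clause in the definition of the special locus is equivalent to the strict inclusion $\mathrm{sploc}(x/A_{i+1}) \subsetneq \mathrm{sploc}(x/A_i)$.

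Case (i) can happen at most finitely often: the theory $T$ of the algebraically closed field structure on $S(F)$ has finite Morley rank equal to $\dim_{\mathbb{C}} S$, and ACF-forking strictly decreases Morley rank. Case (ii) also occurs only finitely often: the subsequence $\mathrm{sploc}(x/A_{i_0}) \supsetneq \mathrm{sploc}(x/A_{i_1}) \supsetneq \cdots$ of steps at which (ii) holds is a strictly descending chain of special-domain fibres, and by Lemma \ref{lem:sploc} taking $q$-images yields a descending chain of subvarieties of $S$; since distinct special domains of the same special subvariety are disjoint and special subvarieties are irreducible, this chain is eventually strictly descending in the Zariski topology of $S$ (of finite Krull dimension), so has finite length.

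Combining the two bounds yields $U(\mathrm{tp}(x/A)) < \infty$ for each single $x$. For a finite tuple $\overline{x} = (x_1, \ldots, x_m)$, the Lascar inequality $U(\overline{x}/A) \leq \sum_{i=1}^m U(x_i / A \cup \{x_1, \ldots, x_{i-1}\})$, which holds in the superstable theory $\widetilde{T}(\widehat{\mathbf{p}})$ by Proposition \ref{prop:superstable}, then gives a finite bound for $U(\overline{x}/A)$, completing the proof. The main subtlety lies in case (ii): one must rule out an infinite strictly descending chain of special-domain fibres with the same $q$-image, which is handled by invoking the irreducibility of special subvarieties (each arising from a connected Shimura subdatum) together with Noetherianity of the Zariski topology on $S$.
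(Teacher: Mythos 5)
Your skeleton is the same as the paper's: run a forking chain, split each step via Proposition \ref{prop:forking} into a field-sort forking step or a change of special locus, bound the former by the finite Morley rank of $T$, bound the latter separately, and (implicitly or via Lascar inequalities, using Proposition \ref{prop:superstable}) pass to tuples. The paper dispatches the special-locus part by observing that the sploc is determined by finitely many parameters; you instead try to bound it through $q$-images and Noetherianity, and that is exactly where your argument has a genuine gap.

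First, the inference ``strict decrease of $\mathrm{sploc}(x/A_i)$ forces (eventual) strict decrease of $q(\mathrm{sploc}(x/A_i))$'' is not supported by what you cite. Disjointness of distinct special domains of a single special subvariety and irreducibility of special subvarieties are facts about the sets $D_{V,i}$ themselves, whereas your chain consists of fibres $D_{V,i}(\overline{a})$ over parameter tuples, taken in an arbitrary (say saturated) model of $\widetilde{T}(\widehat{\mathbf{p}})$, where such fibres contain non-standard points; nothing you say excludes two nested, distinct fibres with the same image under $q$, so this implication needs an actual proof. Second, even granting strictly decreasing images, Lemma \ref{lem:sploc} only gives that $q(\mathrm{sploc})$ is a \emph{possibly reducible} subvariety of a power of $S$: finite Krull dimension bounds chains of \emph{irreducible} closed sets, while strictly decreasing chains of arbitrary closed subsets of a Noetherian space are finite but of unbounded length (finite sets of points, say). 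Mere termination of each chain does not yield finite $U$-rank: if for every $n$ some forking chain of length $n$ issued from a fixed type, that type would already have $U$-rank at least $\omega$. So the missing content is a \emph{uniform} bound on the number of sploc-changes along a forking chain --- for instance by showing the relevant images are weakly special (hence irreducible of dimension at most $\dim S$), or by bounding such chains group-theoretically via chains of $\mathbb{Q}$-subgroups of $G$ --- which is what the paper's appeal to ``the sploc is defined by finitely many parameters'' is standing in for. The rest of your argument (the bound on field-type steps and the Lascar-inequality reduction to single elements) is fine.
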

\begin{proof}
We know that $T$ has finite Morley rank, which bounds the length of possible chains of types in the language $\mathcal{L}_{F}$. On the other hand, the $\mathrm{sploc}$ is defined by finitely many parameters. By Proposition \ref{prop:forking}, this shows that $\widetilde{T}(\widehat{\mathbf{p}})$ has finite $\mathrm{SU}$-rank, and as $\mathrm{SU}$-rank and $U$-rank coincide in stable theories (see  \cite[Exercise 8.6.2]{tent-ziegler}), by Proposition \ref{prop:superstable} we are done.
\end{proof}

\section{Necessary Conditions for Categoricity}
\label{subsec:necessary}
We now come to the question of categoricity of the theory of a Shimura variety. Let $p:X^{+}\rightarrow S$ be fixed throughout this entire section. It is easy to see that $\mathrm{Th}(\mathbf{p})$,  $\widetilde{T}(\widehat{\mathbf{p}})$ and $\widetilde{T}(\widetilde{\mathbf{p}})$ are not categorical because there is no restriction on the size of the fibres of $p$, $\widehat{p}$ and $\widetilde{p}$, respectively. Also, from the properties of $\mathrm{ACF}_{0}$ we know that if $S(F)$ is a countable model of $T$, then it determined up to isomorphism by the transcendence degree of $F$ over $\mathbb{Q}$.

One way to address this is to add two (infinitely long) conditions to these theories. To exhibit how this would work, let us add the conditions to $\mathrm{Th}(\mathbf{p})$. Let $\mathrm{Th}^{\infty}_{\mathrm{SF}}(\mathbf{p})$ \index{t@$\mathrm{Th}^{\infty}_{\mathrm{SF}}(\mathbf{p})$} be the union of $\mathrm{Th}(\mathbf{p})$, SF, and the $\mathcal{L}_{\omega_{1},\omega}$-sentence that says that the transcendence degree of $F$ is infinite. So our question now is whether $\mathrm{Th}^{\infty}_{\mathrm{SF}}(\mathbf{p})$ is categorical. 

We will show that it is necessary for categoricity (of any of the three theories being considered) that we have open image for every Galois representation attached to points obtained as explained in \textsection\ref{subsec:galrep}. To show this, one can rewrite the argument of \cite[\textsection 4.1]{daw-harris}. Even though the language used in \cite{daw-harris} is not the same as the one we have used, the proof can be adapted easily. Instead of doing that, will show the same argument of \cite[Theorem 4.1]{daw-harris}, but in a different way. We start by recalling a theorem of Keisler, which is central to all the known categoricity results regarding arithmetic varieties.

\begin{thm}[see {{\cite[Corollary 5.6]{keisler}}}]
\label{thm:keisler}
If an $\mathcal{L}_{\omega_{1},\omega}$ sentence is $\aleph_{1}$-categorical, then the set of $m$-types over the empty set in models of this sentence is at most countable.
\end{thm}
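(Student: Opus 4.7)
The proof proceeds by contraposition: assuming an $\mathcal{L}_{\omega_{1},\omega}$-sentence $\phi$ admits uncountably many complete $m$-types over $\emptyset$, I would produce two non-isomorphic models of $\phi$ of cardinality $\aleph_{1}$. First, fix a countable fragment $F\subseteq\mathcal{L}_{\omega_{1},\omega}$ containing $\phi$ and closed under subformulas and finite Boolean operations; we work with complete $F$-types throughout. Since $|F|=\aleph_{0}$, only countably many $F$-types over $\emptyset$ are \emph{principal} (isolated by a single $F$-formula consistent with $\phi$), so the hypothesis gives us a non-principal complete $m$-type $p(\bar{x})$ realized in some model of $\phi$.

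Next, I would construct a model $M_{1}\models\phi$ of cardinality $\aleph_{1}$ realizing $p$. Starting from a countable model that realizes $p$ (which exists since $p$ is realized somewhere, and one can pull down to a countable $F$-elementary submodel via the Löwenheim--Skolem theorem for countable fragments), build an increasing $F$-elementary chain $(M_{1,\alpha})_{\alpha<\omega_{1}}$ of countable models, taking unions at limits, to arrive at a model of cardinality $\aleph_{1}$ that still realizes $p$.

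In parallel, I would construct a model $M_{0}\models\phi$ of cardinality $\aleph_{1}$ that \emph{omits} $p$. Keisler's $\mathcal{L}_{\omega_{1},\omega}$-omitting types theorem, applied within the countable fragment $F$, gives a countable model $N_{0}\models\phi$ in which $p$ is omitted (using that $p$ is non-principal). I would then construct an $\omega_{1}$-chain $(N_{\alpha})_{\alpha<\omega_{1}}$ of countable $F$-elementary extensions of $N_{0}$, arranging at each successor step that $p$ remains omitted by reapplying the omitting types theorem to a suitable expansion of $F$ by a constant tuple. Limits are handled by unions; since the chain is countable at each stage, $p$ stays omitted throughout. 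The resulting union $M_{0}$ has cardinality $\aleph_{1}$ and omits $p$. Then $M_{1}\not\cong M_{0}$, contradicting $\aleph_{1}$-categoricity and establishing that there can be only countably many $m$-types.

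The principal obstacle is the iterative omission step: one must verify that the omitting types theorem can be invoked at each successor stage, and that no accidental realization of $p$ is forced by the accumulated parameters. The key input making this work is the non-principality of $p$, which persists under adjoining finitely many parameters in a carefully chosen way, together with the fact that $F$ remains countable at every stage (so the hypotheses of the omitting types theorem for $\mathcal{L}_{\omega_{1},\omega}$ continue to hold). A book-keeping argument of length $\omega_{1}$ then ensures both that $M_{0}$ attains cardinality $\aleph_{1}$ and that $p$ is omitted in the union.
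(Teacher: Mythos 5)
This statement is quoted from Keisler's paper and is not proved in the present text, so your attempt can only be measured against the known argument; measured that way, your overall strategy (extract a non-principal type $p$ from the uncountability of the type space, then exhibit one model of power $\aleph_{1}$ realising $p$ and one omitting it) is the right shape, but both constructions rest on a step that simply is not available in $\mathcal{L}_{\omega_{1},\omega}$: the upward L\"owenheim--Skolem theorem fails, so a countable model of $\phi$ need not have \emph{any} proper $F$-elementary extension, let alone an increasing chain of length $\omega_{1}$ of them. Thus ``build an increasing $F$-elementary chain $(M_{1,\alpha})_{\alpha<\omega_{1}}$'' is unjustified even on the realising side: the sentence $\phi\wedge\exists\bar{x}\bigwedge p(\bar{x})$ has a countable model, but nothing you have said gives it an uncountable one. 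On the omitting side the gap is worse: the countable omitting types theorem for a countable fragment produces a fresh countable model, not a \emph{proper extension of the given} $N_{\alpha}$ that still omits $p$, and the successor step you defer to ``a book-keeping argument of length $\omega_{1}$'' is precisely the hard content of the theorem, not an administrative detail.

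This is exactly where Keisler's machinery enters: he works in the logic $L(Q)$ with the quantifier ``there exist uncountably many,'' proves completeness and an omitting-types theorem for $\omega_{1}$-standard models, and uses these to manufacture models of cardinality $\aleph_{1}$ that realise, respectively omit, prescribed non-principal types (more precisely, to show that an uncountable model realising uncountably many types in a countable fragment forces many non-isomorphic models of power $\aleph_{1}$). Without invoking that apparatus, or an equivalent substitute, your two chains cannot be built, so as written the proof has a genuine gap at both of its central constructions; the opening reduction (only countably many principal $F$-types, hence a non-principal $p$ exists) is fine.
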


Consider the inverse system over $S(\mathbb{C})$ of all those special subvarieties of the form $Z_{\overline{g}}$, where $\overline{g}$ is chosen so that $\Gamma_{\overline{g}}$ is normal in $\Gamma$, with the corresponding maps $\psi_{\overline{g},e}:Z_{\overline{g}}\rightarrow S(\mathbb{C})$. For every point $z\in S(\mathbb{C})$, the fibre over $z$ of $\psi_{\overline{g},)}$ consists of finitely many points, and it carries a simply transitive action of $\Gamma/\Gamma_{\overline{g}}$ and a compatible action of $\mathrm{Aut}(\mathbb{C}/L)$, where $L$ is a finitely generated field containing $F_{0}$ and the coordinates of $z$. If $p(x) = z$, then $\mathrm{tp}(x)$ knows in which of the $\mathrm{Aut}(\mathbb{C}/L)$-orbits in $\psi_{\overline{g},e}^{-1}(z)$ lies $p_{\overline{g}}(x)$. 

The action of $\mathrm{Aut}(\mathbb{C}/L)$ may not be transitive on $\psi_{\overline{g},e}^{-1}(z)$, so by choosing different $\mathrm{Aut}(\mathbb{C}/L)$-orbits in the fibre, we can construct new types from $\mathrm{tp}(x)$. If the action $\mathrm{Aut}(\mathbb{C}/L)$ is never transitive (or if there are infintely many tuples $\overline{g}$ for which the action is not transitive), then we would be able to define uncountable many new 1-types. Keisler's theorem would then say that none of the theories we are looking at is $\aleph_{1}$-categorical. 

However, if a Galois representation $\rho_{\widetilde{y}}:\mathrm{Aut}(\mathbb{C}/L)\rightarrow\overline{\Gamma}$ (following the notation of \textsection\ref{subsec:galgen}) has open image, then the number of tuples for which the action of $\mathrm{Aut}(\mathbb{C}/L)$ is non-transitive, is finite. 

Now, it does not make sense to expect $\rho_{\widetilde{y}}$ to have open image unless $z$ is Hodge-generic. If $\mathrm{spcl}(z) = V\subsetneq S(\mathbb{C})$, then we need to ``relativise'' the inverse system of varieties $Z_{\overline{g}}$ to $V$, meaning that we need to consider the inverse system of varieties of the form $Z_{\overline{g}}^{V,i}$ (where we choose an index $i$ and fix it), and so the sensible condition to ask in this case is that a representation $\mathrm{Aut}(\mathbb{C}/L)\rightarrow\overline{\Gamma^{V,i}}$ have open image.

For future reference, we will now specialise this result for $\mathrm{Th}^{\infty}_{\mathrm{SF}}(\mathbf{p})$ in the following Corollary, but it should be clear that the argument holds for $\widetilde{T}(\widehat{\mathbf{p}})$ and $\widetilde{T}(\widetilde{\mathbf{p}})$ as well.

\begin{cor}
\label{cor:fic}
Let $p:X^{+}\rightarrow S(\mathbb{C})$ be a Shimura variety. Let $\overline{z}\in S(\mathbb{C})^{n}$ and let $V$ be its special closure. If $\mathrm{Th}^{\infty}_{\mathrm{SF}}(\mathbf{p})$ is $\aleph_{1}$-categorical, then the image of a homomorphism
\begin{equation*}
    \mathrm{Aut}(\mathbb{C}/L)\rightarrow\overline{\Gamma^{V,i}}
\end{equation*}
associated with $\overline{z}$ has finite index (for all $i$).
\end{cor}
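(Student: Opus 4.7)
The plan is to argue by contradiction via Keisler's Theorem \ref{thm:keisler}, which forces an $\aleph_{1}$-categorical $\mathcal{L}_{\omega_{1},\omega}$-sentence to admit only countably many $m$-types over $\emptyset$. Assume that the image $N := \mathrm{Im}(\rho_{\tilde{y}}) \leq \overline{\Gamma^{V,i}}$ has infinite index for some $i$; I will exhibit at least $2^{\aleph_{0}}$ many $m$-types over $\emptyset$ (where $m$ is the length of $\overline{z}$) and derive a contradiction.

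First I count Galois orbits on the profinite fibre. The group $\overline{\Gamma^{V,i}}$ is second countable, being an inverse limit of countable groups $\Gamma_{e,\overline{g}}^{V,i}$ over the countable set of tuples $\overline{g}$. A closed subgroup of infinite index in such a profinite group has index at least $2^{\aleph_{0}}$, since the coset space is infinite, compact Hausdorff, second countable, and perfect (by homogeneity of topological groups). By \textsection\ref{subsec:galrep}, the inverse limit $F_{\overline{z}}$ of the fibres of $\psi_{\overline{g},e}^{V,i}$ over $\overline{z}$ is a principal homogeneous space for $\overline{\Gamma^{V,i}}$, and the action of $\mathrm{Aut}(\mathbb{C}/L)$ on $F_{\overline{z}}$ factors through $\rho_{\tilde{y}}$ and commutes with the $\overline{\Gamma^{V,i}}$-action. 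Hence the $\mathrm{Aut}(\mathbb{C}/L)$-orbits on $F_{\overline{z}}$ are in bijection with $N \backslash \overline{\Gamma^{V,i}}$, giving at least $2^{\aleph_{0}}$ such orbits.

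Second I show these orbits produce distinct complete types over $\emptyset$. For each orbit $O$, I choose in a sufficiently saturated model of $\mathrm{Th}^{\infty}_{\mathrm{SF}}(\mathbf{p})$ a tuple $\overline{x}_{O} \in D^{m}$ with $q(\overline{x}_{O}) = \overline{z}$ and $\iota(\overline{x}_{O}) \in O$, where $\iota$ is the embedding from Remark \ref{rem:iota}; such a lift exists because standardness is preserved by the Galois action and each $\psi_{\overline{g},e}^{V,i}$ is surjective over $\overline{z}$, so every $\mathrm{Aut}(\mathbb{C}/L)$-orbit on $F_{\overline{z}}$ meets the standard locus. By Proposition \ref{prop:typedet}, $\mathrm{qftp}(\overline{x}_{O}/\emptyset)$ is determined by the compatible family $\{\mathrm{qftp}(q(\overline{g}\overline{x}_{O})/F_{0})\}_{\overline{g}}$ together with the $\mathrm{sploc}$ data. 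Any $\emptyset$-automorphism $\tau$ of the monster sending $\overline{x}_{O}$ to $\overline{x}_{O'}$ must satisfy $\tau(\overline{z}) = \overline{z}$, and therefore restricts on the field sort to an element of $\mathrm{Aut}(\mathbb{C}/L)$; since $\tau \circ \iota = \iota \circ \tau$ (each $q_{\overline{g}}$ is preserved by $\tau$), such a restriction sends $O$ to itself, forcing $O = O'$. Distinct orbits thus yield distinct $\emptyset$-types, contradicting Keisler.

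The hard part will be carefully aligning the inverse-limit picture of \textsection\ref{subsec:galrep} with the type-theoretic description of Proposition \ref{prop:typedet}, and in particular verifying that every $\mathrm{Aut}(\mathbb{C}/L)$-orbit on $F_{\overline{z}}$ is realized by a standard point reachable from some lift $\overline{x} \in D^{m}$ in a sufficiently saturated model. Once this alignment is in place, the argument is the standard Keisler dichotomy already employed in \cite{daw-harris} and in the abelian-variety setting of \cite{bays-pillay}, adapted to the enriched two-sorted Shimura language.
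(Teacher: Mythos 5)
Your skeleton is the same as the paper's: invoke Keisler's Theorem \ref{thm:keisler} and show that infinite index of the Galois image forces uncountably many types over $\emptyset$, by counting $\mathrm{Aut}(\mathbb{C}/L)$-orbits on the (pro-)fibre of the system $\psi^{V,i}_{\overline{g},e}$ over $\overline{z}$. Your coset count is fine: the representation factors through the compact group $\mathrm{Gal}(\overline{L}/L)$, so the image is closed, and a closed subgroup of infinite index in a second countable profinite group has $2^{\aleph_{0}}$ cosets by the homogeneity argument you give; this matches the paper's ``infinitely many non-transitive levels'' count.

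The genuine gap is in how you realize and compare these types. There is no ``sufficiently saturated model of $\mathrm{Th}^{\infty}_{\mathrm{SF}}(\mathbf{p})$'': the partial types exhibited at the start of \textsection\ref{sec:sf} are finitely satisfiable, so any $\omega$-saturated model of $\mathrm{Th}(\mathbf{p})$ violates SF, and the class of SF models (being the class of models of an $\mathcal{L}_{\omega_{1},\omega}$-sentence) has no monster in the usual sense; so both your choice of lifts $\overline{x}_{O}$ and your automorphism argument take place in a structure that does not exist. Moreover, Keisler's theorem requires the $2^{\aleph_{0}}$ types to be realized in models of the sentence, and ``every orbit meets the standard locus of $\widehat{S}(\mathbb{C})$'' does not give this: $\widetilde{S}(\mathbb{C})$ itself is not an SF model (its fibres are whole profinite orbits), and in the standard model $\iota(p^{-1}(\overline{z}))$ meets only countably many orbits. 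What is needed, and what you do not supply, is a construction of, for each orbit $O$, some model of $\mathrm{Th}^{\infty}_{\mathrm{SF}}(\mathbf{p})$ containing a tuple $\overline{x}$ over $\overline{z}$ with $\iota(\overline{x})\in O$ — e.g.\ realize the orbit-type in a model of the first-order theory $\mathrm{Th}(\mathbf{p})$ by compactness over the diagram of $S(\mathbb{C})$ (each finite level is realized since the $\psi$-maps are surjective), arrange the realization to be standard, and then pass to an SF elementary submodel with the same field sort via Lemma \ref{lem:sf}. Finally, distinctness of the types should not go through a monster of the SF class: argue syntactically, as in Proposition \ref{prop:typedet}, that equality of types over $\emptyset$ yields a field isomorphism over $F_{0}$ matching the two compatible systems; since it fixes $\overline{z}$ it fixes $L$, extends to an element of $\mathrm{Gal}(\overline{L}/L)$, and hence identifies the two orbits. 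With those repairs your argument becomes the paper's.
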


\section{Categoricity Via Quasiminimality}
\label{sec:quasi}
We would like to show that, under the conditions of Corollary \ref{cor:fic}, $\mathrm{Th}_{\mathrm{SF}}^{\infty}(\mathbf{p})$ is categorical. Now, the Shimura structure $\mathbf{p}$ is not quasiminimal when the dimension of S is at least 2 (even if we restrict to the class of models satisfying SF) as it does not satisfy (QM4), and therefore, we cannot apply Theorem \ref{thm:quasi} straightaway. Instead, we need to first pass to a sufficiently general curve, as is explained below. If we are working with a Shimura curve, then we can use the quasiminimal method straightaway as is shown in \cite{daw-harris}.

The strategy is the following. Given an $\mathcal{L}_{F}$-definable curve $C$ in $S$, consider the corresponding two-sorted structure $\left<p^{-1}(C(\mathbb{C})), C(\mathbb{C}), \left\{p^{C}_{\overline{g}}\right\}\right>$, where $p^{C}_{\overline{g}}$ is the restriction of $p_{\overline{g}}$ to $p^{-1}(C(\mathbb{C}))$. Suppose that $\left<X^{+}, S(\mathbb{C}), \left\{p_{\overline{g}}\right\}\right>$ is $\widetilde{\mathcal{L}}$-definable $\left<p^{-1}(C(\mathbb{C})), C(\mathbb{C}), \left\{p^{C}_{\overline{g}}\right\}\right>$. Then if we can prove categoricity of the latter structure, we will obtain categoricity of $p:X^{+}\rightarrow S(\mathbb{C})$.

\subsection{Hecke-generic Curves}
\label{subsec:heckegen}
\begin{defi}
Let $\overline{g}$ be a tuple from $G^{\mathrm{ad}}(\mathbb{Q}){+}$ such that $\Gamma_{\overline{g}}$ is normal in $\Gamma$. We say that an irreducible subvariety $C\subseteq S$ is $\overline{g}$-\emph{Hecke-generic} \index{Hecke-generic} if $\psi_{\overline{g},(e)}^{-1}(C)$ is irreducible in $Z_{\overline{g}}$. We say that $C$ is \emph{Hecke-generic} if $\psi_{\overline{g},(e)}^{-1}(C)$ is irreducible in $Z_{\overline{g}}$ whenever $\Gamma_{\overline{g}}$ is normal in $\Gamma$ (i.e. it is $\overline{g}$-Hecke-generic for all $\overline{g}$).
\end{defi}

For the existence of such curves, we use the following.

\begin{lem}
Let $C\subseteq S$ be an irreducible subvariety defined over $F_{0}$. Let $b\in C(\mathbb{C})$ be a generic point over $\overline{\mathbb{Q}}$. Let $\overline{g}$ be such that $\Gamma_{\overline{g}}$ is normal in $\Gamma$. Then $C$ is $\overline{g}$-Hecke-generic if and only if some (all) corresponding homomorphism $\mathrm{Aut}(\mathbb{C}/\overline{\mathbb{Q}}(b))\rightarrow\Gamma/\Gamma_{\overline{g}}$ is surjective.
\end{lem}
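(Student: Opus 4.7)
The plan is to convert the irreducibility of $\psi_{\overline{g},(e)}^{-1}(C)$ into a transitivity statement about the Galois action on a generic fibre, and then use Lemma~\ref{lem:elem} to transform transitivity into surjectivity of the associated homomorphism. First I would record that $\psi_{\overline{g},(e)}:Z_{\overline{g}}\to S$ is a finite morphism defined over $E^{S}(\Sigma)\subseteq F_{0}$, of degree $[\Gamma:\Gamma_{\overline{g}}]$, and that its fibres carry a simply transitive right action of $\Gamma/\Gamma_{\overline{g}}$ also defined over $E^{S}(\Sigma)$. Base-changing over the irreducible $F_{0}$-variety $C$ yields a finite surjective morphism $\psi_{\overline{g},(e)}^{-1}(C)\to C$. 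Shrinking $C$ (and its preimage) to the Zariski-dense open subset over which $\psi_{\overline{g},(e)}$ is \'etale does not change irreducibility; since $b$ is generic over $\overline{\mathbb{Q}}\supseteq F_{0}$, it lies in this open set, so we may assume the restricted cover is finite \'etale.

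The main step is the classical correspondence: for a finite \'etale cover of a smooth irreducible variety over $\overline{\mathbb{Q}}$, the irreducible components of the total space are in bijection with the orbits of $\mathrm{Aut}(\mathbb{C}/\overline{\mathbb{Q}}(b))$ on the fibre above a geometric generic point $b$. Thus $\psi_{\overline{g},(e)}^{-1}(C)$ is irreducible iff $\mathrm{Aut}(\mathbb{C}/\overline{\mathbb{Q}}(b))$ acts transitively on $\psi_{\overline{g},(e)}^{-1}(b)$.

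Finally I would apply Lemma~\ref{lem:elem} to the commuting pair of actions on $\psi_{\overline{g},(e)}^{-1}(b)$: the $\Gamma/\Gamma_{\overline{g}}$-action is simply transitive and commutes with $\mathrm{Aut}(\mathbb{C}/\overline{\mathbb{Q}}(b))$ by the argument of Lemma~\ref{lem:actcom}, since $\Gamma/\Gamma_{\overline{g}}$ acts by regular maps defined over $E^{S}(\Sigma)\subseteq\overline{\mathbb{Q}}(b)$. Picking any basepoint $\widetilde{y}\in\psi_{\overline{g},(e)}^{-1}(b)$ yields a homomorphism $\rho_{\widetilde{y}}:\mathrm{Aut}(\mathbb{C}/\overline{\mathbb{Q}}(b))\to\Gamma/\Gamma_{\overline{g}}$ whose image is precisely the set of elements of $\Gamma/\Gamma_{\overline{g}}$ that carry $\widetilde{y}$ into its Galois orbit. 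Combined with the simple transitivity of $\Gamma/\Gamma_{\overline{g}}$, transitivity of the Galois action is equivalent to surjectivity of $\rho_{\widetilde{y}}$, and the conjugation formula $\rho_{x_{1}}=h^{-1}\rho_{x_{0}}h$ from Lemma~\ref{lem:elem} disposes of the ``some/all'' clause.

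The main obstacle I expect is the clean deployment of the components-versus-orbits principle: one must be careful to shrink $C$ only to an open subset over which the cover is \'etale (so every irreducible component of $\psi_{\overline{g},(e)}^{-1}(C)$ restricts nontrivially, and generic points stay generic), and to verify that the field of definition of the $\Gamma/\Gamma_{\overline{g}}$-action truly sits inside $\overline{\mathbb{Q}}(b)$ so that Lemma~\ref{lem:actcom} applies. Everything else is essentially bookkeeping with Lemmas~\ref{lem:elem} and~\ref{lem:actcom}.
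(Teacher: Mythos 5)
Your argument is correct and lands on the same reduction as the paper---transitivity of the $\mathrm{Aut}(\mathbb{C}/\overline{\mathbb{Q}}(b))$-action on the fibre $\psi_{\overline{g},(e)}^{-1}(b)$, converted into surjectivity of $\rho_{\widetilde{y}}$ via the simply transitive $\Gamma/\Gamma_{\overline{g}}$-action and Lemmas \ref{lem:elem} and \ref{lem:actcom}---but you justify the bridge between irreducibility of $\psi_{\overline{g},(e)}^{-1}(C)$ and the generic fibre differently. The paper works directly with the components: finiteness of $\psi_{\overline{g},(e)}$ gives a component $Y$ surjecting onto $C$, transitivity of $\Gamma$ on the analytic components (quoting \cite{ullmo-yafaev2}) shows every other component is a translate $\gamma Y$, and a point $b_{\overline{g}}\in Y$ above $b$, generic in $Y$, ties the Galois orbit in the fibre to the set of components reached. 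You instead invoke the general correspondence between irreducible components of a finite \'etale cover and Galois orbits on the fibre over a generic point, which avoids the appeal to transitivity of $\Gamma$ on components but obliges you to check that correspondence's hypotheses. Two refinements there: first, no shrinking to the \'etale locus is needed---since $\Gamma$ is neat, Lemma \ref{lem:nofix} makes $\psi_{\overline{g},(e)}$ an unramified finite covering, so $\psi_{\overline{g},(e)}^{-1}(C)\rightarrow C$ is finite \'etale on the nose; this matters because your assertion that shrinking ``does not change irreducibility'' tacitly assumes no component of $\psi_{\overline{g},(e)}^{-1}(C)$ sits entirely over the removed locus, which is not automatic in general but does follow here from flatness/finiteness (every component dominates $C$); second, smoothness of $C$ is neither available (it is an arbitrary irreducible subvariety) nor needed, since the components-versus-orbits statement only requires that every component of the preimage dominate $C$, again supplied by the \'etale base change. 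With those two points made explicit your proof is complete, and it is somewhat more self-contained on the algebraic side than the paper's, which leans on the cited transitivity of $\Gamma$ on components.
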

\begin{proof}
The map $\psi_{\overline{g},(e)}:Z_{\overline{g}}\rightarrow S(\mathbb{C})$, being finite, is closed. So for some irreducible component $Y\subseteq\psi_{\overline{g},(e)}^{-1}(C)$ we must have that $\psi_{\overline{g},(e)}(Y) = C$. If $Y'$ is another irreducible component, then $Y = \gamma Y'$ for some $\gamma\in\Gamma/\Gamma_{\overline{g}}$ ($\Gamma$ acts transitively in analytic components, cf \cite[Theorem 4.1]{ullmo-yafaev2}). Choose $b_{\overline{g}}\in Y$ such that $\psi_{\overline{g},(e)}\left(b_{\overline{g}}\right) = b$. Thus $b_{\overline{g}}$ is generic in $Y$ over $L$. Therefore, by considering the action of $\mathrm{Aut}(\mathbb{C}/F_{0}(b))$ on $b_{\overline{g}}$, the homomorphism $\mathrm{Aut}\left(\mathbb{C}/\overline{Q}(b)\right)\rightarrow\Gamma/\Gamma_{\overline{g}}$ is surjective if and only if $Y = \gamma Y'$ for all $\gamma\in\Gamma/\Gamma_{\overline{g}}$. In other words, $\psi_{\overline{g},(e)}^{-1}(C)$ is irreducible.
\end{proof}

So, to show the existence of a curve $C\subseteq S$ which is Hecke-generic, it suffices to show that $S(\mathbb{C})$ has a point of transcendence degree 1 which is Galois generic. For this we refer the reader to the proof of the main theorem of \cite{baldi}.



Let $C\subseteq S$ be a Hecke-generic curve defined over $F_{0}$. Let $X^{+}_{C} := p^{-1}(C(\mathbb{C}))$. Consider the two-sorted structure $\mathbf{r}:=\left<X^{+}_{C},C(\mathbb{C}),\left\{r_{\overline{g}}\right\}_{\overline{g}}\right>$, where the structure on $X^{+}_{C}$ is inherited from the $\mathcal{L}_{\mathrm{cov}}$-structure on $X^{+}$, the structure on $C(\mathbb{C})$ is inherited from the $\mathcal{L}_{F}$-structure on $S(\mathbb{C})$, and the maps $r_{\overline{g}}$ are just the restrictions of $p_{\overline{g}}$ to $X^{+}_{C}$. Thus $\mathbf{r}$ is a definable (without parameters) $\widetilde{\mathcal{L}}$-substructure of $\mathbf{p}$. In particular, $X^{+}_{C}$ has a $\Gamma$-action. By analogy, given $\mathbf{q}\models\widetilde{T}$ or $\mathrm{Th}(\mathbf{p})$, let $D_{C} = q^{-1}(C(F))$, and denote by $\mathbf{r}_{\mathbf{q}}$ the two-sorted structure $\left<D_{C}, C(F), \left\{r_{\overline{g}}\right\}_{\overline{g}}\right>$.

\begin{remark}
Let $\mathrm{Th}(\mathbf{r})$ be the complete first-order theory of $\mathbf{r}$ in the language $\widetilde{\mathcal{L}}$. Given $\mathbf{q}\models\widetilde{T}$, then the corresponding $\widetilde{\mathcal{L}}$-structure $\mathbf{r}_{\mathbf{q}}$ is a model of $\mathrm{Th}(\mathbf{r})$. Also, $\mathrm{Th}(\mathbf{r})$ has quantifier elimination because $\widetilde{T}$ has quantifier elimination and $\mathbf{r}$ is a $\emptyset$-definable substructure of a model of $\widetilde{T}$. Just like in Corollary \ref{cor:gact}, the models of $\mathrm{Th}(\mathbf{r})$ have a $\Gamma$-action. So it makes sense to consider the class $\mathrm{Th}_{\mathrm{SF}}^{\infty}(\mathbf{r})$ of models which satisfy SF and the field $F$ has infinite transcendence degree over $\mathbb{Q}$. 
\end{remark}

\begin{lem}
\label{lem:def}
Let $\mathbf{q}\models\mathrm{Th}(\mathbf{p})$. Then $\mathbf{r}_{\mathbf{q}}$ determines $\mathbf{q}$ i.e. $\mathbf{q} = \mathrm{dcl}\left(D_{C}\cup C(F)\right)$, where $\mathrm{dcl}$ denotes the definable closure operator in $\mathbf{q}$. 
\end{lem}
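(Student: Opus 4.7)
\emph{Plan.} The claim $\mathbf{q} = \mathrm{dcl}(D_{C} \cup C(F))$ splits into showing that every element of the variety sort and every element of the covering sort of $\mathbf{q}$ is $\mathcal{L}$-definable from parameters in $D_{C} \cup C(F)$, and I would treat the two sorts separately.

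For the variety sort, the first observation is that $C \subseteq S$ is a positive-dimensional subvariety defined over $F_{0}$, so I can choose a dominant coordinate projection $\pi \colon C \to \mathbb{A}^{1}$. Because $F$ is algebraically closed, every $\alpha \in F$ arises as $\pi(c)$ for some $c \in C(F)$, so $C(F)$ generates $F$ as a field over $F_{0}$. Quantifier elimination for $\mathrm{ACF}_{0}$ then yields $F \subseteq \mathrm{dcl}(C(F))$ and hence $S(F) \subseteq F^{\dim S} \subseteq \mathrm{dcl}(C(F))$.

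For the covering sort, the key fact is that $\mathcal{L}_{D}$ contains a unary function symbol for each $g \in G^{\mathrm{ad}}(\mathbb{Q})^{+}$, so the full $G^{\mathrm{ad}}(\mathbb{Q})^{+}$-orbit of $D_{C}$ sits inside $\mathrm{dcl}(D_{C})$. For an arbitrary $x \in D$, I would try to isolate $x$ by a formula of the form
\[
(y_{1}, \ldots, y_{n}, v) \in D_{V, i} \;\wedge\; q(v) = z,
\]
where $y_{1}, \ldots, y_{n} \in D_{C}$, $V$ is a special subvariety, $i$ is an index of a special domain, and $z = q(x) \in S(F) \subseteq \mathrm{dcl}(C(F))$. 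Using Proposition~\ref{prop:typedet1}, which expresses $\mathrm{qftp}(x / D_{C} \cup C(F))$ in terms of special loci of Hecke translates of $x$ together with $\mathrm{ACF}$-types of their images under the $q_{\overline{g}}$, combined with the Hecke-genericity of $C$, the fiber of $D_{V, i}$ over $(y_{1}, \ldots, y_{n})$ intersected with $q^{-1}(z)$ should reduce to $\{x\}$.

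The main obstacle lies in the covering sort: it is not in general true that $D = G^{\mathrm{ad}}(\mathbb{Q})^{+} \cdot D_{C}$, since for $\dim S \geq 2$ the Hecke orbit of a generic point of $S(F)$ need not meet $C(F)$, and so one cannot simply write $x = gy$ with $y \in D_{C}$ for every $x$. The argument must therefore exploit the richer family of special-domain relations $D_{V, i}$ with several parameters drawn from $D_{C}$, and this is precisely where the Hecke-genericity of $C$ (ensuring that $\psi^{-1}_{\overline{g},(e)}(C)$ is irreducible and therefore fibres over $C$ behave transitively enough) has to be leveraged to pin $x$ down by a single $\mathcal{L}$-formula over $D_{C} \cup C(F)$.
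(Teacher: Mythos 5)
Your treatment of the variety sort is essentially the paper's (the paper phrases it as bi-interpretability of $C(F)$ and $S(F)$ with the field sort; your coordinate-projection version is the same idea), modulo the small inaccuracy that a dominant projection $\pi\colon C\to\mathbb{A}^{1}$ is only surjective on $F$-points up to finitely many exceptional values; this is harmless, since the image is cofinite and one can still write any $\alpha\in F$ as a sum of two elements of the image, so the subfield generated by the coordinates of $C(F)$ is all of $F$.

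The covering sort, however, is where the proof actually lives, and there your proposal has a genuine gap: you write down the right kind of defining set, namely a fibre $D_{V,i}(\overline{y})\cap q^{-1}(z)$ with $\overline{y}$ from $D_{C}$ and $z=q(x)$ (this is exactly $\mathrm{sploc}(x/D_{C})\cap q^{-1}(q(x))$), but you never prove that this set is $\{x\}$ — you yourself label this "the main obstacle" and only assert that Proposition \ref{prop:typedet1} together with Hecke-genericity "should" force it. The dangerous candidates are precisely the points of the fibre lying in the same special locus, above all the translates $\gamma x$ for $\gamma\in\Gamma$ (which always satisfy $q(\gamma x)=q(x)$ and have the same special closure), and in a model of $\mathrm{Th}(\mathbf{p})$ without SF the fibre may contain further points as well; nothing in your sketch excludes them. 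The paper's proof does exactly this missing step, and not via Hecke-genericity or the type description: it uses Remark \ref{rem:iota} to produce, for each non-central $\gamma\in\Gamma$, some $g\in G^{\mathrm{ad}}(\mathbb{Q})^{+}$ with $q(gx)\neq q(g\gamma x)$, and then combines the equivariance $\mathrm{sploc}(gx/D_{C})=g\,\mathrm{sploc}(x/D_{C})$ from Lemma \ref{lem:sploc3} with the fact (via Lemma \ref{lem:sploc} and the shape of the special locus over the full preimage $D_{C}=q^{-1}(C(F))$) that $q\left(\mathrm{sploc}(gx/D_{C})\right)$ is the single point $q(gx)$, to conclude $\gamma x\notin\mathrm{sploc}(x/D_{C})$. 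Without an argument of this kind — or a worked-out substitute showing directly that your chosen $D_{V,i}(\overline{y})\cap q^{-1}(z)$ is a singleton — the proposal does not establish $x\in\mathrm{dcl}(D_{C}\cup C(F))$, so the key step of the lemma remains unproved.
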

\begin{proof}
The domain sorts $C(F)$ and $S(F)$ are both bi-interpretable with the field sort $F$ (with the same constants), so we know that at the level of the variety structures, $S(F)$ is definable over $C(F)$. 

Let $x\in D$. As $S(F)$ is definable, then the fibre $q^{-1}(q(x))$ is definable. Now consider $\mathrm{sploc}\left(x/D_{C}\right)$, which we know is definable by Lemma \ref{lem:sploc}. It remains to show that for all non-central $\gamma\in\Gamma$, $\gamma x\notin\mathrm{sploc}\left(x/D_{C}\right)$. By Remark \ref{rem:iota}, for every non-central $\gamma\in\Gamma$ there is $g\in G^{\mathrm{ad}}(\mathbb{Q})^{+}$ such that $q(gx)\neq q(g\gamma x)$. As $\mathrm{sploc}\left(gx/D_{C}\right) = g\mathrm{sploc}\left(x/D_{C}\right)$ by Lemma \ref{lem:sploc3} and $q\left(\mathrm{sploc}\left(gx/D_{C}\right)\right) = q(gx)$ by Lemma \ref{lem:sploc}, we conclude that $\gamma x\notin\mathrm{sploc}\left(x/D_{C}\right)$.
\end{proof}

To prove categoricity of $\mathrm{Th}^{\infty}_{\mathrm{SF}}(\mathbf{p})$ it will be enough to prove categoricity of $\mathrm{Th}^{\infty}_{\mathrm{SF}}(\mathbf{r})$. Let $\mathscr{K}_{\mathbf{r}}$ \index{k@$\mathscr{K}_{\mathbf{r}}$} be the class of models of $\mathrm{Th}^{\infty}_{\mathrm{SF}}(\mathbf{r})$. Given $\mathbf{t}=\left<\mathbf{R},\mathbf{C},t\right>\in\mathscr{K}_{\mathbf{r}}$ we define a pregeometry $\mathrm{cl}_{\mathbf{t}}$ \index{c@$\mathrm{cl}_{\mathbf{t}}$} on $\mathbf{t}$ as follows. Given a set $B\subseteq C(F)$, let $\mathrm{acl}(B)$ denote the set of all points in $C(F)$ with coordinates that are algebraic over the coordinates of the points in $B$ (this is just the model-theoretic definition of $\mathrm{acl}$). The operator $\mathrm{acl}$ is always a pregeometry in strongly minimal structures (like C). Given $A\subseteq R\cup C(F)$ we write $A = A_{R}\cup A_{C}$, where $A_{R}= (A\cap R)$ and $A_{C} = A\cap C(F)$, and then set $A_{1} = A_{R}\cup q^{-1}(A_{R})$. Then we define $\mathrm{cl}_{\mathbf{t}}(A) := \left<q^{-1}\circ\mathrm{acl}\circ q(A_{1}), \mathrm{acl}\circ q(A_{1}), q\right>$, where we take the induced structure on each sort. For example, the sort $\mathrm{acl}\circ q(A_{1})$ is simply the $\mathcal{L}_{F}$-structure $C(K)$, where $K$ is the algebraic closure of the field generated over $F_{0}$ by the coordinates of $q(A_{1})$.

As $\mathrm{acl}$ is a pregeometry, then it is straightforward to check that $\mathrm{cl}_{\mathbf{t}}$ is also a pregeometry. It is simple to adapt the axiomatisation from \textsection\ref{subsec:axiomscov} to give an axiomatisation for $\mathrm{Th}(\mathbf{r})$, and from this it follows that  $\mathrm{cl}_{\mathbf{t}}(A)\models\mathrm{Th}_{\mathrm{SF}}(\mathbf{r})$. Lastly, the dimension defined by $\mathrm{cl}_{\mathbf{t}}$ is equal to the transcendence degree of $K$. 

The strategy for proving categoricity of $\mathscr{K}_{\mathbf{r}}$ now is to first show that $\left(\mathbf{r},\mathrm{cl}_{\mathbf{r}}\right)$ satisfies the axioms (QM1)--(QM5) of a quasiminimal pregeometry structure described in \textsection \ref{subsec:quasi}. Aside from (QM5), the other axioms are straightforward to check.

\subsection{Finite-Index Conditions}
\label{subsec:fic}
Let $\mathscr{K}_{\mathbf{p}}$ \index{k@$\mathscr{K}_{\widehat{\mathbf{p}}}$} be the class of models of $\mathrm{Th}^{\infty}_{\mathrm{SF}}(\mathbf{p})$.

\begin{defi}
We say that a Shimura variety $p:X^{+}\rightarrow S(\mathbb{C})$ satisfies the \emph{first finite-index condition} (FIC1) \index{FIC1} if for any tuple $x_{1},\ldots,x_{m}\in D$ of non-special points in distinct $G^{\mathrm{ad}}(\mathbb{Q})^{+}$-orbits, the induced homomorphism 
\begin{equation*}
    \mathrm{Aut}(\mathbb{C}/L)\rightarrow\overline{\Gamma^{V,i}}
\end{equation*}
has finite index, where $D_{V,i} = \mathrm{spcl}(x_{1},\ldots,x_{m})$, and $L$ is finitely generated over $F_{0}$ by the coordinates of the $p(x_{i})$.
\end{defi}

\begin{prop}[cf. {{\cite[Lemma 4.11]{daw-harris}}}]
\label{prop:main}
Suppose $p:X^{+}\rightarrow S(\mathbb{C})$ has FIC1. Then $\mathscr{K}_{\mathbf{p}}$ satisfies $\aleph_{0}$-homogeneity over the empty set. 
\end{prop}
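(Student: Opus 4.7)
The plan is to show that given any two models $\mathbf{q}, \mathbf{q}' \in \mathscr{K}_{\mathbf{p}}$, any finite partial isomorphism $\xi$ witnessed by tuples $\overline{x} = (x_1,\ldots,x_m) \in D$ and $\overline{x}' = (x_1',\ldots,x_m') \in D'$, and any $y \in D$, we can exhibit $y' \in D'$ with $\mathrm{qftp}(\overline{x},y) = \mathrm{qftp}(\overline{x}',y')$. By Proposition \ref{prop:typedet} with $A = \emptyset$, the target type is determined by the data $\{\mathrm{qftp}(q(\overline{g}\overline{x},\overline{g}y)/F_0),\, \mathrm{spcl}(\overline{g}\overline{x},\overline{g}y)\}$ as $\overline{g}$ ranges over tuples from $G^{\mathrm{ad}}(\mathbb{Q})^{+}$. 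After the standard reductions (special coordinates are constants in $F_0$; an orbit coincidence $y = gx_i$ forces $y' = gx_i'$), we may assume $\overline{x}, y$ are non-special and lie in pairwise distinct $G^{\mathrm{ad}}(\mathbb{Q})^{+}$-orbits.

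Write $D_{V,i} := \mathrm{spcl}(\overline{x}, y)$, $L := F_0(q(\overline{x}))$ and $L_{+} := L(q(y))$, and let $\sigma \colon L \to F'$ be the field embedding induced by $\xi$. First extend $\sigma$ to $L_{+}$ by sending $q(y)$ to a point $w \in V(q(\overline{x}'))$ whose algebraic type over $\sigma(L)$ matches $\sigma$ of the type of $q(y)$ over $L$; the existence of such $w$ is where the infinite transcendence degree built into $\mathrm{Th}^{\infty}_{\mathrm{SF}}(\mathbf{p})$ is used. Now apply FIC1 to the tuple $(\overline{x}, y)$: the induced homomorphism $\mathrm{Aut}(\mathbb{C}/L_{+}) \to \overline{\Gamma^{V,i}}$ has image of some finite index $n$. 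Since this image is closed, we can choose a tuple $\overline{g}_0$ (with $\Gamma_{\overline{g}_0}$ normal in $\Gamma$) large enough that the image already has index exactly $n$ in the finite group $\Gamma_{e,\overline{g}_0}^{V,i}$. Let $C$ denote the $L_{+}$-Galois orbit inside the fibre of $\psi_{\overline{g}_0,(e)}^{V,i}$ above $(q(\overline{x}), q(y))$ containing the distinguished point $q(\overline{g}_0\overline{x},\overline{g}_0 y)$. By SS/MOD combined with SF---which forces the fibre of $q$ in $D'$ above $w$ to be a single $\Gamma$-orbit whose image under $q_{\overline{g}_0}$ surjects onto the corresponding fibre in $Z_{\overline{g}_0}^{V,i}$---we can select $y' \in D'_{V,i}$ satisfying $q(y') = w$ together with $q(\overline{g}_0\overline{x}', \overline{g}_0 y') \in \sigma(C)$.

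Verification proceeds in three parts. Special closure: $\mathrm{spcl}(\overline{x}', y') = D'_{V,i}$, because membership in any proper special subvariety of $V$ is an $F_0$-definable algebraic condition, transported by $\sigma$, and minimality of $V$ at $(\overline{x},y)$ then transfers; the sploc data at all $\overline{g}$ reduce to this by Lemma \ref{lem:sploc3}. Algebraic types: agreement at $\overline{g}_0$ holds by construction, and for any $\overline{g} \supseteq \overline{g}_0$ the projection $\Gamma_{e,\overline{g}}^{V,i} \twoheadrightarrow \Gamma_{e,\overline{g}_0}^{V,i}$ restricts to a surjection between the coset spaces of the respective Galois images which, both having cardinality $n$, must be a bijection; hence the $\sigma(L_+)$-Galois orbit of $q(\overline{g}\overline{x}', \overline{g}y')$ is forced to be the unique lift of $\sigma(C)$, matching $\sigma$ of the orbit of $q(\overline{g}\overline{x},\overline{g}y)$; for general $\overline{g}$ pass to a common refinement with $\overline{g}_0$ and project. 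The main obstacle is that $\mathbf{q}'$ need not be $\omega$-saturated, so infinitely many algebraic-type conditions have to be realised by a single $y'$; FIC1 is precisely what reduces this infinitary demand to a finite matching problem at $\overline{g}_0$, with compatibility at larger $\overline{g}$ automatic from the stabilisation of the finite-index bound and at smaller $\overline{g}$ automatic from projection.
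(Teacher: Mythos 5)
Your overall strategy (use FIC1 to fix a finite level $\overline{g}_{0}$ past which the index of the Galois image stabilises, match data there, and argue that all higher levels are then forced by the bijection of coset spaces) is the same in spirit as the paper's, which instead enlarges the base field to $L_{\overline{g}}$ so that the image becomes all of $\overline{\Gamma^{V,i}}_{\overline{g}}$; your ``unique lift'' argument for levels above $\overline{g}_{0}$ is essentially sound. The gap is in the construction of $y'$. You extend $\sigma$ to $L_{+}=L(q(y))$ by choosing $w$ to realise only $\sigma\left(\mathrm{qftp}(q(y)/L)\right)$, and only afterwards try to pick $y'$ with $q'_{\overline{g}_{0}}(\overline{x}',y')\in\sigma(C)$. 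But the $\overline{x}'$-block of $q'_{\overline{g}_{0}}(\overline{x}',y')$ is already frozen at the values $q'(\overline{g}_{0}\overline{x}')$ determined by $\mathbf{q}'$, and varying $y'$ moves only the $y'$-block; so your claim that the $\Gamma$-orbit above $w$ maps onto ``the corresponding fibre in $Z^{V,i}_{\overline{g}_{0}}$'' is false (its image lies in the slice of that fibre with the fixed $\overline{x}'$-block), and $\sigma(C)$ need not meet that slice at all. Concretely, in the modular case take $x$ with $j(x)$ transcendental and $y$ with $j(y)=j(x/2)+1$: then $x,y$ are non-special, in distinct Hecke orbits, and $j(y)$ is algebraic of degree $3$ over $L=F_{0}(j(x))$, with all conjugates of the form $j(\gamma x/2)+1$. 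Your recipe allows $w=j'(\gamma x'/2)+1$ for any conjugate; for a wrong choice the pair $\left(q'(gx'),w\right)$ no longer lies on the line $Y=X+1$, so no $y'$ with $q'(y')=w$ can have the same quantifier-free type as $y$, and indeed $\sigma(C)$ contains no point whose $\overline{x}'$-block equals $q'(\overline{g}_{0}\overline{x}')$. (Here the stabilisation level $\overline{g}_{0}$ necessarily sees the level-$2$ data, so the failure occurs exactly at the level where you try to select $y'$.) The same phenomenon can occur when $q(y)$ is transcendental over $L$, since its minimal $L$-variety may become reducible over $L\left(q(\overline{g}_{0}\overline{x})\right)$, and which component it lies on is correlated with the frozen conjugates.

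What is missing is precisely the step the paper delegates to Proposition \ref{prop:genrealisation}: before extending $\sigma$, choose $y'$ (and hence $w=q'(y')$) so that the \emph{joint} finite-level data are matched, i.e. $(\overline{g}_{0}\overline{x}',\overline{g}_{0}y')\in\xi\left(\mathrm{sploc}(\overline{g}_{0}\overline{x},\overline{g}_{0}y)\right)$ and $q'(\overline{g}_{0}\overline{x}',\overline{g}_{0}y')$ realises $\sigma\left(\mathrm{qftp}\left(q(\overline{g}_{0}\overline{x},\overline{g}_{0}y)/L\right)\right)$. This is a single first-order condition, hence realisable in $\mathbf{q}'$ without saturation, and it is automatically compatible with the frozen values $q'(\overline{g}_{0}\overline{x}')$; only then does one extend $\sigma$ over the level-$\overline{g}_{0}$ coordinates and run your stabilisation argument (or the paper's conjugation argument) for the higher levels, together with Lemma \ref{lem:sploc3} for the special loci. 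A final scope remark: the paper's own proof, which is what feeds into (QM5), only treats the case where $q(y)$ is algebraic over $L$; your attempt covers arbitrary $y$, which is fine in intent but does not remove the compatibility issue above.
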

\begin{proof}
Suppose $\mathbf{q}:=\left<\mathbf{D},\mathbf{S},q\right>$ and $\mathbf{q}':=\left<\mathbf{D}',\mathbf{S}',q'\right>$ are two models of $\mathrm{Th}_{\mathrm{SF}}^{\infty}(\mathbf{p})$ and suppose that $x_{1}\ldots,x_{m}\in D$ is a collection of non-special points in distinct $G^{\mathrm{ad}}(\mathbb{Q})^{+}$-orbits. Suppose that $x_{1}',\ldots,x_{m}'\in D'$ is another collection of non-special points in distinct $G^{\mathrm{ad}}(\mathbb{Q})^{+}$-orbits, such that 
\begin{equation*}
    \mathrm{qftp}\left(x_{1},\ldots,x_{m}\right) = \mathrm{qftp}\left(x_{1}',\ldots,x_{m}'\right).
\end{equation*}
Let $L$ be field generated over $F_{0}$ by the coordinates of the $q(x_{i})$. This finite partial isomorphism yields a partial $\mathcal{L}_{D}$-isomorphism $\xi:D\dashrightarrow D'$ and an embedding $\sigma: L\rightarrow F'$ fixing $F_{0}$. 

Let $\tau\in D$ be a non-special point in a separate $G^{\mathrm{ad}}(\mathbb{Q})^{+}$-orbit from the $x_{i}$ such that the coordinates of $q(\tau)$ are algebraic over $L$ (so $q(\tau)\in\mathrm{acl}(q(x_{1}),\ldots,q(x_{m}))$). Let $V$ be the special closure of $\left(q(x_{1}),\ldots,q(x_{m}),q(\tau)\right)$. Let $\overline{g} = (e, g_{1},\ldots,g_{n})$ be a tuple of elements of $G$. Let $L_{\overline{g}}$ denote the field generated over $L$ by the coordinates of the $q(\overline{g}x_{i})$ and $q(\overline{g}\tau)$ (so $q(\tau)$ is defined over $L_{(e)}$, which means that $L$ and $L_{(e)}$ may be different). Also, let 
\begin{equation*}
    \overline{\Gamma^{V,i}}_{\overline{g}}:=\varprojlim \Gamma_{\overline{g},\overline{h}}^{V,i},
\end{equation*}
where $\overline{h}$ varies over all tuples such that $\Gamma_{\overline{h}}$ is contained in $\Gamma_{\overline{g}}$ and is normal in $\Gamma$. By FIC1, the image of the homomorphism 
\begin{equation*}
    \mathrm{Aut}(\mathbb{C}/L)\rightarrow\overline{\Gamma^{V,i}},
\end{equation*}
corresponding to the point $\left(q(x_{1}),\ldots,q(x_{m}),q(\tau)\right)\in S(\mathbb{C})^{m+1}$, has finite index. Therefore the image of the homomorphism is open, and so there exists a tuple $\overline{g}$ such that the homomorphism
\begin{equation*}
    \mathrm{Aut}\left(\mathbb{C}/L_{\overline{g}}\right)\rightarrow\overline{\Gamma^{V,i}}_{\overline{g}},
\end{equation*}
corresponding to the point $\left(q\left(\overline{g}x_{1}\right),\ldots,q\left(\overline{g}x_{m}\right),q\left(\overline{g}\tau\right)\right)\in Z^{V,i}_{\overline{g}}$, is surjective. Now observe that
\begin{equation*}
    \bigcup_{\overline{g}}\mathrm{qftp}\left(q\left(\overline{g}x_{1}\right),\ldots,q\left(\overline{g}x_{m}\right),q\left(\overline{g}\tau\right)\right)
\end{equation*}
is equivalent to
\begin{equation*}
    \mathrm{qftp}\left(q\left(\overline{g}x_{1}\right),\ldots,q\left(\overline{g}x_{m}\right),q\left(\overline{g}\tau\right)\right)\cup\bigcup_{\overline{h}}\mathrm{qftp}\left(q\left(\overline{h}x_{1}\right),\ldots,q\left(\overline{h}x_{m}\right),q\left(\overline{h}\tau\right)/L_{\overline{g}}\right),
\end{equation*}
where $\overline{h}$ varies over the tuples such that $\Gamma_{\overline{h}}$ is contained in $\Gamma_{\overline{g}}$ and normal in $\Gamma$. Using proposition \ref{prop:genrealisation}, we choose $\tau'\in D'$ such that
\begin{equation*}
    \xi\left(\mathrm{sploc}\left(\overline{g}\tau,\overline{g}x_{1},\ldots,\overline{g}x_{m}\right)\right) = \mathrm{sploc}\left(\overline{g}\tau',\overline{g}x_{1}',\ldots,\overline{g}x_{m}'\right)
\end{equation*}
 and
\begin{equation*}
    \mathrm{qftp}\left(q\left(\overline{g}x_{1}\right),\ldots,q\left(\overline{g}x_{m}\right),q\left(\overline{g}\tau\right)\right) = \mathrm{qftp}\left(q'\left(\overline{g}x'_{1}\right),\ldots,q'\left(\overline{g}x'_{m}\right),q'\left(\overline{g}\tau'\right)\right).
\end{equation*}

We now extend $\sigma:L\rightarrow F'$ to $\sigma:L_{\overline{g}}\rightarrow F'$. By Proposition \ref{prop:typedet}, to finish the proof we need to show that 
\begin{equation*}
    \xi\left(\mathrm{sploc}\left(\overline{h}\tau,\overline{h}x_{1},\ldots,\overline{h}x_{m}/\overline{g}x_{1},\ldots,\overline{g}x_{m},\overline{g}\tau\right)\right) = \mathrm{sploc}\left(\overline{h}\tau',\overline{h}x_{1}',\ldots,\overline{h}x_{m}'/\overline{g}x',\ldots,\overline{g}x_{m}',\overline{g}\tau'\right)
\end{equation*}
 (but this is already guaranteed by Corollary \ref{cor:sploc}), and that
\begin{equation}
\label{eq:claim}
    \mathrm{qftp}\left(q\left(\overline{h}x_{1}\right),\ldots,q\left(\overline{h}x_{m}\right),q\left(\overline{h}\tau\right)/L_{\overline{g}}\right) = \mathrm{qftp}\left(q\left(\overline{h}x'_{1}\right),\ldots,q\left(\overline{h}x'_{m}\right),q\left(\overline{h}\tau'\right)/\sigma\left(L_{\overline{g}}\right)\right),
\end{equation}
where $\overline{h}$ is any tuple such that $\Gamma_{\overline{h}}$ is contained in $\Gamma_{\overline{g}}$ and normal in $\Gamma$. The left-hand side of (\ref{eq:claim}) is determined by the $\mathrm{Aut}\left(\mathbb{C}/L_{\overline{g}}\right)$-orbit of $\left(q\left(\overline{h}x_{1}\right),\ldots,q\left(\overline{h}x_{m}\right),q\left(\overline{h}\tau\right)\right)\in Z^{V,i}_{\overline{h}}$ in the fibre of $\psi_{\overline{h},\overline{g}}$ over $\left(q\left(\overline{g}x_{1}\right),\ldots,q\left(\overline{g}x_{m}\right),q\left(\overline{g}\tau\right)\right)\in Z_{\overline{g}}^{V,i}$. Since we are assuming FIC1, this orbit is the whole fibre of the morphism:
\begin{equation*}
    \psi_{\overline{h},\overline{g}}^{V,i}:Z_{\overline{h}}^{V,i}\rightarrow Z_{\overline{g}}^{V,i}.
\end{equation*}
Similarly, the right-hand side of (\ref{eq:claim}) is determined by the $\mathrm{Aut}\left(\mathbb{C}/\sigma(L_{\overline{g}})\right)$-orbit containing $\left(q\left(\overline{h}x'_{1}\right),\ldots,q\left(\overline{h}x'_{m}\right),q\left(\overline{h}\tau'\right)\right)$ in the fibre of $\psi_{\overline{h},\overline{g}}^{V,i}$ over $\left(q\left(\overline{g}x'_{1}\right),\ldots,q\left(\overline{g}x'_{m}\right),q\left(\overline{g}\tau'\right)\right)$. We need to show that this orbit equals the fibre of $\psi_{\overline{h},\overline{g}}^{V,i}$. 

Extend $\sigma$ to an automorphism of $\mathbb{C}$ and choose a point $y$ in the fibre of $\psi_{\overline{h},\overline{g}}^{V,i}$ over $\left(q\left(\overline{g}x_{1}\right),\ldots,q\left(\overline{g}x_{m}\right),q\left(\overline{g}\tau\right)\right)$, and let $y'=\sigma(y)$. Let $\rho\in\mathrm{Aut}\left(\mathbb{C}/\sigma\left(L_{\overline{g}}\right)\right)$, then $\sigma^{-1}\rho\sigma\in\mathrm{Aut}\left(\mathbb{C}/L_{\overline{g}}\right)$ and
\begin{equation*}
    \rho\left(y'\right) = \rho\sigma(y) = \sigma\sigma^{-1}\rho\sigma(y) = \sigma\left(\phi\left(\sigma^{-1}\rho\sigma\right)\cdot y\right) = \phi\left(\sigma^{-1}\rho\sigma\right)\cdot\sigma(y) = \phi\left(\sigma^{-1}\rho\sigma\right)\cdot y'
\end{equation*}
(recall that by Lemma \ref{lem:actcom} the action of the automorphism commutes with that of the elements of the group). Therefore the homomorphism $\mathrm{Aut}\left(\mathbb{C}/\sigma\left(L_{\overline{g}}\right)\right)\rightarrow\Gamma_{\overline{g},\overline{h}}^{V,i}$, obtained from the composition of $\rho\mapsto\sigma^{-1}\rho\sigma$ and $\phi$, is surjective, which completes the proof of (\ref{eq:claim}). 
\end{proof}

The next definition is a translation of the condition on $\aleph_{0}$-homogeneity over countable models. We do not specify that $F$ be countable because, as we are only working with finitely many points, we will be working with a countable subfield of $F$ anyway. 

\begin{defi}
We say that a Shimura variety $p:X^{+}\rightarrow S(\mathbb{C})$ satisfies the \emph{second finite-index condition} (FIC2) \index{FIC2} if for any algebraically closed field $F\subseteq\mathbb{C}$ and any tuple $x_{1},\ldots,x_{m}\in D$ of non-special points in distinct $G^{\mathrm{ad}}(\mathbb{Q})^{+}$-orbits such that:
\begin{enumerate}
    \item $p(x_{i})\notin S(F)$ for all $i\in\left\{1,\ldots,m\right\}$
    \item the field $L$ generated over $F$ by the coordinates of the $p(x_{i})$ has transcendence degree $1$ over $F$,
\end{enumerate}
the image of the induced homomorphism 
\begin{equation*}
    \mathrm{Aut}(\mathbb{C}/L)\rightarrow\overline{\Gamma^{V,i}},
\end{equation*}
has finite index, where  $D_{V,i}=\mathrm{sploc}\left(x_{1},\ldots,x_{m}\right)$.
\end{defi}

\begin{prop}
\label{prop:main2}
If $p:X^{+}\rightarrow S(\mathbb{C})$ satisfies FIC2, then $\mathscr{K}_{\mathbf{p}}$ satisfies $\aleph_{0}$-homogeneity over countable models.
\end{prop}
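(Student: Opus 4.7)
The plan is to mirror the proof of Proposition \ref{prop:main}, but first absorb the existing matching tuples into the common countable submodel so that FIC2 needs only be invoked on a single new element; this circumvents the transcendence-degree restriction in FIC2 that would otherwise block a direct application to the joint tuple $(\bar{x},\tau)$. Let $\mathbf{q}$ and $\mathbf{q}'$ be models of $\mathrm{Th}_{\mathrm{SF}}^{\infty}(\mathbf{p})$ with common countable elementary submodel $\mathbf{n}$ of variety sort $N$, and suppose $\bar{x}\in D^{m}$ and $\bar{x}'\in (D')^{m}$ satisfy $\mathrm{qftp}(\bar{x}/\mathbf{n})=\mathrm{qftp}(\bar{x}'/\mathbf{n})$. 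Given $\tau\in D$, I will produce $\tau'\in D'$ preserving qftp-equality.

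First I will replace $\mathbf{n}$ by a countable elementary submodel $\mathbf{n}_{1}\prec\mathbf{q}$ which is $\ell$-atomic over $\mathbf{n}\cup\bar{x}$, and symmetrically $\mathbf{n}_{1}'\prec\mathbf{q}'$ $\ell$-atomic over $\mathbf{n}\cup\bar{x}'$; such submodels exist by Lemma \ref{lem:ell}(a), applicable because $\mathrm{Th}(\mathbf{p})$ is stable (indeed superstable, by an argument parallel to Proposition \ref{prop:superstable}). Completeness of $\mathrm{Th}(\mathbf{p})$ (Proposition \ref{prop:qetp}) together with quantifier elimination promotes the qftp-equality of $\bar{x}$ and $\bar{x}'$ over $\mathbf{n}$ to full type-equality, and uniqueness of $\ell$-atomic models over a set then yields an isomorphism $\mathbf{n}_{1}\cong\mathbf{n}_{1}'$ extending the identity on $\mathbf{n}$ and sending $\bar{x}\mapsto\bar{x}'$. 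This reduces the problem to extending the empty-tuple partial isomorphism over $\mathbf{n}_{1}$ by the single element $\tau$.

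Now I split on $d:=\mathrm{tr.deg.}(q(\tau)/N_{1})$. If $d=0$ then $q(\tau)\in S(N_{1})$ and the extension is handled inside $\mathbf{n}_{1}$ via SF together with the fact that $q^{-1}(S(N_{1}))\subseteq \mathbf{n}_{1}$. If $d=1$ then FIC2 applies to the singleton $\{\tau\}$ with $F=N_{1}$: both hypotheses hold because $q(\tau)\notin S(N_{1})$ and $N_{1}(q(\tau))$ has transcendence degree one over $N_{1}$, so the Galois representation $\mathrm{Aut}(\mathbb{C}/L)\to\overline{\Gamma^{V,i}}$ (with $D_{V,i}=\mathrm{spcl}(\tau)$ and $L=N_{1}(q(\tau))$) has finite index. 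From here the argument is a direct transcription of the last two paragraphs of the proof of Proposition \ref{prop:main}: choose a tuple $\bar{g}$ at which the corresponding finite-level representation $\mathrm{Aut}(\mathbb{C}/L_{\bar{g}})\to\overline{\Gamma^{V,i}}_{\bar{g}}$ is surjective, invoke Proposition \ref{prop:genrealisation} to produce $\tau'\in D'$ realising the correct qftp at level $\bar{g}$, and conclude via Proposition \ref{prop:typedet} together with the conjugation computation $\rho(y')=\phi(\sigma^{-1}\rho\sigma)\cdot y'$ to transport orbits to all refinements $\bar{h}\supseteq\bar{g}$. For $d>1$, I iterate: insert intermediate $\tau_{1},\ldots,\tau_{d-1}\in D$ whose images add one transcendental each over the growing base, applying the $d=1$ case and absorbing each newly constructed matched pair into the common submodel before the next step.

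The principal obstacle will be the initial absorption: constructing $\mathbf{n}_{1}$ and $\mathbf{n}_{1}'$ in a form useful for the two-sorted structure of $\mathrm{Th}(\mathbf{p})$, and verifying that their isomorphism over $\mathbf{n}$ can genuinely be chosen to send $\bar{x}\mapsto\bar{x}'$. This requires a modest adaptation of the minimal-model construction of Proposition \ref{prop:min} to $\mathrm{Th}(\mathbf{p})$, along with care that the $G^{\mathrm{ad}}(\mathbb{Q})^{+}$-orbit structure is compatible with $\ell$-atomicity. Once this is in place, the case analysis on $d$ and the appeal to FIC2 proceed by direct analogy with Proposition \ref{prop:main}, with the transcendence-degree-one case being essentially the only place where the new arithmetic hypothesis enters.
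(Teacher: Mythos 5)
Your overall skeleton (mirror Proposition \ref{prop:main}, work over the countable model, invoke FIC2) is indeed the paper's proof, which is literally a one-liner: rerun the argument of Proposition \ref{prop:main} with parameters from the common countable model, replacing FIC1 by FIC2; as there, the new element $\tau$ is taken with $q(\tau)$ algebraic over the field $L$ generated over the submodel's field by the coordinates of the $q(x_{i})$, so no induction on transcendence degree appears. The problem with your proposal is the absorption step, which is where you have relocated all the content. You need countable elementary submodels $\mathbf{n}_{1}\prec\mathbf{q}$ and $\mathbf{n}_{1}'\prec\mathbf{q}'$ containing $\mathbf{n}\cup\overline{x}$, resp.\ $\mathbf{n}\cup\overline{x}'$, which are $\ell$-atomic over these sets, plus an isomorphism $\mathbf{n}_{1}\cong\mathbf{n}_{1}'$ over $\mathbf{n}$ sending $\overline{x}\mapsto\overline{x}'$. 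Lemma \ref{lem:ell}(a) does not give this: it produces an $\ell$-constructible model over $\mathbf{n}\cup\overline{x}$ inside the monster, with no guarantee it can be found inside the prescribed model $\mathbf{q}$ ($\ell$-isolated types over $\mathbf{n}\overline{x}$ need not be realised in $\mathbf{q}$; this is precisely why the paper must combine Lemma \ref{lem:ell}(b) with dcl-computations, as in Lemma \ref{lem:3.6b}). Nor is ``uniqueness of $\ell$-atomic models over a set'' available: in the paper uniqueness is only reached after upgrading $\ell$-constructibility to atomicity/constructibility, and that upgrade (Proposition \ref{prop:3.11}) is itself proved \emph{using} Propositions \ref{prop:main} and \ref{prop:main2}. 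So the absorption is unjustified as stated, and circular if repaired with the paper's later machinery.

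A further sign that the reduction cannot be right as framed: the homogeneity actually proved (and used, for (QM5)) extends the partial isomorphism only by elements $\tau$ with $q(\tau)$ algebraic over $L$ --- the unrestricted back-and-forth reading is simply false (take $\mathbf{q}'=\mathbf{n}$ and $\tau$ with $q(\tau)\notin S(F_{\mathbf{n}})$: no candidate $\tau'$ exists), so your $d\geq 1$ branches aim at a statement that is not the intended one and cannot hold in that generality. But for such $\tau$, once $\overline{x}$ has been absorbed into $\mathbf{n}_{1}$ we have $q(\tau)\in S(N_{1})$, hence $\tau\in D_{\mathbf{n}_{1}}$ by SF, and ``nothing is left to prove'' --- confirming that the entire difficulty now sits inside the unproven absorption. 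Your underlying observation, that FIC2's transcendence-degree-one hypothesis does not literally cover a joint tuple of higher transcendence degree over the countable model, is a fair criticism of the brevity of the paper's argument; but addressing it requires engaging the Galois-theoretic statement for the joint tuple directly (as the paper implicitly does by transcribing the proof of Proposition \ref{prop:main}, via Propositions \ref{prop:genrealisation} and \ref{prop:typedet}), not outsourcing it to an absorption lemma that is unavailable at this stage of the development.
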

\begin{proof}
The proof is the same as that of Proposition \ref{prop:main}, now assuming that there is a common countable model embedded in both $\mathbf{q}$ and $\mathbf{q}'$, and working over this countable subset using FIC2.
\end{proof}

\subsection{Conclusions}
In the proofs that follow we will preserve the notation of Hecke-generic curves introduced in \textsection \ref{subsec:heckegen}. In particular, we fix a Hecke-generic curve $C$ in $S$.

\begin{cor}
\label{cor:countable}
If $p:X^{+}\rightarrow S(\mathbb{C})$ satisfies FIC1 and FIC2, then $\mathscr{K}_{\mathbf{p}}$ is $\kappa$-categorical for $\kappa=\aleph_{0}$ and $\aleph_{1}$. 
\end{cor}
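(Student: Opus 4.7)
By Lemma \ref{lem:def}, every $\mathbf{q} \in \mathscr{K}_{\mathbf{p}}$ is determined by its Hecke-generic curve substructure $\mathbf{r}_{\mathbf{q}} \in \mathscr{K}_{\mathbf{r}}$, so it suffices to prove that $\mathscr{K}_{\mathbf{r}}$ is $\aleph_{0}$- and $\aleph_{1}$-categorical. The plan is to verify that $(\mathbf{r}, \mathrm{cl}_{\mathbf{r}})$ is a quasiminimal pregeometry structure and then invoke Theorem \ref{thm:quasi} together with Proposition \ref{prop:kirby}.

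Axioms (QM1)--(QM4) are to be checked directly. (QM1) follows from quantifier elimination of $\mathrm{Th}(\mathbf{r})$ together with the definition of $\mathrm{cl}_{\mathbf{t}}$, which is built purely from $\mathrm{acl}$ on the strongly minimal variety sort and the function $q$; (QM2) is built into $\mathscr{K}^{\infty}$; (QM3) follows because for a finite $A$, the field $F_{0}(q(A_{1}))^{\mathrm{alg}}$ is countable, hence contains only countably many points of $C$, and SF together with countability of $\Gamma$ bounds the $q$-preimage; (QM4) follows from Hecke-genericity of $C$, which guarantees that any generic point on $C$ over a countable closed base yields an irreducible fibre in $\psi_{\overline{g},(e)}^{-1}$, so that its type over the base is uniquely determined.

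The key axiom is (QM5), $\aleph_{0}$-homogeneity over $\mathrm{cl}_{\mathbf{r}}$-closed countable sets. I would verify it by directly adapting Propositions \ref{prop:main} and \ref{prop:main2} to the setting of $\mathbf{r}$. Given countable closed $C_{1}, C_{1}'$ with $\mathrm{tp}(C_{1}) = \mathrm{tp}(C_{1}')$, finite tuples $\overline{b}, \overline{b}'$ agreeing in type over them, and $a \in \mathrm{cl}_{\mathbf{r}}(C_{1}, \overline{b})$, one produces $a'$ by back-and-forth as in the proof of Proposition \ref{prop:main}: Hecke-genericity of $C$ lifts the Galois representations attached to $p(a)$ and $\overline{p(b)}$ into the inverse system $\overline{\Gamma^{V,i}}$ of the ambient Shimura variety, where FIC1 applies over $\emptyset$ and FIC2 applies over the coordinate field of $\widehat{C_{1}}$. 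The finite-index hypotheses then force the image to become surjective after adjoining finitely many $q_{\overline{g}}$ to the base, and Proposition \ref{prop:genrealisation} combined with Proposition \ref{prop:typedet} produces the required $a'$. The main obstacle is precisely this translation step: one must check carefully that Hecke-genericity of $C$ entails that the Galois image attached to the image tuple on $C$ agrees with the one attached to the same tuple viewed in $S$, so that the FIC hypotheses stated for the full variety genuinely control the type on the curve.

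Once quasiminimality is established, Theorem \ref{thm:quasi} gives exactly one structure in $\mathscr{K}_{\mathbf{r}}$ per cardinal dimension, and Proposition \ref{prop:kirby} upgrades this to $\aleph_{0}$- and $\aleph_{1}$-categoricity of $\mathscr{K}_{\mathbf{r}}$. Transferring back via Lemma \ref{lem:def} yields the same categoricity for $\mathscr{K}_{\mathbf{p}}$, as required.
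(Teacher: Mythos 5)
Your proposal follows essentially the same route as the paper: reduce to the Hecke-generic curve structure $\mathbf{r}$ via Lemma \ref{lem:def}, verify that $(\mathbf{r},\mathrm{cl}_{\mathbf{r}})$ is a quasiminimal pregeometry structure with (QM5) supplied by adapting Propositions \ref{prop:main} and \ref{prop:main2} (i.e.\ FIC1 and FIC2), and conclude with Proposition \ref{prop:kirby}. The only detail the paper makes explicit that you leave implicit is that a model of cardinality at most $\aleph_{1}$ has its field sort embeddable in $\mathbb{C}$, which is what lets FIC2 (stated for subfields of $\mathbb{C}$) apply.
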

\begin{proof}
Every field of characteristic $0$ and cardinality at most $2^{\aleph_{0}}$ can be embedded in $\mathbb{C}$. Given $\mathbf{q}\in\mathscr{K}_{\mathbf{p}}$ of cardinality at most $\aleph_{1}$, the corresponding structure $\mathbf{r}_{\mathbf{q}}$ will define a model in $\mathscr{K}_{\mathbf{r}}$. Following the proofs of Propositions \ref{prop:main} and \ref{prop:main2}, we get that  $\left(\mathbf{r},\mathrm{cl}_{\mathbf{r}}\right)$ is a quasiminimal pregeometry structure. The Corollary now follows from Proposition \ref{prop:kirby}.
\end{proof}

\begin{thm}[First Main Categoricity Result]
\label{thm:main1}
The Shimura variety $p:X^{+}\rightarrow S$ satisfies FIC1 and FIC2 if and only if $\mathscr{K}_{\mathbf{p}}$ is $\kappa$-categorical for all infinite cardinalities $\kappa$.
\end{thm}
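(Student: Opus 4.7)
The plan is to prove the two directions separately, using the Hecke-generic curve $C \subseteq S$ fixed in \textsection\ref{subsec:heckegen} to reduce categoricity of $\mathscr{K}_{\mathbf{p}}$ to categoricity of the substructure class $\mathscr{K}_{\mathbf{r}}$, and exploiting the quasiminimal framework for the forward direction and Keisler's theorem (via Corollary \ref{cor:fic}) for the backward direction.

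For the forward direction, I would assume FIC1 and FIC2 and show that $(\mathbf{t}, \mathrm{cl}_{\mathbf{t}})$ is a quasiminimal pregeometry structure for every $\mathbf{t} \in \mathscr{K}_{\mathbf{r}}$. Axioms (QM1), (QM2) and (QM3) follow from quantifier elimination of $\mathrm{Th}(\mathbf{r})$, the assumption that the field has infinite transcendence degree, and the Countable Closure Property of $\mathrm{acl}$ on the strongly minimal sort $C$ respectively. For (QM4), note that Hecke-genericity of $C$ guarantees that for a generic point $b \in C(F)$ the homomorphism $\mathrm{Aut}(\mathbb{C}/\overline{\mathbb{Q}}(b)) \to \Gamma/\Gamma_{\overline{g}}$ is surjective for every normal $\Gamma_{\overline{g}}$, so the same argument as in Proposition \ref{prop:main}, but over a countable closed subset, yields uniqueness of the generic type. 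Axiom (QM5) is exactly $\aleph_0$-homogeneity over closed (equivalently countable) submodels, which is Proposition \ref{prop:main2} combined with Proposition \ref{prop:main} for the empty-set case. Theorem \ref{thm:quasi} then gives that $\mathscr{K}_{\mathbf{r}}$ is $\kappa$-categorical for all uncountable $\kappa$, and Corollary \ref{cor:countable} handles $\aleph_0$ and $\aleph_1$.

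To transfer categoricity from $\mathscr{K}_{\mathbf{r}}$ to $\mathscr{K}_{\mathbf{p}}$, given two models $\mathbf{q}, \mathbf{q}' \in \mathscr{K}_{\mathbf{p}}$ of cardinality $\kappa$, the corresponding structures $\mathbf{r}_{\mathbf{q}}$ and $\mathbf{r}_{\mathbf{q}'}$ belong to $\mathscr{K}_{\mathbf{r}}$ and have the same cardinality (the field sort has the same transcendence degree, since $C \subseteq S$ is definable and the $D_C$ sorts are quotients by the same countable group action). By categoricity of $\mathscr{K}_{\mathbf{r}}$, pick an isomorphism $\mathbf{r}_{\mathbf{q}} \cong \mathbf{r}_{\mathbf{q}'}$. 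By Lemma \ref{lem:def}, $\mathbf{q} = \mathrm{dcl}(D_C \cup C(F))$ and similarly for $\mathbf{q}'$, so the isomorphism extends uniquely to an $\mathcal{L}$-isomorphism $\mathbf{q} \cong \mathbf{q}'$.

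For the backward direction, assume $\mathscr{K}_{\mathbf{p}}$ is categorical, in particular $\aleph_1$-categorical. FIC1 is then immediate from Corollary \ref{cor:fic} applied to tuples $(q(x_1), \ldots, q(x_m))$ of non-special points in distinct orbits, since the special closure on the domain side corresponds to the special closure $V$ on the variety side. For FIC2, fix an algebraically closed subfield $F \subseteq \mathbb{C}$ and non-special points $x_1, \ldots, x_m$ in distinct $G^{\mathrm{ad}}(\mathbb{Q})^+$-orbits with $q(x_i) \notin S(F)$ and $L := F(q(x_1), \ldots, q(x_m))$ of transcendence degree $1$ over $F$; by Proposition \ref{prop:min} we may find a submodel over which $F$ is the field sort, and the Keisler-style argument behind Corollary \ref{cor:fic}, run over this countable submodel rather than over $\emptyset$, shows that if the image of $\mathrm{Aut}(\mathbb{C}/L) \to \overline{\Gamma^{V,i}}$ did not have finite index, one could construct uncountably many $1$-types over this submodel, contradicting $\aleph_1$-categoricity.

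The main obstacle I anticipate is the careful verification that Hecke-genericity of the curve $C$ lets us bootstrap FIC1 and FIC2, which are phrased for tuples in $X^+$, into the uniqueness of generic type (QM4) for the curve substructure $\mathbf{r}$; a generic point on $C$ is Hodge-generic in $S$, but one must check that the relevant Galois representations attached to tuples containing $\tau \in X^+_C$ still have open image when restricted to the fibres lying above the curve, which is precisely where Hecke-genericity of $C$ is used via the lemma of \textsection\ref{subsec:heckegen}.
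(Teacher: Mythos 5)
Your overall strategy is the one the paper follows: necessity via the Keisler-type argument of Corollary \ref{cor:fic}, sufficiency via quasiminimality of the curve structure $\mathbf{r}$ built from a Hecke-generic curve (with FIC1 and FIC2 feeding into Propositions \ref{prop:main} and \ref{prop:main2} to give (QM5), and Hecke-genericity giving (QM4)), and the transfer back to $\mathscr{K}_{\mathbf{p}}$ via Lemma \ref{lem:def}. However, there is a genuine gap at the pivotal step of the sufficiency direction: you write that Theorem \ref{thm:quasi} ``gives that $\mathscr{K}_{\mathbf{r}}$ is $\kappa$-categorical for all uncountable $\kappa$''. Theorem \ref{thm:quasi} says nothing about $\mathscr{K}_{\mathbf{r}}$, the class of models of $\mathrm{Th}^{\infty}_{\mathrm{SF}}(\mathbf{r})$; it gives categoricity only of the quasiminimal class $\mathscr{K}(\mathbf{r})$ generated by the single structure $\mathbf{r}$ (closed substructures of $\mathbf{r}$, isomorphic copies, and unions of chains of closed embeddings). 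The content of the paper's proof is precisely the missing identification $\mathscr{K}_{\mathbf{r}}=\mathscr{K}(\mathbf{r})$: one first uses Corollary \ref{cor:countable} to see that $\mathscr{K}_{\mathbf{r}}$ has a unique countable model, necessarily isomorphic to the countable infinite-dimensional model $\mathbf{r}_{0}\in\mathscr{K}(\mathbf{r})$; then, for an arbitrary $\mathbf{t}\in\mathscr{K}_{\mathbf{r}}$ with $\mathrm{cl}_{\mathbf{t}}$-basis $B$, one writes $\mathbf{t}=\bigcup\left\{\mathrm{cl}_{\mathbf{t}}(B') : B'\subseteq B,\ |B'|=\aleph_{0}\right\}$, notes each $\mathrm{cl}_{\mathbf{t}}(B')$ is a model of $\mathrm{Th}_{\mathrm{SF}}(\mathbf{r})$ isomorphic to $\mathbf{r}_{0}$, and uses closure of $\mathscr{K}(\mathbf{r})$ under unions of directed systems of closed embeddings to conclude $\mathbf{t}\in\mathscr{K}(\mathbf{r})$. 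Without this (or an equivalent) argument, categoricity of $\mathscr{K}_{\mathbf{r}}$ does not follow from what you have established.

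Your alternative tack of verifying (QM1)--(QM5) for \emph{every} $\mathbf{t}\in\mathscr{K}_{\mathbf{r}}$ would not repair this as stated: first, Theorem \ref{thm:quasi} as cited still only classifies structures inside a class of the form $\mathscr{K}(M)$, so you would additionally need the (different) statement that any two quasiminimal pregeometry structures of the same dimension with matching countable closures are isomorphic; second, the verification itself is not routine for large models, since Propositions \ref{prop:main} and \ref{prop:main2} and the FIC conditions are Galois-theoretic statements over $\mathbb{C}$, and a model of $\mathscr{K}_{\mathbf{r}}$ whose field has cardinality greater than $2^{\aleph_{0}}$ does not embed into the standard structure --- this is exactly why the paper only checks quasiminimality of $\mathbf{r}$ itself (and of structures of size at most $\aleph_{1}$, via Proposition \ref{prop:kirby}, for the countable and $\aleph_{1}$ cases) and handles arbitrary models by the abstract union-of-chains argument. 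The necessity direction of your proposal is fine and in fact slightly more explicit than the paper about running the Keisler argument over a countable submodel to get FIC2.
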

\begin{proof}
The necessary conditions are given by Corollary \ref{cor:fic}. So now we focus on the sufficient conditions. As we mentioned in Corollary \ref{cor:countable}, $\left(\mathbf{r}, \mathrm{cl}_{\mathbf{r}}\right)$ is a quasiminimal pregeometry structure. Using the notation from \textsection \ref{subsec:quasi}, the models of $\mathscr{K}(\mathbf{r})$ are determined (up to isomorphism) by their dimension by Theorem \ref{thm:quasi}. As we saw at the beginning of this section, the dimension of a model of $\mathscr{K}(\mathbf{r})$ is the transcendence degree (over $\mathbb{Q}$) of the field  corresponding to the variety of that model. As algebraically closed fields are isomorphic if and only if they have the same characteristic, cardinality and transcendence degree over the prime field, then there is a unique model $\mathbf{r}_{0}\in \mathscr{K}(\mathbf{r})$ of cardinality $\aleph_{0}$ and infinite dimension.

In order to prove categoricity of $\mathscr{K}_{\mathbf{r}}$, we need to show that $\mathscr{K}_{\mathbf{r}}=\mathscr{K}(\mathbf{r})$. It is clear that $\mathscr{K}(\mathbf{r})\subseteq\mathscr{K}_{\mathbf{r}}$, since $\mathbf{r}\in\mathscr{K}_{\mathbf{r}}$. For the converse, first note that the class $\mathscr{K}(\mathbf{r})$ is quasiminimal under our hypothesis. By Corollary \ref{cor:countable}, $\mathscr{K}_{\mathbf{r}}$ has a unique model $\mathbf{t}_{0}$ of cardinality $\aleph_{0}$. But as $\mathscr{K}(\mathbf{r})\subseteq\mathscr{K}_{\mathbf{r}}$, then $\mathbf{t}_{0}$ is isomorphic to $\mathbf{r}_{0}$, and so $\mathscr{K}(\mathbf{t_{0}})=\mathscr{K}(\mathbf{r_{0}})$. Let $\mathbf{t}\in\mathscr{K}_{\mathbf{r}}$, and let $B$ be a $\mathrm{cl}_{\mathbf{t}}$-basis of $\mathbf{t}$. Then $\mathbf{t} = \bigcup\left\{\mathrm{cl}_{\mathbf{t}}(B') : B'\subseteq B, |B'|=\aleph_{0}\right\}$. Also, for every $B'\subseteq B$ countable, we have that $\mathrm{cl}_{\mathbf{t}}(B')\in\mathscr{K}_{\mathbf{r}}$ is isomorphic to  $\mathbf{t}_{0}$. As $ \mathscr{K}(\mathbf{t_{0}})$ is closed under unions of chains of closed embeddings (all embeddings in $\mathscr{K}_{\mathbf{r}}$ are closed with respect to the pregeometries $\mathrm{cl}_{\mathbf{t}}$), then it is also closed under taking unions of directed systems, which shows that $\mathbf{t}\in \mathscr{K}(\mathbf{t_{0}})$. Therefore $\mathscr{K}_{\mathbf{r}} = \mathscr{K}(\mathbf{t_{0}})=\mathscr{K}(\mathbf{r_{0}}) =\mathscr{K}(\mathbf{r})$.  
\end{proof}

\begin{remark}
We have worked in a theory where we assume that the field has infinite transcendence degree. This is so that there is only one countable model. But the theory of quasiminimal classes also says that countable models will be determined by their dimension, i.e. their transcendence degree over $\mathbb{Q}$. So, assuming FIC1 and FIC2, two countable models of $\mathscr{K}_{\mathbf{p}}$ with respective fields $F_{1}$ and $F_{2}$ will be isomorphic if and only if $\mathrm{tr.deg.}_{\mathbb{Q}}(F_{1})=\mathrm{tr.deg.}_{\mathbb{Q}}(F_{2})$.
\end{remark}

Categoricity of Shimura curves was already proven in the main theorem of \cite{daw-harris}, although in a slightly different language. However, \cite[Lemma 3.4]{ribet} implies that the results of \cite[\textsection 5]{daw-harris} are enough to obtain categoricity in the language we have used here.

\section{Classification of Shimura Covers}
\label{sec:classification}
In this section we will be focused on classifying the minimal models of $\widetilde{T}(\widetilde{\mathbf{p}})$. We will use the techniques of independent systems and local isolation as described in \cite[\textsection 3]{bays-pillay} to classify minimal models. As explained in that paper, these methods provide a more robust approach to the categoricity question than quasiminimality. Intuitively, the method is the following: let $M_{0} = S\left(\overline{\mathbb{Q}}\right)$ be the prime model of $T$, and suppose we know that this is part of a model $\left<\widetilde{M}_{0}, M_{0}, \left\{q_{\overline{g}}\right\}_{\overline{g}}\right>\models \widetilde{T}(\widetilde{\mathbf{p}})$. If $F\models\mathrm{ACF}_{0}$, then $M\prec S(F)$, and we want to show that there is exactly one way in which to extend $q_{0}:\widetilde{M}_{0}\rightarrow M_{0}$ to a Shimura cover $q:\widetilde{M}\rightarrow S(F)$ such that the $q$-fibre over every point $z\in S(F)\setminus M_{0}$ is standard. This will be done by induction on the transcendence basis of $F$ over $\mathbb{Q}$ via independent systems.

Fix a monster model $\widetilde{\mathfrak{q}}:=\left<\widetilde{\mathbb{M}},\mathbb{M}, \left\{\mathfrak{q}_{\overline{g}}\right\}_{\overline{g}}\right>$ of $\widetilde{T}(\widetilde{\mathbf{p}})$. In particular, $\mathbb{M}$ is a monster model of $T$.

\subsection{Preliminaries}
Given $\widetilde{\mathbf{q}}_{1}:=\left<\widetilde{M}_{1},M_{1},\left\{q_{\overline{g}}\right\}_{\overline{g}}\right>$ and $\widetilde{\mathbf{q}}_{2}:=\left<\widetilde{M}_{2},M_{2},\left\{q_{\overline{g}}'\right\}_{\overline{g}}\right>$, two models of $\widetilde{T}(\widehat{\mathbf{p}})$, we write $\widetilde{\mathbf{q}}_{1}\prec^{\ast}\widetilde{\mathbf{q}}_{2}$ to mean that $\widetilde{\mathbf{q}}_{1}\prec\widetilde{\mathbf{q}}_{2}$ and that for every $z\in M_{1}$, $q'^{-1}(z)=q^{-1}(z)$. Such elementary extensions are called \emph{fibre-preserving}.\index{fibre-preserving extension}

\begin{remark}
\label{rem:fp}
Suppose $\widetilde{\mathbf{q}}\prec\widetilde{\mathbf{q}}'$. Then $\widetilde{\mathbf{q}}\prec^{\ast}\widetilde{\mathbf{q}}'$ if and only if  in $\widetilde{M}_{2}$ we have that $\widetilde{M}_{1}=q'^{-1}(M_{1})$.
\end{remark}

\begin{lem}
\label{lem:3.3}
Let $\widetilde{\mathbf{q}}\models\widetilde{T}(\widehat{\mathbf{p}})$ and $M'\prec M$. Let $\widetilde{M}':=q^{-1}(M')$. Then by taking the structure on $\widetilde{M}'$ induced by $\widetilde{M}$ and letting $q_{\overline{g}}'$ denote the restriction of $q_{\overline{g}}$ to $\widetilde{M}'$, we get that $\widetilde{\mathbf{q}}':=\left<\widetilde{M}',M',\left\{q_{\overline{g}}'\right\}_{\overline{g}}\right>\prec^{\ast}\widetilde{\mathbf{q}}$.
\end{lem}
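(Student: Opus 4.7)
The fibre-preserving condition is automatic from the construction: since $\widetilde{M}' = q^{-1}(M')$, we have $(q')^{-1}(z) = q^{-1}(z)$ for every $z \in M'$, which is the criterion of Remark \ref{rem:fp}. The plan for proving $\widetilde{\mathbf{q}}' \prec \widetilde{\mathbf{q}}$ is to equip $\widetilde{M}'$ with the $\widetilde{\mathcal{L}}$-structure induced by $\widetilde{\mathbf{q}}$, namely $\widetilde{M}'_{V,i} := \widetilde{M}_{V,i} \cap (\widetilde{M}')^m$ and $q'_{\overline{g}}$ the restriction of $q_{\overline{g}}$, then to show that $\widetilde{\mathbf{q}}'$ is a well-defined $\widetilde{\mathcal{L}}$-substructure of $\widetilde{\mathbf{q}}$ and is itself a model of $\widetilde{T}(\widehat{\mathbf{p}})$. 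Granted this, Proposition \ref{prop:qe2} applies: since $\widetilde{T}(\widehat{\mathbf{p}})$ is complete with quantifier elimination, every formula is equivalent modulo the theory to a quantifier-free one, and quantifier-free formulas are preserved in both directions between a substructure and the ambient structure, forcing the embedding to be elementary.

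The first technical step is to verify that each $q'_{\overline{g}}$ actually maps $\widetilde{M}'$ into $(M')^n$. For $x \in \widetilde{M}'$ and $g \in G^{\mathrm{ad}}(\mathbb{Q})^+$, the pair $(q(x), q(gx))$ lies on the Hecke subvariety $Z_{(e,g)}$, whose first-coordinate projection $Z_{(e,g)} \to S$ is a finite morphism defined over $F_0 \subseteq M'$; hence $q(gx)$ is algebraic over $q(x) \in M'$. Since $M' \prec M$ are models of $T$, which is bi-interpretable with $\mathrm{ACF}_0$, the underlying field of $M'$ is algebraically closed, so $q(gx) \in M'$, and iterating across the tuple gives $q_{\overline{g}}(\widetilde{M}') \subseteq (M')^n$.

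It then remains to verify axioms (A1)--(A6) for $\widetilde{\mathbf{q}}'$. Axiom (A1) is given; (A3) and the universal content of (A4), (A5) pass to substructures; (A6) is automatic since the singleton $\widetilde{M}_{z,i}$ attached to a special $z \in M'$ consists of a point $x$ with $q(x) = z \in M'$, so $x \in \widetilde{M}'$. The key case is the existential content of (A2): for $\overline{z} \in Z_{\overline{g}}^{V,i}(M')$, (A2) in $\widetilde{\mathbf{q}}$ supplies $x \in \widetilde{M}_{V,i}$ with $q_{\overline{g}}(x) = \overline{z}$, and then (A3) yields $q(x) = \psi_{\overline{g},(e)}^{V,i}(\overline{z}) \in (M')^m$ because $\psi_{\overline{g},(e)}^{V,i}$ is $F_0$-rational; thus $x \in \widetilde{M}'_{V,i}$. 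The existential parts of (A4) and (A5) are handled the same way, producing witnesses via (A2) whose $q$-images land in $M'$. The verification of these existential axioms is the main obstacle, and its resolution together with Proposition \ref{prop:qe2} closes the argument.
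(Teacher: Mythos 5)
Your proposal is correct and follows the paper's proof exactly: fibre-preservation is automatic from $\widetilde{M}'=q^{-1}(M')$ via Remark \ref{rem:fp}, and by quantifier elimination (Proposition \ref{prop:qe2}) elementarity reduces to checking that $\widetilde{\mathbf{q}}'$ satisfies axioms (A1)--(A6). The paper simply declares that verification ``immediate,'' so your extra work --- in particular the algebraicity argument showing that witnesses supplied by (A2) in $\widetilde{\mathbf{q}}$ already have $q$-images in the algebraically closed $M'$ --- is added detail rather than a divergence from the paper's route.
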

\begin{proof}
By Remark \ref{rem:fp} and quantifier elimination, it suffices to show that $\widetilde{\mathbf{q}}\models\widetilde{T}(\widetilde{\mathbf{p}})$. But this is immediate as it satisfies axioms (A1)--(A6).
\end{proof}

\begin{lem}
\label{lem:3.6b}
Let $\widetilde{\mathbf{q}},\widetilde{\mathbf{q}}'\models\widetilde{T}(\widetilde{\mathbf{p}})$. Suppose we have a model $M_{1}\models T$ contained in $\mathbb{M}$ such that $M\prec M_{1}$. Let $A:=\widetilde{M}\cup M_{1}$, and suppose that $\widetilde{\mathbf{q}}'$ is $\ell$-atomic over $A$. Then $\widetilde{\mathbf{q}}\prec^{\ast}\widetilde{\mathbf{q}}'$ and $M' = M_{1}$. 
\end{lem}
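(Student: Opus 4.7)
The plan is to deduce both $M' = M_1$ and fibre-preservation by two applications of Lemma \ref{lem:ell}(b) (Bays--Pillay's local isolation lemma), working inside the monster $\widetilde{\mathfrak{q}}$. First I would observe that $\widetilde{\mathbf{q}} \prec \widetilde{\mathbf{q}}'$: the inclusion $\widetilde{M} \subseteq A \subseteq \widetilde{M}'$ makes $\widetilde{\mathbf{q}}$ an $\widetilde{\mathcal{L}}$-substructure of $\widetilde{\mathbf{q}}'$, and quantifier elimination (Proposition \ref{prop:qe2}) upgrades this to elementary substructure. Similarly the inclusion $M_1 \subseteq A \subseteq \widetilde{\mathbf{q}}'$ gives $M_1 \subseteq M'$ for free. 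What remains is (i) the reverse inclusion $M' \subseteq M_1$ and (ii) the fibre-preservation $\widetilde{M} = q'^{-1}(M)$ described in Remark \ref{rem:fp}.

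For (i), I would pick an elementary submodel $\widetilde{\mathbf{M}}_1 \prec \widetilde{\mathfrak{q}}$ whose variety sort is exactly $M_1$; this is furnished by Lemma \ref{lem:3.3} applied to $M_1 \prec \mathbb{M}$ (taking cover sort $\mathfrak{q}^{-1}(M_1)$). Given $b \in M'$, the hypothesis that $\widetilde{\mathbf{q}}'$ is $\ell$-atomic over $A$ means $b$ is $\ell$-isolated over $A$. Apply Lemma \ref{lem:ell}(b) to the formula $\phi(x) := (x = x)$ in the variety sort. Then $\phi(\widetilde{\mathbf{M}}_1) = M_1 \subseteq A$ and $\phi(\widetilde{\mathfrak{q}}) = \mathbb{M}$, so the condition to check is $\mathrm{dcl}^{\mathrm{eq}}(A) \cap \mathrm{dcl}^{\mathrm{eq}}(\mathbb{M}) \subseteq \widetilde{\mathbf{M}}_1^{\mathrm{eq}}$. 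By quantifier elimination any variety-sort element definable from $A = \widetilde{M} \cup M_1$ is $T$-definable from $q(\widetilde{M}) \cup M_1 \subseteq M_1$; since $M_1 \prec \mathbb{M}$ in $T$ (which is stable), such an element is in $M_1^{\mathrm{eq}}$. The lemma then yields $b \in \phi(\widetilde{\mathbf{M}}_1) = M_1$.

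For (ii), let $b \in \widetilde{M}'$ with $q'(b) = z \in M$, and set $B := q^{-1}(z) \subseteq \widetilde{M} \subseteq A$. Again $b$ is $\ell$-isolated over $A$. This time apply Lemma \ref{lem:ell}(b) with elementary submodel $\widetilde{\mathbf{q}} \prec \widetilde{\mathfrak{q}}$ and with $\phi(x) := (q(x) = z)$, a formula over $\{z\} \subseteq M$. Then $\phi(\widetilde{\mathbf{q}}) = B \subseteq A$ and $\models \phi(b)$. For the dcl condition, note that the fibre $\phi(\widetilde{\mathfrak{q}}) = \mathfrak{q}^{-1}(z)$ is a single $\Gamma$-orbit of $B$ (fibres of $\mathfrak{q}$ above $z$ are $\Gamma$-orbits, as inherited from the construction of Shimura covers), so $\mathrm{dcl}^{\mathrm{eq}}(\phi(\widetilde{\mathfrak{q}})) = \mathrm{dcl}^{\mathrm{eq}}(B) \subseteq \widetilde{\mathbf{q}}^{\mathrm{eq}}$, which certainly gives $\mathrm{dcl}^{\mathrm{eq}}(A) \cap \mathrm{dcl}^{\mathrm{eq}}(\phi(\widetilde{\mathfrak{q}})) \subseteq \widetilde{\mathbf{q}}^{\mathrm{eq}}$. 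The lemma delivers $b \in B \subseteq \widetilde{M}$, which is exactly what is needed by Remark \ref{rem:fp}. The main obstacle is verifying these two $\mathrm{dcl}^{\mathrm{eq}}$ conditions cleanly: both rely on quantifier elimination plus the specific fibre structure of Shimura covers, and one must be careful that imaginaries coming from the cover sort do not secretly produce new variety-sort elements outside $M_1$ in the first application, or outside $\widetilde{\mathbf{q}}$ in the second.
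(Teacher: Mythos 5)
Your overall architecture — two applications of Lemma \ref{lem:ell}(b), one to get $M'\subseteq M_{1}$ and one to get fibre preservation — is the same as the paper's, and your step (i) is essentially the paper's argument for $M'=M_{1}$ (with $\phi(x):=(x=x)$ in place of $\exists v\,(q(v)=w)$, and Lemma \ref{lem:3.3} applied to the monster in place of Corollary \ref{cor:promodels}); it is fine once the $\mathrm{dcl}^{\mathrm{eq}}$ bookkeeping is done honestly, i.e.\ for imaginaries $\beta\in\mathrm{dcl}^{\mathrm{eq}}(\overline{z})$ with $\overline{z}$ a variety-sort tuple of $\mathbb{M}$ \emph{not} necessarily inside $M_{1}$: the point, as in the paper, is that by quantifier elimination cover-sort parameters from $\widetilde{M}$ enter $\mathrm{tp}(\overline{z}/A)$ only through $q_{\overline{g}}(\widetilde{M})\subseteq M\subseteq M_{1}$, so $\mathrm{tp}(\overline{z}/A)$ is determined by $\mathrm{tp}(\overline{z}/M_{1})$, and a definability argument then puts $\beta$ into the $\mathrm{eq}$ of the chosen submodel over $M_{1}$.

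The genuine gap is in step (ii). You verify the hypothesis $\mathrm{dcl}^{\mathrm{eq}}(A)\cap\mathrm{dcl}^{\mathrm{eq}}(\phi(\widetilde{\mathbb{M}}))\subseteq\widetilde{\mathbf{q}}^{\mathrm{eq}}$ by claiming the stronger containment $\mathrm{dcl}^{\mathrm{eq}}(\phi(\widetilde{\mathbb{M}}))=\mathrm{dcl}^{\mathrm{eq}}(q^{-1}(z))\subseteq\widetilde{\mathbf{q}}^{\mathrm{eq}}$, on the grounds that the monster's fibre over $z$ is a single $\Gamma$-orbit. That is exactly the statement that $\widetilde{\mathfrak{q}}$ satisfies SF, and it is false: SF is an $\mathcal{L}_{\omega_{1},\omega}$ condition precisely because the partial types exhibited in \textsection\ref{sec:sf} (e.g.\ $\left\{q(v)=q(x)\wedge v\in D_{V,i}\wedge v\neq\gamma x\right\}_{\gamma\in\Gamma}$) are finitely satisfiable, so the saturated model $\widetilde{\mathfrak{q}}$ has fibres over points of $M$ containing many new points (new $\Gamma$-orbits, and non-standard points), and any such new point lies in $\mathrm{dcl}^{\mathrm{eq}}(\phi(\widetilde{\mathbb{M}}))$ but not in $\widetilde{\mathbf{q}}^{\mathrm{eq}}$. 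Indeed, if monster fibres were single $\Gamma$-orbits, fibre preservation would hold for every elementary extension and the $\ell$-atomicity hypothesis would be superfluous; verifying the displayed intersection condition is the real content of the lemma. The repair is the paper's argument: given $\beta\in\mathrm{dcl}^{\mathrm{eq}}(A)\cap\mathrm{dcl}^{\mathrm{eq}}(\phi(\widetilde{\mathbb{M}}))$, choose a finite tuple $\overline{x}$ from the (large) fibre with $\beta\in\mathrm{dcl}^{\mathrm{eq}}(\overline{x})$; since each $q_{\overline{g}}(x_{i})$ is algebraic over $z\in M$ and hence lies in $M$, Corollary \ref{cor:typedet2} shows $\mathrm{tp}(\overline{x}/A)$ is determined by $\mathrm{tp}(\overline{x}/\widetilde{M})$, and then the $\exists!u$ argument with defining formulas $\theta_{1}(u,\overline{x})$, $\theta_{2}(u,\overline{a})$, together with completeness of $\widetilde{T}(\widetilde{\mathbf{p}})$ and $\widetilde{\mathbf{q}}\prec\widetilde{\mathfrak{q}}$, places $\beta$ in $\widetilde{\mathbf{q}}^{\mathrm{eq}}$; only after this does Lemma \ref{lem:ell}(b) return $b\in q^{-1}(z)\subseteq\widetilde{M}$, as required by Remark \ref{rem:fp}.
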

\begin{proof}
First we show that fibres are preserved. For this we use part (b) of Lemma \ref{lem:ell}. Fix $z\in M$ and let $\phi(v)$ be the formula $q(v)=z$. Let $\beta\in\mathrm{dcl}^{\mathrm{eq}}(A)\cap\mathrm{dcl}^{\mathrm{eq}}\left(\phi\left(\widetilde{\mathbb{M}}\right)\right)$ and choose a finite tuple $\overline{x}$ of $\phi\left(\widetilde{\mathbb{M}}\right)$ such that $\beta\in\mathrm{dcl}^{\mathrm{eq}}(\overline{x})$. Observe that the components of $\overline{x}$ are all in the fibre above $z$, and as $z\in M$, we get by Corollary \ref{cor:typedet2} that $\mathrm{tp}(\overline{x}/A)$ is determined by $\mathrm{tp}\left(\overline{x}/\widetilde{M}\right)$. Let $\theta_{1}(u,\overline{x})$ be a formula over $\overline{x}$ defining $\beta$ and let $\theta_{2}(u,\overline{a})$ be a formula over $A$ defining $\beta$. So 
\begin{equation}
\label{eq:beta}
    \exists ! u\left(\theta_{1}\left(u,\overline{v}\right)\wedge\theta_{2}\left(u,\overline{a}\right)\right)\in\mathrm{tp}(\overline{x}/A),
\end{equation}
where $\mathrm{tp}\left(\overline{x}/A\right)$ is now taken over $\widetilde{\mathcal{L}}^{\mathrm{eq}}$. Then there is an $\widetilde{\mathcal{L}}^{\mathrm{eq}}$-formula $\psi(\overline{v})\in\mathrm{tp}\left(\overline{x}/\widetilde{M}\right)$ which implies (\ref{eq:beta}). As $\widetilde{T}(\widetilde{\mathbf{p}})$ is complete and $\psi(\overline{v})$ is realised in $\widetilde{\mathfrak{q}}$, then it is realised in $\widetilde{\mathbf{q}}$. We conclude that $\beta\in \widetilde{\mathbf{q}}^{\mathrm{eq}}$. As $\widetilde{\mathbf{q}}'$ is $\ell$-atomic over $A$, then for any $x\in\widetilde{M}'$ such that $q'(x)=z$, $\mathrm{tp}(x/A)$ is $\ell$-isolated, and so by part b) of Lemma \ref{lem:ell} we conclude that $x\in\widetilde{M}$, thus finishing the proof that fibres are preserved. 

Now we prove that $M' = M_{1}$. Let $\psi(w)$ be the formula $\exists v (q(v) = w)$. Let $\beta\in\mathrm{dcl}^{\mathrm{eq}}(A)\cap\mathrm{dcl}^{\mathrm{eq}}(\psi(\mathbb{M}))$ and let $\overline{z}$ be a finite tuple of $\psi(\mathbb{M})$ such that $\beta\in\mathrm{dcl}^{\mathrm{eq}}(\overline{z})$. As before, by quantifier elimination $\mathrm{tp}\left(\overline{z}/A\right)$ is determined by $\mathrm{tp}\left(\overline{z}/M_{1}\right)$. Set $\widetilde{M}_{1}=\widetilde{S}(F)$, where $M_{1} = S(F)$. We know from Corollary \ref{cor:promodels} that the natural maps $q^{1}_{\overline{g}}:\widetilde{M}_{1}\rightarrow M_{1}^{n}$ define an elementary substructure $\widetilde{\mathbf{q}}_{1}:=\left<\widetilde{M}_{1}, M_{1}, \left\{q^{1}_{\overline{g}}\right\}_{\overline{g}}\right>$ of $\widetilde{\mathfrak{q}}$, so $\mathrm{tp}\left(\overline{z}/A\right)$ is determined by $\mathrm{tp}\left(\overline{z}/\widetilde{M}_{1}\right)$, and therefore $\beta\in\widetilde{\mathbf{q}}_{1}^{\mathrm{eq}}$ just as before. By $\ell$-atomicity and part (b) of Lemma \ref{lem:ell}, we conclude that if $z\in M'$ satisfies $\psi(z)$, then $z\in M_{1}$. Therefore $M_{1} = M'$.
\end{proof}

\begin{lem}
\label{lem:3.7}
Suppose $\widetilde{\mathbf{q}}_{1}$, $\widetilde{\mathbf{q}}_{2}$ and $\widetilde{\mathbf{q}}_{3}$ are elementary submodels of $\widetilde{\mathfrak{q}}$, with $\widetilde{\mathbf{q}}_{1}\prec^{\ast}\widetilde{\mathbf{q}}_{2}$, $\widetilde{\mathbf{q}}_{1}\prec^{\ast}\widetilde{\mathbf{q}}_{3}$, and $\widetilde{\mathbf{q}}_{2}\forkindep[\widetilde{\mathbf{q}}_{1}]\widetilde{\mathbf{q}}_{3}$. Suppose $\widetilde{\mathbf{q}}_{4}$ be an $\ell$-atomic model over $\widetilde{\mathbf{q}}_{2}\cup\widetilde{\mathbf{q}}_{3}$. Then $\widetilde{\mathbf{q}}_{i}\prec^{\ast}\widetilde{\mathbf{q}}_{4}$, for $i=1,2,3$.  
\end{lem}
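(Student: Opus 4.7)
The plan is to establish $\widetilde{\mathbf{q}}_i \prec^* \widetilde{\mathbf{q}}_4$ for each $i$. The case $i=3$ mirrors $i=2$ by symmetry of the hypotheses, and once $\widetilde{\mathbf{q}}_2 \prec^* \widetilde{\mathbf{q}}_4$ is in hand, $\widetilde{\mathbf{q}}_1 \prec^* \widetilde{\mathbf{q}}_4$ follows by transitivity of $\prec^*$ along $\widetilde{\mathbf{q}}_1 \prec^* \widetilde{\mathbf{q}}_2 \prec^* \widetilde{\mathbf{q}}_4$; this transitivity is immediate from Remark~\ref{rem:fp}, since for $z \in M_1 \subseteq M_2$ one has $\mathfrak{q}_4^{-1}(z) = q_2^{-1}(z) = q_1^{-1}(z)$. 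So the substantive task is to show $\widetilde{\mathbf{q}}_2 \prec^* \widetilde{\mathbf{q}}_4$.

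My strategy is to adapt the endgame of Lemma~\ref{lem:3.6b} by applying Lemma~\ref{lem:ell}(b) with $A := \widetilde{M}_2 \cup \widetilde{M}_3$ and, for an arbitrary $z \in M_2$, the formula $\phi(v) := (q(v) = z)$. The inclusion $\phi(\widetilde{\mathbf{q}}_2) = q_2^{-1}(z) \subseteq \widetilde{M}_2 \subseteq A$ is automatic, so it suffices to verify
\begin{equation*}
    \mathrm{dcl}^{\mathrm{eq}}(A) \cap \mathrm{dcl}^{\mathrm{eq}}\bigl(\phi(\widetilde{\mathbb{M}})\bigr) \subseteq \widetilde{\mathbf{q}}_2^{\mathrm{eq}}.
\end{equation*}
Granted this, every $x \in \widetilde{M}_4$ with $\mathfrak{q}_4(x) = z$ has $\ell$-isolated type over $A$ by hypothesis, so Lemma~\ref{lem:ell}(b) places $x$ in $q_2^{-1}(z) \subseteq \widetilde{M}_2$, yielding the desired fibre preservation.

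To check the $\mathrm{dcl}^{\mathrm{eq}}$-inclusion, I would take $\beta$ in the intersection, pick a finite tuple $\overline{x}$ from $\phi(\widetilde{\mathbb{M}})$ with $\beta \in \mathrm{dcl}^{\mathrm{eq}}(\overline{x})$, and fix formulas $\theta_1(u, \overline{x})$ and $\theta_2(u, \overline{a})$ defining $\beta$ over $\overline{x}$ and over $A$ respectively; the aim is to produce a formula $\psi(\overline{v}) \in \mathrm{tp}(\overline{x}/\widetilde{M}_2)$ implying $\exists! u\bigl(\theta_1(u, \overline{v}) \wedge \theta_2(u, \overline{a})\bigr)$. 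As in Lemma~\ref{lem:3.6b}, completeness of $\widetilde{T}(\widetilde{\mathbf{p}})$ and $\widetilde{\mathbf{q}}_2 \prec \widetilde{\mathfrak{q}}$ will then realise $\psi$ in $\widetilde{\mathbf{q}}_2$, and the corresponding witness for the existential will be $\beta$, placing $\beta$ in $\widetilde{\mathbf{q}}_2^{\mathrm{eq}}$.

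The main obstacle, and where the hypotheses of the lemma are genuinely used, is in showing that $\mathrm{tp}(\overline{x}/A)$ is controlled by $\mathrm{tp}(\overline{x}/\widetilde{M}_2)$. By Corollary~\ref{cor:typedet2}, $\mathrm{tp}(\overline{x}/A)$ is described by the family $\bigcup_{\overline{g}} \mathrm{qftp}(q_{\overline{g}}(\overline{x})/\widehat{A})$ together with $\mathrm{sploc}(\overline{x}/A)$. For the algebraic half, since the coordinates of $\overline{x}$ all lie over $z \in M_2$ and each $\psi_{\overline{g},(e)}^{V,i}$ has finite fibres, the coordinates of $q_{\overline{g}}(\overline{x})$ are algebraic over $z$ and hence lie in the algebraically closed field $M_2 = \widehat{\widetilde{M}_2}$, so this contribution reduces to a type over $\widehat{\widetilde{M}_2}$. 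The delicate half is the special-locus contribution, where $\mathrm{sploc}(\overline{x}/A)$ can \emph{a priori} be strictly finer than $\mathrm{sploc}(\overline{x}/\widetilde{M}_2)$ if $\overline{x}$ involves coordinates from $\widetilde{M}_3 \setminus \widetilde{M}_2$; to rule this out I would invoke the independence $\widetilde{\mathbf{q}}_2 \forkindep_{\widetilde{\mathbf{q}}_1} \widetilde{\mathbf{q}}_3$, stationarity of types over models in the stable theory $\widetilde{T}(\widetilde{\mathbf{p}})$ (Proposition~\ref{prop:superstable}), and the forking description in Proposition~\ref{prop:forking}, to conclude that $\overline{x} \forkindep_{\widetilde{M}_2} \widetilde{M}_3$, so that $\mathrm{tp}(\overline{x}/A)$ is the unique non-forking extension of $\mathrm{tp}(\overline{x}/\widetilde{M}_2)$ and every formula in it is implied by one over $\widetilde{M}_2$. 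This special-locus bookkeeping, exploiting the independent amalgamation of $\widetilde{M}_2$ and $\widetilde{M}_3$ over $\widetilde{M}_1$, is the essential technical step.
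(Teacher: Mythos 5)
Your reduction (do $i=2$, get $i=3$ by symmetry of $\forkindep$, and $i=1$ by transitivity of $\prec^{\ast}$) is fine, but the central step does not work as proposed. To apply Lemma \ref{lem:ell}(b) with $A=\widetilde{M}_{2}\cup\widetilde{M}_{3}$ you must verify the $\mathrm{dcl}^{\mathrm{eq}}$-condition for \emph{arbitrary} finite tuples $\overline{x}$ from the monster fibre $\phi(\widetilde{\mathbb{M}})$ over $z$, and your route to this is the claim $\overline{x}\forkindep[\widetilde{M}_{2}]\widetilde{M}_{3}$. That claim is a non sequitur: the hypothesis $\widetilde{\mathbf{q}}_{2}\forkindep[\widetilde{\mathbf{q}}_{1}]\widetilde{\mathbf{q}}_{3}$ only constrains tuples \emph{from} $\widetilde{M}_{2}$ against $\widetilde{M}_{3}$, whereas $\overline{x}$ is an arbitrary monster tuple lying over $z$, not an element of $\widetilde{M}_{2}$ or even of $\widetilde{M}_{4}$. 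The monster fibre over $z\in M_{2}$ can contain points that are specially correlated with elements $y\in\widetilde{M}_{3}\setminus\widetilde{M}_{2}$: whenever $\mathrm{spcl}(z,q(y))$ is a proper non-product special subvariety with positive-dimensional fibres (already in $S=\mathcal{A}_{1}\times\mathcal{A}_{1}$, take $W=\{u_{1}=v_{1}\}$ and $q(y)=(z_{1},w_{2})$ with $z_{1}\in M_{1}$, $w_{2}\in M_{3}\setminus M_{1}$), saturation produces $x$ over $z$ with $(x,y)\in D_{W,i}$, and nothing in the hypotheses forces the class $D_{W,i}(y)$ to be represented by a parameter in $\widetilde{M}_{2}$ (the independence hypothesis is compatible with $y$ carrying ``partial fibre data'' not realised in $\widetilde{M}_{2}$, since $\widetilde{\mathbf{q}}_{3}$ need not be atomic or standard over $\widetilde{\mathbf{q}}_{1}$). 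Then $\mathrm{sploc}(\overline{x}/A)$ is not $\widetilde{M}_{2}$-definable, and the canonical parameter of $D_{W,i}(y)$ lies in $\mathrm{dcl}^{\mathrm{eq}}(A)\cap\mathrm{dcl}^{\mathrm{eq}}(\phi(\widetilde{\mathbb{M}}))$ but outside $\widetilde{\mathbf{q}}_{2}^{\mathrm{eq}}$, so the very hypothesis of Lemma \ref{lem:ell}(b) you are trying to check can fail. A secondary issue: even where non-forking does hold, ``$\mathrm{tp}(\overline{x}/A)$ is the unique non-forking extension, hence every formula in it is implied by one over $\widetilde{M}_{2}$'' is not a valid inference in a general stable theory; it would have to be extracted from the explicit type description of Corollary \ref{cor:typedet2}, and that again hinges on the unproved sploc claim.

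The paper's proof avoids this entirely and is worth comparing: it never invokes Lemma \ref{lem:ell}(b) here. Assuming a new fibre element $x\in\widetilde{M}_{4}$ over $z\in M_{3}$, it uses $\ell$-isolation of the $\psi$-type of $x$ over $\widetilde{M}_{2}\cup\widetilde{M}_{3}$, where $\psi(v,u)$ is $q(v)=q(u)\wedge v\neq u$, to obtain a single formula $\phi(v,\overline{a_{2}},\overline{a_{3}})$ (which by Corollary \ref{cor:typedet2} may be taken of sploc-plus-field form) implying that $\psi$-type; the independence hypothesis is then used in its coheir form --- $\mathrm{tp}(\overline{a_{2}}/\widetilde{M}_{3})$ is finitely satisfiable in the \emph{model} $\widetilde{M}_{1}$ --- to replace $\overline{a_{2}}$ by parameters $\overline{a_{1}}\in\widetilde{M}_{1}\subseteq\widetilde{M}_{3}$ while keeping a witness $x'\in q_{3}^{-1}(z)$ of $\phi$, and the implication $\phi\rightarrow\psi(v,x')$ then yields the contradiction $\psi(x',x')$. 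The key point you are missing is that only the one element $x\in\widetilde{M}_{4}$, which genuinely is $\ell$-isolated over $A$, needs to be controlled; your approach instead requires uniform control of the whole monster fibre, which the hypotheses do not provide. If you want to keep your framework, you would have to either restrict to an argument local to $x$ as above, or prove the representability of such ``partial special data'' of $\widetilde{M}_{3}$ inside $\widetilde{M}_{2}$, which is false in general under the stated hypotheses.
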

\begin{proof}
Because of Proposition \ref{prop:qe2}, it remains to check that fibres are preserved. To that end, suppose that they are not preserved, say that there is $z\in M_{3}$ for which there is $x\in\widetilde{M}_{4}$ such that $q_{4}(x)=z$ but $x\notin q_{3}^{-1}(z)$. Let $\psi(v,u)$ be the formula 
\begin{equation*}
\psi(v,u):=\left(q(v)=q(u)\right)\wedge(v\neq u)
\end{equation*}
For $y\in\widetilde{M}_{3}$ such that $q_{3}(y)=z$, we have that $\psi(v,y)\in\mathrm{tp}\left(x/\widetilde{M}_{2}\cup\widetilde{M}_{3}\right)$, in particular $\psi(v,y)\in\mathrm{tp}\left(x/\widetilde{M}_{2}\cup\widetilde{M}_{3}\right)_{\psi}$. By $\ell$-atomicity there is $\phi(v,\overline{a_{2}},\overline{a_{3}})$, with $\overline{a_{i}}$ a tuple of $\widetilde{M}_{i}$, such that
\begin{equation*}
\phi(v,\overline{a_{2}},\overline{a_{3}})\models\mathrm{tp}\left(x/\widetilde{M}_{2}\cup\widetilde{M}_{3}\right)_{\psi}.
\end{equation*}
Observe that for any $y\in\widetilde{M}_{2}$ such that $q_{2}(y)=z$, we will have $\psi(v,y)\in\mathrm{tp}\left(x/\widetilde{M}_{2}\cup\widetilde{M}_{3}\right)_{\psi}$.  

By Corollary \ref{cor:typedet2} we may assume that $\phi(v,\overline{a_{2}},\overline{a_{3}})$ is of the form 
\begin{equation*}
    \left(v\in\widetilde{M}_{V,i}(\overline{a_{2}},\overline{a_{3}})\right)\wedge \left(q_{\overline{g}}(v)\in q_{\overline{g}}\left(\widetilde{M}_{V,i}(\overline{a_{2}},\overline{a_{3}})\right)\right).
\end{equation*}
By independence, $\mathrm{tp}\left(\overline{a_{2}}/\widetilde{M}_{3}\right)$ is finitely satisfiable in $\widetilde{M}_{1}$, and using the form we have chosen for $\phi$, we can choose $\overline{a_{1}}$ in $\widetilde{M}_{1}$ such that
\begin{equation*}
\widetilde{\mathbf{q}}_{3}\models \exists v\left(\phi\left(v,\overline{a_{1}},\overline{a_{3}}\right)\wedge q(v)=z\right),
\end{equation*}
which is witnessed by some $x'\in q_{3}^{-1}(z)$. But:
\begin{equation*}
\forall v\left(\phi\left(v,\overline{u},\overline{a_{3}}\right)\rightarrow\psi\left(v,x'\right)\right)\in\mathrm{tp}\left(\overline{a_{2}}/\widetilde{M}_{3}\right),
\end{equation*}
which leads to a contradiction, as we would have that $\widetilde{\mathbf{q}}_{3}\models\psi\left(x',x'\right)$.
\end{proof}

\begin{lem}
\label{lem:3.9}
Let $\widetilde{\mathbf{q}}\models\widetilde{T}(\widetilde{\mathbf{p}})$. Suppose $B\subseteq\widetilde{M}$ and $a\in\widetilde{M}$ are such that for any tuple $\overline{g}$ of elements of $G^{\mathrm{ad}}(\mathbb{Q})^{+}$ we have that $\mathrm{tp}\left(q_{\overline{g}}(a)/\widehat{B}\right)$ is isolated. Then $\mathrm{tp}(a/B)$ is $\ell$-isolated. 
\end{lem}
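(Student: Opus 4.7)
The plan is to produce, for each formula $\phi(v,\overline{y})$, a single formula $\psi(v)\in\mathrm{tp}(a/B)$ of the shape
\begin{equation*}
\psi(v)\;:=\;v\in\widetilde{M}_{V_{0},i_{0}}(\overline{c})\;\wedge\;\chi_{\overline{g}}(q_{\overline{g}}(v)),
\end{equation*}
where $\widetilde{M}_{V_{0},i_{0}}(\overline{c})=\mathrm{sploc}(a/B)$ (which exists and has this form by Lemma~\ref{lem:sploc}), $\overline{g}$ is a tuple of $G^{\mathrm{ad}}(\mathbb{Q})^{+}$ chosen adapted to $\phi$, and $\chi_{\overline{g}}(u)$ is a formula over $\widehat{B}$ isolating $\mathrm{tp}(q_{\overline{g}}(a)/\widehat{B})$ (which exists by hypothesis). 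Both conjuncts lie in $\mathrm{tp}(a/B)$ by construction. By quantifier elimination (Proposition~\ref{prop:qe2}), $\phi$ may be taken quantifier-free; since the conjunction of isolating formulas for the atomic subformulas of $\phi$ isolates the complete $\phi$-type, it suffices to handle atomic $\phi$.

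First consider atomic $\phi$ in the field sort, say of the form $R(q_{\overline{h}_{1}}(v),\ldots,q_{\overline{h}'_{1}}(\overline{y}_{\sigma_{1}}),\ldots)$ for some $\mathcal{L}_{F}$-relation $R$. Choose $\overline{g}$ so that every $\overline{h}_{j},\overline{h}'_{j}$ is a subtuple of $\overline{g}$; by axiom~(A3), each $q_{\overline{h}_{j}}(v)$ becomes an $F_{0}$-definable function of $q_{\overline{g}}(v)$ via the projection maps $\psi_{\overline{g},\overline{h}_{j}}$. If $a'\models\psi$ then $q_{\overline{g}}(a')\equiv_{\widehat{B}}q_{\overline{g}}(a)$, and because $q_{\overline{h}'_{j}}(\overline{b}_{\sigma_{j}})\in\widehat{B}$ for every $\overline{b}\in B$, the formula $\phi(v,\overline{b})$ takes the same truth value at $a'$ as at $a$.

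The remaining atomic case is $\phi(v,\overline{y})=\widetilde{M}_{V,i}(v,\overline{y}_{\tau})$ (the equality $v=y_{j}$ is captured by the diagonal $\widetilde{M}_{Z_{(e,e)},1}$, and cases involving the $G^{\mathrm{ad}}(\mathbb{Q})^{+}$-action are captured by the graphs $\widetilde{M}_{Z_{(e,g)},1}$ of Corollary~\ref{cor:gact}). For each $\overline{b}_{\tau}\in B^{|\tau|}$, the equivalence $a\in\widetilde{M}_{V,i}(\overline{b}_{\tau})\iff\mathrm{sploc}(a/B)\subseteq\widetilde{M}_{V,i}(\overline{b}_{\tau})$ holds since $\overline{b}_{\tau}\in B$. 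When $a\in\widetilde{M}_{V,i}(\overline{b}_{\tau})$ one has $\widetilde{M}_{V_{0},i_{0}}(\overline{c})\subseteq\widetilde{M}_{V,i}(\overline{b}_{\tau})$, so $\psi$ forces $a'\in\widetilde{M}_{V,i}(\overline{b}_{\tau})$. When $a\notin\widetilde{M}_{V,i}(\overline{b}_{\tau})$, axiom~(A4) decomposes the intersection $\widetilde{M}_{V_{0},i_{0}}\cap\widetilde{M}_{V,i}$ (with appropriate reshuffling of coordinates) into a finite union $\bigcup_{k=1}^{r}\widetilde{M}_{W_{k},j_{k}}$ whose indices depend only on $V_{0},i_{0},V,i$, and the minimality of $\mathrm{sploc}(a/B)$ forces $a\notin\widetilde{M}_{W_{k},j_{k}}(\overline{c},\overline{b}_{\tau})$ for every $k$. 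Enlarging $\overline{g}$ if necessary so that, within $\widetilde{M}_{V_{0},i_{0}}$, membership in each $\widetilde{M}_{W_{k},j_{k}}$ is already witnessed at finite level by $q_{\overline{g}}$, the isolating condition $\chi_{\overline{g}}(q_{\overline{g}}(v))$ transfers this non-membership to $a'$.

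The main obstacle is precisely this last step: choosing a single $\overline{g}$ so that, uniformly in $\overline{b}_{\tau}\in B$, the condition $v\in\widetilde{M}_{W_{k},j_{k}}(\overline{c},\overline{b}_{\tau})$ restricted to $\widetilde{M}_{V_{0},i_{0}}$ is captured by the field-sort condition $q_{\overline{g}}(v,\overline{c},\overline{b}_{\tau})\in Z_{\overline{g}}^{W_{k},j_{k}}$. This requires two ingredients: the finiteness of the relevant $W_{k},j_{k}$, which are determined by $V_{0},i_{0},V,i$ alone and therefore independent of $\overline{b}_{\tau}$; and Lemma~\ref{lem:centre}, which gives $\bigcap_{\overline{g}}\Gamma_{\overline{g}}\subseteq Z_{G}(\mathbb{Q})$, so that $q_{\overline{g}}$ becomes essentially injective on $\widetilde{M}_{V_{0},i_{0}}$ modulo centre as $\overline{g}$ grows. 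A Noetherian compactness argument on the inverse system of varieties $Z_{\overline{g}}^{W_{k},j_{k}}$ then yields a single finite $\overline{g}$ that works uniformly, completing the $\ell$-isolation.
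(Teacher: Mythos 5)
The first part of your argument is essentially the paper's proof: by quantifier elimination one reduces to atomic formulas; atomic formulas in the field sort are handled by the isolation hypothesis for $\mathrm{tp}\left(q_{\overline{g}}(a)/\widehat{B}\right)$ together with (A3), choosing $\overline{g}$ to contain all the relevant tuples; and for the special-domain relation the single formula $v\in\mathrm{sploc}(a/B)$, which lies in $\mathrm{tp}(a/B)$ and is over $B$ by Lemma \ref{lem:sploc}, already implies every instance $(v,\overline{b})\in\widetilde{M}_{V,i}$ belonging to the type, via the equivalence $(a,\overline{b})\in\widetilde{M}_{V,i}\iff\mathrm{sploc}(a/B)\subseteq\widetilde{M}_{V,i}(\overline{b})$. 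Note that $\ell$-isolation, as defined in the paper, asks for each formula $\phi$ for a single formula implying the instances of $\phi$ that lie in the type, so the paper treats the membership relation entirely in the covering sort and never needs your conjunct $\chi_{\overline{g}}(q_{\overline{g}}(v))$ for that case.

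The step you yourself flag as the main obstacle is where your proposal breaks down. You need that, after enlarging $\overline{g}$, membership in each $\widetilde{M}_{W_{k},j_{k}}$ (within $\widetilde{M}_{V_{0},i_{0}}$) is ``witnessed at finite level by $q_{\overline{g}}$'', uniformly in $\overline{b}_{\tau}\in B$. In an arbitrary model of $\widetilde{T}(\widetilde{\mathbf{p}})$ this is false: axiom (A2) only gives $q_{\overline{g}}\left(\widetilde{M}_{W,j}\right)=Z_{\overline{g}}^{W,j}$, i.e.\ membership in the relation implies the field-sort condition at every finite level, while the converse is exactly the standardness phenomenon, which is not first-order --- this is the content of the partial types exhibited in \textsection\ref{sec:sf} and of the distinction between $\widehat{S}$ and $\widetilde{S}$: in a saturated model a point can satisfy $q_{\overline{h}}(x,\overline{c},\overline{b})\in Z_{\overline{h}}^{W_{k},j_{k}}$ for every tuple $\overline{h}$ and still fail $(x,\overline{c},\overline{b})\in\widetilde{M}_{W_{k},j_{k}}$. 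Consequently no single finite $\overline{g}$ can be extracted; the finiteness of the list $(W_{k},j_{k})$ does not bound the level needed as $\overline{b}_{\tau}$ varies over an arbitrary $B$, the ``Noetherian compactness argument on the inverse system $Z_{\overline{g}}^{W_{k},j_{k}}$'' is not an argument about arbitrary models, and Lemma \ref{lem:centre} concerns $\Gamma$ acting on $X^{+}$, so it yields injectivity-type statements for the standard model only. If one insists on treating the negated relation $\neg\left[(v,\overline{y})\in\widetilde{M}_{V,i}\right]$ as its own formula, the difficulty you met is genuine, but it cannot be resolved by descending to a finite field-sort level; the paper's proof simply does not attempt this reduction of the covering-sort relations to the field sort and stops at the sploc equivalence above.
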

\begin{proof}
By quantifier elimination, it suffices to prove that $\mathrm{tp}(a/B)_{\phi}$ is isolated for an atomic formula $\phi(x,y)$. If $\phi$ is of the form $\psi\left(q_{\overline{g}}(x),q_{\overline{g}}(y)\right)$, then this follows by isolation of $\mathrm{tp}\left(q_{\overline{g}}(a)/\widehat{B}\right)$. For $\phi$ of the form $(x,y)\in\widetilde{M}_{V,i}$, it follows from the fact that for $b\in B$, $(a,b)\in\widetilde{M}_{V,i}$ if and only if $\mathrm{sploc}(a/B)\subseteq \widetilde{M}_{V,i}(b)$.
\end{proof}

\begin{lem}
\label{lem:constructible}
Suppose $\widetilde{\mathbf{q}}\models\widetilde{T}(\widetilde{\mathbf{p}})$ satisfies SF and that it is atomic over a set $U\subseteq\widetilde{M}\cup M$. If $M$ is countable, then $\widetilde{\mathbf{q}}$ is constructible over $U$. 
\end{lem}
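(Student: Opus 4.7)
The plan is to appeal to the classical theorem of Shelah, cited in the remark following the definition of $\ell$-isolation, which states that over an arbitrary parameter set a countable atomic set is already constructible (\cite[Lemma IV.3.16]{shelah}). Thus the entire content of the lemma reduces to verifying that the two-sorted universe of $\widetilde{\mathbf{q}}$ is countable, and then invoking this theorem.

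The countability verification goes through the standard fibres condition. Since $\widetilde{\mathbf{q}}\models\mathrm{SF}$, for every $z\in M$ the fibre $q^{-1}(z)$ is contained in a single $\Gamma$-orbit (by the axiom $\forall x,y\in D\,((q(x)=q(y))\to\bigvee_{\gamma\in\Gamma}(x=\gamma y))$). Here $\Gamma\subseteq G^{\mathrm{ad}}(\mathbb{Q})^{+}$ is countable since $G(\mathbb{Q})$ is countable, so each fibre has cardinality at most $\aleph_{0}$. Combined with the hypothesis $|M|\leq\aleph_{0}$, this gives
\begin{equation*}
|\widetilde{M}|\;=\;\Big|\bigsqcup_{z\in M}q^{-1}(z)\Big|\;\leq\;\aleph_{0}.
\end{equation*}
Hence both sorts of $\widetilde{\mathbf{q}}$ are countable.

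With countability secured, the result follows directly from the Shelah fact: the atomic hypothesis on $\widetilde{\mathbf{q}}$ over $U$ upgrades to constructibility over $U$, providing an enumeration $(a_{i})_{i<\omega}$ of $\widetilde{\mathbf{q}}\setminus U$ such that each $\mathrm{tp}(a_{i}/U\cup a_{<i})$ is isolated. I do not anticipate any real obstacle: the only step with mathematical content is the SF-based counting, which is immediate from the formulation of SF and the countability of $\Gamma$; everything else is a citation.
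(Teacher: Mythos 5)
Your proof is correct, and it takes a mildly different route from the paper's. The paper also leans on the Shelah fact that a countable atomic set is constructible, but it applies it only to a \emph{section} $A\subseteq\widetilde{M}$ of $q$ (countable simply because $M$ is), and then finishes each remaining point $c_{1}\in\widetilde{M}$ by picking $c_{2}\in A$ in the same fibre and invoking SF together with Corollary \ref{cor:gact}: $c_{1}=\gamma c_{2}$ for some $\gamma\in\Gamma$, and since the $\gamma$-action is definable, $c_{1}$ is definable (hence constructible) over $c_{2}$. You instead use SF purely as a counting device -- each fibre lies in a single $\Gamma$-orbit and $\Gamma\subseteq G^{\mathrm{ad}}(\mathbb{Q})^{+}$ is countable, so $\widetilde{M}$ is countable -- and then apply the countable-atomic-implies-constructible fact once to the whole two-sorted structure. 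Both arguments are valid; yours is more direct but uses the full atomicity hypothesis and the countability of $\Gamma$, while the paper's decomposition only needs atomicity on the section, with the rest of $\widetilde{M}$ handled by definability over it (which is why the paper cites Corollary \ref{cor:gact}). One small point worth making explicit in your write-up: in the language $\widetilde{\mathcal{L}}$ of $\widetilde{T}(\widetilde{\mathbf{p}})$ there are no function symbols for $\Gamma$, so the expression $x=\gamma y$ in SF, and hence your bound $|q^{-1}(z)|\leq|\Gamma|$, presupposes the definable $G^{\mathrm{ad}}(\mathbb{Q})^{+}$-action of Corollary \ref{cor:gact} (whose graph is the graph of a bijection); this is exactly how the paper makes SF meaningful for covering structures, so it is a citation you should add rather than a gap.
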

\begin{proof}
Let $A\subseteq\widetilde{M}$ be a section of $q$. Then $A$ is countable and atomic over $U$, hence it is constructible. Let $c_{1}\in\widetilde{M}$ and choose $c_{2}\in A$ such that $q(c_{1}) = q(c_{2})$. By Corollary \ref{cor:gact} and SF, $c_{1} = \gamma c_{2}$ for some $\gamma\in\Gamma$, and so $c_{1}$ is constructible over $c_{2}$. 
\end{proof}

\begin{defi}
An $\widetilde{I}$-system \index{i@$\widetilde{I}$-system} in an $I$-system $\left(\widetilde{\mathbf{q}}_{s}\right)_{s\in I}$ in $\widetilde{\mathfrak{q}}$ such that:
\begin{itemize}
    \item $\left(M_{s}\right)_{s\in I}$ is an independent atomic $I$-system in $T$. 
    \item If $s\subseteq t$, then $\widetilde{\mathbf{q}}_{s}\prec^{\ast}\widetilde{\mathbf{q}}_{t}$.
\end{itemize}
\end{defi}


\begin{prop}
An $\widetilde{I}$-system $\left(\widetilde{\mathbf{q}}_{s}\right)_{s\in I}$ is an independent $I$-system.
\end{prop}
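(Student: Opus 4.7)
The plan is to invoke Proposition~\ref{prop:forking} to decompose forking in $\widetilde{T}(\widetilde{\mathbf{p}})$ into a $T$-forking condition on $q$-images plus a definability condition on special loci. Fix $s \in I$ and a tuple $\overline{a}$ from $\widetilde{M}_s$. To show that $\mathrm{tp}(\overline{a}/\widetilde{M}_{\ngeq s})$ does not fork over $\widetilde{M}_{<s}$, it is enough, by Proposition~\ref{prop:forking}, to verify two things. First, $\mathrm{tp}(q(\overline{a})/\widehat{\widetilde{M}_{\ngeq s}})$ does not fork over $\widehat{\widetilde{M}_{<s}}$ in $T$; and second, $\mathrm{sploc}(\overline{a}/\widetilde{M}_{\ngeq s})$ is definable over $\widetilde{M}_{<s}$.

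The first condition follows directly from the hypothesis that $(M_s)_{s \in I}$ is an independent $T$-system. The key observation is that each $\widetilde{\mathbf{q}}_t$ being a model of $\widetilde{T}(\widetilde{\mathbf{p}})$ forces $q_{\overline{g}}$ to be surjective onto $Z_{\overline{g}}(M_t)$ by axiom (A2), so that $\widehat{\widetilde{M}_t}$ generates $M_t$ as a field and the two sets induce the same forking on tuples contained in $M_s$. Since $q(\overline{a}) \in M_s$ and $M_s \forkindep[M_{<s}] M_{\ngeq s}$ holds in $T$, the first condition is immediate.

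The second condition requires combining the $T$-independence with the fibre-preserving property $\prec^*$. Writing $C = \mathrm{sploc}(\overline{a}/\widetilde{M}_{\ngeq s}) = \widetilde{M}_{V,i}(\overline{b})$ for some tuple $\overline{b}$ from $\widetilde{M}_{\ngeq s}$, Lemma~\ref{lem:sploc} tells us that $q(C)$ is a subvariety of a power of $S$ defined over the field generated by $q(\overline{b}) \subseteq M_{\ngeq s}$. Since $q(\overline{a}) \in q(C)$ and $q(\overline{a}) \forkindep[M_{<s}] M_{\ngeq s}$ in the stable theory $T$, the minimal subvariety over $M_{\ngeq s}$ containing $q(\overline{a})$ is in fact defined over $M_{<s}$. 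Using Lemma~\ref{lem:sploc4} to work modulo $G^{\mathrm{ad}}(\mathbb{Q})^{+}$-orbits, together with the fibre-preserving identity $\widetilde{M}_t = q^{-1}(M_t) \cap \widetilde{M}_{\ngeq s}$ (valid because $\widetilde{\mathbf{q}}_t \prec^{\ast} \widetilde{\mathbf{q}}_{\ngeq s}$ for $t \in {<}s$), one can replace $\overline{b}$ by a tuple from $\widetilde{M}_{<s}$ defining the same set $C$, which yields the required definability.

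The main obstacle is the second condition: although the reduction of the algebraic locus of $q(\overline{a})$ from $M_{\ngeq s}$ down to $M_{<s}$ is standard, the special locus is not just an arbitrary algebraic object but a slice of a named special domain, so the descent on the $T$-side does not mechanically produce a descent of the parameters on the cover side. The fibre-preservation is what makes the transfer go through, and a careful bookkeeping of which components of $\overline{b}$ lie above $M_{<s}$ (hence inside $\widetilde{M}_{<s}$) versus those lying above the ``new'' part of $M_{\ngeq s}$ (which, by $T$-independence, produce no additional special constraints on $\overline{a}$) is the technical heart of the argument. As an alternative route one could first establish $\ell$-atomicity of the $\widetilde{I}$-system via Lemma~\ref{lem:3.9} applied to the $T$-atomicity of $(M_s)_s$, and then mimic the reduction of Lemma~\ref{lem:3.22}; but proceeding directly through Proposition~\ref{prop:forking} is more transparent.
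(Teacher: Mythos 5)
Your argument is essentially the paper's own proof: reduce, via Proposition \ref{prop:forking} together with the independence of $(M_{s})_{s}$ in $T$, to showing that the special locus of a tuple from $\widetilde{M}_{s}$ over the rest of the system is defined over $\widetilde{M}_{<s}$; then descend the parameters on the variety side using the identification of forking with algebraic independence in $\mathrm{ACF}_{0}$, and lift the descended parameters back to $\widetilde{M}_{<s}$ by the fibre-preserving property of the $\widetilde{I}$-system. The only cosmetic differences are that the paper checks the enumerated criterion of Lemma \ref{lem:fact} inductively rather than the definition of independence directly, and the ``bookkeeping'' step you flag is exactly the paper's final two lines (existence of a defining tuple in $M_{<s}$ plus fibre preservation), carried out at the same level of detail.
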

\begin{proof}
Let $I=(s_{i})_{i\in\lambda}$ be an enumeration of $I$. By Lemma \ref{lem:fact} it suffices to show that, given $i\in\lambda$, $\widetilde{\mathbf{q}}_{s_{i}}\forkindep[\widetilde{\mathbf{q}}_{<s_{i}}]\widetilde{\mathbf{q}}_{s_{<i}}$, where we may assume inductively that the restriction of $\left(\widetilde{\mathbf{q}}_{s}\right)_{s}$ to $s_{<i}$ is and independent system. Furthermore, by Proposition \ref{prop:forking} and the independence of $\left(M_{s}\right)_{s}$, it suffices to show that for every $a\in\widetilde{M}_{s_{i}}$, $\mathrm{sploc}\left(a/\widetilde{M}_{s_{<i}}\right)$ is defined over $\widetilde{M}_{<s_{i}}$. 




Say that $\mathrm{sploc}\left(a/\widetilde{M}_{s_{<i}}\right) = \widetilde{M}_{V,j}(\overline{c})$, where $\overline{c}$ is a tuple from $\widetilde{M}_{s_{<i}}$. So $q(a)\in V\left(q\left(\overline{c}\right)\right)$. 
In $\mathrm{ACF}_{0}$, model-theoretic independence ($\forkindep$) can be understood as algebraic independence (see \cite[Corollary 6.4.5]{tent-ziegler}) in the following sense: if $F,K,L\models\mathrm{ACF}_{0}$, then $K\forkindep[F] L$ if and only if for every finite tuple $\overline{k}$ from $K$, we have that $\mathrm{tr.deg}_{F}\left(\overline{k}\right) = \mathrm{tr.deg}_{L}\left(\overline{k}\right)$. This means then that there is a tuple $\overline{z}$ in $M_{<s_{i}}$ such that $V\left(q\left(\overline{c}\right)\right) = V\left(\overline{z}\right)$, and so we can assume that the coordinates of $q\left(\overline{c}\right)$ are in $M_{<s_{i}}$. 
As $\widetilde{I}$-systems preserve fibres, then $\overline{c}\in\widetilde{M}_{<s_{i}}$, so $\mathrm{sploc}\left(a/\widetilde{M}_{s_{<i}}\right)$ is defined over $\widetilde{M}_{<s_{i}}$.
\end{proof}

\subsection{\texorpdfstring{$\omega$}{w}-Stability Over Models}
We will say that $\widetilde{T}(\widetilde{\mathbf{p}})$ \emph{satisfies FIC} \index{FIC} if  $p:X^{+}\rightarrow S(\mathbb{C})$ satisfies FIC1 and FIC2.

\begin{prop}
\label{prop:3.11}
Assume $\widetilde{T}(\widetilde{\mathbf{p}})$ satisfies FIC. Let $\widetilde{\mathbf{q}}\prec\widetilde{\mathfrak{q}}$ and $b\in\mathbb{M}$. Let $M(b)\models T$ be prime over $Mb$. Suppose $\widetilde{\mathbf{q}}'$ is a model such that $\widetilde{\mathbf{q}}\prec^{\ast}\widetilde{\mathbf{q}}'\prec\widetilde{\mathfrak{q}}$ and $M' = M(b)$. Then $\widetilde{\mathbf{q}}'$ is atomic over $\widetilde{\mathbf{q}}b$. If $M$ is also countable and $\widetilde{\mathbf{q}}'$ satisfies SF, 
then $\widetilde{\mathbf{q}}'$ is constructible over $\widetilde{\mathbf{q}}b$. Furthermore, if $\widetilde{\mathbf{q}}$ satisfies SF, then such a $\widetilde{\mathbf{q}}'$ exists.  
\end{prop}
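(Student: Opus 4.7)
The atomicity claim will be proved by producing, for each finite tuple $\overline{a}$ from $\widetilde{M}'$, a single $\widetilde{\mathcal{L}}$-formula over $\widetilde{M}b$ that isolates $\mathrm{tp}(\overline{a}/\widetilde{M}b)$. First I reduce: using Corollary \ref{cor:gact}, Lemma \ref{lem:sploc3}, and the fact that $M(b)$ is atomic over $Mb$ in $T = \mathrm{ACF}_0$, I may assume the coordinates of $\overline{a}$ lie in distinct $G^{\mathrm{ad}}(\mathbb{Q})^+$-orbits and that none is special; and by fiber preservation ($\widetilde{\mathbf{q}}\prec^{\ast}\widetilde{\mathbf{q}}'$) I may further assume $q(\overline{a})\in M(b)^n\setminus M^n$. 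By Corollary \ref{cor:typedet2}, and since $\widehat{\widetilde{M}b}=Mb$, the quantifier-free type $\mathrm{qftp}(\overline{a}/\widetilde{M}b)$ is determined by the sploc $\mathrm{sploc}(\overline{a}/\widetilde{M}b)=\widetilde{M}_{V,i}(\overline{c})$ (for some tuple $\overline{c}$ from $\widetilde{M}b$) together with the family $\{\mathrm{qftp}(q_{\overline{g}}(\overline{a})/Mb)\}_{\overline{g}}$.

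The sploc contributes the single formula $\overline{v}\in\widetilde{M}_{V,i}(\overline{c})$. Since $M(b)$ is prime over $Mb$, each type $\mathrm{tp}(q_{\overline{g}}(\overline{a})/Mb)$ is isolated by some $\mathcal{L}_F$-formula $\phi_{\overline{g}}$; the nontrivial task is to collapse this infinite family into one formula. Apply FIC2 with $F:=M$ to the tuple $\overline{a}$—the hypotheses hold since $M$ is algebraically closed, $q(\overline{a})\notin M^n$, and $q(\overline{a})$ has transcendence degree at most $1$ over $M$ (that the conclusion transfers from $\mathbb{C}$ to the monster is a first-order statement about geometric irreducibility of the fibers of $\psi^{V,i}_{\overline{h},\overline{g}}$). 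The conclusion furnishes a tuple $\overline{g}^{\ast}$, which I enlarge so that $\Gamma_{\overline{g}^{\ast}}\lhd\Gamma$ using \textsection\ref{subsec:normal}, such that for every $\overline{h}\supseteq\overline{g}^{\ast}$ with $\Gamma_{\overline{h}}\lhd\Gamma$, the fiber of $\psi^{V,i}_{\overline{h},\overline{g}^{\ast}}$ over $q_{\overline{g}^{\ast}}(\overline{a})$ is a single $\mathrm{Aut}(\mathbb{M}/L_{\overline{g}^{\ast}})$-orbit, coinciding with the $\mathrm{Aut}(\mathbb{M}/Mb)$-orbit of $q_{\overline{h}}(\overline{a})$ by Lemma \ref{lem:actcom} (here $L_{\overline{g}^{\ast}}$ is generated over $Mb$ by the coordinates of $q_{\overline{g}^{\ast}}(\overline{a})$).

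I then claim the conjunction
\begin{equation*}
\theta(\overline{v}) \;:=\; \bigl(\overline{v}\in\widetilde{M}_{V,i}(\overline{c})\bigr)\wedge \phi_{\overline{g}^{\ast}}\bigl(q_{\overline{g}^{\ast}}(\overline{v})\bigr)
\end{equation*}
isolates $\mathrm{tp}(\overline{a}/\widetilde{M}b)$. A realization $\overline{a}'$ of $\theta$ has $\mathrm{tp}(q_{\overline{g}^{\ast}}(\overline{a}')/Mb)=\mathrm{tp}(q_{\overline{g}^{\ast}}(\overline{a})/Mb)$, so in particular $q(\overline{a}')$ is generic in $V(\overline{c})$ over $Mb$, forcing $\mathrm{sploc}(\overline{a}'/\widetilde{M}b)=\widetilde{M}_{V,i}(\overline{c})$. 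For any other $\overline{h}$, take a common refinement $\overline{h}'\supseteq\overline{h}\cup\overline{g}^{\ast}$ with $\Gamma_{\overline{h}'}\lhd\Gamma$; the transitivity of the previous paragraph gives $\mathrm{qftp}(q_{\overline{h}'}(\overline{a}')/Mb)=\mathrm{qftp}(q_{\overline{h}'}(\overline{a})/Mb)$, and projection through $\psi^{V,i}_{\overline{h}',\overline{h}}$ transfers the equality to $\overline{h}$. By Corollary \ref{cor:typedet2}, $\overline{a}'\equiv_{\widetilde{M}b}\overline{a}$, establishing atomicity.

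The constructibility conclusion is then immediate from Lemma \ref{lem:constructible}: if $M$ is countable, so is $M(b)$, and an atomic model satisfying SF over a countable set is constructible. For existence under $\widetilde{\mathbf{q}}\models\mathrm{SF}$, I pick one standard point $x_z$ in the monster fiber above each representative $z$ of each $G^{\mathrm{ad}}(\mathbb{Q})^+$-orbit inside $M(b)\setminus M$ (possible by Remark \ref{rem:stptexist}), set $\widetilde{M}':=\widetilde{M}\cup\bigcup_z \Gamma\cdot G^{\mathrm{ad}}(\mathbb{Q})^+\cdot x_z$, and verify axioms (A1)--(A6), SF (each new fiber is a single $\Gamma$-orbit by construction), and fiber preservation over $M$ via Lemma \ref{lem:3.3} together with SF of $\widetilde{\mathbf{q}}$. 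The hard part throughout is the collapse performed in the second paragraph: FIC2 is exactly what allows the a priori infinite Galois-theoretic data to be captured by the single parameter $\overline{g}^{\ast}$; without such an openness hypothesis, an inverse limit of genuinely growing Galois data would admit no single-formula description and the type could not be isolated.
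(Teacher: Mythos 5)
The skeleton you set up — reduce via Corollary \ref{cor:typedet2} to ``sploc plus the field-sort types'', find one level $\overline{g}^{\ast}$ beyond which the Galois data stabilises, and then isolate with a single conjunction using primeness of $M(b)$ over $Mb$ — is exactly the paper's strategy (the paper's isolating statement is $\mathrm{tp}(q_{\overline{g}}(c)/Mb)\cup\{v\in\widetilde{M}'_{V,i}\}\models\mathrm{tp}(c/\widetilde{M}b)$). But the step where you produce $\overline{g}^{\ast}$ has a genuine gap. FIC1/FIC2 are hypotheses about algebraically closed subfields $F\subseteq\mathbb{C}$ and homomorphisms from $\mathrm{Aut}(\mathbb{C}/L)$; in Proposition \ref{prop:3.11} the model $M$ is an arbitrary elementary submodel of the monster, so its field may have cardinality exceeding $2^{\aleph_{0}}$ and need not embed in $\mathbb{C}$, and a point $b\in\mathbb{M}$ can raise the transcendence degree by more than $1$ when $\dim S>1$, so ``apply FIC2 with $F:=M$'' is not even formally licensed. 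Your parenthetical fix — that the transfer from $\mathbb{C}$ to the monster is ``a first-order statement about geometric irreducibility of the fibres'' — does not work: the decomposition of a fibre of $\psi^{V,i}_{\overline{h},\overline{g}}$ into $\mathrm{Aut}(\cdot/L)$-orbits is governed by irreducibility over the non-algebraically-closed field $L$, not by geometric irreducibility, and irreducibility over $L$ is precisely the sort of property that can be destroyed when the base field grows; it is not captured first-order in the parameters in the way you need. This transfer is the heart of the proposition, and it is where the paper spends its effort: it first obtains a tuple $\overline{g}$ with $\mathrm{tp}(q_{\overline{g}}(c)/M)\models\mathrm{tp}(\widehat{c}/M)$ when $M\subseteq\mathbb{C}$, using Propositions \ref{prop:main} and \ref{prop:main2} (not FIC2 alone), and then extends to arbitrary $M$, and to $Mb$, by induction along a transcendence basis, using that $T$ has finite Morley rank so that $\mathrm{tp}(z/Mq(c))$ has finite multiplicity and the type over $Mz$ is again controlled at some finite level. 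Some argument of this kind (or, say, a countable algebraically closed subfield capturing all the relevant loci, combined with FIC1 as well as FIC2) is required; as written, your proof assumes the conclusion of exactly this step.

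Two smaller points. Your intermediate claim that a realisation of $\theta$ is ``generic in $V(\overline{c})$ over $Mb$'' is neither needed nor true in general: the Zariski locus of $q(\overline{a})$ over $Mb$ may be a proper, non-special subvariety of $V(q(\overline{c}))$. And for existence the paper argues more economically: take an $\ell$-constructible model over $\widetilde{M}\cup M'$ by Lemma \ref{lem:ell}(a), get fibre preservation and $M'=M(b)$ from Lemma \ref{lem:3.6b}, and arrange SF by Proposition \ref{prop:min}; your hand-built structure would in addition require verifying (A2), (A4), (A5) for \emph{tuples} of the newly chosen fibre points (joint membership in special domains of powers of the cover, which independent choices over each orbit do not obviously guarantee), and Lemma \ref{lem:3.3} does not give fibre preservation in the direction you invoke it.
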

\begin{proof}
First we show atomicity. Let $c\in\widetilde{M}'$, we need to show that $\mathrm{tp}\left(c/\widetilde{M}b\right)$ is isolated. Let $\widetilde{M}'_{V,i}(\overline{x}):=\mathrm{sploc}\left(c/\widetilde{M}\right)$, where $\overline{x}$ is a tuple from $\widetilde{M}$. Frist we will show that there is a tuple $\overline{g}$ from $G^{\mathrm{ad}}(\mathbb{Q})^{+}$ such that 
\begin{equation}
\label{eq:models}
    \mathrm{tp}\left(q_{\overline{g}}(c)/M\right)\models \mathrm{tp}\left(\widehat{c}/M\right).
\end{equation}
By Propositions \ref{prop:main} and \ref{prop:main2} (which assume FIC), we have that there is a tuple $\overline{g}$  satisfying (\ref{eq:models}) when $M$ is contained in $\mathbb{C}$. Now, if $z$ is any element in $\mathbb{M}$ transcendental over $M$, then as $T$ has finite Morley rank, $\mathrm{tp}\left(z/Mq(c)\right)$ has finite multiplicity (see \cite[Exercise 8.5.7]{tent-ziegler}). Therefore, $\mathrm{tp}\left(\widehat{c}/M\right)\cup\mathrm{tp}\left(q(c)/Mz\right)$ has finitely many extensions to $Mz$. So again, for some tuple $\overline{g}$ we have
\begin{equation*}
    \mathrm{tp}\left(q_{\overline{g}}(c)/Mz\right)\models \mathrm{tp}\left(\widehat{c}/Mz\right).
\end{equation*}
And so, an inductive argument on a transcendence basis of $M$ over the prime model shows that there is a tuple $\overline{g}$ satisfying (\ref{eq:models}) in general. Furthermore, by the same argument on finite extensions, we get then that there is a tuple $\overline{g}$ satisfying
\begin{equation}
\label{eq:models2}
    \mathrm{tp}\left(q_{\overline{g}}(c)/Mb\right)\models \mathrm{tp}\left(\widehat{c}/Mb\right).
\end{equation}

Combining (\ref{eq:models2}) with Corollary \ref{cor:typedet2}, we get
\begin{equation*}
    \mathrm{tp}\left(q_{\overline{g}}(c)/Mb\right)\cup\left\{v\in\widetilde{M}'_{V,i}\right\}\models \mathrm{tp}\left(c/\widetilde{M}b\right).
\end{equation*}
As $q_{\overline{g}}(c)\in (M')^{n}$ and $M'$ is prime over $Mb$,  $\mathrm{tp}\left(q_{\overline{g}}(c)/Mb\right)$ is isolated. So $\mathrm{tp}\left(c/\widetilde{M}b\right)$ is isolated, which finishes the proof of atomicity.

For constructibility assuming countability and SF, use Lemma \ref{lem:constructible}. 

It remains to prove that such a $\widetilde{\mathbf{q}}'$ exists. By Lemma \ref{lem:ell}(a), there exists a model which is $\ell$-constructible over $\widetilde{M}\cup M'$. By Lemma \ref{lem:3.6b} we get that $\widetilde{\mathbf{q}}\prec^{\ast}\widetilde{\mathbf{q}}'$ and $M'=M(b)$. That $\widetilde{\mathbf{q}}'$ satisfies SF can be assumed by Proposition \ref{prop:min}. 
\end{proof}

\subsection{Atomicity Over Independent Systems}
\label{subsec:indepsys}
We first fix some notation. Let $M:=S(F)\models T$, and let $K_{0} = \overline{F_{0}}$. Then $M_{0} = S(K_{0})$ is the prime model of $T$. If $B$ is a transcendence basis for $F$ over $K_{0}$, then $M$ is constructible and prime over $M_{0}B$.

Let $\mathcal{P}^{\mathrm{fin}}(B)$ be the set of finite subsets of $B$. Let $M_{\emptyset}:= M_{0}$, and for $s\in\mathcal{P}^{\mathrm{fin}}(B)$ define inductively $M_{s}\prec M(F)$ to be prime over $M_{<s}\cup s$, that is $M_{s} = S\left(\overline{F(s)}\right)$.

\begin{lem}
\label{lem:3.23}
$\left(M_{s}\right)_{s\in\mathcal{P}^{\mathrm{fin}}(B)}$ is a constructible independent $\mathcal{P}^{\mathrm{fin}}(B)$-system, and 
\begin{equation*}
\bigcup_{s\in\mathcal{P}^{\mathrm{fin}}(B)}M_{s} = S(F). 
\end{equation*}
\end{lem}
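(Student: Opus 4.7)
The plan is to reduce everything to elementary facts about $\mathrm{ACF}_{0}$, under the bi-interpretation of $T$ with the field sort. I would begin by verifying that $\left(M_{s}\right)_{s}$ is indeed a $\mathcal{P}^{\mathrm{fin}}(B)$-system: if $s\subseteq t$ then $\overline{F_{0}(s)}\subseteq\overline{F_{0}(t)}$, and since algebraically closed subfields of an algebraically closed field are elementary substructures in $\mathrm{ACF}_{0}$, we get $M_{s}\prec M_{t}$. The union claim $\bigcup_{s}M_{s}=S(F)$ follows from $B$ being a transcendence basis: any $a\in F$ is algebraic over $F_{0}(B)$, hence over $F_{0}(s)$ for some finite $s\subseteq B$, placing $a$ inside $M_{s}$.

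For constructibility, the crucial observation is that when $|s|\geq 2$, every $p\in s$ lies in $M_{\{p\}}\subseteq M_{<s}$, so in fact $s\subseteq M_{<s}$ and $F_{0}(s)$ sits inside the subfield of $F$ generated by $M_{<s}$. Thus every element of $M_{s}=\overline{F_{0}(s)}$ is algebraic over this subfield, and so its type over $M_{<s}$ is isolated by its minimal polynomial together with the inequalities excluding the other roots. Enumerating $M_{s}$ so as to iteratively adjoin these algebraic elements then shows that $M_{s}$ is constructible over $M_{<s}$.

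For independence $M_{s}\forkindep[M_{<s}] M_{\ngeq s}$, I would recall that non-forking in $\mathrm{ACF}_{0}$ coincides with algebraic independence (the same fact invoked in the proof of the previous proposition on $\widetilde{I}$-systems). When $|s|\geq 2$, the argument above yields $M_{s}\subseteq\mathrm{acl}(M_{<s})$, making the independence immediate; the case $s=\emptyset$ is vacuous since $\ngeq\emptyset=\emptyset$. For $|s|=1$, say $s=\{p\}$, one has $M_{<s}=M_{\emptyset}=S(\overline{F_{0}})$, while $M_{\ngeq s}$ is contained in $S(\overline{F_{0}(B\setminus\{p\})})$. The algebraic independence of $B$ over $F_{0}$ makes $p$ transcendental over $\overline{F_{0}(B\setminus\{p\})}$, delivering the required independence.

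There is no real obstacle here; the proof amounts to unpacking the definitions of $M_{<s}$ and $M_{\ngeq s}$ as set-theoretic unions and translating into algebraic independence in $\mathrm{ACF}_{0}$. The only thing to watch is not to conflate the set $M_{<s}$, which is a union of elementary submodels and in general not itself a model, with its algebraic closure in the ambient field.
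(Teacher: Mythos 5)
Your argument is correct, and it diverges from the paper's proof in a worthwhile way. The union clause is the same computation in different clothing (the paper quotes minimality of $M$ over $M_{0}B$ instead of chasing coordinates), and for constructibility both proofs ultimately rest on the observation that for $|s|\geq 2$ one has $s\subseteq M_{<s}$, so that $M_{s}$ is prime over $M_{<s}$: the paper cites total transcendence of $T$ (prime extensions are constructible), while you make the $\mathrm{ACF}_{0}$ content explicit. One small correction there: the isolating formula is simply the minimal polynomial over the field generated by the parameters available at each stage (the remaining roots realise the same type, and the inequalities you add are not formulas over that parameter set), though this changes nothing since algebraic types are isolated. The genuine difference is the independence clause. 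The paper goes through the general machinery: finite character of $\forkindep$, Lemma \ref{lem:3.22} (which needs $\ell$-atomicity of the system), an induction, and Lemma \ref{lem:3.21}, reducing everything to $M_{\{b\}}\forkindep[M_{\emptyset}]M_{s}$ for $b\notin s$. You instead verify the defining condition $M_{s}\forkindep[M_{<s}]M_{\ngeq s}$ directly: vacuously for $s=\emptyset$, via $M_{s}\subseteq\mathrm{acl}\left(M_{<s}\right)$ when $|s|\geq 2$, and via transcendence of $p$ over $\overline{F_{0}(B\setminus\{p\})}$ when $s=\{p\}$, using that every finite tuple from $M_{\ngeq\{p\}}$ lies in such a field. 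Your route is more elementary and self-contained, resting only on the identification of non-forking with algebraic independence in $\mathrm{ACF}_{0}$ (the same fact the paper uses at the very end); the paper's route mirrors the Bays--Pillay framework and is the one that would survive in settings where the higher levels of the system are not literally algebraic over the lower ones.
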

\begin{proof}
$\bigcup_{s\in\mathcal{P}^{\mathrm{fin}}(B)}M_{s}$ is an elementary submodel of $M$ containing $M_{0}B$, and as $M$ is minimal over $M_{0}B$, then we must have $\bigcup_{s\in\mathcal{P}^{\mathrm{fin}}(B)}M_{s} = S(F)$.

$T$ is totally transcendental, so every subset of a model has a constructible prime extension (see \cite[Theorem 5.3.3]{tent-ziegler}), and thus we get constructibility of the system. It remains to show that the system is independent. Also, in totally transcendental theories, prime extensions are atomic (and, hence, $\ell$-atomic). Using the finite character of $\forkindep$ and Lemma \ref{lem:3.22}, it will suffice to show that, for $b\in B$, $M_{\left\{b\right\}}\forkindep[M_{\emptyset}]M_{s}$ when $b\notin s\in\mathcal{P}^{\mathrm{fin}}(B)$.

So let $s\in\mathcal{P}^{\mathrm{fin}}(B)$ be such that $b\notin s$. By induction, we can assume that the restriction of the system to $s$ is independent. By Lemma \ref{lem:3.21}, $M_{s}$ is constructible over $M_{0}s$. Observe that $b\notin M_{s}$ and $M_{\left\{b\right\}}\forkindep[M_{\emptyset}] M_{s}$, which finishes the proof. 
\end{proof}

The following Proposition is a straightforward restatement of \cite[Proposition 3.26]{bays-pillay}. With what we have shown so far, the proof of \cite[Proposition 3.26]{bays-pillay} shows that Proposition \ref{prop:3.26} is a formal consequence of previous results, so we refer the reader to that paper for the proof. 

\begin{prop}[see {{\cite[Proposition 3.26]{bays-pillay}}}]
\label{prop:3.26}
Assume $\widetilde{T}(\widetilde{\mathbf{p}})$ satisfies FIC. Let $\left(\widetilde{\mathbf{q}}_{s}\right)_{s\in I}$ be an $\widetilde{I}$-system, with $I$ Noetherian. Then the system is atomic. Also, if each $M_{s}$ is countable and each $\widetilde{\mathbf{q}}_{t}$ satisfies SF, then the system is constructible. 
\end{prop}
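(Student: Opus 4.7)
I plan to proceed by transfinite induction on an enumeration $(s_i)_{i \leq \lambda}$ of $I$ compatible with inclusion. The cases $|s_i| \leq 1$ are vacuous, so fix $s = s_i$ with $|s| > 1$ and assume atomicity at each $s_j$ with $j < i$. The goal is to show that an arbitrary $c \in \widetilde{M}_s$ has isolated type over $\widetilde{M}_{<s}$.

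First I would verify that $M_{<s} := \bigcup_{t \subsetneq s} M_t$ is itself an elementary submodel of $\mathbb{M}$; this follows from the independence of the system $(M_t)_{t \subsetneq s}$ together with the fact that in the totally transcendental theory $T$ unions of Noetherian independent systems of elementary submodels are elementary. In particular $M_s$ is atomic, hence prime, over $M_{<s}$ in $T$.

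The core of the argument is an adaptation of the proof of Proposition~\ref{prop:3.11} with the role of $M$ played by $M_{<s}$. Set $\widetilde{M}_{V,j}(\overline{a}) := \mathrm{sploc}(c/\widetilde{M}_{<s})$ for some finite tuple $\overline{a}$ from $\widetilde{M}_{<s}$. Using FIC, the argument of that proposition produces a single tuple $\overline{g}$ of elements of $G^{\mathrm{ad}}(\mathbb{Q})^{+}$ such that $\mathrm{tp}(q_{\overline{g}}(c)/M_{<s})$ isolates $\mathrm{tp}(\widehat{c}/M_{<s})$: first one obtains such a $\overline{g}$ over a countable elementary submodel inside $\mathbb{C}$ from Propositions~\ref{prop:main} and~\ref{prop:main2}, and then extends along a transcendence basis of $M_{<s}$ using that in $T$, which has finite Morley rank, each transcendental extension adds only finitely many new completions of a given type (cf.\ the inductive paragraph in the proof of Proposition~\ref{prop:3.11}). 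Because $M_s$ is atomic over $M_{<s}$, the formula isolating $\mathrm{tp}(q_{\overline{g}}(c)/M_{<s})$, conjoined with the special-locus condition $v \in \widetilde{M}_{V,j}(\overline{a})$, isolates $\mathrm{tp}(c/\widetilde{M}_{<s})$ by Corollary~\ref{cor:typedet2}, which gives atomicity.

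For the constructibility half, note that under SF and countability of each $M_s$, fibre preservation plus SF makes every $\widetilde{M}_s$ countable (each fibre of $q$ is a single $\Gamma$-orbit of a standard section), so atomicity upgrades to constructibility by Lemma~\ref{lem:constructible}. The main obstacle is the first technical step --- verifying that $M_{<s}$ is an elementary submodel and that the FIC-based argument of Proposition~\ref{prop:3.11} transfers cleanly to this relative setting --- but once this is in place the rest is a direct translation of that proof with $M_{<s}$ in place of $M$ and $\widetilde{M}_{<s}$ in place of $\widetilde{M}$.
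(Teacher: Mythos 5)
Your reduction hinges on a false claim. You assert that $M_{<s}=\bigcup_{t\subsetneq s}M_t$ is an elementary submodel of $\mathbb{M}$, on the grounds that in a totally transcendental theory ``unions of Noetherian independent systems of elementary submodels are elementary''. There is no such fact, and it fails badly here: $T$ is bi-interpretable with $\mathrm{ACF}_0$, and for $s=\{1,2\}$ with $M_{\{1\}},M_{\{2\}}$ corresponding to $\overline{\mathbb{Q}(a)}$, $\overline{\mathbb{Q}(b)}$ ($a,b$ algebraically independent over $M_{\emptyset}$), the union $M_{<s}$ is not even closed under the field operations, so it is not a substructure of $\mathbb{M}$, let alone an elementary one. (Note that you do not need this claim to know $M_s$ is atomic over $M_{<s}$: that is part of the definition of an $\widetilde{I}$-system, since the underlying $(M_s)_s$ is assumed to be an independent \emph{atomic} $I$-system.) Since you yourself identify this elementarity as the step on which ``the rest is a direct translation'', the proposal has a genuine gap at its core.

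Because $M_{<s}$ is not a model, the plan to rerun the proof of Proposition \ref{prop:3.11} ``with $M_{<s}$ in place of $M$'' does not typecheck: that argument uses in several places that $M$ is a model of $T$ --- FIC2 and Propositions \ref{prop:main} and \ref{prop:main2} are formulated over algebraically closed fields inside $\mathbb{C}$ (resp.\ over countable models), the transcendence-basis induction passes through algebraically closed intermediate fields, and the final step uses primality of $M'$ over $Mb$. Handling types over a \emph{union of independent models}, rather than over a single model, is precisely the content of \cite[Proposition 3.26]{bays-pillay}, and it is why the paper sets up the $\ell$-isolation and independence machinery (Lemma \ref{lem:ell}, Lemmas \ref{lem:3.6b}, \ref{lem:3.7}, \ref{lem:3.9}, Proposition \ref{prop:forking}); the paper's own treatment is simply to verify these ingredients and invoke the proof of \cite[Proposition 3.26]{bays-pillay} verbatim. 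If you want a self-contained argument along your lines, the missing work is to establish the analogue of (\ref{eq:models}) over the model $\mathrm{acl}(M_{<s})\subseteq M_s$ and then descend to $M_{<s}$ itself (using that the relevant fibre-transitivity statement only becomes weaker as the parameter field shrinks, together with the hypothesis that $M_s$ is atomic over $M_{<s}$); as written, the proposal assumes this difficulty away via the false elementarity claim.
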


\subsection{Classification of Models with Standard Fibres}

\begin{lem}
\label{lem:3.30}
Assume that $\widetilde{T}(\widetilde{\mathbf{p}})$ satisfies FIC. Let $S(F)=M\models T$, let $S(\overline{\mathbb{Q}})\cong M_{0}\prec M$ be the prime model of $T$, and let $B$ be a transcendence basis of $F$ over $\mathbb{Q}$. Let $\widetilde{\mathbf{q}}_{0}\models\widetilde{T}(\widetilde{\mathbf{p}})$ be such that it satisfies SF and $M_{0}$ is the variety sort of $\widetilde{\mathbf{q}}_{0}$. Then there exists $\widetilde{\mathbf{q}}\models\widetilde{T}(\widetilde{\mathbf{p}})$ such that $M$ is the variety sort of $\widetilde{\mathbf{q}}$, $\widetilde{\mathbf{q}}_{0}\prec^{\ast}\widetilde{\mathbf{q}}$, and $\widetilde{\mathbf{q}}$ is constructible over $B\cup\widetilde{\mathbf{q}}_{0}$, and it satisfies SF.
\end{lem}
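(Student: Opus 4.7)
The plan is to decompose the construction along the $\mathcal{P}^{\mathrm{fin}}(B)$-system of Lemma \ref{lem:3.23}, so that each piece is countable and Proposition \ref{prop:3.11} delivers its full conclusion (constructibility, not merely atomicity). Applying that lemma to $F$ over $\overline{F_0}$ gives a constructible independent $\mathcal{P}^{\mathrm{fin}}(B)$-system $(M_s)_{s\in\mathcal{P}^{\mathrm{fin}}(B)}$ with $M_{\emptyset}=M_0$ and $\bigcup_s M_s=M$; crucially every $M_s$ with $s$ finite is countable, being the algebraic closure of $\mathbb{Q}(s)$ inside $S$.

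I would construct an $\widetilde{I}$-system $(\widetilde{\mathbf{q}}_s)_s$ inside the monster $\widetilde{\mathfrak{q}}$ by induction on $|s|$, maintaining at every stage that $\widetilde{\mathbf{q}}_s$ has variety sort $M_s$, satisfies SF, and enjoys $\widetilde{\mathbf{q}}_t\prec^{\ast}\widetilde{\mathbf{q}}_s$ for every $t\subsetneq s$. For $s=\emptyset$ set $\widetilde{\mathbf{q}}_{\emptyset}:=\widetilde{\mathbf{q}}_0$; for singletons $s=\{b\}$ apply Proposition \ref{prop:3.11} to $\widetilde{\mathbf{q}}_0$ and $b$, producing a $\prec^{\ast}$-extension with variety sort $M_{\{b\}}$, satisfying SF, and constructible over $\widetilde{\mathbf{q}}_0\cup\{b\}$. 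For $|s|>1$, I would invoke Lemma \ref{lem:ell}(a) to pick an $\ell$-constructible model $\widetilde{\mathbf{q}}_s$ over $\widetilde{M}_{<s}\cup M_s$ and then, for each $t\subsetneq s$, apply Lemma \ref{lem:3.6b} with base $\widetilde{\mathbf{q}}_t$; the hypothesis that $\widetilde{\mathbf{q}}_s$ be $\ell$-atomic over $\widetilde{M}_t\cup M_s$ follows because, by the inductive constructibility of the restricted system and Lemma \ref{lem:3.21} applied to $t$ inside $s$, the complement $\widetilde{M}_{<s}\setminus\widetilde{M}_t$ is itself constructible over $\widetilde{M}_t\cup M_s$. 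SF at stage $s$ is imposed by restricting $\widetilde{\mathbf{q}}_s$ to the submodel generated by a standard section of $q_s$ that extends those already chosen in the smaller $\widetilde{\mathbf{q}}_t$ (as in the proof of Proposition \ref{prop:min}), so that those pieces remain contained in the new $\widetilde{\mathbf{q}}_s$ as $\prec^{\ast}$-extensions.

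Once the $\widetilde{I}$-system is in place, Proposition \ref{prop:3.26} applies (the index set is Noetherian, each $M_s$ is countable, each $\widetilde{\mathbf{q}}_s$ satisfies SF), so the system is constructible. Taking $B_p:=\{p\}$ for $p\in B$, the required hypothesis of Lemma \ref{lem:3.21} that $\widetilde{\mathbf{q}}_{\{p\}}$ be constructible over $\widetilde{\mathbf{q}}_0\cup\{p\}$ is precisely what Proposition \ref{prop:3.11} secured. Lemma \ref{lem:3.21} then yields that $\widetilde{M}_I:=\bigcup_s\widetilde{M}_s$ is constructible over $\widetilde{M}_{\emptyset}\cup B=\widetilde{\mathbf{q}}_0\cup B$. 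Setting $\widetilde{\mathbf{q}}:=\bigcup_s\widetilde{\mathbf{q}}_s$, its variety sort is $\bigcup_s M_s=M$, constructibility over $\widetilde{\mathbf{q}}_0\cup B$ is inherited, the relation $\widetilde{\mathbf{q}}_0\prec^{\ast}\widetilde{\mathbf{q}}$ propagates through the directed chain, and SF — an $\mathcal{L}_{\omega_1,\omega}$-sentence whose matrix is universally quantified in two variables — is preserved under the directed union.

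The main obstacle is the inductive step for $|s|>1$: having picked $\widetilde{\mathbf{q}}_s$ as $\ell$-constructible over $\widetilde{M}_{<s}\cup M_s$, one must verify $\widetilde{\mathbf{q}}_t\prec^{\ast}\widetilde{\mathbf{q}}_s$ for every $t\subsetneq s$ separately, rather than only over the whole of $\widetilde{M}_{<s}$, and simultaneously enforce SF without breaking those fibre-preserving embeddings. Both issues reduce to careful bookkeeping, but they genuinely use the independence of the $(M_s)_s$-system (to shift the base of $\ell$-constructibility from $\widetilde{M}_{<s}$ down to $\widetilde{M}_t$ via transitivity) and the inductive assumption that each smaller $\widetilde{\mathbf{q}}_t$ already satisfies SF (so that the selected standard section at stage $s$ may be taken to extend those already fixed at earlier stages).
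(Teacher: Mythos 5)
Your overall architecture is the reverse of the paper's: you build the $\widetilde{I}$-system bottom-up, corner by corner, and only then take the union, whereas the paper constructs the whole model in one stroke --- by Lemma \ref{lem:ell}(a) and Proposition \ref{prop:min} it takes $\widetilde{\mathbf{q}}$ to be $\ell$-constructible and \emph{minimal} over $\widetilde{M}_{0}\cup M$ (minimality gives SF immediately), identifies $\widetilde{\mathbf{q}}_{\emptyset}=\widetilde{\mathbf{q}}_{0}$ and the variety sort via Lemma \ref{lem:3.6b}, and then obtains the $\widetilde{I}$-system for free as the preimages $q^{-1}(M_{s})$ by Lemma \ref{lem:3.3}, after which Proposition \ref{prop:3.26}, Proposition \ref{prop:3.11} and Lemma \ref{lem:3.21} finish exactly as in your last paragraph. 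The reason the paper goes top-down is precisely the step where your proposal has a genuine gap: the inductive stage $|s|>1$. You choose $\widetilde{\mathbf{q}}_{s}$ $\ell$-constructible over $\widetilde{M}_{<s}\cup M_{s}$ and then claim that, for each $t\subsetneq s$, it is $\ell$-atomic over $\widetilde{M}_{t}\cup M_{s}$ because ``$\widetilde{M}_{<s}\setminus\widetilde{M}_{t}$ is constructible over $\widetilde{M}_{t}\cup M_{s}$ by the inductive constructibility of the restricted system and Lemma \ref{lem:3.21} applied to $t$ inside $s$.'' Lemma \ref{lem:3.21} does not give this: its conclusion is constructibility of $M_{I}$ over the \emph{bottom} corner $M_{\emptyset}$ together with the chosen generators $\bigcup_{p}B_{p}$, not over an intermediate corner $\widetilde{M}_{t}$ together with the larger variety-sort model $M_{s}$. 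Nor does the inductive hypothesis help directly: constructibility of $\widetilde{M}_{t'}$ over $\widetilde{M}_{<t'}$ does not yield constructibility of the relativised system $(\widetilde{M}_{t\cup u})_{u}$ over $\widetilde{M}_{t}$, since isolation of types does not transfer between different parameter bases without an argument.

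The missing statement is not bookkeeping: to see that no new fibre points (equivalently, no new algebraic information in the field sort) appear over a corner $\widetilde{M}_{t}$ when one passes to a model built over the independent union $\widetilde{M}_{<s}\cup M_{s}$, one needs exactly the FIC-driven analysis of types over independent systems that Proposition \ref{prop:3.26} and Lemma \ref{lem:3.6b} encapsulate (in Lemma \ref{lem:3.6b} the base is a single model $\widetilde{M}\cup M_{1}$, which is why its proof goes through; your base $\widetilde{M}_{<s}\cup M_{s}$ is a union of models, and adapting the $\mathrm{dcl}^{\mathrm{eq}}$ computation there amounts to re-proving a relative version of Proposition \ref{prop:3.26} at every stage). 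On top of this, your device for imposing SF at stage $s$ --- restricting to the submodel generated by a standard section extending the earlier ones --- needs a separate check that the restricted structure still contains every $\widetilde{M}_{t}$, still has variety sort $M_{s}$, and remains $\ell$-atomic over the relevant base; none of this is automatic. All of these difficulties evaporate in the paper's order of construction, so I would either switch to the top-down argument or supply a genuine proof (not a citation of Lemma \ref{lem:3.21}) of the relative atomicity claim at the inductive step. Your singleton step, the final application of Proposition \ref{prop:3.26} and Lemma \ref{lem:3.21}, and the preservation of SF and $\prec^{\ast}$ under the directed union are fine.
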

\begin{proof}
Set $I=\mathcal{P}^{\mathrm{fin}}(B)$. Let $(M_{s})_{s\in I}$ be defined as in the beginning of  \textsection\ref{subsec:indepsys}, which is a constructible independent $I$-system by Lemma \ref{lem:3.23}. By Lemma \ref{lem:ell}(a) and Proposition \ref{prop:min}, there is a model $\widetilde{\mathbf{q}}$ which is $\ell$-constructible and minimal over $M\widetilde{\mathbf{q}}_{0}$, so it will have standard fibres. Set $\widetilde{M}_{s} = q^{-1}(M_{s})\subseteq\widetilde{M}$. By Lemma \ref{lem:3.6b}, $\widetilde{\mathbf{q}}_{\emptyset} = \widetilde{\mathbf{q}}_{0}$ and $q\left(\widetilde{M}\right) = M$, and by Lemma \ref{lem:3.3}, $(\widetilde{\mathbf{q}}_{s})_{s}$ is an $\widetilde{I}$-system. By Proposition \ref{prop:3.26}, $(\widetilde{\mathbf{q}}_{s})_{s}$ is a constructible independent system. By Proposition \ref{prop:3.11}, for each $b\in B$ the model $\widetilde{\mathbf{q}}_{\left\{b\right\}}$ is constructible over $\widetilde{\mathbf{M}}_{\emptyset}b$, and so by Lemma \ref{lem:3.21}, $\widetilde{\mathbf{q}} = \widetilde{\mathbf{q}}_{I}$ is constructible over $\widetilde{\mathbf{q}}_{\emptyset}B=\widetilde{\mathbf{q}}_{0}B$. That $\widetilde{\mathbf{q}}$ satisfies SF comes from the fact that, as the $\widetilde{I}$-system preserves fibres, then the fibre a point $z\in M$ is equal to the fibre in $\widetilde{M}_{s}$ of $z$ seen as an element of $M_{s}$, for some $s\in I$. As every model of the $\widetilde{I}$-system satisfies SF, then so does $\widetilde{\mathbf{q}}$.
\end{proof}

\begin{thm}[Second Main Categoricity Result]
Assume $\widetilde{T}(\widetilde{\mathbf{p}})$ satisfies FIC. Let $\widetilde{\mathbf{q}}_{0},\widetilde{\mathbf{q}},\widetilde{\mathbf{q}}'\models\widetilde{T}(\widetilde{\mathbf{p}})$ be such that $\widetilde{\mathbf{q}}_{0}$ satisfies SF, $\widetilde{\mathbf{q}}_{0}\prec^{\ast}\widetilde{\mathbf{q}}$ (where $M_{0}$ is the prime model of $T$), $\widetilde{\mathbf{q}}_{0}\prec^{\ast}\widetilde{\mathbf{q}}'$, and $\widetilde{\mathbf{q}},\widetilde{\mathbf{q}}'$ are minimal over $\widetilde{\mathbf{q}}_{0}$. Suppose that there is an isomorphism $M\cong M'$ which is the identity on $M_{0}$. Then $\widetilde{\mathbf{q}}\cong_{\widetilde{\mathbf{q}}_{0}}\widetilde{\mathbf{q}}'$ (but this isomorphism might not agree with the isomorphism $M\cong M'$). 
\end{thm}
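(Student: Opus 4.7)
The plan is to identify the variety sorts of $\widetilde{\mathbf{q}}$ and $\widetilde{\mathbf{q}}'$ via the given isomorphism, and then show that the resulting two covering structures both coincide with a canonical extension of $\widetilde{\mathbf{q}}_0$ produced by Lemma \ref{lem:3.30}. First, let $\phi:M\to M'$ be the given isomorphism fixing $M_0$. Define $\widetilde{\mathbf{q}}''$ by keeping the domain sort $\widetilde{M}'$ together with its $\mathcal{L}_{\mathrm{cov}}$-structure, taking variety sort $M$, and setting $q_{\overline{g}}'':=\phi^{-1}\circ q_{\overline{g}}'$. Then $(\mathrm{id}_{\widetilde{M}'},\phi^{-1})$ is an $\widetilde{\mathcal{L}}$-isomorphism $\widetilde{\mathbf{q}}'\cong\widetilde{\mathbf{q}}''$, which restricts to the identity on $\widetilde{\mathbf{q}}_0$ because $\phi$ fixes $M_0$ and $\widetilde{\mathbf{q}}_0\prec^{\ast}\widetilde{\mathbf{q}}'$ forces $q'^{-1}(M_0)=\widetilde{M}_0$. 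Thus $\widetilde{\mathbf{q}}_0\prec^{\ast}\widetilde{\mathbf{q}}''$, and $\widetilde{\mathbf{q}}''$ is still minimal over $\widetilde{\mathbf{q}}_0$ with variety sort $M$. So it suffices to construct an isomorphism $\widetilde{\mathbf{q}}\cong_{\widetilde{\mathbf{q}}_0}\widetilde{\mathbf{q}}''$.

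Fix a transcendence basis $B$ of $F$ over $\mathbb{Q}$, where $M=S(F)$. By Lemma \ref{lem:3.30}, there is a model $\widetilde{\mathbf{q}}^{\ast}\models\widetilde{T}(\widetilde{\mathbf{p}})$ with variety sort $M$ and $\widetilde{\mathbf{q}}_0\prec^{\ast}\widetilde{\mathbf{q}}^{\ast}$, which is constructible over $\widetilde{\mathbf{q}}_0\cup B$ and satisfies SF. The heart of the argument is to show $\widetilde{\mathbf{q}}\cong_{\widetilde{\mathbf{q}}_0}\widetilde{\mathbf{q}}^{\ast}$; the analogous statement for $\widetilde{\mathbf{q}}''$ then follows by exactly the same argument, and transitivity yields $\widetilde{\mathbf{q}}\cong_{\widetilde{\mathbf{q}}_0}\widetilde{\mathbf{q}}''$.

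To produce this isomorphism, pick for each $b\in B$ a standard preimage $\widetilde{b}\in\widetilde{M}$ via axiom (A2), and let $\widetilde{B}\subseteq\widetilde{M}$ be the resulting tuple; do the same inside $\widetilde{\mathbf{q}}^{\ast}$ to obtain $\widetilde{B}^{\ast}\subseteq\widetilde{M}^{\ast}$. The FIC hypothesis, through Propositions \ref{prop:main} and \ref{prop:main2}, guarantees that $\widetilde{B}$ and $\widetilde{B}^{\ast}$ realise the same type over $\widetilde{\mathbf{q}}_0$: the $\aleph_0$-homogeneity over $\widetilde{\mathbf{q}}_0$ says exactly that standard preimages of the same non-special, pairwise non-$G^{\mathrm{ad}}(\mathbb{Q})^{+}$-conjugate tuple in the variety sort realise the same quantifier-free (hence complete) type. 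Enumerate $\widetilde{M}^{\ast}$ as $\widetilde{M}_0\cup\widetilde{B}^{\ast}\cup\{a_i\}_{i<\lambda}$ with each $\mathrm{tp}(a_i/\widetilde{M}_0\widetilde{B}^{\ast}a_{<i})$ isolated (witnessing constructibility), and extend the partial isomorphism sending $\widetilde{B}^{\ast}\mapsto\widetilde{B}$ stepwise by realising each isolated type $\mathrm{tp}(a_i/\cdots)$ inside $\widetilde{\mathbf{q}}$. The image is an elementary $\prec^{\ast}$-extension of $\widetilde{\mathbf{q}}_0$ sitting inside $\widetilde{\mathbf{q}}$, and its variety sort contains $M_0(B)$, whose algebraic closure is $F$, so its variety sort is all of $M$. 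Minimality of $\widetilde{\mathbf{q}}$ over $\widetilde{\mathbf{q}}_0$ forces surjectivity, giving $\widetilde{\mathbf{q}}^{\ast}\cong_{\widetilde{\mathbf{q}}_0}\widetilde{\mathbf{q}}$.

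The main obstacle is the base step: verifying that $\mathrm{tp}(\widetilde{B}^{\ast}/\widetilde{\mathbf{q}}_0)=\mathrm{tp}(\widetilde{B}/\widetilde{\mathbf{q}}_0)$ so the partial isomorphism with which the construction begins is actually elementary. This is where both FIC1 and FIC2 are needed, applied not only to finitely many points but uniformly along the inductive construction over the countable submodel $\widetilde{\mathbf{q}}_0$. A secondary point to check is that $\widetilde{\mathbf{q}}$ does in fact satisfy SF on its new fibres (so the type description of Proposition \ref{prop:typedet} applies verbatim, and $\widetilde{B}$ is well-defined as a tuple of standard points up to $\Gamma$-action); this follows from minimality via Proposition \ref{prop:min}, since any minimal $\prec^{\ast}$-extension is isomorphic to one built from a standard section, which automatically satisfies SF.
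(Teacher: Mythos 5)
Your overall skeleton (transport $\widetilde{\mathbf{q}}'$ along the given isomorphism, invoke Lemma \ref{lem:3.30} for an extension constructible over $\widetilde{\mathbf{q}}_{0}\cup B$, extend a partial elementary map stepwise through the construction, and finish by minimality) is the same as the paper's. But there is a genuine gap at what you yourself flag as the base step. It is not true that arbitrary standard preimages $\widetilde{B}\subseteq\widetilde{M}$ and $\widetilde{B}^{\ast}\subseteq\widetilde{M}^{\ast}$ of the same field-sort tuple $B$ realise the same type over $\widetilde{\mathbf{q}}_{0}$, and Propositions \ref{prop:main} and \ref{prop:main2} do not say this: they are back-and-forth statements whose \emph{hypothesis} is that two tuples already have equal quantifier-free types, and whose conclusion is that the partial isomorphism extends by one element; they never assert equality of types of two lifts of the same variety-sort point. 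By Corollary \ref{cor:typedet2}, $\mathrm{tp}\left(\widetilde{b}/\widetilde{\mathbf{q}}_{0}\right)$ records the compatible system $\left(q_{\overline{g}}(\widetilde{b})\right)_{\overline{g}}$ of fibre points over $b$ up to the action of $\mathrm{Aut}(\mathbb{C}/L)$; FIC only makes the Galois image of finite index in $\overline{\Gamma}$, not all of it, so the fibre over a Hodge-generic $b$ can split into several Galois orbits, i.e.\ several distinct types of standard preimages. An SF-model realises in its fibre over $b$ only a single $\Gamma$-orbit, and the orbits realised by $\widetilde{\mathbf{q}}$ and by $\widetilde{\mathbf{q}}^{\ast}$ over the same $b$ need not be conjugate. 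This is exactly why the theorem warns that the isomorphism need not agree with the given $M\cong M'$: your construction, which fixes each $b\in B$ and tries to match fibres over it, is attempting the stronger statement that fails.

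The paper's proof avoids the issue by never lifting $B$: it keeps the transcendence basis in the field sort, where quantifier elimination gives $B\cup\widetilde{\mathbf{q}}_{0}\equiv B'\cup\widetilde{\mathbf{q}}_{0}$ immediately (both are algebraically independent tuples over $M_{0}$, and there are no covering-sort constraints to match), and then uses constructibility of the whole model over $\widetilde{\mathbf{q}}_{0}\cup B$ from Lemma \ref{lem:3.30} to extend this map to an elementary embedding, so that the covering-sort images --- in particular, over which point of the variety sort the image of a given fibre sits --- are produced by the construction rather than prescribed in advance; minimality then gives surjectivity, exactly as in your final step. To repair your version you would have to either choose $\widetilde{B}^{\ast}$ realising $\mathrm{tp}\left(\widetilde{B}/\widetilde{\mathbf{q}}_{0}\right)$, which may be impossible within the fibres over $B$ in $\widetilde{\mathbf{q}}^{\ast}$, or allow the variety-sort image of $B$ to move, which collapses back to the paper's argument. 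A smaller point: your enumeration uses constructibility of $\widetilde{\mathbf{q}}^{\ast}$ over $\widetilde{M}_{0}\cup\widetilde{B}^{\ast}$, whereas Lemma \ref{lem:3.30} gives constructibility over $\widetilde{\mathbf{q}}_{0}\cup B$ with $B$ in the field sort; passing to the lifted parameter set is an additional step that would need justification.
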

\begin{proof}
If $S(F)=M\cong M'=S(F')$, let $B$ be a transcendence basis of $F$ over $\overline{\mathbb{Q}}$, and let $B'$ be the image of $B$ in $F'$. Then $\widetilde{\mathbf{q}}$ and $\widetilde{\mathbf{q}}'$ are constructible and minimal over $\widetilde{\mathbf{q}}_{0}B$ by Lemma \ref{lem:3.30}. By quantifier elimination of $\widetilde{T}(\widetilde{\mathbf{p}})$, $B\cup\widetilde{\mathbf{q}}_{0}\equiv B'\cup\widetilde{\mathbf{q}}_{0}$, and by constructibility of $\widetilde{\mathbf{q}}$ over $B\cup\widetilde{\mathbf{q}}_{0}$, this extends to an elementary embedding $\widetilde{\mathbf{q}}\prec\widetilde{\mathbf{q}}'$. By minimality of $\widetilde{\mathbf{q}}'$ over $B'\cup\widetilde{\mathbf{q}}_{0}$, this embedding is an isomorphism.
\end{proof}

\section{Additional Remarks}
\label{sec:remarks}
\subsection{Reduction to the Neat Case}
\label{subsec:neat}
For completeness, we now give the argument shown in \cite[\textsection 5.3]{daw-harris} to show that if we have categoricity when $\Gamma$ is neat, then we also have categoricity when we take a group which is commensurable with $\Gamma$ but not necessarily neat. 

Suppose that $\Gamma$ is a congruence subgroup that gives rise to a locally symmetric variety $\Gamma\backslash X^{+}$ which has a canonical model $S$ over an abelian extension $E^{S}$ of $E$, but is not necessarily neat (like it happens with the $j$ function). Let $p: X^{+}\rightarrow S(\mathbb{C})$ be the usual map. We want to know if $\mathrm{Th}_{\mathrm{SF}}^{\infty}(\mathbf{p})$ is categorical. Take two models of the same cardinality: $q:D\rightarrow S(F)$ and $q':D'\rightarrow S(F')$. By Proposition \ref{prop:borel}, we know that $\Gamma$ is virtually neat and that the finite-index neat subgroup can be taken to be a principal congruence subgroup. We observe that this subgroup contains a subgroup of the form $\Gamma_{\overline{g}}$, for some tuple $\overline{g}$ of $G^{\mathrm{ad}}(\mathbb{Q})^{+}$, and so it is also neat.

Let $Z_{\overline{g}}(\mathbb{C}) = \Gamma_{\overline{g}}\backslash X^{+}$. As we have seen before, we can produce a morphism of Shimura varieties $\pi:Z_{\overline{g}}\rightarrow S$ defined over the composite of the respective reflex fields, but as both varieties have the same Shimura datum, it follows that $\pi$ is defined over $E^{S}(\Sigma)$. Consider the maps
\begin{equation*}
\begin{array}{ccccccc}
q_{\overline{g}} : D  &\longrightarrow&  Z_{\overline{g}}(F) &,& q'_{\overline{g}} : D' & \longrightarrow & Z_{\overline{g}}(F')\\
x  &\mapsto&  \left(q(\overline{g}x)\right) &&  x & \mapsto & \left(q'(\overline{g}x)\right).
\end{array}
\end{equation*}
Observe that $p_{\overline{g}}$ is the uniformisation map of $Z_{\overline{g}}(\mathbb{C})$ and that, becasuse $q$ and $q'$ satisfy the conditions SS and SF determined by $p$, then $q_{\overline{g}}$ and $q'_{\overline{g}}$ satisfy the conditions SS and SF determined by $p_{\overline{g}}$. Applying Theorem \ref{thm:main1} to $p_{\overline{g}}$ we get that there is an isomorphism of models
\begin{equation*}
\begin{CD}
D @>\xi>> D'\\
@VVq_{\overline{g}}V @VVq'_{\overline{g}}V\\
Z_{\overline{g}}(F) @>\sigma>> Z_{\overline{g}}(F'),
\end{CD}.   
\end{equation*}
From the work we did in \textsection \ref{subsec:galrep}, it is immediate that the special structure induced on $X^{+}$ by $Z_{\overline{g}}(\mathbb{C})$ is the same as that induced by $S(\mathbb{C})$. As $p = \pi\circ p_{\overline{g}}$, then $q = \pi\circ q_{\overline{g}}$ and $q' = \pi\circ q_{\overline{g}}$. As $\pi$ is defined over $E^{S}(\Sigma)$, we conclude that the automorphism $\sigma$ descends to give an isomorphism of the original models:
\begin{equation*}
\begin{CD}
D @>\xi>> D'\\
@VVpV @VVqV\\
S(F) @>\sigma>> S(F').
\end{CD}    
\end{equation*}

\subsection{Categoricity of \texorpdfstring{$\mathcal{A}_{g}$}{A}}

We will now focus on the case of $\mathcal{A}_{g}$, the moduli space of principally polarised abelian varieties of dimension $g$. We recall that in this case $G = \mathrm{GSp}_{2g}$ and we can view $\mathcal{A}_{g}$ as the quotient $\Gamma\backslash\mathbb{H}^{+}_{g}$, where $\Gamma = \mathrm{Sp}_{2g}(\mathbb{Z})$. Also  $E^{\mathcal{A}_{g}}=\mathbb{Q}$ (cf \cite[p. 352]{milne}). Because we will need the statement of the Mumford-Tate conjecture, we will use $F_{0} = \mathbb{Q}^{\mathrm{ab}}(\Sigma)$. 

Unfortunately, in order to stick to standard notation, we will have to use the same symbol, $g$, for the dimension of the abelian variety and for elements of the group $G$. However this should not lead to confusion.

We remark first that, just as with the case of the $j$ function, the map $\mathbb{H}^{+}_{g}\rightarrow\mathcal{A}_{g}$ is ramified as $\mathrm{Sp}_{2g}(\mathbb{Z})$ is not neat. So $\mathcal{A}_{g}$ does not fall into the class of Shimura varieties we have been considering. This can be fixed by considering principal congruence subgroups of $\mathrm{Sp}_{2g}(\mathbb{Z})$. 

Let $A$ be an abelian variety of dimension $g$ defined over a number field $K$. Given an integer $N$, denote the $N$-torsion of $A$ by $A[N]$ (here we think of $A[N]$ as a subgroup of $A\left(\overline{K}\right)$). A \emph{level $N$-structure} on an $A$ is an isomorphism
\begin{equation}
\label{eq:symiso}
    \psi_{N} : A[N] \rightarrow \left(\mathbb{Z}/N\mathbb{Z}\right)^{2g} 
\end{equation}
which preserves the standard symplectic form. 

We will now look at the varieties:
\begin{equation*}
    \mathcal{A}_{g,N} := \Gamma(N)\backslash\mathbb{H}_{g}^{+},
\end{equation*}
where $\Gamma(N)$ is the principal congruence subgroup of $\mathrm{Sp}_{2g}(\mathbb{Z})$ of level $N$. $\mathcal{A}_{g,N}$ is the \emph{moduli space of principally polarised abelian varieties of dimension $g$ with a level-$N$ structure}. If $N\geq 4$, then $\mathcal{A}_{g,N}$ is smooth and $\mathbb{H}^{+}_{g}\rightarrow\mathcal{A}_{g,N}$ is the topological universal cover. 

A point $x\in\mathcal{A}_{g,N}$ corresponds to a triple $(A,\lambda, \psi_{N})$, where $(A,\lambda)$ denotes an abelian variety $A$ with a polarisation $\lambda$, and $\psi_{N}$ is an isomorphism as in (\ref{eq:symiso}). Let 
\begin{equation*}
\widehat{\Gamma}: = \varprojlim \Gamma/\Gamma(N),
\end{equation*}
where $\Gamma(N)$ is the congruence subgroups of of level $N$ of $\Gamma$. Therefore we have a short exact sequence:
\begin{equation*}
    1\rightarrow \Gamma_{\overline{g}}/\Gamma(N)\rightarrow\Gamma/\Gamma(N)\rightarrow\Gamma/\Gamma_{\overline{g}}\rightarrow 1.
\end{equation*}
As the Mittag-Leffler conditions are satisfied, we conclude that there is surjective homomorphism $\pi:\widehat{\Gamma}\rightarrow\overline{\Gamma}$. \footnote{See https://stacks.math.columbia.edu/tag/0594 for the relevant details showing the existence of a surjective homomorphism.} Observe that $\widehat{\Gamma}$ is open in $\mathrm{GSp}_{2g}(\mathbb{A}_{f})$. Given a special domain $D_{V,i}\subseteq\mathbb{H}_{g}^{+}$, let $\widehat{\Gamma^{V,i}}:=\pi^{-1}\left(\overline{\Gamma}^{V,i}\right)$.

In order to obtain neatness, we will consider $N\geq 3$ (which, by the comments in \textsection\ref{subsec:neat}, will be enough for our needs). It is shown in \cite{ullmo-yafaev} that the Galois representation attached to a point $z\in\mathcal{A}_{g,N}$ (when $N\geq 3$) as we described it in \textsection \ref{subsec:galgen}, coincides with the action of the Galois group of $L$ (here we follow the notation of \textsection \ref{subsec:galgen}) on the Tate module of the abelian variety associated to $z$. This means that the image of $\mathrm{Aut}(\mathbb{C}/L)\rightarrow\widehat{\Gamma^{V,i}}$ is open if the Mumford-Tate conjecture holds for the abelian variety associated to $z$.

The Mumford-Tate conjecture is known for elliptic curves (by Theorem \ref{thm:pink}), simple abelian surfaces, and simple abelian varieties of dimension 3 (see \cite[Theorem 6.1]{lombardo}). By \cite[Theorem 9.5]{commelin}, we know that it then also holds for any finite product of these abelian varieties, which means that Mumford-Tate holds for all abelian surfaces and all abelian varieties of dimension 3. So we know that in these cases the corresponding homomorphisms 
\begin{equation}
\label{eq:open}
    \mathrm{Aut}(\mathbb{C}/\mathbb{Q}^{\mathrm{ab}}(p(x_{1}),\ldots,p(x_{m})))\rightarrow\widehat{\Gamma^{V,i}}
\end{equation}
have open image. As the proof of \cite[Theorem 9.5]{commelin} considers abelian varieties defined over finitely generated fields, this almost takes care of both FIC1 and FIC2. What is missing, is to extend the field $\mathbb{Q}^{\mathrm{ab}}(p(x_{1}),\ldots,p(x_{m}))$ so that it includes the coordinates of special points, which are part of the field of constants we are using in our models. For that, we use the following lemma.

\begin{lem}
\label{lem:abext}
If $g=1$, $g=2$ or $g=3$, then $\mathbb{Q}^{\mathrm{ab}}(\Sigma)$ is a solvable extension of $\mathbb{Q}^{\mathrm{ab}}$.
\end{lem}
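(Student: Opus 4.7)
The proof plan is to combine the main theorem of complex multiplication (which locates the coordinates of special points inside abelian extensions of reflex fields) with the observation that for small $g$ the Galois closures of the relevant CM fields are solvable over $\mathbb{Q}$.

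First, I would unpack what $\Sigma$ consists of in this setting. A special point of $\mathcal{A}_g(\mathbb{C})$ is the moduli point of a principally polarised CM abelian variety $(A,\lambda)$ of dimension $g$, with CM by a CM field $K$ of degree $2g$ via a CM type $\Phi$. By the main theorem of complex multiplication (see e.g.\ Shimura--Taniyama, or \cite[\S 14]{milne}), the field of moduli of $(A,\lambda)$ lies inside an abelian extension $F_\sigma$ of the reflex field $E(K,\Phi)$. In particular, if $\sigma\in\Sigma$ is a coordinate of this special point, then $\sigma\in F_\sigma$, and $F_\sigma/E(K,\Phi)$ is abelian. Moreover $E(K,\Phi)$ is contained in the Galois closure $L_K$ of $K$ over $\mathbb{Q}$.

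Next, I would analyse $\mathrm{Gal}(L_K/\mathbb{Q})$ for $g\leq 3$. Writing $K=F(\sqrt{-\delta})$ with $F$ totally real of degree $g$ and $\delta\in F$ totally positive, $L_K$ is obtained from the Galois closure $F^{\mathrm{gal}}$ of $F$ by adjoining the square roots $\sqrt{-\delta^{\tau}}$ for $\tau$ ranging over the embeddings of $F$. This yields a short exact sequence
\begin{equation*}
1 \longrightarrow N \longrightarrow \mathrm{Gal}(L_K/\mathbb{Q}) \longrightarrow \mathrm{Gal}(F^{\mathrm{gal}}/\mathbb{Q}) \longrightarrow 1,
\end{equation*}
with $N$ an elementary abelian $2$-group. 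Since $\mathrm{Gal}(F^{\mathrm{gal}}/\mathbb{Q})$ embeds in $S_g$, and $S_g$ is solvable for $g\leq 4$, the group $\mathrm{Gal}(L_K/\mathbb{Q})$ is solvable whenever $g\leq 3$ (indeed, whenever $g\leq 4$).

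Finally, I would assemble the pieces. For each $\sigma\in\Sigma$, the field $F_\sigma$ is a finite solvable extension of $\mathbb{Q}$: the tower $\mathbb{Q}\subseteq L_K\supseteq E(K,\Phi)\subseteq F_\sigma$ has solvable bottom part by the previous step and abelian top part by CM theory, so $\mathrm{Gal}$ of the Galois closure of $F_\sigma/\mathbb{Q}$ is solvable. Compositing with $\mathbb{Q}^{\mathrm{ab}}$, the extension $F_\sigma\cdot\mathbb{Q}^{\mathrm{ab}}/\mathbb{Q}^{\mathrm{ab}}$ has Galois group isomorphic to a subgroup of $\mathrm{Gal}(F_\sigma^{\mathrm{gal}}/\mathbb{Q})$, hence solvable. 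Since $\mathbb{Q}^{\mathrm{ab}}(\Sigma)$ is the union (equivalently, compositum) of the $F_\sigma\cdot\mathbb{Q}^{\mathrm{ab}}$ as $\sigma$ ranges over $\Sigma$, and solvability of algebraic extensions is preserved under directed compositum, $\mathbb{Q}^{\mathrm{ab}}(\Sigma)/\mathbb{Q}^{\mathrm{ab}}$ is solvable.

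The main step requiring care is the invocation of complex multiplication to place $\sigma$ inside a specific abelian extension of $E(K,\Phi)$, and the observation that the reflex field sits inside the Galois closure of $K$; once these standard facts are in place, the solvability argument is elementary and is the reason the statement restricts to $g\leq 3$ (it would in fact extend to $g=4$, but fail in general for $g\geq 5$ where $S_g$ is non-solvable).
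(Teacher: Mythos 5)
Your proposal is correct and shares the paper's overall skeleton — CM theory places the coordinates of a special point in an abelian extension of the reflex field, the reflex field is solvable over $\mathbb{Q}$ for small $g$, and a compositum of solvable extensions of uniformly bounded derived length is solvable — but you establish the key solvability input by a genuinely different argument. The paper works with the Galois closure of the reflex field $E_{p(x)}$ itself, realising it (via the cited example in Chai--Conrad--Oort) as a compositum $E_{p(x)}E_{p(x')}$ of reflex fields of special points and then bounding its degree ($\leq 8$ for $g=2$, $\leq 12$ for $g=3$) so that solvability follows from smallness; you instead embed $E(K,\Phi)$ in the Galois closure $L_K$ of the CM field $K$ and identify $\mathrm{Gal}(L_K/\mathbb{Q})$ as an extension of a subgroup of $S_g$ by an elementary abelian $2$-group. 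Your route is more structural: it needs no compositum-of-reflex-fields input, it makes transparent that the true constraint is solvability of $S_g$ (so the argument extends to $g=4$ and fails only at $g\geq 5$), and it is robust where the paper's numerology is fragile — for a generic sextic CM field the reflex field has degree $8$ and the Galois closure of $K$ has degree $48$, not $\leq 12$, though solvability of course persists, which is exactly what your group-theoretic description shows. Two small points to tighten: a special point of $\mathcal{A}_g$ can correspond to a non-simple abelian variety, whose CM algebra is a product of CM fields rather than a single field of degree $2g$, so the argument should be run factor-wise on the simple isogeny factors (this changes nothing and the paper elides it as well); and the tower written as $\mathbb{Q}\subseteq L_K\supseteq E(K,\Phi)\subseteq F_\sigma$ should be replaced by the clean statement that $F_\sigma L_K/L_K$ is abelian while $L_K/\mathbb{Q}$ is solvable Galois, whence the Galois closure of $F_\sigma$ over $\mathbb{Q}$ is solvable with derived length bounded uniformly in $g$ — the uniform bound is what legitimises passing to the infinite compositum over all of $\Sigma$ in your last step.
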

\begin{proof}
Recall that the case $g=1$ was treated in \cite{daw-harris}. Given $x\in \mathbb{H}^{+}_{g}$ special, $p(x)$ represents a CM abelian abelian variety of dimension $g$. Recall that the reflex field depends only on the Shimura datum, and not on the choice of $\Gamma$. Let $E_{p(x)}$ denote the reflex field corresponding to the Shimura datum $\left(\mathrm{MT}(x), \left\{x\right\}\right)$. $E_{p(x)}$ is a CM-field of degree at most $2g$ over $\mathbb{Q}$, but it may not be Galois over $\mathbb{Q}$. In any case, we can find another special point $x'\in\mathbb{H}^{+}_{g}$ such that the compositum $E_{p(x)}E_{p(x')}$ is the Galois closure of $E_{p(x)}$ over $\mathbb{Q}$ (see \cite[1.5.3. Example]{chai-conrad-oort}). This Galois closure is of degree at most $8$ over $\mathbb{Q}$ if $g=2$, and of degree at most 12 over $\mathbb{Q}$ if $g=3$. In any case, the Galois group will be solvable. So, if we let $E_{\mathrm{sp}}$ be the compositum of the reflex fields of all special points, then the extension  $\mathbb{Q}^{\mathrm{ab}}E_{\mathrm{sp}}$ is solvable over $\mathbb{Q}^{\mathrm{ab}}$, because the compositum of solvable extensions is solvable. 


To finish, we use that $\mathbb{Q}(p(x))$ is an abelian extension of $E_{p(x)}$, and that the compositum of abelian extensions is abelian.
\end{proof}

We therefore get (recall that categoricity of $j:\mathbb{H}^{+}\rightarrow\mathbb{C}$ was already proven in \cite{daw-harris}):

\begin{cor}
$p:\mathbb{H}_{2}^{+}\rightarrow\mathcal{A}_{2}$ and $p:\mathbb{H}_{3}^{+}\rightarrow\mathcal{A}_{3}$ are categorical.
\end{cor}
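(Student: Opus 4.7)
The plan is to verify the finite-index conditions FIC1 and FIC2 for $\mathcal{A}_{g,N}$ with $g\in\{2,3\}$ and a neat level structure, and then invoke Theorem \ref{thm:main1} together with the reduction of \textsection\ref{subsec:neat}. First I would reduce to the neat case: since $\mathrm{Sp}_{2g}(\mathbb{Z})$ is not neat, I replace $\mathcal{A}_g$ by $\mathcal{A}_{g,N}$ for some $N\geq 3$, so that $\Gamma(N)$ is neat. By \textsection\ref{subsec:neat} it suffices to prove categoricity for $p:\mathbb{H}_g^+\to\mathcal{A}_{g,N}$, and Theorem \ref{thm:main1} then reduces the job to checking FIC1 and FIC2.

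Next I would set up the Galois-theoretic input. Fix non-special $x_1,\ldots,x_m\in\mathbb{H}_g^+$ in distinct $G^{\mathrm{ad}}(\mathbb{Q})^+$-orbits with $\mathrm{spcl}(\overline{x})=D_{V,i}$. For $N\geq 3$ the Galois representation attached via \textsection\ref{subsec:galrep} to the tuple of principally polarised abelian varieties represented by $(p(x_1),\ldots,p(x_m))$ agrees with the representation on Tate modules (by Ullmo--Yafaev, as cited in the discussion preceding the corollary), giving a map
\[
\mathrm{Aut}(\mathbb{C}/\mathbb{Q}^{\mathrm{ab}}(p(x_1),\ldots,p(x_m)))\longrightarrow \widehat{\Gamma^{V,i}}.
\]
Commelin's theorem (\cite[Theorem 9.5]{commelin}) combined with Lombardo's theorem gives the Mumford--Tate conjecture for arbitrary products of simple abelian varieties of dimension $\leq 3$, hence for every principally polarised abelian variety of dimension $2$ or $3$ and every product thereof. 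This forces the image of the displayed homomorphism to be open in $\widehat{\Gamma^{V,i}}$. Moreover, since Commelin's proof applies over any finitely generated base field, the same openness holds when working over a finitely generated extension of $\mathbb{Q}^{\mathrm{ab}}$, which is what is needed for FIC2.

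The remaining step is to enlarge the base field from $\mathbb{Q}^{\mathrm{ab}}(p(x_1),\ldots,p(x_m))$ to $L=F_0(p(x_1),\ldots,p(x_m))=\mathbb{Q}^{\mathrm{ab}}(\Sigma)(p(x_1),\ldots,p(x_m))$ (and analogously for FIC2 with $F$ in place of $\mathbb{Q}^{\mathrm{ab}}$). By Lemma \ref{lem:abext}, $\mathbb{Q}^{\mathrm{ab}}(\Sigma)/\mathbb{Q}^{\mathrm{ab}}$ is solvable for $g\leq 3$; compositing with $\mathbb{Q}^{\mathrm{ab}}(p(x_1),\ldots,p(x_m))$ preserves solvability, so the image of $\mathrm{Aut}(\mathbb{C}/L)$ is a closed subgroup of the previous open image with pro-solvable quotient. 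The key point is that the open image in $\widehat{\Gamma^{V,i}}$ is not virtually solvable: the Mumford--Tate group $H$ of a tuple of non-special points in $\mathrm{GSp}_{2g}$ is reductive with non-trivial semisimple part (since $H$ is not a torus), so the corresponding open adelic subgroup admits no infinite solvable quotient. The pro-solvable quotient must therefore be finite, which gives finite index. Pushing forward through the canonical surjection $\pi:\widehat{\Gamma^{V,i}}\twoheadrightarrow\overline{\Gamma^{V,i}}$ produces the finite-index image required by FIC1; the identical argument with finitely generated $F$ handles FIC2.

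Theorem \ref{thm:main1} then delivers categoricity of $\mathrm{Th}^\infty_{\mathrm{SF}}(\mathbf{p})$ for $\mathcal{A}_{g,N}$, and the reduction of \textsection\ref{subsec:neat} transports this to $\mathcal{A}_g$ itself. The delicate step I expect to be the main obstacle is the non-virtual-solvability assertion in the preceding paragraph: it requires a genuine use of the reductive-but-non-toral structure of the Mumford--Tate group of a non-special tuple, and is the only place where openness (rather than merely density) of the Galois image is not formally preserved under solvable base change. The dimension restriction $g\leq 3$ is doubly essential here --- it is needed both for the Mumford--Tate conjecture and for the solvability of $\mathbb{Q}^{\mathrm{ab}}(\Sigma)/\mathbb{Q}^{\mathrm{ab}}$ provided by Lemma \ref{lem:abext}.
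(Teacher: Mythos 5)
Your overall route coincides with the paper's: reduce to a neat level $N\geq 3$, identify the Galois representations of \textsection\ref{subsec:galrep} with the Tate-module representations via Ullmo--Yafaev, deduce open image in $\widehat{\Gamma^{V,i}}$ over $\mathbb{Q}^{\mathrm{ab}}(p(x_{1}),\ldots,p(x_{m}))$ from the Mumford--Tate conjecture in dimensions $2$ and $3$ (Lombardo plus Commelin), pass to $\mathbb{Q}^{\mathrm{ab}}(\Sigma)$ using the solvability of Lemma \ref{lem:abext}, and conclude with Theorem \ref{thm:main1} and the reduction of \textsection\ref{subsec:neat}. The gap is exactly at the step you yourself flag as delicate. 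You justify preservation of openness along the solvable tower by asserting that, since the Mumford--Tate group of a non-special tuple is reductive with non-trivial semisimple part, the open image in $\widehat{\Gamma^{V,i}}$ ``admits no infinite solvable quotient''. That inference is not valid: a reductive group which is not a torus may still have a positive-dimensional central torus, and then compact open subgroups of its adelic points do have infinite abelian quotients (compare $\mathrm{GSp}_{2g}$ itself, where the similitude character maps any compact open subgroup onto an open subgroup of $\widehat{\mathbb{Z}}^{\times}$; central tori also occur among Hodge groups of abelian varieties with type IV endomorphisms). Moreover ``not virtually solvable'' is not even the right property: $\mathrm{SL}_{2}(\mathbb{Z}_{\ell})\times\mathbb{Z}_{\ell}$ is not virtually solvable yet has an infinite abelian quotient. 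What the induction along the tower of Lemma \ref{lem:abext} actually needs is that \emph{every} open subgroup occurring in the chain has finite topological abelianisation, equivalently that the closure of the commutator subgroup of any open subgroup of $\widehat{\Gamma^{V,i}}$ is open.

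That statement is precisely what the paper supplies, and it is not formal: it is Ribet's Lemma 3.4 of \cite{ribet}, which applies to $\mathrm{Sp}_{2g}\left(\widehat{\mathbb{Z}}\right)$ because $\mathfrak{sp}_{2g}$ is its own derived algebra, together with Ribet's Remark 3 to transfer the conclusion to the subgroups $\widehat{\Gamma^{V,i}}$; note that this transfer is itself a step requiring justification, not a consequence of the crude dichotomy ``torus versus non-torus''. With the commutator lemma in hand, the rest is the induction you sketch: $\mathcal{U}_{1}$ is open, each $\mathcal{U}_{i+1}$ contains the closure of $[\mathcal{U}_{i},\mathcal{U}_{i}]$ because the corresponding Galois quotient is abelian, hence each $\mathcal{U}_{i}$ is open, and finite index in $\overline{\Gamma^{V,i}}$ follows by pushing forward along $\pi$. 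So your skeleton matches the paper, but the ``reductive non-torus, hence no infinite solvable quotient'' heuristic must be replaced by the Ribet commutator argument (and the same replacement is needed in your treatment of FIC2, where in addition $F$ is an algebraically closed subfield of $\mathbb{C}$, not a finitely generated one, so the appeal to Commelin should be phrased, as in the paper, in terms of the finitely generated field over which the relevant abelian varieties and the tuple are defined).
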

\begin{proof}
The group $\mathrm{Sp}_{2g}\left(\widehat{\mathbb{Z}}\right)$ satisfies the conditions of Lemma 3.4 of \cite{ribet} (because its Lie algebra is its own derived algebra, see \cite[Remark 2, p.253]{ribet}), that is, if $\mathcal{U}$ is an open subgroup, then the closure of the commutator of $\mathcal{U}$ is open in $\mathrm{Sp}_{2g}\left(\widehat{\mathbb{Z}}\right)$. To extend this to $\widehat{\Gamma^{V,i}}$, we use \cite[Remark 3, p.253]{ribet}.

By Lemma \ref{lem:abext} then, we know that there are groups $\mathcal{U}_{1},\ldots,\mathcal{U}_{n}$ in $\widehat{\Gamma^{V,i}}$ such that, $\mathcal{U}_{1}$ is the image of $\mathrm{Aut}(\mathbb{C}/\mathbb{Q}^{\mathrm{ab}}(p(x_{1}),\ldots,p(x_{m})))$ under the homomorphism (\ref{eq:open}), $\mathcal{U}_{i+1}$ is normal in $\mathcal{U}_{i}$, $\mathcal{U}_{i}/\mathcal{U}_{i+1}$ is abelian, and $\mathcal{U}_{n}$ is the image of $\mathrm{Aut}\left(\mathbb{C}/\mathbb{Q}^{\mathrm{ab}}(\Sigma)(p(x_{1}),\ldots,p(x_{m}))\right)$. Then, $\mathcal{U}_{1}$ is open and $\mathcal{U}_{2}$ contains the closure of the commutator subgroup of $\mathcal{U}_{1}$, so $\mathcal{U}_{2}$ is open. Inductively then, the image of $\mathrm{Aut}(\mathbb{C}/L)$ is open in $\widehat{\Gamma^{V,i}}$. 
\end{proof}

\begin{akn}
I would like to thank Jonathan Pila for invaluable guidance and supervision. I would also like to thank Gregorio Baldi, Martin Bays, Christopher Daw, and Boris Zilber for very fruitful discussions around the topics presented here. 
\end{akn}

\end{document}